\numberwithin{equation}{section}
\newtheorem{theorem}{Theorem}[section]
\newtheorem{definition}[theorem]{Definition}
\newtheorem{corollary}[theorem]{Corollary}
\newtheorem{example}[theorem]{Example}
\newtheorem{lemma}[theorem]{Lemma}
\newtheorem{proposition}[theorem]{Proposition}
\newtheorem{remark}[theorem]{Remark}
\newtheorem{remarks}[theorem]{Remarks}
\newcommand{\f}{\theta,Y}
\newcommand{\s}{x,y}
\def\g{\gamma}
\def\eps{\varepsilon }
\newcommand\R{\mathbb R}
\def\g{\gamma}
\def\eps{\varepsilon}
\newcommand\br{\begin{remark}}
\newcommand\er{\end{remark}}
\newcommand\bp{\begin{pmatrix}}
\newcommand\ep{\end{pmatrix}}
\newcommand{\be}{\begin{equation}}
\newcommand{\ee}{\end{equation}}
\newcommand\ba{\begin{equation}\begin{aligned}}
\newcommand\ea{\end{aligned}\end{equation}}
\newcommand{\bap}{\begin{app}}
\newcommand{\eap}{\end{app}}
\newcommand{\begs}{\begin{exams}}
\newcommand{\eegs}{\end{exams}}
\newcommand{\beg}{\begin{example}}
\newcommand{\eeg}{\end{exaplem}}
\newcommand{\bpr}{\begin{proposition}}
\newcommand{\epr}{\end{proposition}}
\newcommand{\bt}{\begin{theorem}}
\newcommand{\et}{\end{theorem}}
\newcommand{\bc}{\begin{corollary}}
\newcommand{\ec}{\end{corollary}}
\newcommand{\bl}{\begin{lemma}}
\newcommand{\el}{\end{lemma}}
\newcommand{\bd}{\begin{definition}}
\newcommand{\ed}{\end{definition}}
\newcommand{\brs}{\begin{remarks}}
\newcommand{\ers}{\end{remarks}}
\newcommand{\CalF}{\mathcal{F}}
\newcommand{\CalG}{\mathcal{G}}
\newcommand{\sgn}{\text{\rm sgn}}
\newcommand{\bbN}{{\mathbb{N}}}
\newcommand{\bbR}{{\mathbb{R}}}
\newcommand{\bbZ}{{\mathbb{Z}}}
\newcommand{\bbT}{{\mathbb{T}}}
\newcommand{\beq}{\begin{equation}}
\newcommand{\eeq}{\end{equation}}
\newtheorem{thm}{Theorem}[section]
\newtheorem{prop}[thm]{Proposition}
\newtheorem{lem}[thm]{Lemma}
\newtheorem{defn}[thm]{Definition}
\newtheorem{ass}[thm]{Assumption}
\newtheorem{rem}[thm]{Remark}
\newtheorem{exams}[thm]{Examples}
\newtheorem{notation}[thm]{Notation}
\numberwithin{equation}{section}
\def\({\left(\begin{array}{cccccc}}
\def\){\end{array}\right)}
\newcommand{\pf}{\begin{proof}}
\newcommand{\foorp}{\end{proof}}
\title
\author{Aric Wheeler and Mark Williams}
\begin{document}

\begin{abstract}

A Rayleigh wave is a type of surface wave that propagates in the boundary of an elastic solid with traction (or Neumann) boundary conditions.  
Since the 1980s much work has been done on the problem of constructing a leading term in an \emph{approximate} solution to the rather complicated second-order quasilinear hyperbolic boundary value problem with fully nonlinear Neumann boundary conditions that governs  the propagation of Rayleigh waves.  The question has remained open whether or not this leading term approximate solution is really close in a precise sense to the exact solution of the governing equations.   We prove a positive answer to this question for  the case of Rayleigh wavetrains in any space dimension $d\geq 2$.  The case of Rayleigh pulses in dimension $d=2$ has already been treated by Coulombel and Williams.  For highly oscillatory Rayleigh wavetrains we are able to construct high-order approximate solutions consisting of the leading term plus an arbitrary number of correctors.  Using those high-order solutions we then perform an error analysis which shows (among other things) that for small wavelengths the leading term is close to the exact solution in $L^\infty$ on a fixed time interval independent of the wavelength.  The error analysis is carried out in a somewhat general setting and is applicable to other types of waves for which high order approximate solutions can be constructed.

\end{abstract}

\date{\today}
\maketitle

%2
%\frame{
\tableofcontents
\newpage

\part{General introduction and main results}\label{sec:Intro}

The problem of constructing oscillatory, approximate solutions to the traction problem in nonlinear elasticity \eqref{j0} has been considered by a number of authors including \cite{Lardner,Hunter06,Benzoni-Gavage,BCproc,Williams}.  These papers construct 2-scale, WKB-type, approximate solutions consisting of a leading term and, in the case of \cite{Williams}, a first corrector as well.    This paper provides a ``rigorous justification" (explained below) of such approximate solutions for wavetrains in any number of space dimensions $d\geq 2$.

The traction problem is a Neumann-type boundary value problem for a quasilinear second-order hyperbolic system, and like most Neumann-type problems the boundary conditions exhibit a certain degeneracy - the uniform Lopatinski condition fails.  The specific nature of the failure in this case is manifested by the presence of surface waves, or Rayleigh waves, that propagate in the boundary along characteristics of the Lopatinski determinant.    Approximate surface wave solutions  of both pulse-type   and wavetrain-type   (defined below) have  been constructed.

The task of rigorously justifying these approximate solutions, that is, showing that they are close in  a precise sense to true exact solutions of the traction problem was begun in \cite{Williams}, which provided a justification in the case of pulses in \emph{two} space dimensions; see Remark \ref{pvw}.   
In this paper we treat the case of wavetrains in any space dimension $d\geq 2$ by entirely different methods.  

%The construction of approximate solutions is valid in any number of space dimensions; the restriction to space dimension two  in \cite{Williams} or to dimension three in this paper (CHECK restriction for wavetrains) applies only to the rigorous error analysis. 

%\emph{\quad}Our main focus in the remaining chapters  is to provide rigorous answers to the basic questions of geometric optics for surface \emph{pulses} in isotropic hyperelastic materials.    We will describe how the results extend to wavetrains in section \ref{wavetrains}.   Although many pieces of the argument  work just as well in higher dimensions,  there is one piece that requires us to assume $d=2$ in our main results, Theorems \ref{uniformexistence} and \ref{approxthm}. 
%At several points our estimates rely on the use of Kreiss symmetrizers and, as we explain in section \ref{higherD}, there is a serious difficulty with the construction of Kreiss symmetrizers for linearized elasticity in $d\geq 3$.

We now describe the form of the traction problem to be considered here.
   Let the unknown  $\phi=(\phi_1,\dots,\phi_d)(t,x)$ represent the deformation of  an isotropic, hyperelastic,  Saint Venant-Kirchhoff (SVK) material  whose reference (or undeformed) configuration is $\omega=\{x=(x_1,\dots,x_d):x_d>0\}$.    We will study the case where the  deformation results from the application of  a surface force $g=g(t,x)$, but our methods extend easily to the case where forcing is applied in the interior as well.   Here $\phi(t,\cdot):\omega\to \mathbb{R}^d$ and $g(t,\cdot):\partial\omega\to \mathbb{R}^d$.
%\footnote{At the moment I don't see why we can't work in any dimension $d\geq 2$. Did Sable-Tougeron take $d=2$ just because she needed strict hyperbolicity to get Kreiss symmetrizers in 1985-6, or was there a reason connected with the elasticity equations in $d\geq 3$? Check this.}   
The equations are  a second-order, nonlinear $d\times d$ system
%\footnote{The interior equation is quasilinear, while the boundary equation is fully nonlinear.}
\begin{align}\label{j0}
\begin{split}
&\partial_t^2\phi -\mathrm{Div}(\nabla\phi\;\sigma(\nabla\phi))=0\text{ in }x_d>0\\
& \nabla\phi\;\sigma(\nabla\phi)n=g\text{ on }x_d=0,\; \;\\
&\phi(t,x)=x \text{ and }g=0 \text{ in }t\leq 0,
\end{split}
\end{align}
where  $n=(0,\dots,0,-1)$  is  the outer unit normal to the boundary of $\omega$, $\nabla\phi=(\partial_{x_j}\phi_i)_{i,j=1,\dots,d}$ 
%(\nabla\phi)_{i,j=1}$, $i,j=1,2$ i
is the spatial gradient matrix,  $\sigma$ is the stress $\sigma(\nabla\phi)=\lambda \mathrm{tr} E\cdot I+2\mu E$ with Lam\'e constants $\lambda$ and $\mu$  satisfying $\mu>0$, $\lambda+\mu>0$,   and $E$ is the strain $E(\nabla\phi)=\frac{1}{2}({}^t\nabla\phi\cdot\nabla \phi-I)$.  Here ${}^t\nabla\phi$ denotes the transpose of $\nabla\phi$, $\mathrm{tr} E$ is the trace of the matrix $E$, and 
\begin{align}
\mathrm{Div}M=(\sum^d_{j=1}\partial_jm_{i,j})_{i=1,\dots,d}\text{ for a matrix }  M=(m_{i,j})_{i,j=1,\dots,d}.
\end{align}

%We recall that the stored energy function $W(E)$ for an SVK material  \eqref{SVK} is the leading part, quadratic in $E$, of the general isotropic hyperelastic energy given by \eqref{generalenergy}.\footnote{When rewritten in terms of $\nabla\phi$ or $\nabla U=\nabla \phi -I$, the stored energy function is fourth order in those arguments.}  In section \ref{generalisotropic} we explain how our results extend to this more general case.  It will simplify the exposition to work initially with the SVK problem, which contains all the main difficulties.

The system \eqref{j0} has the form of a second-order quasilinear system with fully nonlinear Neumann boundary conditions:
\begin{align}\label{j1}
\begin{split}
&\partial_t^2 \phi -  \sum_{i=1}^d \partial_{x_i}(\mathcal{A}_i(\nabla\phi))=0\text{ in }x_d>0\\
&\sum^d_{i=1}n_i\mathcal{A}_i(\nabla\phi)=g \text{ on }x_d=0\\
&\phi(t,x)=x \text{ and }g=0 \text{ in }t\leq 0,
\end{split}
\end{align}
where  the real functions $\mathcal{A}_i(\cdot)$ are $C^\infty$ (in fact, polynomial) in their arguments.  
%The second form of the boundary condition is obtained  by an application of the implicit function theorem to the equation $B(\nabla \phi)=g$,  and is valid for $|\nabla\phi-I_{2\times 2}|_{L^\infty}$ small.    Set $x'=(t,x_1)$. 

Defining the displacement $U(t,x)=\phi(t,x)-x$, we rewrite \eqref{j1} as 
\begin{align}\label{j2}
\begin{split}
&\partial_t^2 U -  \sum_{i=1}^d \partial_{x_i}{A}_i(\nabla U))=0\text{ in }x_d>0\\
&\sum^d_{i=1}n_i{A}_i(\nabla U)=g \text{ on }x_d=0\\
&U(t,x)=0 \text{ and }g=0 \text{ in }t\leq 0,
\end{split}
\end{align}
where the functions $A_i$ are related to $\mathcal{A}_i$ in the obvious way.   
%\begin{split}
%&\partial_t^2 U+\sum_{|\alpha|=2} A_\alpha(\nabla U)\partial_x^\alpha U=0\text{ in }x_2>0\\
%&h(\nabla U)=g\text{ or }\partial _{x_2} U=H(\partial_{x_1} U,g)\text{ on }x_2=0\\
%&U(t,x)=0 \text{ and }g(t,x_1)=0 \text{ in }t\leq 0,
%\end{split}
Writing $x'=(x_1,\dots,x_{d-1})$ for the tangential spatial variables, we  take highly oscillatory wavetrain boundary data with the \emph{weakly nonlinear} scaling (defined below): 
\begin{align}\label{j3}
g=g^\eps(t,x')=\eps^2 G\left(t,x',\frac{\beta\cdot (t,x')}{\eps}\right),\;\;\eps\in (0,1],
\end{align}
where $G(t,x',\theta)\in H^\infty(\mathbb{R}_t\times \mathbb{R}^{d-1}_{x'}\times \mathbb{T}_\theta)$.  Here  $\beta\in \mathbb{R}^d\setminus 0$ is a frequency in the elliptic region of  (the linearization at $\nabla U=0$ of) \eqref{j2}, chosen so that the uniform Lopatinskii condition fails at $\beta$.\footnote{See section \ref{introp3} for definitions and more detail on the choice of $\beta$.   The existence of such special frequencies $\beta$ was predicted by Lord Rayleigh \cite{Ray}.}   
%We may take $\beta=(\pm\tau_r(\mathbf{\eta}),\mathbf{\eta})$, for $\tau_r(\mathbf{\eta})$ as in (H3) of chapter \ref{chapter2} and $\bf{\eta}=1$.}
We will refer to $\beta$ as a \emph{Rayleigh frequency}.  We expect the response $U^\eps(t,x)$ to be a Rayleigh wavetrain
 propagating in the boundary.

We note that for a fixed $\eps$ the existence of an exact solution $U^\eps$  of \eqref{j2} on a time interval $(-\infty,T_\eps]$  follows from the main result of \cite{SN}. Since Sobolev norms of $g^\eps(t,x)$ clearly blow up as $\eps\to 0$, the times of existence $T_\eps$ provided by \cite{SN} converge to zero as $\eps\to 0$.   
One of the goals of this paper is to show that solutions actually exist on a fixed time interval independent of $\eps$.  The strategy, which is an example of a general method going back to \cite{Gues},  is to first  construct high-order approximate solutions on a fixed time interval independent of $\eps$, and then to construct exact solutions (which are known to be unique) that are ``close" to the approximate solutions on a time interval that is possibly shorter but still independent of $\eps$.   Carrying this out will at the same time achieve the second main goal of the paper, which is to show that the (first-order) approximate solutions constructed by earlier authors are indeed close to the exact solutions for $\eps$ small.

In part \ref{p3} (Theorem \ref{theo:Error}) we construct  high-order approximate solutions $U^\eps_a$ of \eqref{j2}  on a time interval $(\-\infty,T]$,  with  $T>0$  independent of $\eps$, of the form
\begin{align}\label{j4}
U_{a}^\varepsilon(t,x)=\sum_{k=2}^{N}\varepsilon^k U_k(t,x,\frac{\beta\cdot (t,x')}{\varepsilon},\frac{x_d}{\varepsilon}),
\end{align}
where the ``profiles" $U_k$ are \emph{bounded} and can be written 
\begin{align}
U_k(t,x,\theta,Y)=\underline{U_k}(t,x)+U_k^*(t,x',\theta,Y),
\end{align}
with $U^*_k$ periodic in $\theta$ and exponentially decaying in $Y$.\footnote{More precisely, $U_k$ lies in the space $S$ of Definition \ref{d1a}.   The sense in which $U^\varepsilon_a$ is an \emph{approximate} solution is specified in Theorem \ref{theo:Error}.}

 We stated above that the scaling in the choice of boundary data \eqref{j3} is the \emph{weakly nonlinear} scaling.  For the wavelength $\eps$, the weakly nonlinear scaling of $g^\eps(t,x')$  is by definition the smallest amplitude, in this case $\eps^2$, for which the equations for the leading order profile $U_2$  are nonlinear.  A higher power of $\eps$ would lead to a linear problem for $U_2$.  The weakly nonlinear scaling was identified by Lardner \cite{Lardner}, who made a first attempt to formulate  the profile equation for the leading term $U_2$ in \eqref{j4}.  When the Fourier mean of $G$, $\underline{G}(t,x')$, is zero, it turns out that $U_2=U_2^*$ and is completely determined by its trace on the boundary.  The equation for that trace, which is usually referred to as \emph{the amplitude equation},  is a nonlinear, Burgers-type equation  \eqref{eq:BilFourMult} that is \emph{nonlocal} in $\theta$. 

The following theorem summarizes our main results, Theorems \ref{main} and \ref{maincor}, as applied to nonlinear elasticity in the case where the boundary forcing $G\in H^\infty$. 
   Those results are more precise than  Theorem \ref{mainintro}; for example, they allow boundary data of finite smoothness and give precise information on the regularity and ``size" of  both the exact solution $U^\eps(t,x)$ and the profiles $U_k(t,x,\theta,Y)$.
   
   \begin{thm}\label{mainintro}
   Consider the SVK problem \eqref{j2} with boundary data \eqref{j3}, where  $G(t,x',\theta)\in H^\infty(\mathbb{R}_t\times \mathbb{R}^{d-1}_{x'}\times \mathbb{T}_\theta)$.  There exist constants $\eps_0>0$ and $T>0$ such that the exact solution $U^\eps(t,x)$ of \ref{j2} exists and is $C^\infty$ on $\Omega:=(-\infty, T]\times \mathbb{R}^{d-1}_{x'}\times \mathbb{R}^+_{x_d}$ and satisfies for any $p\geq 2$
   \begin{align}\label{j5}
  \left|U^\eps - (\eps^2U_2+\dots+\eps^p U_p)|_{\theta=\frac{\beta\cdot (t,x')}{\eps},Y=\frac{x_d}{\eps}}\right|_{L^\infty(\Omega)} = o(\eps^p), \;\;\eps\in (0,\eps_0],
\end{align}
 where $\eps^2U_2+\dots+\eps^p U_p$ is the approximate solution constructed in Theorem \ref{theo:Error}.
\end{thm}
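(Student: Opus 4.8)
The plan is to deduce Theorem~\ref{mainintro} from the two ingredients that the introduction promises are available: the construction of high-order approximate solutions $U^\eps_a$ in Theorem~\ref{theo:Error}, and the general error-analysis machinery of Theorems~\ref{main} and~\ref{maincor}. The first step is to fix $p\geq 2$ and apply Theorem~\ref{theo:Error} with $N$ chosen strictly larger than $p$ (say $N=p+1$ or larger, as dictated by the loss of derivatives in the stability estimate) to produce, on a time interval $(-\infty,T]$ with $T$ independent of $\eps$, an approximate solution $U^\eps_a=\sum_{k=2}^N\eps^k U_k(t,x,\beta\cdot(t,x')/\eps,x_d/\eps)$ whose residual (the amount by which it fails to solve \eqref{j2}) is $O(\eps^M)$ in a suitable norm for $M$ as large as we like. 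The profiles $U_k$ lie in the space $S$ of Definition~\ref{d1a}, hence are bounded, periodic in $\theta$, and exponentially decaying in $Y$; since $G\in H^\infty$ there is no restriction on $N$ coming from the smoothness of the data.

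The second step is to invoke the abstract existence-and-stability result (Theorems~\ref{main}/\ref{maincor}): because the residual of $U^\eps_a$ is $O(\eps^M)$ with $M$ large, there exist $\eps_0>0$ and $T>0$, both independent of $\eps$, and an exact solution $U^\eps$ of \eqref{j2} on $\Omega=(-\infty,T]\times\R^{d-1}_{x'}\times\R^+_{x_d}$, which is $C^\infty$ (bootstrapping from $H^\infty$ data by the standard local theory of \cite{SN} together with the uniform-in-$\eps$ lifespan just obtained), such that
\begin{align}\label{abstracterror}
\bigl|U^\eps-U^\eps_a\bigr|_{L^\infty(\Omega)}=o(\eps^N)
\end{align}
as $\eps\to 0$. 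This is exactly where the uniform time of existence is gained: the abstract theorem trades the large-$\eps$-power smallness of the residual for a solution on a fixed interval that stays within $o(\eps^N)$ of the approximate solution.

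The third step is purely algebraic bookkeeping to pass from \eqref{abstracterror} to \eqref{j5}. Write $U^\eps_a|_{\theta=\beta\cdot(t,x')/\eps,\,Y=x_d/\eps}=\sum_{k=2}^p\eps^k U_k|_{\dots}+\sum_{k=p+1}^N\eps^k U_k|_{\dots}$. Each profile $U_k$ is bounded on its domain (it belongs to $S$), so the tail $\sum_{k=p+1}^N\eps^k U_k|_{\dots}$ has $L^\infty(\Omega)$ norm $O(\eps^{p+1})=o(\eps^p)$. Combining this with the triangle inequality and \eqref{abstracterror} (and $o(\eps^N)=o(\eps^p)$ since $N>p$) yields
\begin{align}
\Bigl|U^\eps-(\eps^2U_2+\dots+\eps^pU_p)|_{\theta=\frac{\beta\cdot(t,x')}{\eps},\,Y=\frac{x_d}{\eps}}\Bigr|_{L^\infty(\Omega)}\leq \bigl|U^\eps-U^\eps_a\bigr|_{L^\infty(\Omega)}+\sum_{k=p+1}^N\eps^k\,|U_k|_{L^\infty}=o(\eps^p),
\end{align}
which is precisely \eqref{j5}. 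One should also note that $\eps_0$ and $T$ may be taken independent of $p$: run the argument once with $N$ large, obtain $U^\eps$, $\eps_0$, $T$, and then the estimate \eqref{j5} for every $p$ with $2\leq p\leq N-1$ follows from the single exact solution; arbitrary $p$ is then reached by monotonicity in $N$ (enlarging $N$ only shrinks $\eps_0$, never enlarges the left side for fixed $p$).

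The only genuine work — and the real obstacle — is hidden in the two black boxes being cited: constructing the high-order profiles $U_k$ (Theorem~\ref{theo:Error}), which requires solving the nonlocal Burgers-type amplitude equation \eqref{eq:BilFourMult} for the trace of $U_2$ and then the linearized nonlocal profile equations for the correctors while keeping all of them in the exponentially-decaying space $S$; and proving the abstract stability estimate \eqref{abstracterror}, which must cope with the failure of the uniform Lopatinskii condition at the Rayleigh frequency $\beta$. Given those, the proof of Theorem~\ref{mainintro} as stated is the short triangle-inequality argument above.
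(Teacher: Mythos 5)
Your proposal follows exactly the route the paper takes: apply Theorem~\ref{theo:Error} to get $U^\eps_a=\sum_{k=2}^{M+1}\eps^kU_k$ on a time interval independent of $\eps$, verify Assumption~\ref{ass1} and invoke Theorem~\ref{main} to get a unique nearby exact solution $U^\eps=U^\eps_a+v^\eps$, then finish by the triangle inequality
\begin{align*}
\left|U^\eps-(\eps^2U_2+\dots+\eps^pU_p)\right|_{L^\infty(\Omega)}
\le \left|\eps^{p+1}U_{p+1}+\dots+\eps^{M+1}U_{M+1}\right|_{L^\infty(\Omega)}+|v^\eps|_{L^\infty(\Omega)},
\end{align*}
which is precisely the proof the paper records for Theorem~\ref{maincor}(b). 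So the black boxes you cite and the final bookkeeping are the paper's.

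One quantitative point in your step two is wrong and should be corrected even though it does not break the argument. You assert $|U^\eps-U^\eps_a|_{L^\infty(\Omega)}=o(\eps^N)$ with $N$ the number of profiles; Theorem~\ref{main}(b) in fact yields only $|v^\eps|_{W^{1,\infty}(\Omega)}\le C_2\,\eps^{M-\frac{d}{2}-1}$, which, since the paper's approximate solution uses $N=M+1$ profiles, is $O(\eps^{N-2-\frac{d}{2}})$ rather than $o(\eps^N)$: the Sobolev embedding and the passage from $H^s$-type control of $\eps^M\nu^\eps$ to an $L^\infty$ bound cost a fixed $\frac{d}{2}+1$ powers of $\eps$. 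The argument closes not because the error is $o(\eps^N)$ but because you are free to choose $M$ with $p<M-\frac{d}{2}-1$, making both the tail $O(\eps^{p+1})$ and the error $O(\eps^{M-\frac{d}{2}-1})$ be $o(\eps^p)$; that is exactly the hypothesis in the paper's Theorem~\ref{maincor}(b). Your remark about uniformity of $\eps_0,T$ in $p$ should likewise be rephrased: $T$ is uniform in $M$ because the amplitude equation has a time of existence depending only on a fixed low Sobolev index (Part~\ref{tamewellp}), and the $o(\eps^p)$ estimate is an asymptotic as $\eps\to0$ for the unique exact solution, so it holds on $(0,\eps_0]$ even if the explicit constant obtained for a particular $M_p$ is valid only on a smaller $\eps$-interval.
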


For any given choice of $p$ in \eqref{j5}, the proof of the theorem depends on being able to construct a number of bounded profiles beyond $U_p$.  
This theorem implies that the explicit qualitative information contained in the approximate solution really does apply to the exact solution.  This includes information about amplitude, group velocity (Remark \ref{h5}), and internal rectification (Remark \ref{internal}).  Moreover, the estimate shows \eqref{j5} shows that even higher profiles contain information about the exact solution.

\begin{rem}[Pulses versus wavetrains]\label{pvw}
When the function $G(t,x',\theta)$ in \eqref{j3} belongs to  $H^\infty(\mathbb{R}_t\times \mathbb{R}^{d-1}_{x'}\times \mathbb{R}_\theta)$ instead of  $H^\infty(\mathbb{R}_t\times \mathbb{R}^{d-1}_{x'}\times \mathbb{T}_\theta)$, we obtain Rayleigh pulses instead of Rayleigh wavetrains.  The Fourier spectrum of $G$, that is, the $k$-support of  $\hat{G}(t,x',k)$, is now a subset of $\mathbb{R}$ rather than $\mathbb{Z}$ and may include a full neighborhood of $k=0$.  A consequence of this is that the decay of pulses in the interior (the $Y$ decay) is much weaker than for wavetrains, generally only  polynomial rather than exponential decay.  Since the construction of successive correctors $U_k$, $k\geq 3$ requires successive integrations over the unbounded domain $\mathbb{R}_\theta\times \mathbb{R}_Y$, starting at $k=4$ the profile $U_k$  becomes too large for $\eps^kU_k$ to be considered a ``corrector".  Thus, in the pulse case it is impossible to construct high-order approximate solutions, and  the method of this paper  cannot be used to justify approximate solutions.  

In the paper \cite{Williams} approximate, leading-term pulse solutions $\eps^2U_2$  were justified in the case of two space dimensions by a different method that relied on  precise microlocal energy estimates for certain ``singular systems" associated to \eqref{j2}.   These estimates were proved using Kreiss symmetrizers \cite{K} for the linearized SVK problem whose construction relied on the fact that in 2D,  the linearized problem is strictly hyperbolic.  For space dimensions three and higher, the linearized SVK problem fails to be strictly hyperbolic, and in fact exhibits complicated characteristics of \emph{variable} multiplicity.     Although Kreiss symmetrizers are now available in a number of situations where strict hyperbolicity fails, including some cases of variable multiplicity \cite{Met,MZ}, the linearized SVK problem in higher dimensions lies beyond the reach of current Kreiss-symmetrizer technology. 
The method of \cite{Williams} can also be used to justify approximate \emph{wavetrain} solutions $\eps^2U_2$ in 2D, but the lack of Kreiss symmetrizers prevents us from using that method for either pulses or wavetrains in space dimensions $\geq 3$.
\end{rem}

\subsection{Organization of the paper}
In part \ref{p2} we consider a class of second-order, quasilinear hyperbolic problems with Neumann boundary conditions \eqref{mainproblem}  that includes the SVK system \eqref{j2}.  The boundary data \emph{can} be taken to be oscillatory wavetrain data of the form \eqref{j3}, but it could have other forms.   We \emph{assume} that we are given an approximate solution $u^\eps_a$ of ``high-order" in the sense of Assumption \ref{ass1} on some fixed time interval independent of $\eps$,  and we seek a nearby exact solution of the form $u^\eps=u^\eps_a+v^\eps$.    The essential step is to study  the ``error problem" satisfied by $v^\eps$ in order to show that $v^\eps$ exists on a fixed time interval independent of $\eps\in(0,\eps_0]$ for some $\eps_0>0$,  and that $v^\eps$ is ``small" compared to $u^\eps_a$.     This is accomplished in the proof of  the main result of this part, Theorem \ref{main}. 

The proof of Theorem \ref{main}  has two main components.   The first is the well-posedness theory of \cite{Sh,SN} for problems of the form \eqref{mainproblem}.   The needed results are summarized in section \ref{apri}, and we apply the local existence and continuation results, Propositions \ref{localex} and \ref{continuation},  to the problem \ref{mainproblem} with $\eps$ \emph{fixed}.  As already noted this yields solutions on time intervals $T_\eps$ that apparently converge to zero as $\eps\to 0$.   These well-posedness results are based on estimates for  linearized hyperbolic and elliptic problems associated to \eqref{mainproblem} that are stated in Propositions \ref{hypest} and \ref{ellest}.   These estimates were proved by taking advantage of the special structure of the Neumann problem \eqref{mainproblem}.  Although they are not as precise as the microlocal estimates that can be proved with Kreiss symmetrizers in the case of two space dimensions, they are sufficiently precise for our  purposes here.  Roughly speaking, the relative lack of precision is compensated for by the fact that we now have a high-order approximate solution to work with.  

The second main component is the proof of simultaneous a priori  estimates for a trio of coupled nonlinear problems consisting of \eqref{mainproblem} and two other problems derived from it -  the problems \eqref{H1} (which is equivalent to \eqref{mainproblem}), \eqref{H2}, and \eqref{E}.    These estimates, which are summarized in Proposition  \ref{mainestimate}, are carried out in section \ref{energyestimates}.  The estimate of Proposition \ref{mainestimate} is given in terms of an $\eps$-dependent energy $\mathcal{E}^s_\eps(t)$ (Definition \ref{energy}) for the coupled systems.   An essential feature of this estimate for the purpose of obtaining a time of existence independent of $\eps$ is that it is \emph{uniform}: the constants that appear in it are either independent of $\eps$ or converge to zero as $\eps\to 0$.   The two components are put together in section \ref{mainR} in a continuous induction argument proving Theorem \ref{main}. 
The main novelties of part \ref{p2}  lie in the choice of an  energy $\mathcal{E}^s_\eps(t)$ that can be estimated ``without loss" in terms of itself as in Proposition \ref{mainestimate}, and in the proof of the uniform estimates of section \ref{energyestimates}. 

Part \ref{p3}  is devoted to the construction of high-order approximate solutions $U^\eps_a$ to the SVK system \eqref{j2}.   These solutions satisfy Assumption \ref{ass1}, so Theorem \ref{main} applies to show that they are close to exact solutions.  This result is formulated  in Theorem  \ref{maincor}, which is a more precise version of  Theorem \ref{mainintro}.  The writing of part \ref{p3} was strongly influenced by \cite{Marcou}, which treats surface waves for first-order conservation laws with linear, homogeneous boundary conditions $Cu=0$, and the second Chapter of \cite{Mar2}, which constructs approximate solutions consisting of a leading term $U_2$ and (part of a) first corrector $U_3$ for a simplified version of the SVK model.  We give more detail later about our debt to these works;   here we just note that the main novelty in our construction of approximate solutions lies in our construction of arbitrarily high order profiles.

In part \ref{tamewellp} we provide a new proof of a tame estimate for the amplitude equation \eqref{eq:AmplitudeEq2}. The solution of this equation determines the leading term in the approximate solution of SVK \eqref{j0}. A tame estimate is needed to obtain a time of existence that depends on a fixed low order of regularity of the solution, rather than a time that decreases to zero as higher regularity is considered.   Such an estimate (without slow spatial variables) was already proved in \cite{Hunter06} for a modified form of the amplitude equation; however, he did not use it to obtain a time of existence independent of high order regularity.  
%The proof we give here is simpler in one respect than that of \cite{Hunter06}, since  it depends only an \emph{obvious} estimate \eqref{hypotheseb} of the kernel $b(n_1,n_2,n_3)$ in  \eqref{eq:BilFourMult}.
%In addition our proof 
The proof we give here applies directly to the form of the amplitude equation given by \eqref{eq:AmplitudeEq2} and incorporates the slow spatial variables. 

Parts \ref{p2},  \ref{p3}, and \ref{tamewellp} are written so that they can be read independently of one another.  We include additional introductory material in the introductions to  parts \ref{p2} and \ref{p3}, sections \ref{introp2} and \ref{introp3}. 

\begin{rem}
Constants $C$,  $C_j$, $A_j$, $B_j$, etc. appearing in the various estimates are always independent of $\eps$ unless explicit $\eps$-dependence is indicated.
Also, we occasionally write    $|f|\lesssim |g|$ as shorthand for $|f|\leq C|g|$ for some $C>0$.  
\end{rem}

\part{Exact solutions near approximate ones}\label{p2}

\section{Introduction}\label{introp2}

Letting $(t,x)=(t,x',x_d)\in \mathbb{R}^{1+d}_+$ we consider the following $N\times N$ nonlinear hyperbolic problem with Neumann boundary conditions on a half-space:\footnote{We  write $A_i(D_xu)=A_i(\partial_1u,\dots,\partial_d u)$, where $u$ takes values in $\mathbb{R}^N$ and $A_i$ has $N$ real components.} 
%the (forward) traction problem of 3D (or $d$-dimensional, $d\geq 2$) nonlinear elasticity on a half-space:
\begin{align}\label{mainproblem}
\begin{split}
&P(u)=\partial_t^2 u-\sum^d_{i=1}\partial_i(A_i(D_xu))=0 \text{ in }x_d\geq 0\\
&Q(u)=\sum_{i=1}^d n_i A_i(D_xu)=g(t,x')\text{ on }x_d=0\\
&u=0 \text{ in }t\leq 0,
\end{split}
\end{align}
where $g=0$ in $t\leq 0$ and $\textbf{n}=(0,\dots,0,-1)\in \mathbb{R}^d$ is the outward unit normal.   Our main interest is in  highly oscillatory wavetrain boundary data
\begin{align}\label{bc}
g=g^\eps(t,x')=\eps^2 G\left(t,x',\frac{\beta\cdot (t,x')}{\eps}\right),
\end{align}
where $G(t,x',\theta)\in H^t(\mathbb{R}\times \mathbb{R}^{d-1}\times \mathbb{T})$ for some large $t$, but the results of this part depend on only on Assumptions \ref{a5a} and \ref{ass1} and thus apply to other types of boundary data.

The traction problem in nonlinear elasticity takes the form \eqref{mainproblem}, and as we will show  in part \ref{p3},  the 
 factor $\eps^2$ in \eqref{bc} gives the weakly nonlinear scaling.

We assume that a ``high-order" approximate solution $u^\eps_a(t,x)$ has been constructed.  This is by definition a  function with the regularity and growth properties listed in Assumption \ref{ass1} 
%\begin{align}\label{approx}
%u_a^\eps(t,x)=\eps^2U_2^\eps(t,x)+\dots+\eps^{M_1}U^\eps_{M_1}(t,x),
%\end{align}
which satisfies
\begin{align}\label{apsys}
\begin{split}
&P(u^\eps_a)=-\eps^{M}R^\eps:=-\mathcal{F}^\eps(t,x)\\
&Q(u^\eps_a)=g^\eps -\eps^{M}r^\eps:=g^\eps(t,x')-\mathcal{G}^\eps(t,x'),
\end{split}
\end{align}
where the functions $R^\eps$, $r^\eps$ have the properties given in Assumption \ref{ass1}, and $M>0$ is an integer that will be specified later.  In the case of the traction problem such an approximate solution is constructed in part \ref{p3}. 

We look for an exact solution of \eqref{mainproblem} in the form\footnote{The functions $u_a$ and $v$ clearly depend on $\eps$, $u_a=u^\eps_a$ and $v=v^\eps$, but we shall often suppress the superscripts $\eps$ on these and other obviously $\eps$-dependent functions.}
\begin{align}
u^\eps=u_a+v. 
\end{align}  
Let $v=\eps^M \nu$ and $w=\partial_t v=\eps^M\omega$, where $\omega=\partial_t\nu$.
The hyperbolic error problem that $v$ must satisfy is:

\begin{align}\label{a1}
\begin{split}
&(a) \partial_t ^2 v-\sum_{i=1}^d\partial_i[A_{i}(D_x(u_a+v))-A_i(D_xu_a)]=\CalF(t,x)\\
&(b) \sum_{i=1}^d n_i[A_{i}(D_x(u_a+v))-A_i(D_xu_a)]=\CalG(t,x') \text{ on }x_d=0.
\end{split}
\end{align}

Below we set $A_{ij}(U):=\partial A_i/\partial{U_j}$, where $U=(U_1,\dots,U_d)$ with $U_j\in\mathbb{R}^N$ a placeholder for $\partial_ju$. The problem \eqref{a1}  is equivalent to the following hyperbolic problem for $\nu$:
\begin{align}\label{H1}
\begin{split}
&(a) \partial_t^2\nu-\sum_{i,j=1}^d\partial_i[A_{ij}(D_x(u_a+v))\partial_j\nu]=\\
&\qquad \eps^{-M}[\CalF+\sum_{i=1}^d\partial_i\left(A_{i}(D_x(u_a+v))-A_i(D_xu_a)-\sum_{j=1}^d A_{ij}(D_x(u_a+v))\partial_jv\right):=\mathbb{F}_1\\
&(b) \sum_{i=1}^d n_i[A_{ij}(D_x(u_a+v))\partial_j\nu]=\\
&\qquad \eps^{-M}\left[\CalG-\left(\sum_i n_i[A_i(D_x(u_a+v))-A_i(D_x u_a)-\sum_j A_{ij}(D_x(u_a+v))\partial_jv]\right)\right]:=\mathbb{G}_1.
\end{split}
\end{align}

The function $\omega$ satisfies the hyperbolic problem obtained by differentiating \eqref{H1} with respect to $t$:\footnote{The form of the problems \eqref{H2} and \eqref{E} is similar to problems that appear in the induction scheme of \cite{SN}.}
\begin{align}\label{H2}
\begin{split}
&(a) \partial_t^2\omega-\sum_{i,j=1}^d\partial_i[A_{ij}(D_x(u_a+v))\partial_j\omega]= \eps^{-M}[\partial_t \CalF-H(\overline{D}_x^2v)]:=\mathbb{F}_2\\
&(b)\sum_{i=1}^d n_i[A_{ij}(D_x(u_a+v))\partial_j\omega]=\eps^{-M}[\partial_t\CalG-H_b(\overline{D}_xv)]:=\mathbb{G}_2,
\end{split}
\end{align}
where 
\begin{align}
\begin{split}
&H(\overline{D}_x^2v):=-\sum_{i,j=1}^d\partial_i[(A_{ij}(D_x(u_a+v))-A_{ij}(D_xu_a))\partial_j\partial_t u_a]\\
&H_b(\overline{D}_xv)=-\sum_{i,j=1}^d n_i(A_{ij}(D_x(u_a+v))-A_{ij}(D_xu_a))\partial_j\partial_t u_a.
\end{split}
\end{align}

The problem \eqref{H1} for $\nu$ can also be written as the following  \emph{elliptic} problem for $\nu$:
\begin{align}\label{E}
\begin{split}
&(a)-\sum_{i,j=1}^d \partial_i(A_{ij}(D_x u_a)\partial_j\nu)+\lambda\nu=\\
&\qquad-\partial_t\omega+\lambda\int^t_0\omega(s,x)ds+\eps^{-M}[\CalF(t,x)+E(\overline{D}^2_xv)]:=\mathbb{F}_3\\
&(b)\sum_{i,j=1}^d n_i(A_{ij}(D_x u_a)\partial_j\nu)= \eps^{-M}[\CalG(t,x')+E_b(\overline{D}_xv)]:=\mathbb{G}_3,
\end{split}
\end{align}
where $\lambda>0$ is large enough and 
\begin{align}
\begin{split}
&E(\overline{D}^2_x v):= -\sum_{i=1}^d\partial_i\left(A_{i}(D_x(u_a+v))-A_i(D_xu_a)-\sum_{j=1}^nA_{ij}(D_x u_a)\partial_jv\right)=\\
&\qquad -\sum^d_{i=1}\partial_i\left[\int^1_0 (1-\theta)(d^2A_i)(D_x(u_a+\theta v))(D_xv,D_xv)d\theta\right],\\
&E_b(D_xv)=\sum^d_{i=1} n_i \left[\int^1_0 (1-\theta)(d^2A_i)(D_x(u_a+\theta v))(D_xv,D_xv)d\theta\right].
\end{split}
\end{align}

\begin{rem}

Conversely, one can  check that sufficiently regular solutions $(\omega^\eps,\nu^\eps)$ of the coupled systems \eqref{H2}, \eqref{E} actually satisfy $\omega=\partial_t\nu$, and that $\nu$ then satisfies \eqref{H1}.  This can be shown by differentiating \eqref{E} with respect to $t$, and using the result of that together with \eqref{H2} to show that $\partial_t\nu-\omega$ satisfies an elliptic problem with vanishing interior and boundary data.  
Substituting $\partial_t \nu=\omega$ into \eqref{E}, we then see that $\nu$ satisfies \eqref{H1}. 
This argument, which is not needed in this paper,
%, which we apply for each particular $\eps$,  
was used  in \cite{SN} to prove local existence for \eqref{mainproblem}  for data with no $\eps$-dependence, Proposition \ref{localex} below.  
\end{rem}

Proposition \ref{localex} gives for each fixed $\eps$ a solution $(\omega^\eps,\nu^\eps)$ of \eqref{H1}, \eqref{H2}, and \eqref{E} on a time interval $T_\eps$ that depends on $\eps$.%\footnote{This is explained in Remark  ...}  
    \;Our main task in the error analysis is to prove that solutions of \eqref{H1} exist on a fixed time interval independent of $\eps$. 
This requires estimates that are uniform with respect to $\eps$.  We will show that one can prove such estimates for an appropriate $\eps$-dependent ``energy", $\mathcal{E}^s_\eps(t)$ (Definition \ref{energy}) defined in terms of $\nu$ and $\omega$,  by estimating solutions of the systems \eqref{H1}, \eqref{H2}, and \eqref{E} using 
the linear a priori estimates for hyperbolic and elliptic boundary problems given in section \ref{apri}.

The hyperbolic estimate of Proposition \ref{hypest} exhibits a loss of one-half spatial derivative on the boundary, corresponding to the degeneracy in the Neumann boundary  
condition (or more precisely, corresponding to the failure of the uniform Lopatinski condition in the elliptic region).    By estimating the systems  \eqref{H1}, \eqref{H2}, and \eqref{E} simultaneously, we are able, roughly speaking, to use the elliptic estimate for \eqref{E} to gain back what is lost in the hyperbolic estimates.   This process is carried out in section \ref{energyestimates}, where we obtain estimates uniform with respect to $\eps$ for an appropriate $\eps$-dependent ``energy" $\mathcal{E}^s_\eps(t)$, whose definition (Definition  \ref{energy}) involves both $\nu$ as in \eqref{H1}, \eqref{E} and $\omega$ as in \eqref{H2}.    This estimate of $\mathcal{E}^s_\eps(t)$ is stated in Proposition \ref{mainestimate}.   In the remainder of section \ref{mainR} we show how to use this estimate to prove Theorem \ref{main}, and thereby complete the error analysis.

\section{Norms and basic estimates}

In this section we define the norms and spaces needed to state and prove the main results of part \ref{p2}.  We also prove basic properties of the norms that will be used repeatedly in section \ref{energyestimates}.

\begin{notation}\label{nota1}
\emph{}
\textbf{1. }For any  nonnegative integers $s$, $k$ let  $$E^{s,k}[0,T]=\cap^s_{i=0}C^i([0,T]:H^{s+k-i}(\mathbb{R}^d_+))\text{ and }E^{s,k}_b[0,T]=\cap^s_{i=0}C^i([0,T]:H^{s+k-i}(\mathbb{R}^{d-1})).$$

We sometimes write $E^{s,0}[0,T]=E^{s}[0,T]$ and $E^{s,0}_b[0,T]=E^{s}_b[0,T]$.

\textbf{2. }  For $u\in E^{s,k}[0,T]$, $t\in [0,T]$, and $\eps\in (0,1]$ we define the $(t,\eps)$-dependent norm   $|u(t)|_{s,k,\eps}$ by
\begin{align}
|u(t)|_{s,k,\eps}:= \sup_{|\alpha|\leq s,|\beta|\leq  k}\eps^{|\alpha|+|\beta|}|\partial_{t,x}^\alpha\partial_x^\beta u(t,\cdot)|_{L^2(x)}.
\end{align}
%We will sometimes write this as 
%\begin{align}
%|u(t)|_{E^{M,L}_\eps}:=|(\overline{\eps D})^L(\overline{\eps D_x})^M u(t,\cdot)|_{L^2(x)}.
%\end{align}
For $f=f(t,x')\in E^{s,k}_b[0,T]$  we similarly define boundary norms
\begin{align}
 \langle f(t)\rangle_{s,k,\eps}:=\sup_{|\alpha|\leq s,|\beta|\leq  k}\eps^{|\alpha|+|\beta|}|\partial_{t,x'}^\alpha\partial_{x'}^\beta f(t,\cdot)|_{L^2(x')}.
\end{align}

We will sometimes write  $|u(t)|_{s,0,\eps}=|u(t)|_{s,\eps}$ and do similarly for the boundary norms.  
%We also set  $|u(t)|_{s,k,1}=|u(t)|_{s,k}$,  and do similarly for the boundary norms.   

\textbf{3. }For $u\in E^{s,k}[0,T]$ we set $|u|_{s,k,\eps,T}:=\sup_{t\in[0,T]}|u(t)|_{s,k,\eps}$ and for $f\in E^{s,k}_b[0,T]$ we set 
$\langle f\rangle_{s,k,\eps,T}:=\sup_{t\in[0,T]} \langle f(t)\rangle_{s,k,\eps}$.

\textbf{4. }If $T'<0$ one defines $E^{s,k}[T',T]$ and the corresponding norm $|u|_{s,k,\eps,T',T}$ as the obvious analogues of $E^{s,k}[0,T]$ and $|u|_{s,k,\eps,T}$.    When 
$u\in E^{s,k}[T',T]$ and vanishes in $t<0$, we write $|u|_{s,k,\eps,T',T}$ instead as simply $|u|_{s,k,\eps,T}$.  \footnote{The functions $u_a$, $\nu$, and $\omega$ in \eqref{H1}, \eqref{H2}, and \eqref{E} all vanish in $t<0$.}

\textbf{5. }  For a nonnegative integer $k$ and $f=f(t,x)$ we set
\begin{align}
\begin{split}
&D^k f=(\partial_{t,x}^\alpha f, |\alpha|=k)  \text{ and }\overline{D}^k f=(\partial_{t,x}^\alpha f, |\alpha|\leq k) \\
&D_x^k f=(\partial_{x}^\beta f, |\beta|=k)  \text{ and }\overline{D}_x^k f=(\partial_{x}^\beta f, |\beta |\leq k) .
\end{split}
\end{align}
We normally write $D$ or $\overline{D}$ in place of $D^1$ or $\overline{D}^1$.

\textbf{6. }All scalar functions appearing in part \ref{p2}  are \emph{real-valued}.  This applies to components of vectors and entries of matrices.  

\end{notation}

The next proposition is an immediate consequence of the definitions.

\begin{prop}
Let $C$ denote a constant independent of $\eps$ and let $\eps_0>0$.  
If $|\eps^2\overline{D}^2_x \nu|_{s,0,\eps}\leq C$ for $\eps\in (0,\eps_0]$, then
\begin{align}
\begin{split}
&|\eps^2D_x\nu|_{s,1,\eps}\leq C   \text{ and }|\eps^2\nu|_{s,2,\eps}\leq C \text{ for }\eps\in (0,\eps_0].
\end{split}
\end{align}

\end{prop}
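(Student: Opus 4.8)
This is an elementary consequence of the definition of the norms $|\cdot|_{s,k,\eps}$ in Notation \ref{nota1}, so the plan is simply to unwind the definitions and observe that the three quantities involved are built from exactly the same collection of $\eps$-weighted derivatives. First I would recall that, by definition,
\begin{align*}
|\eps^2\overline{D}^2_x\nu|_{s,0,\eps}=\sup_{|\alpha|\leq s}\sup_{|\gamma|\leq 2}\eps^{|\alpha|+2}\,|\partial^\alpha_{t,x}\partial^\gamma_x\nu(t,\cdot)|_{L^2(x)},
\end{align*}
since applying $\overline{D}^2_x$ to $\nu$ produces all the derivatives $\partial^\gamma_x\nu$ with $|\gamma|\leq 2$, and the prefactor $\eps^2$ contributes the extra weight. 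Likewise $|\eps^2 D_x\nu|_{s,1,\eps}$ is the supremum of $\eps^{|\alpha|+|\beta|+2}|\partial^\alpha_{t,x}\partial^\beta_x\partial^\gamma_x\nu|_{L^2}$ over $|\alpha|\leq s$, $|\beta|\leq 1$, $|\gamma|=1$, and $|\eps^2\nu|_{s,2,\eps}$ is the supremum of $\eps^{|\alpha|+|\beta|+2}|\partial^\alpha_{t,x}\partial^\beta_x\nu|_{L^2}$ over $|\alpha|\leq s$, $|\beta|\leq 2$.

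The key step is the observation that each $\eps$-weighted derivative $\eps^{|\alpha|+|\beta|+2}\partial^\alpha_{t,x}\partial^\beta_x\nu$ appearing in the definition of $|\eps^2 D_x\nu|_{s,1,\eps}$ or $|\eps^2\nu|_{s,2,\eps}$ also appears (possibly with the purely spatial derivatives regrouped) in the definition of $|\eps^2\overline{D}^2_x\nu|_{s,0,\eps}$: in both cases the total number of spatial derivatives is at most $2$, the number of mixed $(t,x)$-derivatives is at most $s$, and the total $\eps$-weight matches. Concretely, given a multi-index pair $(\alpha,\beta)$ with $|\alpha|\leq s$ and $|\beta|\leq 2$, write $\partial^\alpha_{t,x}=\partial^{\alpha'}_{t,x}$ with $|\alpha'|=|\alpha|\leq s$; then $\partial^\alpha_{t,x}\partial^\beta_x\nu=\partial^{\alpha'}_{t,x}\partial^\gamma_x\nu$ with $\gamma=\beta$, $|\gamma|\leq 2$, which is exactly one of the terms controlled by $|\eps^2\overline{D}^2_x\nu|_{s,0,\eps}$, and the weight $\eps^{|\alpha|+|\beta|+2}$ equals $\eps^{|\alpha'|+2}$ when $|\beta|=0$, and is no larger otherwise — but in fact the weight is literally the same because $|\beta|$ is counted in the $\overline{D}^2_x$ expression as well (the $\eps^{|\gamma|}$ with $|\gamma|=|\beta|$ is absorbed by grouping the spatial derivatives into $\overline{D}^2_x$). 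I would spell this correspondence out once for $|\eps^2\nu|_{s,2,\eps}$ and note the argument for $|\eps^2 D_x\nu|_{s,1,\eps}$ is identical (a derivative $\partial^\beta_x$ with $|\beta|\leq 1$ together with the $D_x$ gives at most $2$ spatial derivatives). Hence both suprema are bounded above by $|\eps^2\overline{D}^2_x\nu|_{s,0,\eps}\leq C$.

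There is no real obstacle here; the only thing to be a little careful about is the bookkeeping of the $\eps$-weights, making sure that the weight attached to a given derivative of $\nu$ is the same in all three norms (it is, precisely because the $\eps$-power is tied to the total order of differentiation, with the $\eps^2$ prefactor accounting for the two derivatives hidden inside $\overline{D}^2_x$, $D_x$, or the two extra weight units in $|\cdot|_{s,2,\eps}$). Once that is checked, the inequalities hold uniformly for $\eps\in(0,\eps_0]$ since they hold termwise for each fixed $\eps$, and the bound $C$ is the same constant throughout.
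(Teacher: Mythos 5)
Your proof is essentially correct and is the only natural argument: unwind the norm definitions and match terms. (The paper itself gives no proof, stating only that the proposition is an immediate consequence of the definitions.) However, your parenthetical claim that ``the weight is literally the same because \dots the $\eps^{|\gamma|}$ with $|\gamma|=|\beta|$ is absorbed by grouping the spatial derivatives into $\overline{D}^2_x$'' is wrong, and it retracts a true statement (your earlier ``is no larger otherwise'') in favor of a false one. In $|\eps^2\overline{D}^2_x\nu|_{s,0,\eps}$ the second index $k$ equals $0$, so the only weight contributed by the norm is $\eps^{|\alpha|}$; the prefactor $\eps^2$ is flat and does \emph{not} scale with the order $|\gamma|$ of the component $\partial^\gamma_x\nu$ sitting inside the tuple $\overline{D}^2_x\nu$. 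Thus the left-hand weight on a fixed derivative $\partial^\alpha_{t,x}\partial^\gamma_x\nu$ is $\eps^{|\alpha|+2}$, whereas the matching weight in $|\eps^2\nu|_{s,2,\eps}$ is $\eps^{|\alpha|+|\gamma|+2}$, and in $|\eps^2 D_x\nu|_{s,1,\eps}$ (writing $\gamma=\beta+e_j$) it is $\eps^{|\alpha|+|\gamma|+1}$. These are strictly smaller than $\eps^{|\alpha|+2}$ for $\eps<1$ except when $|\gamma|=0$ (resp.\ $|\gamma|=1$), so the weights do not coincide, they merely dominate --- and that step genuinely uses $\eps\le 1$, which is the paper's standing convention $\eps\in(0,1]$. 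Getting this distinction right matters: it is precisely why the paper's remark immediately after this proposition stresses that $|\eps\overline{D}_x\nu|_{s,0,\eps}\leq C$ is \emph{not} implied by $|\eps^2\overline{D}^2_x\nu|_{s,0,\eps}\leq C$; lowering the prefactor from $\eps^2$ to $\eps$ \emph{increases} the weight, reversing the inequality. Delete the offending parenthetical and your proof stands as written.
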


\begin{rem}
Below for $\nu$ as in \eqref{H1}, \eqref{H2}, \eqref{E} we will need $\nu$ to satisfy both $|\eps^2\overline{D}^2_x \nu|_{s,0,\eps}\leq C$ and $|\eps\overline{D}_x \nu|_{s,0,\eps}\leq C$
for an appropriate choice of $s$ and range of (small) $\eps$.   Observe that the second of these two properties is \emph{not} a consequence of the first.
\end{rem}

When proving properties of the $|\cdot|_{s,k,\eps}$ norms the following simple observation (used already for the $|\cdot|_{s,0,\eps}$ norm in \cite{Marcou}) is quite useful.

\begin{lem}\label{aa0}
Given $u=u(t,x)$ and $\eps>0$ define the rescaled function $\tilde u(\tilde t,\tilde x)$ by $\tilde u(\frac{t}{\eps},\frac{x}{\eps})=u(t,x)$.
Then
\begin{align}\label{aa1}
|u(t)|_{s,k,\eps}=\eps^{d/2} |\tilde u(t/\eps)|_{s,k,1}.
\end{align}
Here $ |\tilde u(t/\eps)|_{s,k,1}$ means  $|\tilde u(\tilde t)|_{s,k,1}$ evaluated at $\tilde t=t/\eps$.
\end{lem}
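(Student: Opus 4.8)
The plan is to prove the scaling identity \eqref{aa1} by a direct change of variables in each $L^2$ integral appearing in the definition of the norm $|\cdot|_{s,k,\eps}$. First I would record how derivatives transform under the dilation $(\tilde t,\tilde x)=(t/\eps,x/\eps)$: since $u(t,x)=\tilde u(t/\eps,x/\eps)$, the chain rule gives $\partial_{t,x}^\alpha\partial_x^\beta u(t,x)=\eps^{-(|\alpha|+|\beta|)}(\partial_{\tilde t,\tilde x}^\alpha\partial_{\tilde x}^\beta\tilde u)(t/\eps,x/\eps)$. Thus the $\eps^{|\alpha|+|\beta|}$ weight in the definition of $|u(t)|_{s,k,\eps}$ is designed precisely to cancel this factor, leaving $\eps^{|\alpha|+|\beta|}\partial_{t,x}^\alpha\partial_x^\beta u(t,x)=(\partial_{\tilde t,\tilde x}^\alpha\partial_{\tilde x}^\beta\tilde u)(t/\eps,x/\eps)$.

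Next I would compute the spatial $L^2$ norm. Fix $t$ and a multi-index pair $(\alpha,\beta)$ with $|\alpha|\le s$, $|\beta|\le k$. Writing $h(x):=\eps^{|\alpha|+|\beta|}\partial_{t,x}^\alpha\partial_x^\beta u(t,x)=(\partial_{\tilde t,\tilde x}^\alpha\partial_{\tilde x}^\beta\tilde u)(t/\eps,x/\eps)$, substitute $\tilde x=x/\eps$ (so $x=\eps\tilde x$, $\dd x=\eps^d\,\dd\tilde x$ on $\mathbb{R}^d_+$, which is preserved by the dilation) to get
\begin{align}
\int_{\mathbb{R}^d_+}|h(x)|^2\,\dd x=\int_{\mathbb{R}^d_+}\bigl|(\partial_{\tilde t,\tilde x}^\alpha\partial_{\tilde x}^\beta\tilde u)(t/\eps,\tilde x)\bigr|^2\,\eps^d\,\dd\tilde x=\eps^d\,\bigl|(\partial_{\tilde t,\tilde x}^\alpha\partial_{\tilde x}^\beta\tilde u)(t/\eps,\cdot)\bigr|_{L^2(\tilde x)}^2.
\end{align}
Taking square roots, $\eps^{|\alpha|+|\beta|}|\partial_{t,x}^\alpha\partial_x^\beta u(t,\cdot)|_{L^2(x)}=\eps^{d/2}|\partial_{\tilde t,\tilde x}^\alpha\partial_{\tilde x}^\beta\tilde u(t/\eps,\cdot)|_{L^2(\tilde x)}$. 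Finally, taking the supremum over all $|\alpha|\le s$, $|\beta|\le k$ on both sides and using the definition of the norm yields $|u(t)|_{s,k,\eps}=\eps^{d/2}|\tilde u(t/\eps)|_{s,k,1}$, which is \eqref{aa1}.

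This proof is entirely routine; there is no real obstacle. The only point requiring a word of care is that the half-space $\mathbb{R}^d_+$ is invariant under the dilation $x\mapsto x/\eps$ (since $\eps>0$), so the change of variables stays within the domain and no boundary contributions appear; and that the Jacobian factor $\eps^d$ is the same for every term in the supremum, so it factors out cleanly as $\eps^{d/2}$ after taking the square root. I would also remark that the identical argument with $|\cdot|_{L^2(x')}$ and $\eps^{(d-1)/2}$ gives the analogous statement for the boundary norms $\langle\cdot\rangle_{s,k,\eps}$, though the lemma as stated only needs the interior version.
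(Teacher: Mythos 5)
Your proof is correct and follows exactly the same route as the paper's: the chain-rule identity $\eps^{|\alpha|+|\beta|}\partial_{t,x}^\alpha\partial_x^\beta u(t,x)=(\partial_{\tilde t,\tilde x}^\alpha\partial_{\tilde x}^\beta\tilde u)(t/\eps,x/\eps)$ followed by the change of variables $\tilde x=x/\eps$ in the $L^2$ integral. The paper merely states this one-line identity and declares the rest immediate; you have written out the routine details.
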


\begin{proof}
This follows immediately by change of variables from
$$
\eps^{|\alpha|+|\beta|}\partial_{t,x}^\alpha\partial_{x}^\beta u(t,x)=\partial_{\tilde t,\tilde x}^\alpha\partial_{\tilde x}^\beta \tilde u(\frac{t}{\eps},\frac{x}{\eps}). 
$$

\end{proof}

%\begin{prop}[Product estimate]
%Suppose $a$, $b$, $c$, $d$ are $\geq 0$, $a+b\leq L$, $c+d\leq M$, and $L+M>\frac{d}{2}$.  Then 
%\begin{align}
%|(uv)(t)|_{E^{L-a-b,M-c-d}_\eps}\leq C\eps^{-\frac{d}{2}}|u(t)|_{E^{L-a,M-c}_\eps}|v(t)|_{E^{L-b,M-d}_\eps}
%\end{align}

%\end{prop}

\begin{prop}[Sobolev estimate]\label{sob}
Let $s>\frac{d}{2}$.   Then
\begin{align}
\begin{split}
(a) &|u(t)|_{L^\infty(x)} \lesssim \eps^{-\frac{d}{2}}|u(t)|_{s,\eps}\\
(b) &|\partial^\alpha_{t,x} u(t)|_{L^\infty(x)}\lesssim \eps^{-\frac{d}{2}-|\alpha|}|u(t)|_{s+|\alpha|,\eps}
\end{split}
\end{align}
\end{prop}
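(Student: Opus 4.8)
The plan is to reduce to the case $\eps=1$ via the rescaling of Lemma \ref{aa0} and then quote the classical Sobolev embedding $H^s(\mathbb{R}^d_+)\hookrightarrow L^\infty(\mathbb{R}^d_+)$ valid for $s>\frac d2$.

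First I would prove (a). Fix $t$ and $\eps\in(0,1]$, and let $\tilde u$ be the rescaled function of Lemma \ref{aa0}, so that $u(t,x)=\tilde u(t/\eps,x/\eps)$ for every $x\in\mathbb{R}^d_+$; in particular $|u(t)|_{L^\infty(x)}=|\tilde u(t/\eps)|_{L^\infty(\tilde x)}$. Comparing the sup-form of $|\tilde u(\tilde t)|_{s,0,1}$ with the usual (sum-form) $H^s$ norm in the spatial variables only, one has $|\tilde u(\tilde t)|_{H^s(\mathbb{R}^d_+)}\lesssim|\tilde u(\tilde t)|_{s,0,1}$ with a combinatorial constant depending only on $s,d$ (the extra time derivatives in the norm $|\cdot|_{s,0,1}$ only help). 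The Sobolev embedding on the half-space --- obtained by composing a bounded extension operator $H^s(\mathbb{R}^d_+)\to H^s(\mathbb{R}^d)$ with $H^s(\mathbb{R}^d)\hookrightarrow L^\infty(\mathbb{R}^d)$ --- gives $|\tilde u(t/\eps)|_{L^\infty(\tilde x)}\lesssim|\tilde u(t/\eps)|_{H^s(\mathbb{R}^d_+)}$. Finally Lemma \ref{aa0} with $k=0$ reads $|\tilde u(t/\eps)|_{s,0,1}=\eps^{-d/2}|u(t)|_{s,\eps}$, and chaining the three inequalities yields (a).

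For (b) I would apply (a) to $\partial^\alpha_{t,x}u$ in place of $u$, obtaining $|\partial^\alpha_{t,x}u(t)|_{L^\infty(x)}\lesssim\eps^{-d/2}|\partial^\alpha_{t,x}u(t)|_{s,\eps}$, and then observe directly from the definition of the norm that $|\partial^\alpha_{t,x}u(t)|_{s,\eps}\leq\eps^{-|\alpha|}|u(t)|_{s+|\alpha|,\eps}$. Indeed, for each multi-index $\gamma$ with $|\gamma|\leq s$ one has $\eps^{|\gamma|}|\partial^\gamma_{t,x}\partial^\alpha_{t,x}u(t)|_{L^2(x)}=\eps^{-|\alpha|}\,\eps^{|\gamma|+|\alpha|}|\partial^{\gamma+\alpha}_{t,x}u(t)|_{L^2(x)}$, and since $|\gamma+\alpha|=|\gamma|+|\alpha|\leq s+|\alpha|$ the factor $\eps^{|\gamma|+|\alpha|}|\partial^{\gamma+\alpha}_{t,x}u(t)|_{L^2(x)}$ is one of the terms bounded by $|u(t)|_{s+|\alpha|,\eps}$; taking the supremum over $\gamma$ gives the displayed bound, and combining it with the previous inequality gives (b).

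There is essentially no obstacle here: the statement is just a rescaled Sobolev inequality. The only points needing (routine) care are the passage from $\mathbb{R}^d_+$ to $\mathbb{R}^d$ through an extension operator in order to invoke the flat-space embedding, and the bookkeeping of $\eps$-powers, which is built into the definition of $|\cdot|_{s,k,\eps}$ and is handled cleanly by Lemma \ref{aa0}.
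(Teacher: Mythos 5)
Your argument for part (a) is exactly the paper's: rescale via Lemma \ref{aa0}, compare the scaled norm with the ordinary $H^s$ norm, and invoke the half-space Sobolev embedding; for part (b) the paper merely says it ``follows directly from (a),'' and your elaboration (apply (a) to $\partial_{t,x}^\alpha u$ and absorb the extra $\eps^{-|\alpha|}$ into the shifted norm) is the intended argument. The proposal is correct and takes essentially the same route as the paper.
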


\begin{proof}
We have
\begin{align}
|u(t)|_{L^\infty(x)}=|\tilde u(\frac{t}{\eps})|_{L^\infty(\tilde x)}\lesssim |\tilde u(\frac{t}{\eps})|_{H^s(\tilde x)}\leq |\tilde u(\frac{t}{\eps})|_{s,0,1}=\eps^{-\frac{d}{2}}|u(t)|_{s,0,\eps}
\end{align}
by Lemma \ref{aa1}.  The  estimate (b) follows directly from (a).

\end{proof}

The following product estimate, used in the treatment of first order conservation laws in \cite{Marcou}, will be useful to us here.

\begin{prop}[Product estimate]\label{prod}
Suppose $a$ and $b$ are $\geq 0$, $a+b\leq s$,  and $s>\frac{d}{2}$.  Then 
\begin{align}
|(uv)(t)|_{s-a-b,\eps}\leq C\eps^{-\frac{d}{2}}|u(t)|_{s-a,\eps}|v(t)|_{s-b,\eps}
\end{align}
\end{prop}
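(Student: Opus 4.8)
The plan is to strip off the $\eps$-weights using the rescaling of Lemma~\ref{aa0}, reducing the statement to the classical Moser-type product estimate in ordinary Sobolev spaces.

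\textbf{Step 1 (rescaling to $\eps=1$).} Fix $\eps>0$ and attach to $u,v$ the rescaled functions $\tilde u,\tilde v$ of Lemma~\ref{aa0}. Since rescaling commutes with pointwise products, $\widetilde{uv}=\tilde u\,\tilde v$, so \eqref{aa1} applied (with $k=0$) to $uv$, to $u$, and to $v$ gives $|(uv)(t)|_{s-a-b,\eps}=\eps^{d/2}\,|(\tilde u\tilde v)(t/\eps)|_{s-a-b,1}$, $|\tilde u(t/\eps)|_{s-a,1}=\eps^{-d/2}|u(t)|_{s-a,\eps}$, and $|\tilde v(t/\eps)|_{s-b,1}=\eps^{-d/2}|v(t)|_{s-b,\eps}$. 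Hence it suffices to prove the $\eps$-free estimate
\begin{equation}\label{prodone}
|(fh)(\tau)|_{s-a-b,1}\le C\,|f(\tau)|_{s-a,1}\,|h(\tau)|_{s-b,1}\qquad\text{for all }f,h,
\end{equation}
since inserting \eqref{prodone} (at $\tau=t/\eps$) into the three identities above produces precisely the prefactor $\eps^{d/2}\cdot\eps^{-d/2}\cdot\eps^{-d/2}=\eps^{-d/2}$, which is the claimed inequality.

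\textbf{Step 2 (the $\eps=1$ product estimate).} Set $m_0=s-a-b$, $m_1=s-a$, $m_2=s-b$. Because $a,b\ge0$ we have $0\le m_0\le m_1$ and $m_0\le m_2$, and because $a+b\le s$ and $s>d/2$ we have the strict surplus $m_1+m_2-m_0=s>\tfrac d2$. For $|\alpha|\le m_0$ the Leibniz rule gives $\partial^\alpha_{t,x}(fh)=\sum_{\gamma\le\alpha}\binom{\alpha}{\gamma}\,\partial^\gamma_{t,x}f\;\partial^{\alpha-\gamma}_{t,x}h$ (the $t$-derivatives simply produce $x$-derivatives of the profiles $\partial_t^j f(\tau,\cdot)$, $\partial_t^j h(\tau,\cdot)$, which are controlled by $|f(\tau)|_{m_1,1}$, $|h(\tau)|_{m_2,1}$). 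Each summand is estimated in $L^2(x)$ by Hölder's inequality, $\|\partial^\gamma_{t,x}f\,\partial^{\alpha-\gamma}_{t,x}h\|_{L^2(x)}\le\|\partial^\gamma_{t,x}f\|_{L^p(x)}\|\partial^{\alpha-\gamma}_{t,x}h\|_{L^q(x)}$ with $\tfrac1p+\tfrac1q=\tfrac12$, followed by Gagliardo--Nirenberg interpolation bounding the factors by $|f(\tau)|_{m_1,1}$ and $|h(\tau)|_{m_2,1}$. This is possible because the resulting lower bounds $\tfrac1p\ge\tfrac12-\tfrac{m_1-|\gamma|}{d}$ and $\tfrac1q\ge\tfrac12-\tfrac{m_2-|\alpha-\gamma|}{d}$ sum to $1-\tfrac{(m_1+m_2)-|\alpha|}{d}\le1-\tfrac{m_1+m_2-m_0}{d}=1-\tfrac sd<\tfrac12$, leaving room to split the $L^2$-integrability between the two factors; in the finitely many cases where a factor carries more than $d/2$ derivatives it may instead be placed in $L^\infty$ via the Sobolev embedding of Proposition~\ref{sob}. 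Summing over $\gamma$ and over $|\alpha|\le m_0$ yields \eqref{prodone}.

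\textbf{Main obstacle.} The only delicate point is the exponent bookkeeping of Step 2. Since all differentiation orders are integers and the hypothesis $s>d/2$ is strict, every exponent inequality above is strict, so no genuine endpoint failure (of the $H^{d/2}\not\hookrightarrow L^\infty$ type) arises: whenever one factor would be forced to the critical integrability $\tfrac1p=0$ one checks the other factor retains at least $s-\tfrac d2>0$ spare derivatives, so $p,q$ can always be chosen away from the endpoint. The hypothesis $a+b\le s$ enters exactly here, guaranteeing $m_0\ge0$ and the strict surplus that closes the interpolation; the entire $\eps$-dependence, by contrast, is absorbed into the single scaling identity \eqref{aa1}.
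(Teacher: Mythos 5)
Your proposal is correct and follows essentially the same path as the paper: reduce the weighted estimate to the $\eps=1$ case via the rescaling identity of Lemma~\ref{aa0}, and establish the $\eps=1$ case by a Leibniz/product-estimate argument. The only difference is in the amount of detail at the second stage: the paper simply cites the well-known spatial Sobolev product bound $|uv|_{H^{s-a-b}(x)}\le C|u|_{H^{s-a}(x)}|v|_{H^{s-b}(x)}$ and applies the Leibniz rule only for the $\partial_t$ derivatives, whereas you re-derive that product bound from scratch via H\"older and Gagliardo--Nirenberg while distributing all of $\partial_{t,x}^\alpha$ at once; your exponent bookkeeping (sum of the two critical indices being $1-\tfrac{s}{d}<\tfrac12$) is the correct reason the interpolation closes.
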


\begin{proof}
Following \cite{Marcou} we begin with the well-known estimate
\begin{align}
|u(t)v(t)|_{H^{s-a-b}(x)}\leq C|u(t)|_{H^{s-a}(x)} |v(t)|_{H^{s-b}(x)}. 
\end{align}
Thus, for $k\leq s-a-b$ we have
\begin{align}
\begin{split}
&|\partial_t^k (u(t)v(t))|_{H^{s-a-b-k}(x)}\leq C \sup_{k_1+k_2=k}|\partial^{k_1}_t u(t)\partial^{k_2}_t v(t)|_{H^{s-a-k_1-b-k_2}(x)}\leq\\
&\qquad \sup_{k_1+k_2=k}|\partial^{k_1}_t u(t)|_{H^{s-a-k_1}(x)}   |\partial^{k_2}_t v(t)|_{H^{s-b-k_2}(x)}\leq C|u(t)|_{s-a,0,1}|v(t)|_{s-b,0,1}.
\end{split}
\end{align}
This implies $|u(t)v(t)|_{s-a-b,0,1}\leq C|u(t)|_{s-a,0,1}|v(t)|_{s-b,0,1}$, so an application of Lemma \ref{aa0} finishes the proof.

\end{proof}

\begin{prop}[Trace estimate]\label{trace}
For $g=g(t,x',x_d)$ and any nonnegative integer $s$ we have\footnote{Here $\Lambda_{x'}^{1/2}$ is the operator defined by $\mathcal{F}(\Lambda_{x'}^{1/2} h)=\sqrt{1+|\xi'|^2}\mathcal{F}(h)$, where $\mathcal{F}$ denotes the Fourier transform with respect to $x'$.}
\begin{align}
\eps\langle \Lambda_{x'}^{\frac{1}{2}}g(t)\rangle_{s,\eps}\lesssim |g(t)|_{s,1,\eps}.
\end{align}

\end{prop}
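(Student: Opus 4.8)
The plan is to peel off the derivatives $\partial_{t,x'}^\alpha$ one at a time, reduce to a one-dimensional (in $x_d$) trace inequality via the partial Fourier transform in $x'$, and then count powers of $\eps$. Assuming, as we may, that the right-hand side is finite (so that $g(t,\cdot)$ is smooth enough; in particular $\partial_{t,x'}^\alpha g(t,\cdot)\in H^1(\mathbb{R}^d_+)$ whenever $|\alpha|\le s$), and noting that $\Lambda_{x'}^{1/2}$ acts only in $x'$ and hence commutes both with the trace $x_d\mapsto 0$ and with $\partial_{t,x'}^\alpha$, it suffices by the definition of $\langle\cdot\rangle_{s,\eps}$ to show, for every multi-index $\alpha$ in $(t,x')$ with $|\alpha|\le s$,
\[
\eps^{|\alpha|+1}\,|\Lambda_{x'}^{1/2}\psi(t,\cdot\,,0)|_{L^2(x')}\ \lesssim\ |g(t)|_{s,1,\eps},\qquad \psi:=\partial_{t,x'}^\alpha g .
\]

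For the trace inequality, write $\hat\psi(t,\xi',x_d)$ for the partial Fourier transform of $\psi$ in $x'$. For a.e.\ $\xi'$ one has $\hat\psi(t,\xi',\cdot)\in H^1(0,\infty)$, so by the fundamental theorem of calculus and Cauchy--Schwarz in $x_d$,
\[
|\hat\psi(t,\xi',0)|^2 = -\int_0^\infty \partial_{x_d}|\hat\psi(t,\xi',x_d)|^2\,dx_d\ \le\ 2\,|\hat\psi(t,\xi',\cdot)|_{L^2(x_d)}\,|\partial_{x_d}\hat\psi(t,\xi',\cdot)|_{L^2(x_d)}.
\]
Now multiply by the symbol that defines $|\Lambda_{x'}^{1/2}\cdot|_{L^2(x')}^2$, integrate in $\xi'$, and apply Cauchy--Schwarz in $\xi'$ with the \emph{entire} tangential weight placed on the factor not carrying $\partial_{x_d}$; by Plancherel this gives
\[
|\Lambda_{x'}^{1/2}\psi(t,\cdot,0)|_{L^2(x')}^2\ \lesssim\ \Bigl(|\psi(t)|_{L^2(x)} + \sum_{j=1}^{d-1}|\partial_{x_j}\psi(t)|_{L^2(x)}\Bigr)\,|\partial_{x_d}\psi(t)|_{L^2(x)} .
\]
Loading the full tangential weight onto the factor without $\partial_{x_d}$ is the one slightly delicate point: it keeps that factor an integer-order ($H^1_{x'}$) quantity, whereas a symmetric split would produce a half-order tangential derivative of $\psi$, which is not controlled by the norms at hand.

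It remains to count $\eps$-powers. Since $\psi = \partial_{t,x'}^\alpha g$ with $|\alpha|\le s$, the function $\psi$ is a $(t,x)$-derivative of $g$ of order $|\alpha|\le s$, while $\partial_{x_i}\psi$ ($1\le i\le d$) is of the form $\partial_{t,x}^\alpha\partial_x^\beta g$ with $|\alpha|\le s$ and $|\beta|=1$. Directly from the definition of $|\cdot|_{s,1,\eps}$ this yields
\[
\eps^{|\alpha|}\,|\psi(t)|_{L^2(x)}\ \le\ |g(t)|_{s,1,\eps},\qquad \eps^{|\alpha|+1}\,|\partial_{x_i}\psi(t)|_{L^2(x)}\ \le\ |g(t)|_{s,1,\eps}\quad (1\le i\le d).
\]
Multiplying the trace inequality by $\eps^{2(|\alpha|+1)}$ and pairing a factor $\eps^{|\alpha|+1}$ with each norm on the right, every resulting product is $\le |g(t)|_{s,1,\eps}^2$; for the term involving $|\psi(t)|_{L^2(x)}$ one writes $\eps^{|\alpha|+1}|\psi(t)|_{L^2(x)} = \eps\cdot\eps^{|\alpha|}|\psi(t)|_{L^2(x)}\le\eps\,|g(t)|_{s,1,\eps}\le |g(t)|_{s,1,\eps}$, using $\eps\le1$. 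Taking square roots and the supremum over $|\alpha|\le s$ completes the proof.

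There is no real obstacle; the only things to get right are the asymmetric Cauchy--Schwarz in $\xi'$ and the bookkeeping of $\eps$-powers just described. One could instead run the whole argument on the rescaled function of Lemma \ref{aa0}, at the cost of tracking the $\eps$-dependent Fourier multiplier that $\eps\,\Lambda_{x'}^{1/2}$ becomes after rescaling; the direct computation above seems cleaner.
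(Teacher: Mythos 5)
Your proof is correct and follows essentially the same route as the paper's: commute $\Lambda_{x'}^{1/2}$ and $\partial_{t,x'}^\alpha$ with the trace, reduce to the standard estimate $|\Lambda_{x'}^{1/2}h(\cdot,0)|_{L^2(x')}\lesssim|h|_{H^1(\mathbb{R}^d_+)}$, then count $\eps$-powers (using $\eps\le 1$ to absorb the term without a spatial derivative). The only difference is that you derive that trace inequality from scratch via the partial Fourier transform, the one-dimensional FTC bound, and the asymmetric Cauchy--Schwarz in $\xi'$, whereas the paper simply invokes it as a known fact; the $\eps$-bookkeeping is identical.
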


\begin{proof}
We have
\begin{align}
\begin{split}
&\eps \langle\Lambda^{\frac{1}{2}}_{x'}g(t)\rangle_{s,\eps}=\sup_{|\alpha_t,\alpha'|\leq  s}\eps^{\alpha_t+|\alpha'|+1}\left| \Lambda^{\frac{1}{2}}_{x'}\partial_t^{\alpha_t}\partial_{x'}^{\alpha'}g(t,x',0)\right|_{L^2(x')}\lesssim \\
&\sup_{|\alpha_t,\alpha'|\leq  s}\eps^{\alpha_t+|\alpha'|+1}\left( |\partial_t^{\alpha_t}\partial_{x'}^{\alpha'}g(t,x',x_d)|_{L^2(x)}+|\partial_t^{\alpha_t}\partial_{x'}^{\alpha'}\partial_xg(t,x',x_d)|_{L^2(x)}\right)\lesssim  |g(t)|_{s,1,\eps},
\end{split}
\end{align}
where  we have used $|\Lambda^{\frac{1}{2}}_{x'}h(x',0)|_{L^2(x')}\lesssim |h(x',x_d)|_{H^1(x)}$ to get the first inequality.

\end{proof}

\section{A priori estimates and local existence.}\label{apri}

In this section we state (with one essential modification in Proposition \ref{hypest}) the  results from \cite{Sh,SN} that will be used in this paper. 

For some $T>0$ consider the following linear hyperbolic problem:
\begin{align}\label{a2}
\begin{split}
&\partial_t^2 u-\sum^d_{i,j=1}\partial_i(A_{ij}(t,x)\partial_j u):=f(t,x) \text{ in } [-\infty,T]\times \mathbb{R}^d_+\\
&\sum^d_{i,j=1}n_i A_{ij}(t,x)\partial_ju:=g(t,x') \text{ on }[-\infty,T]\times \mathbb{R}^{d-1}\\
&u=0\text{ in }t\leq 0.
\end{split}
\end{align}
%where $f$ and $g$ vanish in $t\leq 0$.  

%Below we set $\|u(t,\cdot)\|:=|u(t,x)|_{L^2(x)}$ and  $\langle g(t,\cdot)\rangle :=|g(t,x')|_{L^2(x')}$. 

\begin{defn}

1. For any nonnegative integer $L$ let $\mathcal{B}^L_T$ denote the set of real-valued functions $f(t,x)$ on $[0,T]\times \overline{\mathbb{R}}^d_+$ with $|f|_{C^L([0,T]\times \overline{\mathbb{R}}^d_+)}<\infty$. 

% For any  nonnegative integer $L$ let  $E^L[0,T]=\cap^L_{i=0}C^i([0,T]:H^{L-i}(\mathbb{R}^d_+))$.

2.  Set $\|u(t,\cdot)\|:=|u(t,x)|_{L^2(x)}$ and  $\langle g(t,\cdot)\rangle :=|g(t,x')|_{L^2(x')}$. 
\end{defn}

\begin{ass}\label{assh}

a)  The entries of $A_{ij}$ belong to $\mathcal{B}^2_T$.

b)  For any $(t,x)\in [0,T]\times \overline{\mathbb{R}}^d_+$ we have $A_{ij}^t=A_{ji}$  ($M^t$ denotes the transpose of the matrix $M$).

c)  There exist positive constants $\delta_1$ and $\delta_2$ such that for $u\in H^2(\mathbb{R}^d_+)$ and $t\in [0,T]$
\begin{align}
\sum^d_{i,j=1}(A_{ij}(t,\cdot)\partial_ju,\partial_i u)_{L^2(x)}\geq \delta_1\|D_xu\|^2-\delta_2\|u\|^2.
\end{align}

\end{ass}

The next proposition gives an improved version of an estimate in \cite{Sh}.

\begin{prop}[Hyperbolic estimate]\label{hypest}
 
 Let $T>0$ and consider the problem \eqref{a2} where the coefficients $A_{ij}$ satisfy Assumption \ref{assh} and $u\in E^2[0,T]$ with $u(0,x)=\partial_tu(0,x)=0$.
 There exists $K_1=K_1(T,\delta_1,\delta_2, max_{ij}|A_{ij}|_{W^{1,\infty}})$ such that $u$ satisfies
\begin{align}\label{a3}
\|\overline{D} u(t,\cdot)\|^2\leq K_1\int^t_0 (\|f(s,\cdot)\|^2+\langle \Lambda^{\frac{1}{2}}_{x'}g(s,\cdot)\rangle^2)ds\text{ for  }t\in [0,T],
\end{align}
%where the  constant $K_1(T)$  depends only on the $W^{1,\infty}$ norms of the coefficients $A_{ij}$.
\end{prop}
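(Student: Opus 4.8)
The plan is to prove the estimate \eqref{a3} by a standard energy method, but carried out carefully so that the constant $K_1$ depends only on the stated quantities and so that the half-derivative loss appears only on the boundary term. First I would introduce the energy functional
\begin{align}\label{pf:energy}
\mathcal{E}(t):=\|\partial_t u(t,\cdot)\|^2+\sum_{i,j=1}^d (A_{ij}(t,\cdot)\partial_j u(t,\cdot),\partial_i u(t,\cdot))_{L^2(x)},
\end{align}
which, by the coercivity hypothesis (Assumption \ref{assh}(c)) together with a Poincar\'e-type bound, is equivalent up to constants (depending on $\delta_1,\delta_2,T$) to $\|\overline{D}u(t,\cdot)\|^2$ once we control $\|u(t,\cdot)\|^2$ itself via integration in time from the vanishing initial data. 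The symmetry hypothesis $A_{ij}^t=A_{ji}$ is exactly what makes this quadratic form the correct ``energy'': differentiating $\mathcal{E}(t)$ in $t$, multiplying the equation \eqref{a2} by $\partial_t u$, and integrating by parts over $\mathbb{R}^d_+$ produces a boundary contribution $\int_{x_d=0} (\sum_i n_i A_{ij}\partial_j u)\cdot \partial_t u = \int_{x_d=0} g\cdot\partial_t u$, while the interior terms recombine thanks to symmetry into $\frac{d}{dt}\mathcal{E}(t)$ plus a commutator term $\sum_{i,j}(\partial_t A_{ij}\,\partial_j u,\partial_i u)$ that is bounded by $\max_{ij}|A_{ij}|_{W^{1,\infty}}\cdot\|D_x u\|^2 \lesssim \mathcal{E}(t)+\|u\|^2$.

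The next step is to handle the boundary term $\int_{x_d=0} g\cdot\partial_t u\,dx'$. This is where the half-derivative loss enters. The naive estimate would bound it by $\langle g\rangle\,\langle\partial_t u\rangle$ and then use a trace inequality $\langle\partial_t u\rangle \lesssim \|\partial_t u\|^{1/2}\|\partial_t u\|_{H^1}^{1/2}$, which would cost a full derivative on $u$ and is too lossy. Instead, I would integrate by parts in $t$ first: write $\int_0^t\!\!\int_{x_d=0} g\cdot\partial_s u\,dx'ds = \int_{x_d=0}g(t)\cdot u(t)\,dx' - \int_0^t\!\!\int_{x_d=0}\partial_s g\cdot u\,dx'ds$, which trades a time derivative off $u$, but this still leaves a trace of $u$ to control. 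The cleaner route — and the one matching the form of the right side of \eqref{a3} — is to pair $g$ against the trace of $u$ using the operator $\Lambda_{x'}^{1/2}$: estimate $|\int_{x_d=0}g\cdot\partial_t u\,dx'| \lesssim \langle\Lambda_{x'}^{1/2}g\rangle\,\langle\Lambda_{x'}^{-1/2}\partial_t u\rangle$, and then invoke the trace estimate (Proposition \ref{trace}, or rather its $\eps=1$ analogue $\langle\Lambda_{x'}^{-1/2}h\rangle \lesssim \|h\|_{L^2(x)}^{1/2}\|h\|_{H^1(x)}^{1/2}$ or more simply $\langle\Lambda_{x'}^{-1/2}h(\cdot,0)\rangle \lesssim \|h\|_{L^2(x)} + \|h\|_{H^1(x)}$ via the standard trace bound $\langle \Lambda_{x'}^{-1/2}h(x',0)\rangle \lesssim \|h\|_{L^2(x)}$ after one integration in $x_d$ — wait, one needs $\partial_{x_d}$). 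Concretely, $\langle\Lambda_{x'}^{-1/2}\partial_t u(\cdot,0)\rangle^2 \lesssim \|\partial_t u\|\cdot\|\overline{D}\partial_t u\|$ is still second order; the correct observation is that $\|\Lambda_{x'}^{-1/2}h(\cdot,0)\|_{L^2(x')}^2 \lesssim \|h\|_{L^2(x)}^2 + \|\partial_{x_d} h\|\,\|h\|$, so with $h=\partial_t u$ one gets $\langle\Lambda_{x'}^{-1/2}\partial_t u\rangle \lesssim \|\overline{D}u\|$ after a Young's inequality — no net loss. Absorbing the resulting $\langle\Lambda_{x'}^{1/2}g\rangle\,\|\overline{D}u\|$ by Young's inequality into $\eta\|\overline{D}u\|^2 + C_\eta\langle\Lambda_{x'}^{1/2}g\rangle^2$ closes the loop.

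Putting these together yields a differential inequality of the form $\frac{d}{dt}\mathcal{E}(t) \le C\big(\mathcal{E}(t)+\|u(t)\|^2\big) + C\|f(t)\|^2 + C\langle\Lambda_{x'}^{1/2}g(t)\rangle^2$, together with $\|u(t)\|^2 \le t\int_0^t\|\partial_s u(s)\|^2 ds \le T\int_0^t \mathcal{E}(s)\,ds$ from the fundamental theorem of calculus and $u(0)=0$. Setting $\mathcal{N}(t):=\mathcal{E}(t)+\|u(t)\|^2$, Gronwall's inequality then gives $\mathcal{N}(t) \le K_1\int_0^t\big(\|f(s)\|^2+\langle\Lambda_{x'}^{1/2}g(s)\rangle^2\big)ds$ on $[0,T]$ with $K_1$ depending only on $T,\delta_1,\delta_2$ and $\max_{ij}|A_{ij}|_{W^{1,\infty}}$; finally the coercivity bound converts $\mathcal{N}(t)$ back into $\|\overline{D}u(t,\cdot)\|^2$, proving \eqref{a3}. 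The main obstacle — and the only place requiring care — is the treatment of the boundary term: one must split the $\Lambda_{x'}^{1/2}$ weight symmetrically and use the sharp trace inequality so that the estimate loses exactly half a derivative on $g$ at the boundary and nothing in the interior; a cruder pairing would either lose a full derivative or fail to produce the right-hand side of the stated form. A secondary technical point is justifying the energy identity itself for $u$ merely in $E^2[0,T]$, which is handled by a routine density/mollification argument.
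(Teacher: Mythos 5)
The paper's proof of Proposition \ref{hypest} is not a direct energy argument: it cites Theorem 6.8 of \cite{Sh}, whose proof uses tangential pseudodifferential calculus, and then upgrades a commutator estimate (from a $C^{1,\mu}$ norm of the coefficients to the Lipschitz norm $W^{1,\infty}$) so that the constant $K_1$ has the stated dependence. Your proposal is a self-contained energy argument, which is a genuinely different route — but it has a gap that is fatal at exactly the step you flag as ``the main obstacle.''

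The problem is the boundary term. After pairing $\int_{x_d=0}g\cdot\partial_t u\,dx'$ as $\langle\Lambda_{x'}^{1/2}g\rangle\,\langle\Lambda_{x'}^{-1/2}\partial_t u(\cdot,0)\rangle$, you need a bound of the form $\langle\Lambda_{x'}^{-1/2}\partial_t u(\cdot,0)\rangle\lesssim\|\overline{D}u\|$. No such trace bound exists: the trace operator does not map $L^2(\mathbb{R}^d_+)$ into $H^{-1/2}(\mathbb{R}^{d-1})$ (the map $H^s(\mathbb{R}^d_+)\to H^{s-1/2}(\mathbb{R}^{d-1})$ is bounded only for $s>1/2$). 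The sharp inequality you do write down, $\langle\Lambda_{x'}^{-1/2}h(\cdot,0)\rangle^2\lesssim\|h\|\,\|\partial_{x_d}h\|$, applied with $h=\partial_t u$ produces $\|\partial_{x_d}\partial_t u\|$, a second-order derivative that is not part of $\|\overline{D}u\|$; your alternative version $\|h\|^2+\|\partial_{x_d}h\|\,\|h\|$ and the subsequent Young's inequality do not remove this factor. You half-notice the issue (``wait, one needs $\partial_{x_d}$'') and then paper over it, but there is no elementary fix: the loss of one-half tangential derivative exhibited in \eqref{a3} is a manifestation of the failure of the uniform Lopatinski condition, and a direct $\partial_t u$-multiplier energy argument cannot produce it for a Neumann boundary condition. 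One needs either a carefully constructed tangential (pseudo)differential multiplier, as in Shibata's proof, or a coupled elliptic gain of derivatives; the paper does exploit such a coupling in section \ref{energyestimates}, but that argument takes Proposition \ref{hypest} as an input and hence cannot be used to establish it. So the proposal does not prove the statement; the boundary-trace step needs to be replaced entirely, which is why the paper defers to \cite{Sh}.
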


\begin{proof}

The proof is identical to the proof of Theorem 6.8 in \cite{Sh}, except for one essential change.\footnote{This proposition is actually a special case of Theorem 6.8 of \cite{Sh}. More general hyperbolic systems are considered there; a similar remark applies to Proposition \ref{ellest}.}    The proof there uses (for example on p. 181) a commutator estimate (Theorem Ap. 5)
\begin{align}\label{aa3}
|e^{-\gamma t}[a(x),\Phi]u|_{L^2}\leq C |a|_{C^{1,\mu}}|e^{-\gamma t}u|_{L^2}, 
\end{align}
where $\Phi$ is a classical pseudodifferential operator of order one (depending on a parameter $\gamma\geq 1$) and $\mu$ is any small positive number.   In place of \eqref{aa3} one can use an improved estimate where the H\"older norm
$|a|_{C^{1,\mu}}$ is replaced by $|a|_{W^{1,\infty}}$, the Lipschitz norm of $a$ (\cite{CM}, Ta).    Thus, the constant $K_1$ here depends on $max_{ij}|A_{ij}|_{W^{1,\infty}})$ rather than 
$max_{ij}|A_{ij}|_{C^{1,\mu}})$ as in \cite{Sh}.

% estimate is proved in [Sh], except that the constant there depends on  a slightly higher norm of the coefficients ($C^{1+\mu}$, $\mu>0$).   Using commutator results of [CoMe], one can improve that result to obtain a constant that depends only on the $W^{1,\infty}$ norm of the coefficients.   

\end{proof}

\begin{rem}
Proposition \ref{hypest} will be applied in the error analysis of section \ref{energyestimates} to the problems \eqref{H1} and \eqref{H2}. 
The fact that $K_1$ depends just on the Lipschitz norm of the $A_{ij}$ is  crucial for the application we give here, since the coefficients $A_{ij}(D_xu_a+D_xv)$ in \eqref{H1}, \eqref{H2},  although quite regular for each fixed $\eps$, are no better than $W^{1,\infty}$ \emph{uniformly} with respect to small $\eps$.

   %We apply this Proposition to the problem \eqref{H2} for $\omega$ to control  $t$ derivatives and to the  problem \eqref{H1} for $\nu$ to control the blow up as $\eps\to 0$ of $D_x\nu$ (``gain epsilons").  

\end{rem}

Next we give an estimate for  the linear elliptic problem:
\begin{align}\label{a4}
\begin{split}
&-\sum^d_{i,j=1}\partial_i(A_{ij}(t,x)\partial_j u)+\lambda u:=f(t,x) \text{ in }[0,T]\times \mathbb{R}^d_+\\
&\sum^d_{i,j=1}n_i A_{ij}(t,x)\partial_ju:=g(t,x') \text{ on }[0,T]\times \mathbb{R}^{d-1},
\end{split}
\end{align}
where $t$ is treated as a parameter.  

\begin{prop}[Elliptic estimate, \cite{SN}]\label{ellest}

Consider the problem \eqref{a4} where the coefficients $A_{ij}$ satisfy Assumption \ref{assh} and $u\in E^2[0,T]$.    There exist positive constants $\lambda_0(T,\delta_1,\delta_2, max_{ij}|A_{ij}|_{W^{1,\infty}})$ and $K_2(T,\delta_1,\delta_2, max_{ij}|A_{ij}|_{W^{1,\infty}})$ such that for $\lambda\geq \lambda_0$ the function $u$ satisfies
\begin{align}\label{a5}
\|\overline{D}^2_xu(t,\cdot)\|\leq K_2\left(\|f(t,\cdot)\|+\langle \Lambda^{\frac{1}{2}}_{x'} g(t,\cdot)\rangle\right) \text{ for  }t\in [0,T].
\end{align}
 %The constants $K_2$ and $\lambda$ that appear here  depend only on the $W^{1,\infty}$ norms of the coefficients $A_{ij}$ (as well as constants that appear in the coercivity assumption on $A_{ij}$).\footnote{This estimate is proved in [Sh-N].}   
\end{prop}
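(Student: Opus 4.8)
The plan is to prove the estimate \eqref{a5} at each fixed $t\in[0,T]$ (with $t$ frozen as a parameter) by the classical three-step argument for a second-order divergence-form elliptic boundary value problem — an $H^1$ a priori bound, a tangential-regularity bound, and finally the equation itself for the normal second derivative — while tracking that every constant depends on the coefficients only through $\delta_1$, $\delta_2$, and $\max_{ij}|A_{ij}|_{W^{1,\infty}}$, uniformly in $t$. The starting point is the weak formulation of \eqref{a4}: since the outward normal to $\mathbb{R}^d_+$ is $n=(0,\dots,0,-1)$, Green's identity gives, for all $\varphi\in H^1(\mathbb{R}^d_+)$,
\begin{align*}
\sum_{i,j=1}^d\bigl(A_{ij}(t,\cdot)\partial_j u,\partial_i\varphi\bigr)_{L^2(x)}+\lambda(u,\varphi)_{L^2(x)}=(f,\varphi)_{L^2(x)}+(g,\varphi)_{L^2(x')}.
\end{align*}
Taking $\varphi=u$, bounding the left side below by $\delta_1\|D_xu\|^2+(\lambda-\delta_2)\|u\|^2$ via the coercivity Assumption \ref{assh}(c), and estimating the boundary pairing by $(g,u|_{x_d=0})_{L^2(x')}=\bigl(\Lambda^{1/2}_{x'}g,\Lambda^{-1/2}_{x'}(u|_{x_d=0})\bigr)_{L^2(x')}\lesssim\langle\Lambda^{1/2}_{x'}g\rangle\,\|u\|_{H^1}$ (self-adjointness of $\Lambda^{1/2}_{x'}$ plus a trace inequality), one absorbs the $\|u\|_{H^1}$-terms once $\lambda\ge\lambda_0$ is large enough and obtains $\|u(t,\cdot)\|_{H^1(\mathbb{R}^d_+)}\lesssim\|f(t,\cdot)\|+\langle\Lambda^{1/2}_{x'}g(t,\cdot)\rangle$.

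Next, for each tangential index $k=1,\dots,d-1$, the derivative $v=\partial_k u$ (legitimate since $u(t,\cdot)\in H^2(\mathbb{R}^d_+)$) satisfies the same weak identity with $g$ replaced by $\partial_kg$ and with an extra interior contribution $\sum_{ij}\bigl((\partial_kA_{ij})\partial_ju,\partial_i\varphi\bigr)_{L^2(x)}$, which is the distributional divergence of an $L^2$ vector field of norm $\lesssim\max_{ij}|A_{ij}|_{W^{1,\infty}}\|D_xu\|$; this is the one spot where only Lipschitz regularity of the $A_{ij}$ is used. Testing this identity against $v=\partial_ku$, invoking coercivity again, handling the boundary term $(\partial_kg,\partial_ku|_{x_d=0})_{L^2(x')}$ by the multiplicative trace inequality together with the $H^1$ bound just proved, and absorbing, gives control of the mixed and purely tangential second derivatives, $\sum_{k<d}\|D_x\partial_ku(t,\cdot)\|\lesssim\|f\|+\langle\Lambda^{1/2}_{x'}g\rangle$ (modulo a correction discussed below). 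Finally, the purely normal second derivative is recovered from the equation algebraically: coercivity forces $(A_{dd}(t,x)\eta,\eta)\ge\delta_1|\eta|^2$ for all $\eta\in\mathbb{R}^N$ — seen by testing \ref{assh}(c) against functions oscillating rapidly in $x_d$ — so the symmetric matrix $A_{dd}$ is boundedly invertible with $|A_{dd}(t,x)^{-1}|\le\delta_1^{-1}$, and solving \eqref{a4} for $\partial_d^2u$ expresses it through $f$, $\lambda u$, the products $(\partial_iA_{ij})\partial_ju$, and second derivatives carrying at least one tangential index, all already estimated.

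The main obstacle is the interlocking absorption between the last two steps: the tangential estimate produces on its right-hand side a small multiple $\eta\|\overline{D}^2_xu\|^2$ that a priori includes the normal part $\|\partial_d^2u\|^2$, whereas the algebraic step controls $\|\partial_d^2u\|$ only through the tangential second derivatives plus data — so one must substitute the third step into the second, choose $\eta$ small and then $\lambda_0$ accordingly, in order to close the estimate without circularity (and to see that the resulting $\lambda_0$, $K_2$ depend only on $T$, $\delta_1$, $\delta_2$, $\max_{ij}|A_{ij}|_{W^{1,\infty}}$). This is precisely the argument carried out in \cite{SN}; the only point worth flagging relative to the classical version is that, exactly as in the proof of Proposition \ref{hypest}, the coefficient and commutator bounds it relies on can be run with the Lipschitz norm $|A_{ij}|_{W^{1,\infty}}$ in place of a Hölder norm (see \cite{CM}). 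That is the form of the estimate actually needed in Section \ref{energyestimates}, where the coefficients $A_{ij}(D_x u_a+D_x v)$ appearing in \eqref{E} are, uniformly in $\eps$, no more regular than $W^{1,\infty}$.
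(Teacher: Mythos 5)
The paper itself offers no proof of Proposition~\ref{ellest}; it simply records that the estimate is part of Theorem~4.4 of \cite{SN} and moves on, in contrast to Proposition~\ref{hypest}, where the authors explicitly flag the one modification (H\"older $\to$ Lipschitz in a pseudodifferential commutator bound) they need relative to \cite{Sh}. So there is no ``paper's proof'' to compare against line by line; what you have written is a reconstruction of the classical divergence-form elliptic argument that \cite{SN} carries out, and as such it is essentially sound. Two small corrections, though. First, the ``main obstacle'' you describe at the end — an alleged circularity because the tangential step supposedly feeds a small multiple of $\|\partial_d^2u\|^2$ back onto the right-hand side — does not occur in this problem: testing the $\partial_k$-differentiated identity ($k<d$) against $v=\partial_k u$ only produces $\|D_x\partial_k u\|$-type quantities, each carrying at least one tangential index, so $\partial_d^2u$ never enters the tangential estimate and the three steps decouple cleanly. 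Second, your closing appeal to the \cite{CM}-type commutator improvement (H\"older replaced by Lipschitz) is not needed here, and you in fact identified the correct mechanism two sentences earlier: in the elliptic estimate the $W^{1,\infty}$ dependence arises directly from the pointwise bound $|\partial_k A_{ij}|\le \max_{ij}|A_{ij}|_{W^{1,\infty}}$ in the commutator $\sum_{ij}((\partial_kA_{ij})\partial_ju,\partial_i\varphi)$, with no pseudodifferential commutator lemma required — that refinement is what Proposition~\ref{hypest} needs, not this one.
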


This estimate is part of Theorem 4.4 of \cite{SN}.  
We will apply it to the problem \eqref{E} for $\nu$ to gain spatial derivatives.

     The following is our main structural assumption on the nonlinear problem \ref{mainproblem} (and \eqref{a6}).

\begin{ass}\label{a5a}
a)  Let $R>0$ and let $U\in \mathbb{R}^{dN}$; here $dN$ is the number of scalar arguments of the $\mathbb{R}^N$-valued functions  $A_i$ in \eqref{mainproblem}. The  $A_i$ are $C^\infty$ functions of $U$ and satisfy $A_i(0)=0$.

 b)  The $N\times N $ matrices $A_{ij}:=\partial A_i/\partial{U_j}$ satisfy $A_{ij}^t=A_{ji}$  ($M^t$ denotes the transpose of the matrix $M$).

c)  There exist positive constants $\delta_1$ and $\delta_2$ such that for $u\in H^2(\mathbb{R}^d_+)$ and $|U|\leq 4R$
\begin{align}
\sum^d_{i,j=1}(A_{ij}(U)\partial_ju,\partial_i u)_{L^2(x)}\geq \delta_1\|u\|_{H^1}^2-\delta_2\|u\|^2.
\end{align}

\end{ass}

\begin{rem}\label{NE}
The equations of  nonlinear elasticity for three-dimensional, isotropic, hyperelastic materials  are shown in \cite{SN}, pages 8-10, to satisfy Assumption \ref{a5a} for small enough $R>0$.    The Saint Venant-Kirchhoff model \eqref{j0} for which we construct approximate solutions $u^\eps_a$ in part \ref{p3}  belongs to this class of models.  
\end{rem}

Next we state a local existence theorem for the nonlinear initial boundary value problem
\begin{align}\label{a6}
\begin{split}
&P(u)=\partial_t^2 u-\sum^d_{i=1}\partial_i(A_i(D_xu))=f(t,x) \text{ in }x_d\geq 0\\
&Q(u)=\sum_{i=1}^d n_i A_i(D_xu)=g(t,x')\text{ on }x_d=0\\
&u(0,x)=v_0(x), \;u_t(0,x)=v_1(x). 
\end{split}
\end{align}

\begin{prop}[Local existence and uniqueness: \cite{SN}, Theorem 2.4]\label{localex}
Suppose that Assumption \ref{a5a} holds, let $L$ be an integer $\geq [d/2]+8$ (here $[r]$ is the greatest integer $\leq r$), and let $T_0$ and  $\mathbb{B}$  be positive constants.   Assume that the data in \eqref{a6} satisfy \footnote{Here $g\in E^{L-2,1/2}_b[0,T_0] \Leftrightarrow \Lambda^{1/2}_{x'} g\in E^{L-2}_b[0,T_0]$.}
\begin{align}\label{a7}
\begin{split}
&(a)\;v_0\in H^L(\mathbb{R}^d_+), v_1\in H^{L-1}(\mathbb{R}^d_+), f\in C^{L-1}([0,T_0];L^2(\mathbb{R}^d_+))\cap E^{L-2}[0,T_0], \\
&g\in C^{L-1}([0,T_0]:H^{1/2}(\mathbb{R}^d_+))\cap E^{L-2,1/2}_b[0,T_0],
\end{split}
\end{align}
\quad\qquad\qquad (b) $v_0$, $v_1$, $f$, and $g$ satisfy corner compatibility conditions of order $L-2$ and  $|v_1|_{L^\infty}+|\overline{D}_xv_0|_{L^\infty}\leq R$,\footnote{The norms on $f$ and $g$ are defined in Notation \ref{nota1}.}
\begin{align}\notag
(c) \;|v_0|_{H^{[d/2]+8}}+|v_1|_{H^{[d/2]+7}}+|f|_{[d/2]+6,0,1,T_0}+\langle\Lambda^{1/2}_{x'} g\rangle_{[d/2]+6,0,1,T_0}\leq \mathbb{B}.\qquad\quad
\end{align}
Then there exists $T_1=T_1(\mathbb{B})\in(0,T_0)$  such that \eqref{a6} has a unique solution $u\in E^L[0,T_1]$ satisfying $|u|_{W^{1,\infty}}\leq 3R$.

\end{prop}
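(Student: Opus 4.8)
Since the statement is quoted as Theorem 2.4 of \cite{SN}, I only indicate the strategy one would follow. The plan is to prove existence by a Picard iteration adapted to the degenerate Neumann structure, in which the half-derivative loss in the hyperbolic estimate (Proposition \ref{hypest}) is compensated by the derivative gain in the elliptic estimate (Proposition \ref{ellest}) for the same unknown; the quasilinearity is handled by Moser/tame product estimates. Throughout, the hypotheses $|v_1|_{L^\infty}+|\overline{D}_xv_0|_{L^\infty}\le R$ and the target bound $|u|_{W^{1,\infty}}\le 3R$ keep all frozen coefficients $A_{ij}(D_x u^{(n)})$ within the range $|U|\le 4R$ where Assumption \ref{a5a}(c), hence Assumption \ref{assh}(c), holds with $\delta_1,\delta_2$ independent of the iteration.

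First I would reduce to vanishing Cauchy data. The corner compatibility conditions of order $L-2$ permit the construction of an approximate initial solution $u_{\mathrm{app}}$ — a Taylor polynomial in $t$ whose coefficients lie in the appropriate Sobolev spaces and are determined by $v_0$, $v_1$, $f$, $g$ and the equation — such that $u-u_{\mathrm{app}}$ has zero Cauchy data and the residuals $P(u_{\mathrm{app}})-f$ and $Q(u_{\mathrm{app}})-g$ vanish to order $L-2$ at $t=0$. After this reduction one may assume $v_0=v_1=0$ and that the interior and boundary forcing vanish to order $L-2$ at $t=0$, changing $\mathbb{B}$ only by a controlled amount.

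Next, set up the iteration: given $u^{(n)}$ with $|u^{(n)}|_{W^{1,\infty}}\le 3R$, let $u^{(n+1)}$ solve the linearized problem with coefficients $A_{ij}(D_x u^{(n)})$, the linearized Neumann condition, right-hand side $f$ plus the lower-order terms produced by expanding $\partial_i A_i(D_xu)$, and zero data. Applying $\overline{D}^\alpha_{t,x}$ for $|\alpha|\le L$, estimating the top-order part by Proposition \ref{hypest}, recovering the lost tangential half-derivative by applying Proposition \ref{ellest} (with $\lambda\ge\lambda_0$ fixed) to the elliptic reformulation of the linearized problem, and closing with the product estimate of Proposition \ref{prod}, the Sobolev estimate of Proposition \ref{sob}, and Gronwall's inequality, one obtains a time $T_1=T_1(\mathbb{B})\in(0,T_0)$ and a constant $C(\mathbb{B})$ such that every iterate satisfies $|u^{(n)}|_{E^L[0,T_1]}\le C(\mathbb{B})$ and, after shrinking $T_1$, $|u^{(n)}|_{W^{1,\infty}([0,T_1]\times\mathbb{R}^d_+)}\le 3R$.

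Finally, subtracting consecutive equations and running the basic $L^2$ hyperbolic energy estimate on $u^{(n+1)}-u^{(n)}$ — whose coefficient differences are controlled by the uniform high-order bounds — shows $\{u^{(n)}\}$ is Cauchy in $E^0[0,T_1]$; interpolation against the uniform $E^L$ bounds gives convergence in $E^{L'}[0,T_1]$ for every $L'<L$, and a bootstrap recovers the limit $u\in E^L[0,T_1]$ solving \eqref{a6} with $|u|_{W^{1,\infty}}\le 3R$. Uniqueness follows from the same difference estimate plus Gronwall. The genuine obstacle, distinguishing this from a textbook quasilinear initial boundary value problem, is the failure of the uniform Lopatinski condition: the hyperbolic estimate loses half a tangential derivative at the boundary, so the induction must close the hyperbolic and elliptic estimates \emph{simultaneously} for the same unknown — this coupled bookkeeping is the delicate point, and it is exactly the device that yields a time of existence depending only on the low-order quantity $\mathbb{B}$.
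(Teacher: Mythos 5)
The paper does not prove this proposition at all; it is a direct citation of Theorem 2.4 in \cite{SN}, and your sketch (correctly) treats it as such. Your outline captures the essential ideas: reduction to vanishing Cauchy data via the compatibility conditions, a Picard-type iteration with frozen coefficients kept in the range where Assumption~\ref{a5a}(c) applies, the use of the hyperbolic estimate \eqref{a3} together with the elliptic estimate \eqref{a5} to compensate the half-derivative loss from the failure of the uniform Lopatinski condition, and the observation that $T_1$ depends only on the low-order bound $\mathbb{B}$.

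One structural point in your description differs from how the paper itself characterizes the scheme of \cite{SN}. You write that the iteration ``must close the hyperbolic and elliptic estimates simultaneously for the same unknown.'' As the Remark just before Proposition~\ref{localex} indicates, the actual device in \cite{SN} introduces an auxiliary unknown $\omega$ and works with a \emph{pair} $(\omega,\nu)$: one solves a hyperbolic problem for $\omega$ (obtained by $t$-differentiating, as in \eqref{H2}) and an elliptic problem for $\nu$ whose right-hand side contains $\partial_t\omega$ (as in \eqref{E}); only afterwards does one verify, by an elliptic uniqueness argument applied to $\partial_t\nu-\omega$, that $\omega=\partial_t\nu$ and hence that $\nu$ solves the original problem. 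This a~posteriori consistency check is a genuine step of the construction that your sketch omits, and it is precisely the device that this paper recycles in its own nonlinear error analysis with the trio \eqref{H1}, \eqref{H2}, \eqref{E}. The rest of your outline — compatibility reduction, Moser/tame products, Gronwall, contraction in a lower norm with interpolation against uniform high-order bounds — is consistent with what one would expect in \cite{SN}.
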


\begin{prop}[Continuation]\label{continuation}
Suppose that Assumption \ref{a5a} holds, let $L$ be an integer $\geq [d/2]+8$,  and let $T_0$ and $\mathbb{B}$ be positive constants.  Suppose that $f$ and $g$ have the regularity in \eqref{a7}(a) and that for some ${T}\in (0,{T}_0]$ we are given a solution of \eqref{a6} such that $u\in E^L[0,T]$ with 
$$|u|_{L,0,1,T}\leq \mathbb{B}/2 \text{ and }|u|_{W^{1,\infty}([0,T]\times \mathbb{R}^d_+)}\leq R.$$
Then there is a time step $\Delta T$ depending on $\mathbb{B}$, $f$, and $g$, but not on $T$, such that $u$ extends to a solution on $[0,\min(T+\Delta T,T_0)]$ and satisfies $|u|_{L,0,1,\min(T+\Delta T,T_0)}\leq \mathbb{B}$. 

\end{prop}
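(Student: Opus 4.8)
The plan is to continue the solution past time $T$ by restarting the initial boundary value problem \eqref{a6} at $t=T$, using the trace of the given solution as Cauchy data, and then to apply the local existence theorem, Proposition \ref{localex}, while checking that the continuation time it produces can be chosen independently of $T$. Concretely, set $v_0:=u(T,\cdot)$, $v_1:=u_t(T,\cdot)$, and let $\tilde f(\tilde t,x):=f(\tilde t+T,x)$, $\tilde g(\tilde t,x'):=g(\tilde t+T,x')$ be the time-shifted forcing terms. Since $L\geq[d/2]+8$ and $|u|_{L,0,1,T}\leq \mathbb{B}/2$, the Cauchy data obey $|v_0|_{H^{[d/2]+8}}+|v_1|_{H^{[d/2]+7}}\leq 2\,|u(T)|_{L,0,1}\leq \mathbb{B}$, while $|u|_{W^{1,\infty}([0,T]\times\mathbb{R}^d_+)}\leq R$ gives $|v_1|_{L^\infty}+|\overline{D}_x v_0|_{L^\infty}\leq 2R$ (admissible for Proposition \ref{localex}, whose hypothesis (b) tolerates $2R$ in place of $R$ since Assumption \ref{a5a}(c) holds on $|U|\leq 4R$). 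The shifted forcing has the regularity required in \eqref{a7}(a), inherited from that of $f$ and $g$, and its norms are bounded uniformly in $T$ by their values on all of $[0,T_0]$: $|\tilde f|_{[d/2]+6,0,1,T_0-T}\leq |f|_{[d/2]+6,0,1,T_0}=:C_f$ and $\langle\Lambda^{1/2}_{x'}\tilde g\rangle_{[d/2]+6,0,1,T_0-T}\leq\langle\Lambda^{1/2}_{x'}g\rangle_{[d/2]+6,0,1,T_0}=:C_g$. Hence the hypotheses of Proposition \ref{localex} hold with the $T$-independent constant $\mathbb{B}':=\mathbb{B}+C_f+C_g$.

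Two points then require attention. First, the corner compatibility conditions of order $L-2$ for the restarted problem at $\{\tilde t=0,\ x_d=0\}$ are automatically satisfied: because $u\in E^L[0,T]$ is a genuine solution of \eqref{a6} on $[0,T]\times\mathbb{R}^d_+$, differentiating the interior equation and the boundary relation in $t$ to order $\leq L-2$, restricting to $x_d=0$, and passing to the one-sided limit $t\to T^-$ (legitimate by the $E^L$ regularity) yields precisely the identities among $\overline{D}^\alpha v_0$, $\overline{D}^\alpha v_1$, and the $t$-derivatives of $f$ and $g$ at $t=T$ that the compatibility conditions demand. Second — and this is the real point — Proposition \ref{localex} as stated only gives a continuation time strictly below the length $T_0-T$ of the interval on which the restarted problem is posed, which degenerates as $T\uparrow T_0$. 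I would remove this defect by first extending $f$ and $g$ from $[0,T_0]$ to the fixed larger interval $[0,T_0+1]$ by a bounded linear extension in the norms of \eqref{a7}(a) — no compatibility at $t=T_0$ is needed, since the restart corner sits at $t=T<T_0$ — so that every restarted problem is posed on an interval of length $\geq 1$. Proposition \ref{localex} then supplies a continuation time $T_1=T_1(\mathbb{B}')>0$ that is bounded below by a constant depending only on $\mathbb{B}'$ (hence only on $\mathbb{B}$, $f$, $g$) and not on $T$.

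Given these, Proposition \ref{localex} yields a unique solution of the restarted problem on $[0,T_1]$; shifting it back by $T$ and gluing to $u$ along $\{t=T\}$ — where the two pieces share all $t$-derivatives of order $\leq L$, by repeatedly differentiating the interior equation and using that the lower-order data agree — produces an $E^L$ solution of \eqref{a6} on $[0,T+T_1]$ extending $u$. It remains to upgrade $E^L$ membership to the quantitative bound $|u|_{L,0,1,\cdot}\leq\mathbb{B}$: the $E^L$-energy of the extended solution starts from a value $\leq\mathbb{B}/2$ at $t=T$ (again since $|u(T)|_{L,0,1}\leq\mathbb{B}/2$), and, by the a priori energy estimate underlying Proposition \ref{localex} — equivalently, by bootstrapping the linear hyperbolic and elliptic estimates of Propositions \ref{hypest} and \ref{ellest} for the quasilinear system, with constants controlled by $\mathbb{B}'$, $C_f$, $C_g$ — it obeys a Gronwall inequality, so it stays $\leq\mathbb{B}$ on a further interval of length $\Delta T_0=\Delta T_0(\mathbb{B},f,g)$, again independent of $T$. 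Taking $\Delta T:=\min(T_1(\mathbb{B}'),\Delta T_0)$, which depends only on $\mathbb{B}$, $f$, $g$, we obtain the extension of $u$ to $[0,\min(T+\Delta T,T_0)]$ with $|u|_{L,0,1,\min(T+\Delta T,T_0)}\leq\mathbb{B}$, and uniqueness is inherited from Proposition \ref{localex}. The main obstacle throughout is exactly this uniformity of $\Delta T$ in $T$ — resolved by the $T$-independence of $\mathbb{B}'$, of the extended forcing norms, and of the existence and Gronwall times they control — whereas the compatibility conditions and the gluing are routine because $u$ is already a bona fide solution on $[0,T]$.
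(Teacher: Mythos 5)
Your proposal is correct and takes essentially the approach the paper intends: the paper's only ``proof'' is Remark 3.8(2), which asserts without detail that the continuation result is a corollary of Proposition~\ref{localex} \emph{and its proof}, i.e.\ of the local existence theorem together with the underlying energy estimates in \cite{SN}. You have filled in exactly what that remark leaves implicit — restarting the problem at $t=T$ with Cauchy data $(u(T),u_t(T))$ and time-shifted forcing, observing that the corner compatibility conditions are automatic because $u$ is already a solution, extending $f,g$ past $T_0$ to keep the local-existence time bounded below as $T\uparrow T_0$, gluing the two $E^L$ pieces across $\{t=T\}$ by matching $t$-derivatives via the equation, and then invoking the a~priori energy estimate behind Proposition~\ref{localex} together with Gronwall to keep the $E^L$ norm below $\mathbb{B}$ for a $T$-independent further interval. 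The one place where you are a little informal — and where the paper itself has the same latent slippage — is the passage from $|u|_{W^{1,\infty}}\le R$ to the additive bound $|v_1|_{L^\infty}+|\overline{D}_xv_0|_{L^\infty}\le 2R$, which nominally exceeds the $\le R$ in hypothesis (b) of Proposition~\ref{localex}. Your remedy (the nonlinear coefficients remain in the good region $|U|\le 4R$ of Assumption~\ref{a5a}(c), so the proof of Proposition~\ref{localex} goes through with $R$ replaced by $2R$ at the cost of the time step) is the right repair, and it is consistent with how the paper actually uses Proposition~\ref{continuation} in the proof of Theorem~\ref{main}, where Remark~\ref{a8aa} already arranges $|u|_{W^{1,\infty}}<R$ strictly. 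Similarly, your appeal to ``the a priori energy estimate underlying Proposition~\ref{localex}'' for the quantitative bound $|u|_{L,0,1,\cdot}\le\mathbb{B}$ is exactly the step the paper outsources to \cite{SN}; without reproducing those estimates you cannot do more, and identifying that this is where the Gronwall clock comes from is the right call.
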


\begin{rem}     

1.   The compatibility conditions referred to in \eqref{a7}(b) are rather complicated to state and their precise form is not needed in this paper; they are stated on p. 6 of \cite{SN}.   

2.  Proposition \ref{continuation} is not stated explicitly in \cite{SN}, but it is a Corollary of Proposition \ref{localex} {and} its proof.   

3.  Theorem 2.4 of \cite{SN} deals with a wider class of systems than \eqref{a6}.

\end{rem}

\section{Main result}\label{mainR}

In this section we describe the main result relating approximate and exact solutions, Theorem \ref{main},  and give an outline of the main argument, deferring some of the proofs until section \ref{energyestimates}. 

Throughout the remainder of Part \ref{p2}  we make the following assumption about the objects appearing in the problem \eqref{apsys} satisfied by the approximate solution $u^\eps_a$.

\begin{ass}\label{ass1}
Suppose $M>\frac{d}{2}+2$, 
 let $T$ be fixed once and for all as the time of existence of the approximate solution $u^\eps_a$ for $\eps\in (0,1]$, and set  $\Omega:=(-\infty,T]\times \overline{\mathbb{R}}^d_+$. 
We suppose $s\geq [\frac{d}{2}]+6$ (here $[r]$ is the greatest integer $\leq r$), that $u^\eps_a\in E^{s+2}(-\infty,T]$ and vanishes in $t<0$,
 and that for some positive constants $A_1$, $A_2$ we have
\begin{align}\label{ass1a}
\begin{split}
&|\eps^\alpha\partial_{t,x}^\alpha u^\eps_a|_{L^\infty(\Omega)}\leq \eps^2 A_1\text{ for all }|\alpha|\leq s+2\\
&|R^\eps(t)|_{s+1,\eps}+\langle r^\eps(t)\rangle_{s+2,\eps}\leq A_2 \text{ for all }t\in [0,T],
\end{split}
\end{align}
where $R^\eps$, $r^\eps$ are as in \eqref{apsys}.

\end{ass}

Next we define an $\eps$-dependent ``energy" $\mathcal{E}^s_\eps(t)$ for which an estimate uniform with respect to $\eps$ is stated in Proposition \ref{mainestimate}.  
%The proof of that estimate is  carried out in section \ref{energyestimates}.

\begin{defn} [The energy $\mathcal{E}^s_\eps(t)$]\label{energy}
 For  $\nu^\eps$, $\omega^\eps$  as in \eqref{H1}, \eqref{H2}, \eqref{E} and $t\in \mathbb{R}$, let $\mathcal{E}^s_\eps(t)=|\eps^2 \overline{D}^2_x \nu(t)|_{s,\eps}+|\eps \overline{D} \nu(t)|_{s,\eps}+|\eps^2 \overline{D} \omega(t) |_{s,\eps}$.  Here and below $s\geq [\frac{d}{2}]+6$ as in Assumption \ref{ass1}. 

\end{defn}

\begin{rem}\label{howtoapply}
 Propositions \ref{localex} or \ref{continuation} are stated for the system \eqref{a6} and thus apply to \eqref{mainproblem}.  We  want to apply these propositions to the systems \eqref{H1}, \eqref{H2}, \eqref{E} for \emph{fixed} $\eps$.    To do this, we use the simple observation that if $u$ is a solution of \eqref{mainproblem}, then the functions $\nu$, $\omega$ defined by
 \begin{align}\label{a8ac}
 \nu=\eps^{-M}(u-u_a),\;\; \omega=\partial_t\nu
 \end{align}
 satisfy \eqref{H1}, \eqref{H2}, and \eqref{E}.   Moreover, since $u_a\in E^{s+2}(-\infty,T]$ it is clear that 
 \begin{align}\label{a8ab}
 u\in E^{s+2}[0,T_\eps]\Rightarrow (\overline{D}^2_x\nu, \overline{D}\nu, \overline{D}\omega)\in E^s[0,T_\eps],
 \end{align}
and it follows from directly from the definition of the spaces $E^s[0,T_\eps]$ that the converse of \eqref{a8ab} is also true when \eqref{a8ac} holds.
\end{rem}

The following lemma is needed among other things for our applications of Propositions \ref{localex} and \ref{continuation} as well as the linear estimates of Propositions \ref{hypest} and \ref{ellest}.

\begin{lem}\label{a8a}

Suppose $T_\eps\in (0,T]$ and that for some $C_1>0$  we have $\mathcal{E}^s_\eps(t)\leq C_1$ for all $t\in [0,T_\eps]$.    Then \footnote{The  proof of \eqref{a8}(b) is contained in step \textbf{2} of the proof of Lemma \ref{icomh1}.   Part (a) is immediate from \eqref{ass1a}.} 
\begin{align}\label{a8}
\begin{split}
&(a)\;|u_a|_{W^{1,\infty}(\Omega)}\leq \eps A_1 \\
%&|\partial^{\gamma_k}D_x^m v(t)|_{s-|\gamma_k|,\eps}\leq C_1\eps^{M-|\gamma_k|-m}\\
%&|\partial^{\zeta}D_x^m \nu(t)|_{s-|\zeta|,\eps}\leq \mathcal{E}^s_\eps(t)\eps^{-|\zeta|-m}.
&(b)\;|v|_{W^{1,\infty}([0,T_\eps]\times \mathbb{R}^+_d)}\lesssim C_1\eps^{M-\frac{d}{2}-1}.
\end{split}
\end{align}
\end{lem}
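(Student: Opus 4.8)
\textbf{Proof proposal for Lemma \ref{a8a}.}

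The plan is to derive both bounds from the definition of $\mathcal{E}^s_\eps(t)$ together with the Sobolev estimate of Proposition \ref{sob} and the $L^\infty$ bounds on $u^\eps_a$ from Assumption \ref{ass1}. Part (a) is immediate: by the first line of \eqref{ass1a} with $|\alpha|=0$ and $|\alpha|=1$ we have $|u_a|_{L^\infty(\Omega)}\leq \eps^2 A_1$ and $|\partial_{t,x}u_a|_{L^\infty(\Omega)}\leq \eps A_1$, hence $|u_a|_{W^{1,\infty}(\Omega)}\leq \eps A_1$ for $\eps\in(0,1]$ (absorbing the $\eps^2$ term into $\eps A_1$). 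So the substantive content is part (b).

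For (b), recall $v=\eps^M\nu$, so $|v|_{W^{1,\infty}}\leq \eps^M(|\nu|_{L^\infty}+|\overline{D}\nu|_{L^\infty})$, and it suffices to bound $|\nu(t)|_{L^\infty(x)}$ and $|\partial_{t,x}\nu(t)|_{L^\infty(x)}$ on $[0,T_\eps]$ by $C\eps^{-\frac{d}{2}-1}C_1$ for an $\eps$-independent $C$. Here I would invoke Proposition \ref{sob}: since $s\geq[\frac d2]+6>\frac d2$, part (a) of that proposition gives $|\nu(t)|_{L^\infty(x)}\lesssim \eps^{-\frac d2}|\nu(t)|_{s,\eps}$, and part (b) with $|\alpha|=1$ gives $|\partial_{t,x}\nu(t)|_{L^\infty(x)}\lesssim \eps^{-\frac d2-1}|\nu(t)|_{s+1,\eps}$. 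The issue is that $\mathcal{E}^s_\eps(t)$ controls $|\eps\overline{D}\nu(t)|_{s,\eps}$ and $|\eps^2\overline{D}^2_x\nu(t)|_{s,\eps}$, but not $|\nu(t)|_{s+1,\eps}$ directly — so I need to relate these. From $|\eps\overline D\nu(t)|_{s,\eps}\leq C_1$ one reads off, for every multi-index with $|\alpha|\leq s$ and every first-order derivative $\partial$, that $\eps^{1+|\alpha|}|\partial_{t,x}^\alpha\partial\nu(t)|_{L^2}\leq C_1$; combined with the $L^2$-in-$x$ bound on $\nu$ itself coming from $\eps|\nu|\le |\eps\overline D\nu|$ (the $\overline D^1$ includes the zeroth-order term), this yields $\eps^{|\alpha|}|\partial_{t,x}^\alpha\nu(t)|_{L^2}\lesssim C_1/\eps$ for $|\alpha|\leq s+1$ (the worst case being $|\alpha|=s+1$, one of the derivatives being peeled off by $\overline D$). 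Hence $|\nu(t)|_{s+1,\eps}\lesssim C_1/\eps$ and also $|\nu(t)|_{s,\eps}\lesssim C_1/\eps \leq C_1\eps^{-1}$. Feeding these into the two Sobolev estimates gives $|\nu(t)|_{L^\infty(x)}\lesssim \eps^{-\frac d2}\cdot C_1\eps^{-1}$ and $|\partial_{t,x}\nu(t)|_{L^\infty(x)}\lesssim \eps^{-\frac d2-1}\cdot C_1\eps^{-1}$; the second dominates, so $|\nu|_{W^{1,\infty}([0,T_\eps]\times\mathbb R^+_d)}\lesssim C_1\eps^{-\frac d2-2}$, and multiplying by $\eps^M$ yields $|v|_{W^{1,\infty}}\lesssim C_1\eps^{M-\frac d2-2}$.

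Since the claimed bound in \eqref{a8}(b) is the better $\eps^{M-\frac d2-1}$, the crude counting above loses one power of $\eps$, so the real work — and the main obstacle — is to be more careful about which derivatives $\mathcal{E}^s_\eps(t)$ actually controls. The point is that $|\eps^2\overline D^2_x\nu(t)|_{s,\eps}\leq C_1$ controls $\eps^{2+|\alpha|}|\partial^\alpha_{t,x}\partial_x^2\nu|_{L^2}$, i.e. after dividing, $\eps^{|\alpha|}|\partial^\alpha_{t,x}\partial_x^2\nu|_{L^2}\lesssim C_1\eps^{-2}$ for $|\alpha|\le s$; this is the same magnitude as what we want for the $L^\infty$ estimate of $\overline D\nu$ after Sobolev embedding with $s$ derivatives in $x$ — and the extra $x$-derivatives let us apply Proposition \ref{sob}(b) without going up to $s+1$ derivatives in $(t,x)$. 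Concretely, I would estimate $|\partial_{t,x}\nu|_{L^\infty}$ by embedding in the $x$-variables: $|\partial_{t,x}\nu(t)|_{L^\infty(x)}\lesssim \eps^{-\frac d2}|\partial_{t,x}\nu(t)|_{s,\eps}$, and then note that $|\partial_{t,x}\nu(t)|_{s,\eps}$ is controlled by a combination of $|\eps\overline D\nu(t)|_{s,\eps}$ (for the time derivative, which gives $C_1/\eps$) and, for spatial derivatives beyond the first, by $|\eps^2\overline D^2_x\nu(t)|_{s,\eps}/\eps$ — wait, this still gives $\eps^{-1}$. The honest resolution, which I would carry out in detail, is that this lemma as stated is proved in ``step \textbf{2} of the proof of Lemma \ref{icomh1}" (per the footnote), where the extra power is recovered by using that $\nu$ vanishes for $t<0$ together with an integration in $t$ from $0$, i.e. $|\nu(t)|\le \int_0^t|\partial_t\nu|\,ds \le T\,|\omega|_{L^\infty}$, and $|\eps^2\overline D\omega(t)|_{s,\eps}\le C_1$ gives $|\omega(t)|_{L^\infty}\lesssim \eps^{-\frac d2-2}C_1$, which is no better. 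So the genuine mechanism is the interpolation/peeling structure of $\mathcal{E}^s_\eps$ itself: for $\overline D\nu$ one uses $|\eps\overline D\nu|_{s,\eps}$ giving a factor $\eps^{-1}$, then Sobolev in $x$ costs $\eps^{-d/2}$, for a net $\eps^{M-1-d/2}$ after the $\eps^M$; for $\nu$ itself one uses the same and does better. I would present this carefully, tracking that the $W^{1,\infty}$ norm of $v$ — which only needs $\nu$ and its \emph{first} derivatives — is governed by the $|\eps\overline D\nu|_{s,\eps}$ piece of the energy alone, yielding exactly $C_1\eps^{M-\frac d2-1}$ via Proposition \ref{sob}(a) applied to each component of $\overline D\nu$. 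The only real care needed is the bookkeeping of which norm in $\mathcal{E}^s_\eps$ supplies which derivative, and verifying $s>\frac d2$ so that Proposition \ref{sob} applies.
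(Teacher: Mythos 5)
Your final paragraph arrives at the paper's actual argument (step \textbf{2} of the proof of Lemma \ref{icomh1}, equation \eqref{b6aa}): from $|\eps\,\overline{D}\nu(t)|_{s,\eps}\le C_1$ one has $|\overline{D}v(t)|_{s,\eps}=\eps^M|\overline{D}\nu(t)|_{s,\eps}\le C_1\eps^{M-1}$, and then Proposition \ref{sob}(a) applied to each component of $\overline{D}v$ gives $|(v,Dv)(t)|_{L^\infty(x)}\lesssim\eps^{-d/2}|\overline{D}v(t)|_{s,\eps}\lesssim C_1\eps^{M-\frac d2-1}$, uniformly in $t\in[0,T_\eps]$. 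So the conclusion is correct and the mechanism matches the paper exactly. That said, the path you take to get there is unnecessarily roundabout: the opening ``crude counting'' via Proposition \ref{sob}(b) isn't a nearly-right version of the argument that just needs tightening --- it's simply the wrong tool, because it insists on expressing the $L^\infty$ bound on $\partial_{t,x}\nu$ through a $|\nu|_{s+1,\eps}$ norm, which $\mathcal{E}^s_\eps$ does not control without losing a factor of $\eps^{-1}$; and the subsequent speculation about integrating in $t$ from the vanishing initial data is a dead end that plays no role here (you correctly note it gives nothing better). The clean argument never invokes Proposition \ref{sob}(b) at all: since $W^{1,\infty}$ only requires $v$ and its first derivatives, one views $\overline{D}v$ as a list of functions, each of which is in the $|\cdot|_{s,\eps}$ class at size $C_1\eps^{M-1}$ courtesy of the $|\eps\,\overline{D}\nu|_{s,\eps}$ piece of the energy, and then Sobolev (a) alone finishes. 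Cutting the two false starts would leave a proof identical in substance to the paper's.
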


\begin{rem}\label{a8aa}
Observe that  if $\mathcal{E}^s_\eps(t)\leq C_1$ for all $t\in [0,T_\eps]$ and $\eps\in (0,\eps_1]$, we can use Lemma \ref{a8a} to insure, by reducing $\eps_1$ if necessary, that for $\eps\in (0,\eps_1]$:
$$
 |u_a|_{W^{1,\infty}(\Omega)}<R/2 \text{ and }|v|_{W^{1,\infty}([0,T_\eps]\times \mathbb{R}^+_d)}<R/2,
$$
where $R$ is the constant in Assumption \ref{a5a}(a).
\end{rem}

The following theorem is the main result of this part.

\begin{thm}\label{main}

Consider the nonlinear hyperbolic Neumann problem \eqref{mainproblem} under Assumption \ref{a5a} and suppose that $u^\eps_a$ is an approximate solution satisfying Assumption \ref{ass1}.  
% let $s\geq [\frac{d}{2}]+3$,  and  let $T$ be the time of existence of the approximate solution.   

(a) There exist constants $\eps_3>0$ and $C_2>0$ such that for $\eps\in [0,\eps_3]$ the coupled systems \eqref{H1}, \eqref{H2}, \eqref{E} have a unique solution on the time interval $[0,T]$ which satisfies
\begin{align}\label{bound}
\mathcal{E}^s_\eps(t)\leq C_2 \text{ for all }t\in[0,T].
\end{align}

(b)   For $\eps\in [0,\eps_3]$ the problem \eqref{mainproblem} has a unique exact solution $u^\eps\in E^{s+2}(-\infty,T]$  given by 
$$
u^\eps=u^\eps_a+v^\eps=u^\eps_a+\eps^M\nu^\eps,
$$
where 
\begin{align}
|\eps^2 \overline{D}^2_x v^\eps(t)|_{s,\eps}+|\eps \overline{D} v^\eps(t)|_{s,\eps}+|\eps^2 \overline{D} \partial_t v^\eps(t) |_{s,\eps}\leq \eps^M C_2 \text{ for }t\in [0,T]\text{ and }\eps\in (0,\eps_3].
\end{align}
In particular this implies
$|v^\eps|_{W^{1,\infty}(\Omega)}\leq C_2\eps^{M-\frac{d}{2}-1}$.
 
\end{thm}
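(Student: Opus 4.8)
The plan is to prove part (a) by a continuous induction (bootstrap) argument on the time of existence, and then deduce part (b) essentially for free from the relation $u^\eps = u^\eps_a + \eps^M\nu^\eps$. First I would fix a constant $C_2$ — to be chosen large, depending only on $T$, $A_1$, $A_2$, the structural constants $\delta_1,\delta_2$ of Assumption \ref{a5a}, and the various Sobolev/product/trace constants from section 3 — and consider the set of times $\tau\in(0,T]$ for which a solution $(\nu^\eps,\omega^\eps)$ of the coupled systems \eqref{H1}, \eqref{H2}, \eqref{E} exists on $[0,\tau]$ and satisfies the \emph{a priori} bound $\mathcal{E}^s_\eps(t)\le C_2$ for $t\in[0,\tau]$. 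I would show this set is nonempty (for $\eps$ small, using Proposition \ref{localex} applied via Remark \ref{howtoapply} — the initial data vanish, so compatibility conditions hold trivially and the smallness condition \eqref{a7}(c) is met since the forcing terms $\mathbb{F}_j,\mathbb{G}_j$ are controlled by $A_1,A_2$ once $\eps$ is small, by Lemma \ref{a8a} and Remark \ref{a8aa}), that it is open (by Proposition \ref{continuation}), and — the crux — that it is closed, i.e. that the bound $\mathcal{E}^s_\eps(t)\le C_2$ actually \emph{improves} to $\mathcal{E}^s_\eps(t)\le C_2/2$ on the whole interval, for $\eps\le\eps_3$. Closedness then forces $\tau=T$.

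The closedness step is where all the work is, and it rests on Proposition \ref{mainestimate} (proved in section \ref{energyestimates}), whose proof I would organize as follows. Assuming $\mathcal{E}^s_\eps(t)\le C_2$ on $[0,\tau]$, first invoke Lemma \ref{a8a} so that $|u_a|_{W^{1,\infty}},|v|_{W^{1,\infty}}<R/2$, hence the coefficients $A_{ij}(D_x(u_a+v))$ satisfy Assumption \ref{assh} with constants controlled uniformly in $\eps$ — crucially their Lipschitz norms in the rescaled variables are $O(1)$, which is exactly why Proposition \ref{hypest} was upgraded to depend on $W^{1,\infty}$ rather than $C^{1,\mu}$. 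Then: (i) apply the hyperbolic estimate of Proposition \ref{hypest} to the $\omega$-system \eqref{H2}, after applying $\eps^2\partial^\alpha_{t,x}$ for $|\alpha|\le s$ and commuting, to bound $|\eps^2\overline D\omega(t)|_{s,\eps}^2$ by a time integral of the forcing norms plus commutator terms; (ii) apply the hyperbolic estimate of Proposition \ref{hypest} to the $\nu$-system \eqref{H1} to control $|\eps\overline D\nu(t)|_{s,\eps}$; (iii) apply the elliptic estimate of Proposition \ref{ellest} to \eqref{E} to recover $|\eps^2\overline D^2_x\nu(t)|_{s,\eps}$, gaining back the half-derivative lost on the boundary in the hyperbolic estimates — here the term $-\partial_t\omega+\lambda\int_0^t\omega\,ds$ on the right of \eqref{E}(a) is estimated using the already-controlled $\omega$ quantities. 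The forcing terms $\mathbb{F}_j,\mathbb{G}_j$ are handled by Taylor expansion (writing the nonlinear remainders as in the $E(\overline D^2_xv),E_b(\overline D_xv)$ formulas) together with the product estimate Proposition \ref{prod}, the Sobolev estimate Proposition \ref{sob}, and the trace estimate Proposition \ref{trace}; the quadratic structure in $D_xv=\eps^M D_x\nu$ produces an extra factor $\eps^{M}$ (or a positive power after accounting for the $\eps^{-M}$ prefactor and the $\eps$-weights), which is what makes the nonlinear contributions \emph{absorbable}. Assembling (i)–(iii) yields an inequality of the schematic form
\begin{align}\label{schem}
\mathcal{E}^s_\eps(t)^2 \le C\,A_2^2\,T + C\,\delta(\eps)\,\sup_{[0,t]}\mathcal{E}^s_\eps^2 + C\int_0^t \mathcal{E}^s_\eps(\sigma)^2\,d\sigma,
\end{align}
where $\delta(\eps)\to0$ as $\eps\to0$ collects all the nonlinear self-interaction terms. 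Choosing $C_2^2 > 4C A_2^2 T e^{2CT}$, then $\eps_3$ small enough that $C\delta(\eps_3)<1/4$ and that all the smallness requirements above hold, Grönwall's inequality applied to \eqref{schem} gives $\mathcal{E}^s_\eps(t)\le C_2/2$ on $[0,\tau]$, closing the bootstrap.

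The main obstacle I anticipate is the bookkeeping of $\eps$-weights in step (iii) and in the nonlinear forcing estimates: one must verify that every term arising from the elliptic gain and from the Taylor remainders genuinely carries either a clean $O(1)$ bound in terms of $\mathcal{E}^s_\eps$ or a small positive power of $\eps$, with no uncancelled negative power of $\eps$ surviving — this is precisely the ``estimate without loss'' feature the authors emphasize, and it forces the somewhat delicate choice of the three-part energy $\mathcal{E}^s_\eps(t)=|\eps^2\overline D^2_x\nu|_{s,\eps}+|\eps\overline D\nu|_{s,\eps}+|\eps^2\overline D\omega|_{s,\eps}$, with its particular pattern of weights, rather than a naive single norm. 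A secondary subtlety is ensuring the commutator errors generated when differentiating the variable-coefficient systems \eqref{H1}, \eqref{H2} are controlled by $\mathcal{E}^s_\eps$ itself (and not a higher norm), which is why the coefficients are assumed only $W^{1,\infty}$-regular uniformly and why $s$ is taken $\ge[d/2]+6$. Once part (a) is established, part (b) follows: set $u^\eps:=u^\eps_a+\eps^M\nu^\eps$; by the converse direction in Remark \ref{howtoapply} this $u^\eps$ solves \eqref{mainproblem} and lies in $E^{s+2}(-\infty,T]$, uniqueness comes from Proposition \ref{localex}, the displayed bound on $v^\eps=\eps^M\nu^\eps$ is just \eqref{bound} multiplied by $\eps^M$, and the $L^\infty$ consequence is Lemma \ref{a8a}(b) with $C_1=C_2$.
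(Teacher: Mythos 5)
Your proposal is correct and follows essentially the same route as the paper: the uniform a priori estimate (Proposition~\ref{mainestimate}) gives the improvement $\mathcal{E}^s_\eps\le C_2\Rightarrow\mathcal{E}^s_\eps\le C_2/2$ (Proposition~\ref{HimpliesC}), which combined with the continuation result Proposition~\ref{continuation} drives a continuous induction (phrased in the paper as a contradiction argument with $T^*_\eps=\sup\{T':\mathcal{E}^s_\eps\le C_2\text{ on }[0,T']\}$ rather than your open/closed packaging), and part~(b) is read off from part~(a) via Remark~\ref{howtoapply} and Lemma~\ref{a8a}(b). Your account of the hyperbolic-then-elliptic estimate triage for \eqref{H1}, \eqref{H2}, \eqref{E}, the role of the $W^{1,\infty}$ upgrade in Proposition~\ref{hypest}, the Grönwall step, and the crucial observation that $B_1,B_2$ must not depend on the bootstrap constant $C_2$ all match the paper's treatment.
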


The proof of Theorem \ref{main} is based on the following \emph{a priori} estimate for the coupled systems, whose proof is carried out in section \ref{energyestimates}.

\begin{prop}\label{mainestimate}
%Let $s>...$ and let $T$ (which is independent of $\eps$)   be the time of existence of the approximate solution $u^\eps_a$.  
%Let $M$, $s$, and $T$ be as in Theorem \ref{main}. 
Suppose   $C_1$ and $\eps_1$ are positive constants and that for  $T_\eps\in (0,T]$ and $\eps\in (0,\eps_1]$, we are given a solution $(\nu^\eps,\omega^\eps)$ of 
the three coupled systems on $[0,T_\eps]$ which satisfies  $\mathcal{E}^s_\eps(t)\leq C_1$ for all $t\in [0,T_\eps]$.   Then there exist positive constants $B_1=B_1(T,A_1,K_1,K_2,\lambda)$\footnote{The constants $K_1$, $K_2$, and $\lambda$ appear  in the linear estimates \eqref{a3}, \eqref{a5}.}, $B_2=B_2(A_1,A_2,K_2)$, and $\eps_2=\eps_2(C_1,A_1,K_1,K_2,\lambda)$ such that for $\eps\in [0,\eps_2]$  and all $t \in [0,T_\eps]$ 
\begin{align}\label{apriori}
[\mathcal{E}^s_\eps(t)]^2\leq B_1\int ^t_0 \left([\mathcal{E}^s_\eps(\sigma)]^2+\eps^2|R^\eps(\sigma)|^2_{s+1,\eps}+\langle r^\eps(\sigma)\rangle ^2_{s+2,\eps}\right)d\sigma+\eps^2B_2.
\end{align}

\end{prop}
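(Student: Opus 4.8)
The plan is to run a single energy estimate for the three coupled systems \eqref{H1}, \eqref{H2}, \eqref{E} simultaneously: the hyperbolic estimate of Proposition \ref{hypest} controls the $\nu$-block $|\eps\overline{D}\nu(t)|_{s,\eps}$ from \eqref{H1} and the $\omega$-block $|\eps^2\overline{D}\omega(t)|_{s,\eps}$ from \eqref{H2}, while the elliptic estimate of Proposition \ref{ellest} controls the remaining block $|\eps^2\overline{D}^2_x\nu(t)|_{s,\eps}$ from \eqref{E}; the point of coupling them is that the half-derivative loss on the boundary in Proposition \ref{hypest} is recovered from the two extra spatial derivatives of $\nu$ that the elliptic estimate produces. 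Everything is done with the $\eps$-weighted operators $\eps^{|\alpha|}\partial^\alpha_{t,x}$, $|\alpha|\le s$. Before applying the linear estimates I would reduce $\eps_1$ so that the coefficients $A_{ij}(D_x(u_a+v))$ of \eqref{H1}, \eqref{H2} and $A_{ij}(D_x u_a)$ of \eqref{E} satisfy Assumption \ref{assh} with $\eps$-independent constants: Lemma \ref{a8a} and Remark \ref{a8aa} keep $|D_x(u_a+v)|<R$, so Assumption \ref{a5a}(c) supplies the coercivity, and by \eqref{ass1a} every $\eps$-weighted derivative of $u_a$ of order $\ge 1$ is $O(\eps)$ while every $\eps$-weighted derivative of $v=\eps^M\nu$ is $O(\eps^{M-\frac{d}{2}-2})$ when $\mathcal{E}^s_\eps(t)\le C_1$ (using $M>\frac{d}{2}+2$ and Proposition \ref{sob}), so the coefficients are uniformly Lipschitz and $K_1,K_2,\lambda_0$ may be taken independent of $\eps$.

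Next I would commute $\eps^{|\alpha|}\partial^\alpha_{t,x}$, with the appropriate further power of $\eps$, through each system, apply the corresponding linear estimate to $\partial^\alpha$ of the unknown, and take $\sup_{|\alpha|\le s}$. From \eqref{H1} this yields $|\eps\overline{D}\nu(t)|_{s,\eps}^2\lesssim K_1\int_0^t(|\mathbb{F}_1(\sigma)|_{s,\eps}^2+\langle\Lambda^{1/2}_{x'}\mathbb{G}_1(\sigma)\rangle_{s,\eps}^2+\mathrm{comm})\,d\sigma$, from \eqref{H2} the analogue for $|\eps^2\overline{D}\omega(t)|_{s,\eps}^2$, and from \eqref{E} a \emph{pointwise}-in-$t$ bound $|\eps^2\overline{D}^2_x\nu(t)|_{s,\eps}\lesssim K_2(\eps^2|\mathbb{F}_3(t)|_{s,\eps}+\eps^2\langle\Lambda^{1/2}_{x'}\mathbb{G}_3(t)\rangle_{s,\eps}+\mathrm{comm})$. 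Each commutator pairs an $\eps$-weighted derivative of a coefficient (which is $O(\eps)$) with a lower $\eps$-weighted derivative of $\nu$ or $\omega$, hence is $O(\eps\,\mathcal{E}^s_\eps)$; the $-\partial_t\omega$ and $\lambda\int_0^t\omega$ in $\mathbb{F}_3$ are absorbed respectively by the $\omega$-block of $\mathcal{E}^s_\eps(t)$ and by $\lambda\int_0^t\mathcal{E}^s_\eps$.

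Estimating the forcing is the heart of the matter. The affine-in-data pieces, after the cancellations $\eps^{-M}\mathcal{F}^\eps=R^\eps$ and $\eps^{-M}\mathcal{G}^\eps=r^\eps$, produce precisely the terms in \eqref{apriori}: one derivative is lost on $R^\eps$ through $\partial_t\mathcal{F}^\eps$ in $\mathbb{F}_2$ and somewhat more than one on $r^\eps$ through $\partial_t\mathcal{G}^\eps$ and the $\Lambda^{1/2}_{x'}$ in $\mathbb{G}_2$ — whence the orders $s+1$ and $s+2$ in Assumption \ref{ass1} — and the single pointwise term $\eps^2|R^\eps(t)|_{s+1,\eps}$ from $\mathbb{F}_3$ is swept into $\eps^2 B_2$ using $|R^\eps(t)|_{s+1,\eps}\le A_2$. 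The nonlinear remainders are controlled with the product estimate Proposition \ref{prod} in its $L^\infty\times L^2$ form (forced by the fact that $u_a$ is controlled only in $\eps$-weighted $L^\infty$), Proposition \ref{sob}, and Proposition \ref{trace} on the boundary; with $v=\eps^M\nu$, the second-order Taylor terms $E,E_b$ coming from $\int_0^1(1-\theta)(d^2A_i)(D_xv,D_xv)\,d\theta$ are $O(\eps^{M-\frac{d}{2}-2}(\mathcal{E}^s_\eps)^2)$ and the terms $H,H_b$, linear in $D_xv$ with coefficient $\partial_j\partial_t u_a$, are $O(\eps A_1\mathcal{E}^s_\eps)$. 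The decisive observation that makes the derivative count close is that $\mathcal{E}^s_\eps$ controls \emph{all} $\eps$-weighted derivatives of $\nu$ up to order $s+2$ — purely spatial ones through the $\overline{D}^2_x\nu$-block, those involving a $\partial_t$ through the $\overline{D}\omega$-block with $\omega=\partial_t\nu$ — which is exactly what is needed to apply Proposition \ref{prod} to the quadratic terms at order $s$ and to let Proposition \ref{trace} trade one interior spatial derivative for the boundary regularity demanded by Proposition \ref{hypest}.

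Finally I would add the three estimates. Squaring the elliptic bound and inserting the time-integrated hyperbolic bounds for the $\nu$- and $\omega$-blocks into it, one reaches \eqref{apriori} after choosing $\eps_2(C_1,\dots)$ small enough to bound $\eps^{2M-d-2}(\mathcal{E}^s_\eps)^4\le(\mathcal{E}^s_\eps)^2$ and to absorb the pointwise terms $\eps A_1\mathcal{E}^s_\eps(t)$ and $\eps^{M-\frac{d}{2}}(\mathcal{E}^s_\eps(t))^2$ into the left side, and after collecting the leftover pointwise data into $\eps^2 B_2$, with $B_1$ depending only on $T,A_1,K_1,K_2,\lambda$ and $B_2$ on $A_1,A_2,K_2$. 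The main obstacle is exactly this ``closing without loss'': arranging that the right-hand side of \eqref{apriori} involves $\mathcal{E}^s_\eps$ at order $s$ and nothing higher in spite of the half-derivative loss in Proposition \ref{hypest}. This is what dictates the three-block form of $\mathcal{E}^s_\eps$ (especially the $\omega=\partial_t\nu$ block) and the precise division of labor among the three linear estimates; a secondary subtlety is keeping $B_1$ independent of $C_1$, which is possible only because every genuinely nonlinear contribution carries a positive power of $\eps$ and so can be hidden in the $C_1$-dependent smallness of $\eps_2$.
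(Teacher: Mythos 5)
Your overall strategy — applying the hyperbolic estimate of Proposition~\ref{hypest} to \eqref{H1} and \eqref{H2}, the elliptic estimate of Proposition~\ref{ellest} to \eqref{E}, and coupling the three so that the elliptic gain of two spatial derivatives recoups the boundary loss in the hyperbolic estimate — is exactly the paper's, and you correctly identify the two subtleties: that $\mathcal{E}^s_\eps$ must contain an $\omega=\partial_t\nu$ block so that the $-\partial_t\omega$ and $\lambda\int_0^t\omega$ terms in $\mathbb{F}_3$ close, and that $B_1$ must not inherit $C_1$, which forces every genuinely nonlinear contribution to carry a positive power of $\eps$ so it can be hidden in $\eps_2(C_1)$. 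You also correctly trace the $s+1$ and $s+2$ in the $R^\eps,r^\eps$ norms to the $\partial_t$ and $\Lambda^{1/2}_{x'}$ appearing in $\mathbb{F}_2,\mathbb{G}_2$.

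There is, however, a concrete gap. You propose to ``commute $\eps^{|\alpha|}\partial^\alpha_{t,x}$ through each system, apply the corresponding linear estimate to $\partial^\alpha$ of the unknown, and take $\sup_{|\alpha|\le s}$,'' treating all space--time derivatives uniformly. This cannot work when $\partial^\alpha$ contains a normal derivative $\partial_{x_d}$: the commuted functions $\partial_{x_d}^k\partial^{\alpha'}\nu$ do not satisfy any boundary value problem of the form \eqref{a2} or \eqref{a4}, since there is no boundary condition for $\partial_{x_d}\nu|_{x_d=0}$ to which Propositions~\ref{hypest} or \ref{ellest} could be applied. The paper therefore splits the argument: section~\ref{energyestimates} first proves the tangential bound \eqref{b42} for $\mathcal{E}^s_{\eps,tan}$ by commuting \emph{only} $\partial^\alpha_{t,x'}$ through \eqref{H1}, \eqref{H2}, \eqref{E}, and then proves \eqref{c4} by a separate induction on the number of normal derivatives (Proposition~\ref{c3}), in which normal derivatives of $\nu,\omega$ are controlled not by the linear estimates but by ``using the equation,'' exploiting that $A_{dd}$ is nonsingular to solve for $\partial_d^2\nu$ from \eqref{H1}(a) as in \eqref{c8}. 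That inductive step is where the terms $|\eps^2\overline{D}\omega|_{E^s_{\eps,s_0+1}}$ and $|\eps\overline{D}\nu|_{E^s_{\eps,s_0+1}}$ are bounded by $\mathcal{E}^s_{\eps,s_0}$ by trivially swapping derivatives, and where the nontrivial block $|\eps^2\overline{D}^2_x\nu|_{E^s_{\eps,s_0+1}}$ needs \eqref{c8}. Your proposal omits this entire mechanism, and without it the case $\alpha_d\ge1$ is not covered; note also that the constant $B_1$ you obtain should, at that stage, depend only on $A_1,K_1,K_2,\lambda,T$ and \emph{not} on $C_1$, which the paper maintains carefully through both the tangential and normal stages.
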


This proposition will allow us to apply a continuous induction argument to prove Theorem \ref{main}.

\begin{prop}\label{HimpliesC}
%Let $s$ and  $T$ be as in Proposition \ref{mainestimate}.
%For the continuous induction we want to show that 
There exist constants $C_2>0$ and $\eps_3>0$   such that for $\eps\in (0,\eps_3]$ and $T_\eps\in (0,T]$, if $(\nu^\eps,\omega^\eps)$ is a solution of 
the three coupled nonlinear systems \eqref{H1}, \eqref{H2}, \eqref{E} which satisfies $\mathcal{E}^s_\eps(t)\leq C_2$ for all $t\in [0,T_\eps]$, then in fact $\mathcal{E}^s_\eps(t)\leq C_2/2$ for all $t\in [0,T_\eps]$.

\end{prop}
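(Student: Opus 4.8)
The plan is to derive Proposition \ref{HimpliesC} from the \emph{a priori} estimate of Proposition \ref{mainestimate} by a Gronwall argument combined with a judicious choice of the constants $C_2$ and $\eps_3$. First I would fix the constants $B_1 = B_1(T,A_1,K_1,K_2,\lambda)$, $B_2 = B_2(A_1,A_2,K_2)$, and the threshold $\eps_2 = \eps_2(C_1,A_1,K_1,K_2,\lambda)$ supplied by Proposition \ref{mainestimate}; note that $B_1$ and $B_2$ do not depend on the bound $C_1$ on the energy, which is the crucial structural feature. Now suppose we are given a solution with $\mathcal{E}^s_\eps(t)\le C_2$ on $[0,T_\eps]$ for a constant $C_2$ still to be chosen. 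Applying Proposition \ref{mainestimate} with $C_1 := C_2$ (valid as long as $\eps\le \eps_2(C_2,\dots)$), and using the bound $\eps^2|R^\eps(\sigma)|^2_{s+1,\eps}+\langle r^\eps(\sigma)\rangle^2_{s+2,\eps}\le (1+\eps^2)A_2^2$ from Assumption \ref{ass1}, we get
\begin{align}\label{hc1}
[\mathcal{E}^s_\eps(t)]^2\le B_1\int_0^t [\mathcal{E}^s_\eps(\sigma)]^2\,d\sigma + \eps^2 B_1 T (1+\eps^2) A_2^2 + \eps^2 B_2 =: B_1\int_0^t [\mathcal{E}^s_\eps(\sigma)]^2\,d\sigma + \eps^2 B_3,
\end{align}
where $B_3 = B_3(T,A_1,A_2,K_1,K_2,\lambda)$ is independent of $\eps$ and of $C_2$.

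The second step is to apply Gronwall's inequality to \eqref{hc1}. Since $t\mapsto [\mathcal{E}^s_\eps(t)]^2$ is continuous (which follows from $u^\eps_a\in E^{s+2}$ and the regularity of $(\nu^\eps,\omega^\eps)$ coming from Proposition \ref{localex}) and satisfies the integral inequality \eqref{hc1}, Gronwall gives
\begin{align}\label{hc2}
[\mathcal{E}^s_\eps(t)]^2\le \eps^2 B_3\, e^{B_1 t}\le \eps^2 B_3\, e^{B_1 T} \text{ for all }t\in[0,T_\eps].
\end{align}
Crucially, the right-hand side of \eqref{hc2} is independent of $C_2$ and tends to zero as $\eps\to 0$. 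Now I would choose $C_2 := 1$ (any fixed positive constant works), and then choose $\eps_3 \le \eps_2(C_2,A_1,K_1,K_2,\lambda)$ small enough that $\eps_3^2 B_3 e^{B_1 T}\le C_2^2/4$; this is possible precisely because $B_3$, $B_1$, $T$ are all independent of $\eps$. With these choices, \eqref{hc2} yields $[\mathcal{E}^s_\eps(t)]^2\le C_2^2/4$, i.e. $\mathcal{E}^s_\eps(t)\le C_2/2$, for all $t\in[0,T_\eps]$ and all $\eps\in(0,\eps_3]$, which is exactly the assertion of the proposition.

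The main subtlety — and the only place where care is genuinely required — is the bookkeeping of the dependence of constants: one must verify that in applying Proposition \ref{mainestimate} with $C_1 = C_2$, the resulting constants $B_1, B_2$ (hence $B_3$) do not secretly depend on $C_2$, since otherwise the final smallness condition $\eps_3^2 B_3 e^{B_1 T}\le C_2^2/4$ could fail to be satisfiable. The statement of Proposition \ref{mainestimate} is arranged precisely so that $B_1$ depends only on $T,A_1,K_1,K_2,\lambda$ and $B_2$ only on $A_1,A_2,K_2$, with $C_1$ entering \emph{only} through the admissible range $\eps\in[0,\eps_2(C_1,\dots)]$; this is what makes the self-improving estimate $C_2\mapsto C_2/2$ possible. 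A secondary point is to make sure the continuity of $t\mapsto \mathcal{E}^s_\eps(t)$ on $[0,T_\eps]$ is legitimate so that the Gronwall lemma applies; this follows from the regularity assertions of Proposition \ref{localex} together with Assumption \ref{ass1} on $u^\eps_a$, as recorded in Remark \ref{howtoapply}. Everything else is a routine Gronwall estimate.
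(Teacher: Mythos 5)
There is a genuine gap. Your proof correctly identifies the crucial structural fact that $B_1,B_2$ do not depend on $C_1$, and the Gronwall scaffolding is the same as in the paper. But the bound \eqref{hc1} is wrong: the term you must control is
\[
B_1\int_0^t\Big(\eps^2|R^\eps(\sigma)|^2_{s+1,\eps}+\langle r^\eps(\sigma)\rangle^2_{s+2,\eps}\Big)\,d\sigma,
\]
and Assumption \ref{ass1} gives $\langle r^\eps(\sigma)\rangle_{s+2,\eps}\le A_2$ with no extra power of $\eps$. Hence the integrand is only $\le (1+\eps^2)A_2^2 = O(1)$, not $O(\eps^2)$, so this integral is bounded by $B_1T(1+\eps^2)A_2^2$ and not by $\eps^2 B_1 T(1+\eps^2)A_2^2$ as you wrote. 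Your inequality \eqref{hc2} therefore fails: Gronwall gives $[\mathcal{E}^s_\eps(t)]^2 = O(1)$, not $O(\eps^2)$, and the right-hand side certainly does not tend to zero as $\eps\to 0$.

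This is not a cosmetic slip: it invalidates your choice $C_2:=1$. Since the Gronwall output is an $\eps$-independent $O(1)$ quantity, $C_2$ must be chosen \emph{large} enough to dominate it. This is exactly what the paper does by first setting $C_2^2 \ge 4(A_2^2+1)e^{B_1T}/B_1$ and then, because $B_1$ is $C_1$-independent, feeding that $C_2$ into Proposition \ref{mainestimate} as $C_1$ to pick $\eps_3 = \eps_2(C_2,\dots)$. The resulting bound is
\[
[\mathcal{E}^s_\eps(t)]^2 \le A_2^2\int_0^t e^{B_1(t-\sigma)}\,d\sigma + \eps^2 B_2\frac{e^{B_1t}}{B_1}\le (A_2^2+1)e^{B_1T}/B_1 \le C_2^2/4,
\]
after shrinking $\eps_3$ so that $\eps^2B_2\le 1$. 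The self-improving map $C_2\mapsto C_2/2$ works here because the $O(1)$ bound is absolute (independent of $C_2$), not because it is $o(1)$; you need $C_2$ large, not $C_2$ arbitrary. The rest of your argument (continuity of $t\mapsto \mathcal{E}^s_\eps(t)$, non-circular dependence of constants) is fine, but you should redo the bookkeeping of the forcing terms and then choose $C_2$ accordingly.
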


Assuming Proposition \ref{mainestimate} we prove Proposition \ref{HimpliesC}.  In order to apply Proposition \ref{mainestimate} we note that for  any positive $C_1$ and $\eps_1>0$ small enough, for each $\eps\in (0,\eps_1]$ the continuation result, Proposition \ref{continuation}, applies to give some $T_\eps>0$ and a solution $(\nu^\eps,\omega^\eps)$ of the coupled systems on $[0,T_\eps]$  which satisfies $\mathcal{E}^s_\eps(t)\leq C_1$ for all $t\in [0,T_\eps]$.\footnote{Since $\nu$ and $\omega$ vanish in $t<0$, we have $\mathcal{E}^s_\eps(t)=0<\frac{C_1}{2}$ for $t<0$.   We also use Remark \ref{howtoapply} here.}

\begin{proof}[Proof of Proposition \ref{HimpliesC}]

We must choose  $C_2$ and $\eps_3$ with the desired properties.  We choose $C_2^2\geq 4(A_2^2+1)e^{B_1T}/B_1$.
%where $B$ is the constant in \eqref{apriori} and $A$ is such that 
%\begin{align}
%|R^\eps(\sigma)|_{E^{\cdot,\cdot}_\eps}+|r^\eps(\sigma)|_{E^{\cdot,\cdot}_{b,\eps}}\leq A\text{ for all }\sigma\in [0,T].
%\end{align}
Taking this $C_2$ as the choice of ``$C_1$" in Proposition \ref{mainestimate}, we let $\eps_3$ be a  corresponding choice of ``$\eps_2$".\footnote{Here we see   why it is essential that the constant $B_1$ in Proposition \ref{mainestimate} not depend on $C_1$.}   If $\eps\in (0,\eps_3]$ and $(\nu^\eps,\omega^\eps)$ satisfies 
$\mathcal{E}^s_\eps(t)\leq C_2$ for all $t\in [0,T_\eps]$, then \eqref{apriori} and Gronwall's inequality imply
\begin{align}
[\mathcal{E}^s_\eps(t)]^2\leq \int^t_0 e^{B_1(t-\sigma)}\left(\eps^2|R^\eps(\sigma)|^2_{s+1,\eps}+\langle r^\eps(\sigma)\rangle^2_{s+2,\eps}\right)d\sigma +\eps^2B_2\frac{e^{B_1t}}{B_1}\text{ for all }t\in [0,T_\eps]. 
\end{align}
Reducing $\eps_3$ if necessary so that $\eps^2B_2\leq 1$ for $\eps\in (0,\eps_3]$, we obtain  for all $t\in [0,T_\eps]$ and $0<\eps\leq \eps_3(C_2,A_1,A_2)$:
\begin{align}
[\mathcal{E}^s_\eps(t)]^2\leq A_2^2\int^t_0 e^{B_1(t-\sigma)} d\sigma +\eps^2B_2\frac{e^{B_1t}}{B_1}\leq (A_2^2+1)e^{B_1T}/B_1\leq C_2^2/4.
\end{align}

\end{proof}

Next we show that Proposition \ref{HimpliesC} implies Theorem \ref{main}.

\begin{proof}[Proof of Theorem \ref{main}]
\textbf{1. }We choose $\eps_3$ and $C_2$ to be the same as in Proposition \ref{HimpliesC}.   For each $\eps\in (0,\eps_3]$ we will use a continuous induction argument to show that the solution of the coupled systems exists and satisfies \eqref{bound} on all of $[0,T]$. 
%Although this kind of argument is standard, it is worth writing out carefully.

\textbf{2. }Suppose that for a given $\eps\in (0,\eps_3]$ and some $T_1<T$, we have $\mathcal{E}^s_\eps(t)\leq C_2/2$ for all $t\in[0,T_1]$.   Then the continuation result,  Proposition \ref{continuation}, implies that there exists a time step $\Delta T^\eps>0$ (which depends on $\eps$ and $C_2$ but not on $T_1$) such that $(\nu^\eps,\omega^\eps)$ extends to the time interval $[0,T_1+\Delta T^\eps]$
and satisfies $\mathcal{E}^s_\eps(t)\leq C_2$ for all $t\in[0,T_1+\Delta T^\eps]$.

\textbf{3. }Fix $\eps\in[0,\eps_3]$ and let $T^*_\eps=\sup\{T'\in[0,T]:\mathcal{E}^s_\eps(t)\leq C_2 \text{ for all }t\in [0,T']\}$.   Observe that since $\mathcal{E}^s_\eps(t)=0$ for $t\leq 0$, step \textbf{2} implies that $T^*_\eps\geq \Delta T^\eps$. We  now prove by contradiction that $T^*_\eps=T$, so  suppose $T^*_\eps<T$.   Then $\mathcal{E}^s_\eps(t)\leq C_2 \text{ for all }t\in [0,T^*_\eps-\frac{\Delta T^\eps}{2}]$, and hence Proposition \ref{HimpliesC} implies $\mathcal{E}^s_\eps(t)\leq C_2/2 \text{ for all }t\in [0,T^*_\eps-\frac{\Delta T^\eps}{2}]$.  By step \textbf{2} we have $\mathcal{E}^s_\eps(t)\leq C_2 \text{ for all }t\in [0,T^*_\eps+\frac{\Delta T^\eps}{2}]$.   Contradiction.   This proves part (a).

\textbf{4. }Using Remark \ref{howtoapply} and Lemma \ref{a8a}, we see that part (b) follows from part (a).  

\end{proof}

Thus it remains only to prove Proposition \ref{mainestimate}.

\section{Uniform estimates for the  coupled nonlinear systems}\label{energyestimates}

This section is devoted to proving Proposition \ref{mainestimate}.    Recall that 
$$\mathcal{E}^s_\eps(t)=|\eps^2 \overline{D}^2_x \nu(t)|_{s,\eps}+|\eps \overline{D} \nu(t)|_{s,\eps}+|\eps^2 \overline{D} \omega(t) |_{s,\eps}.$$
Under the assumptions of that Proposition, the strategy will be to apply the elliptic estimate \eqref{a5} to the problem \eqref{E} to estimate the first term in $\mathcal{E}^s_\eps(t)$, to apply the hyperbolic estimate \eqref{a3} to the problem \eqref{H1} to estimate the second term, and to apply the same hyperbolic estimate to the problem \eqref{H2} to estimate the third term.   We will use Remark \ref{a8aa} to insure that  these systems satisfy the hypotheses of Propositions \ref{hypest} and \ref{ellest}. 

Throughout this section the constants $\eps_1$, $C_1$, and $T_\eps$ are as given in the statement of Proposition \ref{mainestimate}.

\subsection{Tangential derivative estimates} 

The first step is to estimate tangential $\partial^\alpha:=\partial_{t,x'}^\alpha$ derivatives.  We define
\begin{align}\label{b1}
\begin{split}
&|u(t)|_{E^s_{\eps,tan}}=\sup_{|\alpha|\leq s}\eps^{|\alpha|}|\partial_{t,x'}^\alpha u(t,\cdot)|_{L^2(x)}\\
&\mathcal{E}^s_{\eps,tan}(t)=|\eps^2 \overline{D}^2_x \nu(t)|_{E^s_{\eps,tan}}+|\eps \overline{D} \nu(t)|_{E^s_{\eps,tan}}+|\eps^2 \overline{D} \omega(t) |_{E^s_{\eps,tan}}.
\end{split}
\end{align}
and proceed to show for $\eps$, $B_1$, $B_2$ as described in Proposition \ref{mainestimate}:
\begin{align}\label{b2}
[\mathcal{E}^s_{\eps,tan}(t)]^2\leq B_1\int ^t_0 \left([\mathcal{E}^s_\eps(\sigma)]^2+\eps^2|R^\eps(\sigma)|^2_{s+1,\eps}+\langle r^\eps(\sigma)\rangle ^2_{s+2,\eps}\right)d\sigma+\eps^2(B_1[\mathcal{E}^s_\eps(t)]^2+B_2).
\end{align}

We begin by estimating $|\eps \overline{D} \nu(t)|_{E^s_{\eps,tan}}$ by applying the estimate \eqref{a3} to $\eps^\alpha\partial^\alpha (\eps \eqref{H1})$ for $|\alpha|\leq s$.\footnote{Here 
``$\eps^\alpha\partial^\alpha (\eps \eqref{H1})$" denotes the result of multiplying the problem \eqref{H1} by $\eps$ and then applying the operator $\eps^\alpha\partial^\alpha$.}

\begin{lem}[Interior commutator for \eqref{H1}]\label{icomh1}
 Let $\eps_1$ and $C_1$ be as in Proposition \ref{mainestimate}. 
 There exist positive constants   $\eps_2(C_1)\leq \eps_1$  and $B_1=B_1(A_1)$ such that for $\eps\in [0,\eps_2]$  and all $t \in [0,T_\eps]$ 
\begin{align}\label{b3}
|\partial_i[\eps^\alpha\partial^\alpha,A_{ij}(D_x(u_a+v))]\partial_j(\eps\nu)|_{L^2(x)}\leq B_1 \mathcal{E}^s_\eps(t) \text{ for all }|\alpha|\leq s. 
\end{align}
Here (and below) $i,j\in [1,\dots,d]$. 

\end{lem}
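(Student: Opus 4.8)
The plan is to estimate the commutator term by a Moser-type argument, taking advantage of the fact that the energy control $\mathcal{E}^s_\eps(t)\le C_1$ gives uniform $\eps$-weighted $L^2$-bounds on $\overline D^2_x\nu$ together with the $W^{1,\infty}$-bound on $u_a+v$ supplied by Assumption \ref{ass1} and Lemma \ref{a8a}/Remark \ref{a8aa}. First I would expand the commutator $[\eps^\alpha\partial^\alpha, A_{ij}(D_x(u_a+v))]\partial_j(\eps\nu)$ by the Leibniz rule into a sum of terms of the form $c_\gamma\,(\eps^{|\beta|}\partial^\beta A_{ij}(D_x(u_a+v)))(\eps^{|\alpha|-|\beta|}\partial^{\alpha-\beta}\partial_j(\eps\nu))$ with $1\le|\beta|\le|\alpha|\le s$, and then apply $\partial_i$, producing two families of terms: one where $\partial_i$ falls on the coefficient factor (giving $|\beta|+1$ derivatives, $\le s+1$, on $A_{ij}$) and one where it falls on the $\nu$-factor (giving one more $x$-derivative on $\nu$, i.e. a third $x$-derivative, which is exactly why the energy is built from $\overline D^2_x\nu$ and not $\overline D_x\nu$).

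Next I would bound each product in $L^2(x)$ by the standard $\eps$-weighted product/Moser estimates — Proposition \ref{prod} together with the chain-rule (Moser) estimate for $\partial^\beta$ of the composite function $A_{ij}(D_x(u_a+v))$, writing $D_x(u_a+v)=D_xu_a+D_xv=D_xu_a+\eps^M D_x\nu$. The key points making the constants come out right are: (i) since $A_{ij}$ is $C^\infty$ and its argument is uniformly bounded (by $R$, via Remark \ref{a8aa}), the Moser estimate gives $\eps$-weighted $L^\infty$ and $L^2$ control of the derivatives of $A_{ij}(D_x(u_a+v))$ purely in terms of the $\eps$-weighted norms of $D_x u_a$ and $D_x v$; (ii) the $\eps$-weighted derivatives of $D_xu_a$ are $O(\eps^2)\cdot A_1$ by \eqref{ass1a}, and the $\eps$-weighted derivatives of $D_xv=\eps^M D_x\nu$ of order $\le s+1$ are controlled by $\mathcal E^s_\eps(t)\le C_1$ times a power of $\eps$ that is positive because $M>\tfrac d2+2$; (iii) the highest-order factor in each product carries exactly the combination $\eps^2\overline D^2_x\nu$ or $\eps\overline D\nu$ appearing in $\mathcal E^s_\eps(t)$, whereas the remaining factor is a lower-order coefficient derivative that is bounded in $L^\infty$ by a constant depending only on $A_1$ (plus a harmless $\eps^{\text{positive}}C_1$ from the $v$-contribution, absorbed by shrinking $\eps_2$). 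Doing the bookkeeping term by term — splitting into "$\partial_i$ on coefficient" vs. "$\partial_i$ on $\nu$" and, within each, into "high derivative on $\nu$, low on coefficient" vs. the reverse, always putting the low-derivative factor in $L^\infty$ via Proposition \ref{sob} — yields $B_1\mathcal E^s_\eps(t)$ with $B_1$ depending only on $A_1$, after choosing $\eps_2(C_1)\le\eps_1$ small enough that all the stray $\eps^{\text{positive}}C_1$ factors are $\le 1$.

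The main obstacle is the careful tracking of the $\eps$-powers so that $B_1$ genuinely depends only on $A_1$ (and not on $C_1$) — this is the point flagged in the footnote to Proposition \ref{HimpliesC} as essential for the continuous-induction argument. Concretely, one must verify that every appearance of $D_xv=\eps^M D_x\nu$ in a sub-maximal factor comes multiplied by enough extra powers of $\eps$ (coming from the $\eps$-weights on the derivatives that land on it) that after using $\mathcal E^s_\eps\le C_1$ the resulting $\eps^{\text{positive}}C_1$ can be made $\le 1$ uniformly; and that the truly maximal factor is never a $v$-factor alone but always the $\eps^2\overline D^2_x\nu$ (resp. $\eps\overline D\nu$) block, so that its coefficient is an $A_1$-dependent constant. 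Step \textbf{2} of this proof, as the footnote to Lemma \ref{a8a} indicates, simultaneously delivers the bound $|v|_{W^{1,\infty}}\lesssim C_1\eps^{M-d/2-1}$, which feeds back into verifying hypothesis (i) above.
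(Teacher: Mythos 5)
Your proposal is correct and follows essentially the same route as the paper: expand the commutator by the Leibniz/Fa\`a di Bruno rule, distribute the outer $\partial_i$, estimate the resulting products in weighted $L^2$ using the $\eps$-weighted product estimate (Proposition \ref{prod}) and the $L^\infty$ bounds from Assumption \ref{ass1} and the Sobolev estimate, and verify that every $C_1$-dependent factor arrives multiplied by a strictly positive power of $\eps$ (using $M>\tfrac{d}{2}+2$ and $v=\eps^M\nu$), so that $B_1$ depends only on $A_1$ after shrinking $\eps_2(C_1)$. The paper makes the Fa\`a di Bruno expansion and the case analysis (``which factor $\partial_i$ hits'') fully explicit, whereas you fold that into the word ``Moser,'' but the underlying bookkeeping is the same.
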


\begin{proof}
\textbf{1. }We note first that each component of $[\eps^\alpha\partial^\alpha,A_{ij}(D_x(u_a+v))]\partial_j(\eps\nu)$ is a finite sum of terms of the form\footnote{In \eqref{b4} as well as in similar expressions below,  we suppress the indices that label components of $D_xu_a$, $D_xv$, or $\nu$.    Note also that here the $\beta_j$, $\gamma_k$ are \emph{multi}-indices.}
\begin{align}\label{b4}
\eps^{1+|\alpha|}H(D_xu_a,D_xv)(\partial^{\beta_1}D_xu_a)\dots(\partial^{\beta_q}D_xu_a)(\partial^{\gamma_1}D_x v)\dots(\partial^{\gamma_p}D_x v)\partial^\zeta\partial_j\nu,
\end{align}
where $H$ is a $C^\infty$ function of its arguments and 
\begin{align}\label{b5}
|\beta_k|\leq |\alpha|,\; |\gamma_\ell|\leq |\alpha|,\; |\zeta|\leq |\alpha|-1, \text{ and }|\beta|+|\gamma|+|\zeta|=|\alpha|\leq s.
\end{align}
Either $p$ or $q$ can be zero, but not both.\footnote{When, for example, $q=0$, this means that none of the terms $\partial^{\beta_k}D_xu_a$ appears in \eqref{b4}.} 
When the $\partial_i$ derivative in \eqref{b3} is taken, four cases arise depending on whether $\partial_i$ hits $H$, one of the $\partial^{\beta_k}D_xu_a$, one of the $\partial^{\gamma_k}D_x v$, or $\partial^\zeta\partial_j\nu$.

\textbf{2. }In treating these cases we will use the following estimates, where $m=1$ or $2$:
\begin{align}\label{b6}
\begin{split}
&|\partial^{\beta_k}D^m_x u_a(t)|_{L^\infty(x)}\leq A_1\eps^{2-|\beta_k|-m}\\
&|\partial^{\gamma_k}D_x^m v(t)|_{s-|\gamma_k|,\eps}\leq C_1\eps^{M-|\gamma_k|-m}\\
&|\partial^{\zeta}D_x^m \nu(t)|_{s-|\zeta|,\eps}\leq \mathcal{E}^s_\eps(t)\eps^{-|\zeta|-m}.
\end{split}
\end{align}
In this section we take, e.g.,  $\partial^{\beta_k}:=\partial_{t,x'}^{\beta_k}$, but the above estimates also hold for $\partial^{\beta_k}:=\partial_{t,x}^{\beta_k}$. 
These estimates follow directly from assumption \ref{ass1}, the assumption $\mathcal{E}^s_\eps(t)\leq C_1$, and the definition of $\mathcal{E}^s_\eps(t)$. 
For example, since $\mathcal{E}^s_\eps(t)\leq C_1$, we have 
\begin{align}\label{b6b}
\eps|\overline{D} v|_{s,\eps}\leq C_1\eps^M\Rightarrow |v,Dv|_{s,\eps}\leq C_1\eps^{M-1}\Rightarrow |\partial^{\gamma_k}(v,Dv)|_{s-|\gamma_k|,\eps}\leq C_1\eps^{M-|\gamma_k|-1}.
\end{align}
     
     We note also that the  Sobolev estimate of Proposition \ref{sob} implies
     \begin{align}\label{b6aa}
\begin{split}
&|(v,Dv)(t)|_{L^\infty(x)}\lesssim \eps^{-\frac{d}{2}}|(v,Dv)(t)|_{s,\eps}\lesssim C_1\eps^{M-\frac{d}{2}-1}\\
&|D(v,Dv)(t)|_{L^\infty(x)}\lesssim \eps^{-\frac{d}{2}}|D(v,Dv)(t)|_{s-1,\eps}\lesssim C_1\eps^{M-\frac{d}{2}-2}.
\end{split}
\end{align}
In the second estimate we have used \eqref{b6b} with $|\gamma_k|=1$.\footnote{We considered $(v,Dv)$ instead of $D_xv$ here in order to include a proof of Lemma \ref{a8a}(b). }

\begin{rem}\label{imprem}
 Since $M>\frac{d}{2}+2$ we see that the Lipschitz norm of $D_x v^\eps$ is  bounded ($\lesssim C_1$) on $[0,T_\eps]\times\overline{\mathbb{R}}^d_+$ when $\mathcal{E}^s_\eps(t)\leq C_1$ on $[0,T_\eps]$.    Moreover, we can (and do) choose $\eps_2$ so that 
\begin{align}\label{b6aaa}
|D_xv|_{W^{1,\infty}([0,T_\eps]\times \mathbb{R}^d_+)}\leq 1\text{ for }\eps\in (0,\eps_2].
\end{align}
A bound like \eqref{b6aaa} independent of $C_1$ is needed to carry out the argument of Proposition \ref{HimpliesC}.   This is because the constants $K_1$, $K_2$, and $\lambda$ in the linear estimates \eqref{a3} and \eqref{a5} depend on $|D_xv|_{W^{1,\infty}}$, while $B_1$ in Proposition \ref{mainestimate} in turn depends on $K_1$, $K_2$, and $\lambda$.  For the argument of Proposition \ref{HimpliesC} to work, 
the constant $B_1$ must not depend on $C_1$. 

%2. In view of \eqref{b6} and \eqref{b6aa} we can (and do) choose $\eps_2$ so that $|D_x(u_a+v)|_{L^\infty}\leq R$, where $R>0$ is as in Assumption 
\end{rem}

\textbf{3. }Consider the case where $\partial_i$ hits one of the factors $\partial^{\beta_k}D_xu_a$, say the first, producing a term we denote as $\partial^{\beta_1}D_x^2u_a$.  When $p\geq 1$, setting $|\gamma|:=\sum^p_{k=1}|\gamma_k|$ and using $s-|\gamma|-|\zeta|\geq 0$, we apply \eqref{b6} and the product estimate of Proposition \ref{prod} $p$ times to obtain\footnote{A similar use of the product estimate is made in section 1.10 of \cite{Marcou} in her study of first-order hyperbolic conservation laws. }
\begin{align}\label{b6a}
\begin{split}
&|(\partial^{\gamma_1}D_x v)\dots(\partial^{\gamma_p}D_x v)\partial^\zeta\partial_j\nu|_{L^2(x)}\lesssim \\
&\quad \eps^{-pd/2}|(\partial^{\gamma_1}D_x v)|_{s-|\gamma_1|,\eps}\dots|(\partial^{\gamma_p}D_x v)|_{s-|\gamma_p|,\eps}|\partial^\zeta\partial_j\nu|_{s-|\zeta|,\eps}\lesssim\eps^{-pd/2}C_1^p\eps^{Mp-|\gamma|-p}\mathcal{E}^s_\eps(t)\eps^{-|\zeta|-1}.
\end{split}
\end{align}

Using \eqref{b6} and \eqref{b6aa} we also have
\begin{align}\label{b6bb}
|\eps^{1+|\alpha|}H(D_xu_a,D_xv)(\partial^{\beta_1}D^2_xu_a)(\partial^{\beta_2}D_xu_a)\dots(\partial^{\beta_q}D_xu_a)|_{L^\infty(x)}\lesssim \eps^{1+|\alpha|}A_1^q\eps^{q-|\beta|-1},
\end{align}
so by combining these two estimates we find
\begin{align}\label{b7}
\begin{split}
&|\eps^{1+|\alpha|}H(D_xu_a,D_xv)(\partial^{\beta_1}D^2_xu_a)(\partial^{\beta_2}D_xu_a)\dots(\partial^{\beta_q}D_xu_a)(\partial^{\gamma_1}D_x v)\dots(\partial^{\gamma_p}D_x v)\partial^\zeta\partial_j\nu|_{L^2(x)}\leq \\
&\quad C_1^pA_1^q\mathcal{E}^s_\eps(t)\eps^{p(M-\frac{d}{2}-1)}\eps^{|\alpha|-|\beta|-|\gamma|-|\zeta|}\eps^{q-1}\leq C_1^pA_1^q\mathcal{E}^s_\eps(t)\eps^{\mu}\text{ for }t\in [0,T_\eps].
\end{split}
\end{align}
We see that $\mu>0$ using \eqref{b5},  $q\geq 0$, $p\geq 1$, and $M-\frac{d}{2}-1>1$, so  there exists $\eps_2(C_1)$ such that the right side of  \eqref{b7} is $\leq A^q_1\mathcal{E}^s_\eps(t)$ for $\eps\in(0,\eps_2]$.\footnote{The constant $\eps_2$ will be decreased a finite number of times before we arrive at the final choice of $\eps_2$ that appears in the statement of Proposition \ref{mainestimate}.}

When $p=0$ we must have $q \geq 1$, and instead of \eqref{b7} we obtain
\begin{align}\label{b8}
\begin{split}
&|\eps^{1+|\alpha|}H(D_xu_a,D_xv)(\partial^{\beta_1}D^2_xu_a)(\partial^{\beta_2}D_xu_a)\dots(\partial^{\beta_q}D_xu_a)\partial^\zeta\partial_j\nu|_{L^2(x)}\leq \\
&\quad A_1^q\mathcal{E}^s_\eps(t)\eps^{|\alpha|-|\beta|-|\zeta|}\eps^{q-1}\leq A_1^q\mathcal{E}^s_\eps(t).
\end{split}
\end{align}

\textbf{4. }Next consider the case when $\partial_i$ hits $\partial^\zeta\partial_j\nu$ producing $\partial^\zeta D_x^2\nu$.  When $p\geq 1$ instead of \eqref{b7} we find by arguing as above
\begin{align}\label{b9}
\begin{split}
&|\eps^{1+|\alpha|}H(D_xu_a,D_xv)(\partial^{\beta_1}D_xu_a)(\partial^{\beta_2}D_xu_a)\dots(\partial^{\beta_q}D_xu_a)(\partial^{\gamma_1}D_x v)\dots(\partial^{\gamma_p}D_x v)\partial^\zeta D_x^2\nu|_{L^2(x)}\leq \\
&\quad C_1^pA_1^q\mathcal{E}^s_\eps(t)\eps^{-1}\eps^{p(M-\frac{d}{2}-1)}\eps^{|\alpha|-|\beta|-|\gamma|-|\zeta|}\eps^{q}\leq C_1^pA_1^q\mathcal{E}^s_\eps(t)\eps^{\mu}\text{ for }t\in [0,T_\eps].
\end{split}
\end{align}
Here $\mu>0$ has the same value as in \eqref{b7}; the ``extra" factor of $\eps^{-1}$ that was previously contributed by $D_x^2 u_a$ is now contributed by $D_x^2\nu$.
When $p=0$ we clearly obtain an estimate just like \eqref{b8}.

\textbf{5. }The  case when $\partial_i$  hits one of the $\partial^{\gamma_k}D_xv$ in \eqref{b4} clearly yields exactly the same right hand side as in \eqref{b9}.    Finally, when $\partial_i$ hits $H$ in \eqref{b4} one obtains an expression that can be estimated using  the other cases.
\end{proof}

Next we estimate the interior forcing term in $\eps^\alpha\partial^\alpha(\eps\eqref{H1})$, namely $\eps^{1+|\alpha|}\partial^\alpha\mathbb{F}_1$.

\begin{lem}[Interior forcing for \eqref{H1}]\label{iforh1}
 Let $\eps_1$ and $C_1$ be as in Proposition \ref{mainestimate}. 
 There exist positive constants   $\eps_2(C_1)\leq \eps_1$  and $B_1=B_1(A_1)$ such that for $\eps\in [0,\eps_2]$  and all $t \in [0,T_\eps]$\footnote{The constant $B_1$ in \eqref{b10} is, of course, not necessarily the same as  the $B_1$ in Lemma \ref{icomh1}.   We are only  interested in keeping track of what such constants depend on, not their particular values.   
 We will continue to redefine certain constants in this way in order to reduce the  number of distinct labels needed for constants.}
\begin{align}\label{b10}
|\eps^{1+|\alpha|}\partial^\alpha\mathbb{F}_1(t)|_{L^2(x)}\leq B_1 \mathcal{E}^s_\eps(t)+\eps|R^\eps(t)|_{s,\eps} \text{ for all }|\alpha|\leq s. 
\end{align}
%Here (and below) $i,j\in [1,\dots,d]$. 

\end{lem}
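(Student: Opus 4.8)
The plan is to unwind the definition of $\mathbb{F}_1$ from \eqref{H1}(a) and estimate each resulting piece by the same product-rule bookkeeping as in Lemma \ref{icomh1}, so I will mostly point to the estimates already established there rather than redo them. Recall
\[
\mathbb{F}_1=\eps^{-M}\Big[\CalF+\sum_{i=1}^d\partial_i\Big(A_i(D_x(u_a+v))-A_i(D_xu_a)-\sum_{j=1}^d A_{ij}(D_x(u_a+v))\partial_jv\Big)\Big],
\]
and $\CalF=\eps^M R^\eps$, so the $\CalF$-contribution to $\eps^{1+|\alpha|}\partial^\alpha\mathbb{F}_1$ is exactly $\eps^{1+|\alpha|}\partial^\alpha R^\eps$, whose $L^2(x)$ norm is $\leq\eps|R^\eps(t)|_{s,\eps}$ for $|\alpha|\leq s$; this gives the second term on the right of \eqref{b10}. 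For the remaining part, a first-order Taylor expansion of $A_i$ about $D_xu_a$ (as already written out in the formula for $E(\overline D_x^2v)$ in \eqref{E}) shows that the bracketed sum equals
\[
\sum_{i=1}^d\partial_i\Big[\int_0^1\big(A_{ij}(D_x(u_a+\theta v))-A_{ij}(D_x(u_a+v))\big)\,d\theta\;\partial_jv\Big],
\]
so after the $\partial_i$ hits, every term is a $C^\infty$ function of $(D_xu_a,D_xv)$ times a product of factors, each of which is either some $\partial^\gamma D_x^m v$ or $\partial^\beta D_x^m u_a$ or a single $\partial^\zeta D_x^2 v$, with at least one factor carrying $D_xv$ or $D_x^2 v$.

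The key step is then to count powers of $\eps$. After applying $\eps^{1+|\alpha|}\partial^\alpha$ and distributing the tangential derivatives by Leibniz, each term of $\eps^{1+|\alpha|}\partial^\alpha\mathbb{F}_1$ (other than the $R^\eps$ piece) is a sum of terms of the shape
\[
\eps^{1+|\alpha|-M}\,H(D_xu_a,D_xv)\,(\partial^{\beta_1}D_xu_a)\cdots(\partial^{\beta_q}D_xu_a)(\partial^{\gamma_1}D_xv)\cdots(\partial^{\gamma_p}D_xv)\,\partial^\zeta D_x^{m}v
\]
with $m\in\{1,2\}$, $p\geq 1$ (since $v$ appears at least twice after the Taylor remainder, one copy differentiated once more by $\partial_i$), and $|\beta|+|\gamma|+|\zeta|\leq|\alpha|\leq s$. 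Estimating the $u_a$-factors in $L^\infty(x)$ by $|\partial^{\beta_k}D_x u_a(t)|_{L^\infty}\le A_1\eps^{2-|\beta_k|-1}$ and the remaining $p+1$ factors of $v$-derivatives by the product estimate of Proposition \ref{prod} together with $|\partial^{\gamma_k}D_xv(t)|_{s-|\gamma_k|,\eps}\leq C_1\eps^{M-|\gamma_k|-1}$ and $|\partial^\zeta D_x^m v(t)|_{s-|\zeta|,\eps}\leq C_1\eps^{M-|\zeta|-m}$ (these are exactly \eqref{b6} with $\nu$ replaced by $v$, valid since $\mathcal E^s_\eps(t)\leq C_1$ forces $|\eps\overline D v|_{s,\eps}\le C_1\eps^M$ as in \eqref{b6b}), one collects the $\eps$-exponent. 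Counting: $\eps^{1+|\alpha|-M}$ from the prefactor, $\eps^{q-|\beta|-1}$ from the $u_a$-factors, $\eps^{-pd/2}$ from the $p$ applications of Proposition \ref{prod}, $\eps^{(p+1)M-|\gamma|-|\zeta|-p-m}$ from the $v$-factors, and the net exponent simplifies — using $|\alpha|-|\beta|-|\gamma|-|\zeta|\ge 0$, $q\ge 0$, $m\le 2$, $p\ge 1$, and $M>\tfrac d2+2$ — to $\ge pM-p\tfrac d2-p+(\text{nonnegative})\ge M-\tfrac d2-1>1>0$. Hence for $\eps\le\eps_2(C_1)$ each such term is bounded by $C_1^{p}A_1^{q}\eps^{\mu}$ with $\mu>0$, which is $\le B_1(A_1)$ after choosing $\eps_2$ small; summing the finitely many terms gives $\le B_1\mathcal E^s_\eps(t)$ (in fact with a spare positive power of $\eps$, and even without an $\mathcal E^s_\eps(t)$ factor, but it is convenient to absorb it into that term), and combined with the $R^\eps$ contribution this yields \eqref{b10}.

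The only mild subtlety — the point I would treat most carefully — is making sure the $\eps^{-M}$ in front of $\mathbb{F}_1$ is genuinely consumed by the quadratic-in-$v$ structure: since $v$ appears to total order at least $2$ among the product factors (one factor $\partial^\zeta D_x^m v$ plus the $p\ge1$ factors $\partial^{\gamma_k}D_xv$), the $v$-factors supply at least $2M$ powers of $\eps$, of which $M$ cancels the prefactor and the rest is more than enough to overcome the negative contributions $-pd/2$, $-m$, etc., precisely because $M>\tfrac d2+2$. This is the same mechanism as in Lemma \ref{icomh1}, so no new idea is needed; the bookkeeping is routine and I would present it compactly by reduction to the term-by-term estimates \eqref{b7}–\eqref{b9} already recorded there, with $\nu$ replaced throughout by $v$ and the extra $\eps^{-M}$ absorbed by one of the $M$'s coming from the $v$-factors.
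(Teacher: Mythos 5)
There is a genuine gap in your conclusion: your final bound does not actually produce the required factor $\mathcal{E}^s_\eps(t)$. After your Taylor expansion and Leibniz bookkeeping, every term in $\eps^{1+|\alpha|}\partial^\alpha\mathbb{F}_1$ (other than the $R^\eps$ piece) is estimated, as you write, by a \emph{constant} times a positive power of $\eps$, roughly $C_1^{p+1}A_1^q\eps^\mu$. You then assert that ``summing the finitely many terms gives $\le B_1\mathcal{E}^s_\eps(t)$ \dots it is convenient to absorb it into that term.'' This step is false: $\mathcal{E}^s_\eps(t)$ is not bounded below (indeed $\mathcal{E}^s_\eps(0)=0$, since $\nu,\omega$ vanish in $t\le 0$), so a quantity bounded merely by a constant cannot in general be dominated by $B_1\mathcal{E}^s_\eps(t)$ with $B_1$ independent of $C_1$. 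The bookkeeping as you set it up estimates every derivative factor of $v$ using the crude bound $\|\cdot\|\le C_1\eps^{M-\cdots}$, which throws away the information that the profile $v$ is small when $\mathcal{E}^s_\eps$ is.

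The fix — and this is exactly the device the paper uses — is to retain one factor in each product as a $\nu$-derivative rather than a $v$-derivative. Since $v=\eps^M\nu$, one may write the nonlinear part of $\mathbb{F}_1$ in the form $\sum_i\partial_i[H_i(D_xu_a,D_xv)(D_xv,D_x\nu)]$ (equation \eqref{b11}): the $\eps^{-M}$ is consumed by converting one copy of $\partial_j v$ into $\partial_j\nu$. Then in the term-by-term estimates, the single $\nu$-factor is bounded via the third line of \eqref{b6}, $|\partial^\zeta D_x^m\nu(t)|_{s-|\zeta|,\eps}\le\mathcal{E}^s_\eps(t)\eps^{-|\zeta|-m}$, while the remaining $v$-factors are bounded as you did via $C_1$. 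The net result is $\lesssim C_1^{p+1}A_1^q\,\mathcal{E}^s_\eps(t)\,\eps^\mu$ with $\mu>0$ (cf.\ \eqref{b13}), and for $\eps\le\eps_2(C_1)$ this is $\le A_1^q\,\mathcal{E}^s_\eps(t)$; the $C_1$-dependence is pushed into $\eps_2$, not into $B_1$, exactly as the lemma requires. Your $\eps$-power counting and the use of Propositions \ref{sob}, \ref{prod} are otherwise sound (apart from a minor omission of the contribution $-m$ from the top-order factor, which does not change the sign of $\mu$); only the source of the $\mathcal{E}^s_\eps(t)$ factor is missing.
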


\begin{proof}
\textbf{1. }The first term of $\mathbb{F}_1$ is $\eps^{-M}\mathcal{F}=R^\eps$, which obviously gives rise to the term $\eps|R^\eps(t)|_{s,\eps}$ in \eqref{b10}.

\textbf{2. }The remaining part of $\mathbb{F}_1$ can be written in the form 
\begin{align}\label{b11}
\sum^d_{i=1}\partial_i[H_i(D_xu_a,D_xv)(D_xv,D_x\nu)], 
\end{align}
where $H_i$ is a $C^\infty$ function of its arguments.   First expanding out  
\begin{align}\label{b11a}
\eps^{1+|\alpha|}\partial^\alpha[H_i(D_xu_a,D_xv)(D_xv,D_x\nu)]
\end{align}
we obtain a sum of terms of the form
\begin{align}\label{b12}
\eps^{1+|\alpha|}H(D_xu_a,D_xv)(\partial^{\beta_1}D_xu_a)\dots(\partial^{\beta_q}D_xu_a)(\partial^{\gamma_1}D_x v)\dots(\partial^{\gamma_p}D_x v)(\partial^{\zeta_1}D_xv)\partial^{\zeta_2}D_x\nu,
\end{align}
where 
\begin{align}
|\beta|+|\gamma|+|\zeta|=|\alpha|\leq s, 
\end{align}
and $q$ or $p$ (possibly both) can be zero.   Applying $\partial_i$ to \eqref{b12}, one again obtains four cases depending on whether $\partial_i$ hits $H$, one of the $\partial^{\beta_k}D_xu_a$, one of the $\partial^{\gamma_k}D_xv$, or one of the final two factors in \eqref{b12}.

\textbf{3. }For example, in the fourth case we obtain by arguing as in the proof of Lemma \ref{icomh1}:
\begin{align}\label{b13}
\begin{split}
&|\eps^{1+|\alpha|}H(D_xu_a,D_xv)(\partial^{\beta_1}D_xu_a)(\partial^{\beta_2}D_xu_a)\dots(\partial^{\beta_q}D_xu_a)(\partial^{\gamma_1}D_x v)\dots(\partial^{\gamma_p}D_x v)(\partial^{\zeta_1}D_xv)\partial^{\zeta_2} D_x^2\nu|_{L^2(x)} \\
&\leq C_1^{p+1}A_1^q\mathcal{E}^s_\eps(t)\eps^{-1}\eps^{(p+1)(M-\frac{d}{2}-1)}\eps^{|\alpha|-|\beta|-|\gamma|-|\zeta|}\eps^{q}\leq C_1^{p+1}A_1^q\mathcal{E}^s_\eps(t)\eps^{\mu}\text{ for }t\in [0,T_\eps],
\end{split}
\end{align}
where $\mu>0$. In \eqref{b13} we have exhibited the ``extra" factor of $\eps^{-1}$ contributed by $\partial^\zeta D_x^2\nu$.  The other factors of $C_1$, $A_1$, and $\eps$
arise just as in \eqref{b6a}, \eqref{b6b}.

\textbf{4. }The second and third cases (described in step \textbf{2}) yield exactly the same estimate. As before the first case can be treated using the second and third cases.

\end{proof}

It remains to estimate the boundary commutator and boundary forcing for $\eps^\alpha\partial^\alpha(\eps\eqref{H1})$.

\begin{lem}[Boundary commutator for \eqref{H1}]\label{bcomh1}
 Let $\eps_1$ and $C_1$ be as in Proposition \ref{mainestimate}. 
 There exist positive constants   $\eps_2(C_1)\leq \eps_1$  and $B_1=B_1(A_1)$ such that for $\eps\in [0,\eps_2]$  and all $t \in [0,T_\eps]$ 
\begin{align}\label{b14}
\eps|\Lambda^{\frac{1}{2}}_{x'}[\eps^{|\alpha|}\partial^\alpha,A_{ij}(D_x(u_a+v))]\partial_j\nu|_{L^2(x')}\leq B_1 \mathcal{E}^s_\eps(t) \text{ for all }|\alpha|\leq s. 
\end{align}
\end{lem}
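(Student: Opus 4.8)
\textbf{Proof plan for Lemma \ref{bcomh1} (boundary commutator for \eqref{H1}).}
The plan is to reduce the boundary estimate to the interior commutator estimate of Lemma \ref{icomh1} via the trace estimate of Proposition \ref{trace}, and then to track powers of $\eps$ exactly as in the interior case. First I would observe that, just as in step \textbf{1} of the proof of Lemma \ref{icomh1}, each component of $[\eps^{|\alpha|}\partial^\alpha, A_{ij}(D_x(u_a+v))]\partial_j\nu$ is a finite sum of terms of the form
\begin{align}\label{bc1}
\eps^{|\alpha|}H(D_xu_a,D_xv)(\partial^{\beta_1}D_xu_a)\dots(\partial^{\beta_q}D_xu_a)(\partial^{\gamma_1}D_x v)\dots(\partial^{\gamma_p}D_x v)\partial^\zeta\partial_j\nu,
\end{align}
with $|\zeta|\leq|\alpha|-1$, $|\beta|+|\gamma|+|\zeta|=|\alpha|\leq s$, and $p$ or $q$ (but not both) possibly zero; here $\partial^\alpha=\partial_{t,x'}^\alpha$ is purely tangential, so these commutator terms are genuinely restricted to $x_d=0$.

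Next I would apply Proposition \ref{trace} with $s$ replaced by $0$ to each such term $w$ extended to the interior: $\eps\langle\Lambda^{1/2}_{x'} w(t)\rangle_{0,\eps}\lesssim |w(t)|_{0,1,\eps}$, which unwinds to the statement that it suffices to bound $|w(t)|_{L^2(x)}$ and $|D_x w(t)|_{L^2(x)}$ in $L^2$ over the half-space, each multiplied by an extra factor of $\eps$. Now $|w(t)|_{L^2(x)}$ — with $w$ of the form \eqref{bc1} — is exactly a term already estimated in Lemma \ref{icomh1} but \emph{without} the leading factor $\eps$ that appears in \eqref{b3} (there the estimate was applied to $\eps\cdot\eqref{H1}$). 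So this piece is $\lesssim \eps^{-1} B_1\mathcal{E}^s_\eps(t)$, and the extra $\eps$ supplied by the trace estimate restores $B_1\mathcal{E}^s_\eps(t)$. For $|D_x w(t)|_{L^2(x)}$ one more spatial derivative lands on one of the factors in \eqref{bc1}; this is precisely the situation analyzed in steps \textbf{3}--\textbf{5} of the proof of Lemma \ref{icomh1} (the four cases: $\partial_i$ hits $H$, an $\partial^{\beta_k}D_xu_a$, an $\partial^{\gamma_k}D_xv$, or $\partial^\zeta\partial_j\nu$), and it produces an extra $\eps^{-1}$ compared to the $\partial_i$-free term — whether via $D_x^2 u_a$, via $D_x^2\nu$, or via $D_xD_xv$ (using \eqref{b6aaa}/\eqref{b6aa}). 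Again the trace-supplied $\eps$ cancels exactly one of these $\eps^{-1}$'s; the surplus power $\eps^\mu$ with $\mu>0$ (coming from $|\beta|+|\gamma|+|\zeta|\leq s$, $q\geq 0$, and $M-\tfrac{d}{2}-1>1$ as in \eqref{b7}, \eqref{b9}, \eqref{b13}) absorbs the remaining $\eps^{-1}$ after shrinking $\eps_2(C_1)$, leaving a bound $\lesssim B_1\mathcal{E}^s_\eps(t)$ with $B_1=B_1(A_1)$ independent of $C_1$.

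The only point requiring a little care — and the place I'd expect to do actual bookkeeping rather than cite Lemma \ref{icomh1} verbatim — is that the trace estimate Proposition \ref{trace} was stated with a fixed order $s$, so I must either invoke it with order $0$ applied to each individual summand, or reprove the one-line computation in the proof of Proposition \ref{trace} with the tangential derivatives $\eps^{|\alpha|}\partial^\alpha$ already applied; either way the effect is just the substitution $\langle \Lambda^{1/2}_{x'}(\cdot)\rangle_{s,\eps}\leftrightarrow|(\cdot)|_{s,1,\eps}$ with the loss of one tangential-derivative bookkeeping absorbed into the $|\alpha|\leq s$ hypothesis. Since the interior $L^2$ bounds needed are literally the content of Lemma \ref{icomh1} and its proof (applied once with, and once without, an extra $\partial_i$), no genuinely new estimate is needed, and the main obstacle is purely notational: making sure the factor of $\eps$ from the trace estimate is matched against the correct power of $\eps$ in the two interior bounds so that the final constant $B_1$ depends only on $A_1$ and not on $C_1$, as required for the argument of Proposition \ref{HimpliesC}.
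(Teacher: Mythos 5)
Your plan is the same as the paper's: reduce \eqref{b14} to interior $L^2$ bounds via Proposition \ref{trace}, then invoke (the proof of) Lemma \ref{icomh1}. But there is a concrete $\eps$-bookkeeping error in the treatment of the first term. Writing $\mathcal{C}:=[\eps^{|\alpha|}\partial^\alpha,A_{ij}(D_x(u_a+v))]\partial_j\nu$, the trace estimate at order $0$ gives
\begin{align*}
\eps\langle \Lambda^{1/2}_{x'}\mathcal{C}(t)\rangle_{0,\eps}\ \lesssim\ |\mathcal{C}(t)|_{0,1,\eps}\ \lesssim\ |\mathcal{C}(t)|_{L^2(x)}+|\eps D_x\mathcal{C}(t)|_{L^2(x)},
\end{align*}
where the factor $\eps$ is already built into the left-hand side; there is \emph{no} spare factor of $\eps$ in front of the $|\mathcal{C}(t)|_{L^2(x)}$ piece. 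Your claim that ``this piece is $\lesssim \eps^{-1}B_1\mathcal{E}^s_\eps(t)$, and the extra $\eps$ supplied by the trace estimate restores $B_1\mathcal{E}^s_\eps(t)$'' is therefore wrong as stated: the trace estimate does not hand you that $\eps$, and without it you are left with an uncontrolled $\eps^{-1}$. (The related phrase ``each multiplied by an extra factor of $\eps$'' is the source of the confusion; only the $D_x\mathcal{C}$ term carries the extra $\eps$.)

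What you actually need is $|\mathcal{C}(t)|_{L^2(x)}\lesssim B_1\mathcal{E}^s_\eps(t)$, and this does hold — but you must prove it directly rather than deduce it from Lemma \ref{icomh1} by ``dividing out'' the leading $\eps$. Since $\eps\,\mathcal{C}$ is a finite sum of terms of the form \eqref{b4}, the quantity to bound is $|\eps^{-1}\eqref{b4}|_{L^2(x)}$. For $p\geq 1$ the factor $\eps^{p(M-\frac{d}{2}-1)}$ (with $M-\frac{d}{2}-1>1$) is more than enough; for $p=0$ one must recall that $p$ and $q$ cannot both vanish, so $q\geq 1$ and the factor $\eps^{\,q-1}\leq 1$ closes the estimate without any help from the trace estimate. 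This is exactly what the paper does. Your final paragraph hints you know the interior bound is needed ``once without an extra $\partial_i$,'' but as written the $\eps$-count in that step is off by one, which is precisely the sort of slip that would silently destroy the $C_1$-independence of $B_1$ required for Proposition \ref{HimpliesC}; it should be carried out explicitly.
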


\begin{proof}
\textbf{1. }Denote the commutator $[\eps^\alpha\partial^\alpha,A_{ij}(D_x(u_a+v))]\partial_j\nu$ by $\mathcal{C}$ and observe that $\eps\mathcal{C}$ is a sum of terms of the form \eqref{b4}.  By the trace estimate of Proposition \ref{trace} we have
\begin{align}\label{b15}
\eps\langle \Lambda^{\frac{1}{2}}_{x'}\mathcal{C}(t)\rangle_{0,\eps}\leq |\mathcal{C}(t)|_{0,1,\eps}\leq |\mathcal{C}(t)|_{L^2(x)}+|\eps D_x\mathcal{C}(t)|_{L^2(x)}.
\end{align} 
Since $\eps D_x\mathcal{C}(t)$ is a sum of terms of the form $D_x$\eqref{b4}, the second term on the right in \eqref{b15} has already been estimated in Lemma \ref{icomh1} 

\textbf{2. }To estimate  $|\mathcal{C}(t)|_{L^2(x)}$ we must estimate  terms like $|\eps^{-1}$\eqref{b4}$|_{L^2(x)}$. In the cases $p\geq 1$, $p=0$ we obtain
respectively
\begin{align}
\begin{split}
&|\eps^{-1}\eqref{b4}|_{L^2(x)}\leq  C_1^pA_1^q\mathcal{E}^s_\eps(t)\eps^{p(M-\frac{d}{2}-1)}\eps^{|\alpha|-|\beta|-|\gamma|-|\zeta|}\eps^{q-1}\leq C_1^pA_1^q\mathcal{E}^s_\eps(t)\eps^{\mu}\\
&|\eps^{-1}\eqref{b4}|_{L^2(x)}\leq A_1^q\mathcal{E}^s_\eps(t)\eps^{|\alpha|-|\beta|-|\zeta|}\eps^{q-1}\leq A_1^q\mathcal{E}^s_\eps(t),
\end{split}
\end{align}
where $\mu>0$ in the first case, and $q\geq 1$ in the second.
\end{proof}

\begin{lem}[Boundary forcing for \eqref{H1}]\label{bforh1}
 Let $\eps_1$ and $C_1$ be as in Proposition \ref{mainestimate}. 
 There exist positive constants   $\eps_2(C_1)\leq \eps_1$  and $B_1=B_1(A_1)$ such that for $\eps\in [0,\eps_2]$  and all $t \in [0,T_\eps]$ 
\begin{align}\label{b16}
|\eps\Lambda^{\frac{1}{2}}_{x'}\eps^{|\alpha|}\partial^\alpha\mathbb{G}_1(t)|_{L^2(x')}\leq B_1 \mathcal{E}^s_\eps(t)+\langle r^\eps(t)\rangle_{s+1,\eps} \text{ for all }|\alpha|\leq s. 
\end{align}
%Here (and below) $i,j\in [1,\dots,d]$. 
\end{lem}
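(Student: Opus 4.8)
The plan is to follow the same scheme used in the previous lemmas (\ref{icomh1}--\ref{bforh1}): write out $\mathbb{G}_1$ explicitly, peel off the $r^\eps$ term, expand the rest into a finite sum of products, apply the $\eps$-weighted product and Sobolev estimates, and absorb the excess powers of $\eps$. First I would recall from \eqref{H1}(b) that
\begin{align*}
\mathbb{G}_1=\eps^{-M}\left[\CalG-\sum_i n_i\big(A_i(D_x(u_a+v))-A_i(D_xu_a)-\sum_j A_{ij}(D_x(u_a+v))\partial_jv\big)\right].
\end{align*}
The first term is $\eps^{-M}\CalG=r^\eps$ (recall $\CalG=\eps^M r^\eps$ from \eqref{apsys}), and by the trace estimate of Proposition \ref{trace} together with $|h(x',0)|$-type bounds,
$|\eps\Lambda^{1/2}_{x'}\eps^{|\alpha|}\partial^\alpha r^\eps|_{L^2(x')}\lesssim |r^\eps(t)|_{s,1,\eps}\lesssim \langle r^\eps(t)\rangle_{s+1,\eps}$ — actually cleaner: $\eps\langle\Lambda^{1/2}_{x'}r^\eps\rangle_{s,\eps}\lesssim\langle r^\eps\rangle_{s+1,\eps}$ directly from the definition, which gives the stated $\langle r^\eps(t)\rangle_{s+1,\eps}$ term.

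Next I would handle the nonlinear bracket. By Taylor expansion (as already done for $E_b$ in \eqref{E}), the expression $A_i(D_x(u_a+v))-A_i(D_xu_a)-\sum_j A_{ij}(D_x(u_a+v))\partial_jv$ equals $\int_0^1(1-\theta)(d^2A_i)(D_x(u_a+\theta v))(D_xv,D_xv)\,d\theta$, so the nonlinear part of $\mathbb{G}_1$ has the form $\eps^{-M}\sum_i n_i H_i(D_xu_a,D_xv)(D_xv)(D_xv)$ for smooth $H_i$ — quadratic in $D_xv$, no $D_x\nu$ at all. Since $v=\eps^M\nu$, we have $\eps^{-M}(D_xv)(D_xv)=\eps^M(D_x\nu)(D_x\nu)$, gaining a large positive power of $\eps$. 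I would then apply the trace estimate of Proposition \ref{trace}: $\eps\langle\Lambda^{1/2}_{x'}(\cdot)\rangle_{s,\eps}\lesssim |(\cdot)|_{s,1,\eps}$, so it suffices to bound $|\eps\eps^{|\alpha|}\partial^\alpha(\text{nonlinear part})|_{L^2(x)}$ and also the version with one extra $D_x$. Expanding $\eps^{1+|\alpha|}\partial^\alpha$ of $\eps^M H_i(D_xu_a,D_xv)(D_x\nu)(D_x\nu)$ by Leibniz gives a finite sum of terms exactly of the type already estimated in Lemma \ref{iforh1} (indeed with an extra factor $\eps^M$ and with $(D_x\nu,D_x\nu)$ playing the role of $(D_xv,D_x\nu)$ there). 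Using \eqref{b6}, \eqref{b6aa}, Proposition \ref{prod} $p$-fold, and the definition of $\mathcal{E}^s_\eps(t)$, each such term is bounded by $C_1^p A_1^q\,\mathcal{E}^s_\eps(t)\,\eps^\mu$ with $\mu>0$ (the surplus comes from the $\eps^M$ plus the usual bookkeeping $\eps^{|\alpha|-|\beta|-|\gamma|-|\zeta|}$ and $\eps^{p(M-d/2-1)}$, more than enough even after one $D_x$ costs an $\eps^{-1}$). Choosing $\eps_2(C_1)$ small enough so that all these $\eps^\mu$ factors are $\le 1$ and the finitely many constants $A_1^q$ sum to a constant $B_1=B_1(A_1)$, we get $|\eps\Lambda^{1/2}_{x'}\eps^{|\alpha|}\partial^\alpha(\text{nonlinear part})|_{L^2(x')}\le B_1\mathcal{E}^s_\eps(t)$, and combining with the $r^\eps$ term yields \eqref{b16}.

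There is essentially no new obstacle here — the lemma is a near-copy of Lemma \ref{iforh1} with the trace estimate prepended and with the favorable observation that $\mathbb{G}_1$'s nonlinearity is purely quadratic in $D_xv=\eps^M D_x\nu$, so the powers of $\eps$ are strictly better than in the interior estimates. The one point requiring a little care is that after applying Proposition \ref{trace} one must control a term with an additional $D_x$ (the $|\eps D_x\mathcal{C}|_{L^2}$-type term in \eqref{b15}), which costs one power of $\eps^{-1}$; I would verify, exactly as in the passage from \eqref{b7} to \eqref{b9} and \eqref{b13}, that the exponent $\mu$ remains positive using $M-\tfrac d2-1>1$ and $|\alpha|\le s$. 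As usual $\eps_2$ may be shrunk finitely many times, and $B_1$ is redefined in the bookkeeping sense noted in the footnote to Lemma \ref{iforh1}.
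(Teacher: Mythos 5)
Your proposal is correct and follows essentially the same route as the paper: peel off $\eps^{-M}\mathcal{G}=r^\eps$ and bound it by $\langle r^\eps\rangle_{s+1,\eps}$, observe that the remaining nonlinear part of $\mathbb{G}_1$ is the boundary trace of an interior expression of the schematic form $H_i(D_xu_a,D_xv)(D_xv,D_x\nu)$ (equivalently, $\eps^M$ times a quadratic in $D_x\nu$), apply Proposition~\ref{trace} to reduce to the $|\cdot|_{0,1,\eps}$ interior norm, and then reuse the $L^2$ and $\eps D_x$ bounds already established in the proof of Lemma~\ref{iforh1}, with the gain of $\eps^M$ absorbing the $\eps^{-1}$ cost of the extra derivative. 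The only cosmetic slip is that the Taylor-remainder formula you quote is the one with $A_{ij}(D_xu_a)$ (i.e.\ $E_b$), whereas $\mathbb{G}_1$ has $A_{ij}(D_x(u_a+v))$, so the weight is $\theta$ rather than $1-\theta$ and the sign flips; this changes nothing in the estimate since only the quadratic structure in $D_xv$ and the smoothness of the coefficient are used.
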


\begin{proof}
\textbf{1. }The first term of $\mathbb{G}_1$ is $\eps^{-M}\mathcal{G}=r^\eps$, which clearly gives rise to the term $\langle r^\eps(t)\rangle_{s+1,\eps}$ in \eqref{b10}.

\textbf{2. }The remaining part of $\mathbb{G}_1$ is a sum of terms like 
\begin{align}\label{b17}
H_i(D_xu_a,D_xv)(D_xv,D_x\nu), 
\end{align}
where $H_i$ is a $C^\infty$ function of its arguments.   Denote $\eps^\alpha\partial^\alpha\eqref{b17}$  by $\mathcal{K}$ and observe that $\eps\mathcal{K}$ is a sum of terms of the form \eqref{b12}.  By the trace estimate of Proposition \ref{trace} we have
\begin{align}\label{b18}
\eps\langle \Lambda^{\frac{1}{2}}_{x'}\mathcal{K}(t)\rangle_{0,\eps}\leq |\mathcal{K}(t)|_{0,1,\eps}\leq |\mathcal{K}(t)|_{L^2(x)}+|\eps D_x\mathcal{K}(t)|_{L^2(x)}.
\end{align} 
Since $\eps D_x\mathcal{K}(t)$ is a sum of terms of the form $D_x$\eqref{b12}, the second term on the right in \eqref{b18} has already been estimated in Lemma \ref{iforh1}.

\textbf{3. }To estimate  $|\mathcal{K}(t)|_{L^2(x)}$ we must estimate  terms like $|\eps^{-1}$\eqref{b12}$|_{L^2(x)}$. For any integer $p\geq 0$ we obtain
\begin{align}
\begin{split}
&|\eps^{-1}\eqref{b12}|_{L^2(x)}\leq  C_1^{p+1}A_1^q\mathcal{E}^s_\eps(t)\eps^{(p+1)(M-\frac{d}{2}-1)}\eps^{|\alpha|-|\beta|-|\gamma|-|\zeta|}\eps^{q-1}\leq C_1^{p+1}A_1^q\mathcal{E}^s_\eps(t)\eps^{\mu}\\
%&|\eps^{-1}\eqref{b12}|_{L^2(x)}\leq A_1^q\mathcal{E}^s_\eps(t)\eps^{|\alpha|-|\beta|-|\zeta|}\eps^{q-1}\leq A_1^q\mathcal{E}^s_\eps(t),
\end{split}
\end{align}
where $\mu>0$.
\end{proof}

The next proposition summarizes what we have shown so far.

\begin{prop}\label{b19}
%Let $s>...$ and let $T$ (which is independent of $\eps$)   be the time of existence of the approximate solution $u^\eps_a$.  
%Let $M$, $s$, and $T$ be as in Theorem \ref{main}. 
Suppose $\eps_1$ and  $C_1$ are positive constants and that for $\eps\in (0,\eps_1]$ and $T_\eps\in [0,T]$, we are given a solution $(\nu^\eps,\omega^\eps)$ of 
the three coupled systems on $[0,T_\eps]$ which satisfies  $\mathcal{E}^s_\eps(t)\leq C_1$ for all $t\in [0,T_\eps]$.   Then there exist positive constants   $\eps_2(C_1)\leq \eps_1$   and $B_1(A_1,K_1)$ such that for $\eps\in [0,\eps_2]$  and all $t \in [0,T_\eps]$ 
\begin{align}\label{b20}
|\eps \overline{D} \nu(t)|^2_{E^s_{\eps,tan}}\leq B_1\int ^t_0 \left([\mathcal{E}^s_\eps(\sigma)]^2+\eps^2|R^\eps(\sigma)|^2_{s,\eps}+\langle r^\eps(\sigma)\rangle ^2_{s+1,\eps}\right)d\sigma.
\end{align}

\end{prop}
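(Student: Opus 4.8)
The plan is to apply the hyperbolic a priori estimate of Proposition \ref{hypest} to the boundary value problem obtained from \eqref{H1} by first multiplying by $\eps$ and then applying the tangential operator $\eps^{|\alpha|}\partial^\alpha$, with $\partial^\alpha:=\partial_{t,x'}^\alpha$, for each multi-index $|\alpha|\le s$; the bound \eqref{b20} then follows by summing over the finitely many such $\alpha$ and invoking the definition of $|\cdot|_{E^s_{\eps,tan}}$ in \eqref{b1}.

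First I would record the structure of the differentiated problem. Set $u_\alpha:=\eps^{|\alpha|+1}\partial^\alpha\nu$. Commuting $\eps^{|\alpha|}\partial^\alpha$ past the coefficients $A_{ij}(D_x(u_a+v))$ in $\eps\cdot\eqref{H1}$ shows that $u_\alpha$ solves
\begin{align*}
&\partial_t^2 u_\alpha-\sum_{i,j=1}^d\partial_i\big(A_{ij}(D_x(u_a+v))\partial_j u_\alpha\big)=\eps^{|\alpha|+1}\partial^\alpha\mathbb{F}_1+\sum_{i,j=1}^d\partial_i\big([\eps^{|\alpha|}\partial^\alpha,A_{ij}(D_x(u_a+v))]\partial_j(\eps\nu)\big)\text{ in }x_d\ge 0,\\
&\sum_{i,j=1}^d n_i A_{ij}(D_x(u_a+v))\partial_j u_\alpha=\eps^{|\alpha|+1}\partial^\alpha\mathbb{G}_1+\sum_{i,j=1}^d n_i[\eps^{|\alpha|}\partial^\alpha,A_{ij}(D_x(u_a+v))]\partial_j(\eps\nu)\text{ on }x_d=0,
\end{align*}
and $u_\alpha=\partial_t u_\alpha=0$ at $t=0$, since $\nu$ vanishes in $t<0$. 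Here I use that $\eps^{|\alpha|}\partial^\alpha$ applied to $\partial_i(A_{ij}\partial_j(\eps\nu))$ equals $\partial_i(A_{ij}\partial_j u_\alpha)+\partial_i([\eps^{|\alpha|}\partial^\alpha,A_{ij}]\partial_j(\eps\nu))$, and likewise on the boundary, together with $\eps^{|\alpha|+1}\partial^\alpha\nu=\eps^{|\alpha|}\partial^\alpha(\eps\nu)$; also the regularity of $(\nu,\omega)$ assumed in Proposition \ref{mainestimate} and Remark \ref{howtoapply} guarantees $u_\alpha\in E^2[0,T_\eps]$ for $|\alpha|\le s$.

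Second, I would verify that Proposition \ref{hypest} is applicable with a constant $K_1$ that is uniform in $\eps$. By Remark \ref{a8aa}, after shrinking $\eps_2$ we have $|u_a|_{W^{1,\infty}(\Omega)}<R/2$ and $|v|_{W^{1,\infty}}<R/2$ for $\eps\le\eps_2$, so $|D_x(u_a+v)|<R$ and the structural Assumption \ref{a5a}(b),(c) yields the symmetry $A_{ij}^t=A_{ji}$ and the coercivity estimate required in Assumption \ref{assh}(b),(c) (absorbing $\|u\|^2$). For Assumption \ref{assh}(a) and, crucially, for the uniformity of $K_1$, I would invoke Remark \ref{imprem}: since $M>\tfrac d2+2$, the standing hypothesis $\mathcal{E}^s_\eps(t)\le C_1$ forces $|D_xv|_{W^{1,\infty}([0,T_\eps]\times\mathbb{R}^d_+)}\le 1$ for $\eps\le\eps_2$, so the Lipschitz norm of $A_{ij}(D_x(u_a+v))$ is bounded by a constant depending only on $A_1$ (not on $\eps$ or $C_1$); hence $K_1$ in \eqref{a3} may be taken independent of $\eps$.

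Third, I would apply \eqref{a3} to $u_\alpha$ and insert the estimates already in hand. The interior forcing of the $u_\alpha$-problem is controlled in $L^2(x)$ by Lemma \ref{iforh1} together with Lemma \ref{icomh1} by $B_1\mathcal{E}^s_\eps(t)+\eps|R^\eps(t)|_{s,\eps}$; applying $\Lambda^{1/2}_{x'}$ to its boundary forcing, Lemma \ref{bforh1} together with Lemma \ref{bcomh1} (using $\eps\,[\eps^{|\alpha|}\partial^\alpha,A_{ij}]\partial_j\nu=[\eps^{|\alpha|}\partial^\alpha,A_{ij}]\partial_j(\eps\nu)$) gives the bound $B_1\mathcal{E}^s_\eps(t)+\langle r^\eps(t)\rangle_{s+1,\eps}$. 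Since $\|\overline{D}u_\alpha(t)\|^2$ dominates $\eps^{2|\alpha|}\|\partial^\alpha(\eps\overline{D}\nu)(t)\|^2$ — including the normal derivative, because tangential and normal derivatives commute — feeding these two bounds into \eqref{a3} and using $(a+b)^2\le 2a^2+2b^2$ gives, for each $|\alpha|\le s$,
\begin{align*}
\eps^{2|\alpha|}\|\partial^\alpha(\eps\overline{D}\nu)(t)\|^2\le B_1\int_0^t\big([\mathcal{E}^s_\eps(\sigma)]^2+\eps^2|R^\eps(\sigma)|^2_{s,\eps}+\langle r^\eps(\sigma)\rangle^2_{s+1,\eps}\big)\,d\sigma ,
\end{align*}
and taking the supremum over $|\alpha|\le s$ yields \eqref{b20} (with $\eps_2$ the minimum of the finitely many thresholds from the four lemmas). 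I do not expect a genuine obstacle here beyond the bookkeeping: the one delicate point is precisely the $\eps$-uniformity of $K_1$, which is why Remark \ref{imprem} is essential, together with checking that the $\eps$-weights in the differentiated problem match those in the weighted norms appearing in Lemmas \ref{icomh1}–\ref{bforh1}.
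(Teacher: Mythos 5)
Your proof follows exactly the paper's approach: apply the hyperbolic estimate of Proposition \ref{hypest} to $\eps^{|\alpha|}\partial^\alpha(\eps\cdot\eqref{H1})$ for $|\alpha|\le s$, with Lemmas \ref{icomh1}, \ref{iforh1} controlling the interior forcing, Lemmas \ref{bcomh1}, \ref{bforh1} controlling the boundary forcing, and Remarks \ref{a8aa}, \ref{imprem} securing the $\eps$-uniformity of $K_1$; you merely spell out the commutator bookkeeping that the paper leaves implicit. One small slip: in your boundary equation for $u_\alpha$ the commutator term should enter with a minus sign (move $n_i[\eps^{|\alpha|}\partial^\alpha,A_{ij}]\partial_j(\eps\nu)$ to the right-hand side), though this is harmless since the estimate only uses its magnitude.
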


\begin{proof}
The proposition is obtained by 
applying the hyperbolic estimate \eqref{a3} to $\eps^\alpha\partial^\alpha (\eps \eqref{H1})$ for $|\alpha|\leq s$.   The function that plays the role of ``$f$" in \eqref{a3} is a sum of terms  estimated in Lemmas    \ref{icomh1} and \ref{iforh1}, while the function that plays the role of ``$g$" is a sum of terms estimated in Lemmas \ref{bcomh1} and \ref{bforh1}.
\end{proof}

We now estimate $|\eps^2 \overline{D} \omega(t)|_{E^s_{\eps,tan}}$ by applying the estimate \eqref{a3} to $\eps^\alpha\partial^\alpha (\eps^2 \eqref{H2})$ for $|\alpha|\leq s$.\footnote{Here 
``$\eps^\alpha\partial^\alpha (\eps \eqref{H1})$" denotes the result of multiplying the problem \eqref{H1} by $\eps$ and then applying the operator $\eps^\alpha\partial^\alpha$.}

\begin{lem}[Interior commutator for \eqref{H2}]\label{icomh2}
 Let $\eps_1$ and $C_1$ be as in Proposition \ref{mainestimate}. 
 There exist positive constants   $\eps_2(C_1)\leq \eps_1$  and $B_1=B_1(A_1)$ such that for $\eps\in [0,\eps_2]$  and all $t \in [0,T_\eps]$ 
\begin{align}\label{b21}
|\partial_i[\eps^\alpha\partial^\alpha,A_{ij}(D_x(u_a+v))]\partial_j(\eps^2\omega)|_{L^2(x)}\leq B_1 \mathcal{E}^s_\eps(t) \text{ for all }|\alpha|\leq s. 
\end{align}
Here (and below) $i,j\in [1,\dots,d]$. 

\end{lem}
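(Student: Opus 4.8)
The plan is to mirror almost verbatim the proof of Lemma \ref{icomh1}, since the only change is that $\eps\nu$ is replaced by $\eps^2\omega$ and the estimates on $\omega$ are of the same shape as those on $\nu$ but one power of $\eps$ more favorable. First I would expand each component of $[\eps^\alpha\partial^\alpha, A_{ij}(D_x(u_a+v))]\partial_j(\eps^2\omega)$ as a finite sum of terms of exactly the form \eqref{b4}, but with the final factor $\partial^\zeta\partial_j\nu$ replaced by $\partial^\zeta\partial_j\omega$ and with the overall prefactor $\eps^{1+|\alpha|}$ replaced by $\eps^{2+|\alpha|}$; the multi-index constraints \eqref{b5} are unchanged, and again either $p$ or $q$ can vanish but not both. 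Applying $\partial_i$ produces the same four cases according to whether $\partial_i$ falls on $H$, on some $\partial^{\beta_k}D_xu_a$, on some $\partial^{\gamma_k}D_xv$, or on $\partial^\zeta\partial_j\omega$.

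The key input is the analogue of the estimate list \eqref{b6}: from the definition of $\mathcal{E}^s_\eps(t)$ we have $|\eps^2\overline D\omega|_{s,\eps}\leq\mathcal{E}^s_\eps(t)$, hence $|\omega, D_x\omega|_{s,\eps}\leq \mathcal{E}^s_\eps(t)\eps^{-2}$ and more generally $|\partial^\zeta D_x^m\omega(t)|_{s-|\zeta|,\eps}\leq \mathcal{E}^s_\eps(t)\eps^{-|\zeta|-m}$ for $m=1,2$; the estimates on $\partial^{\beta_k}D_x^m u_a$ and $\partial^{\gamma_k}D_x^m v$ from \eqref{b6} are used unchanged. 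Then in the case where $\partial_i$ hits a factor $\partial^{\beta_k}D_xu_a$, producing $\partial^{\beta_1}D_x^2 u_a$, one repeats the computation leading to \eqref{b7}: the $p$ factors of $D_xv$ and the $\omega$-factor are handled by $p$ applications of the product estimate (Proposition \ref{prod}), the $q$ factors of $D_xu_a$ by $L^\infty$ bounds as in \eqref{b6bb}, and one collects
\[
C_1^p A_1^q\,\mathcal{E}^s_\eps(t)\,\eps^{p(M-\frac d2-1)}\,\eps^{|\alpha|-|\beta|-|\gamma|-|\zeta|}\,\eps^{q-1}\cdot \eps
= C_1^p A_1^q\,\mathcal{E}^s_\eps(t)\,\eps^{\mu'},
\]
where the extra $\eps^{+1}$ comes from the prefactor being $\eps^{2+|\alpha|}$ rather than $\eps^{1+|\alpha|}$, so $\mu'=\mu+1>0$ with $\mu$ as in \eqref{b7}. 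The case where $\partial_i$ hits $\partial^\zeta\partial_j\omega$ produces $\partial^\zeta D_x^2\omega$, whose $s$-norm carries $\eps^{-|\zeta|-2}$ instead of $\eps^{-|\zeta|-1}$; this loses one power of $\eps$ relative to the previous case but gains one because the $\omega$-factor is no longer differentiated by a tangential derivative hitting an $u_a$-factor, and the net exponent is again $\mu+1>0$ (the bookkeeping is identical to the passage from \eqref{b7} to \eqref{b9}). When $p=0$ one instead gets a clean bound $\leq A_1^q\mathcal{E}^s_\eps(t)$ exactly as in \eqref{b8}, using $q\geq 1$. The cases where $\partial_i$ hits a $\partial^{\gamma_k}D_xv$ or hits $H$ reduce to the cases already treated, just as in Lemma \ref{icomh1}. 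Choosing $\eps_2(C_1)\leq \eps_1$ small enough that every resulting power $\eps^{\mu'}$ is $\leq 1$ on $(0,\eps_2]$ and summing the finitely many terms gives \eqref{b21} with $B_1$ depending only on $A_1$ (through the $A_1^q$ bounds and the number of terms, both of which are $\eps$-independent).

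I do not expect any real obstacle here: the lemma is a one-power-of-$\eps$ shift of Lemma \ref{icomh1}, and the key point—that $B_1$ does not depend on $C_1$—survives because every occurrence of $C_1$ comes paired with a strictly positive power of $\eps$ (since $M-\frac d2-1>1$ and $p\geq 1$ in those terms), which can be absorbed by shrinking $\eps_2$. The only mild subtlety is the sign bookkeeping in the case $\partial_i$ hitting $\partial^\zeta\partial_j\omega$, where one must check that the loss from $D_x^2\omega$ is compensated; this is the analogue of \eqref{b9} and goes through verbatim. So the proof is: expand into terms of the form \eqref{b4} with the substitutions indicated; record the $\omega$-analogue of \eqref{b6}; run the four cases exactly as in Lemma \ref{icomh1}, noting the uniform extra factor of $\eps$ from the $\eps^{2+|\alpha|}$ prefactor; conclude by choosing $\eps_2$ small and summing.
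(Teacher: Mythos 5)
Your plan matches the paper's: Lemma \ref{icomh2} is proved by repeating the proof of Lemma \ref{icomh1} with $\nu$ replaced by $\eps\omega$ throughout (equivalently, $\eps\nu$ replaced by $\eps^2\omega$). However, your $\omega$-estimates are off by one power of $\eps$, and your aside that the $\omega$-estimates are ``one power of $\eps$ more favorable'' than the $\nu$-estimates has the direction reversed. From $|\eps^2\overline{D}\omega|_{s,\eps}\leq\mathcal{E}^s_\eps(t)$ one deduces $|\partial^\zeta D_x^m\omega|_{s-|\zeta|,\eps}\leq\mathcal{E}^s_\eps(t)\,\eps^{-|\zeta|-m-1}$, not $\eps^{-|\zeta|-m}$; indeed your ``more general'' formula at $m=1$, $\zeta=0$ reads $|D_x\omega|_{s,\eps}\leq\mathcal{E}^s_\eps\,\eps^{-1}$, contradicting the $\mathcal{E}^s_\eps\,\eps^{-2}$ you correctly stated in the previous clause. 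Since the energy controls $\eps^2\overline{D}\omega$ but only $\eps\overline{D}\nu$, $\omega$ is one $\eps$-power \emph{less} favorable than $\nu$, not more. The extra factor of $\eps$ coming from the prefactor $\eps^{2+|\alpha|}$ (versus $\eps^{1+|\alpha|}$ in Lemma \ref{icomh1}) is therefore precisely what compensates the worse $\omega$-estimate; there is no net gain. The exponent you should arrive at equals the $\mu$ of \eqref{b7} and \eqref{b9}, not $\mu+1$, and the final bound $B_1\mathcal{E}^s_\eps(t)$ follows exactly as in Lemma \ref{icomh1}. Your conclusion is correct, but it comes through two cancelling bookkeeping errors.

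One smaller caveat, which neither your writeup nor the paper's terse ``repeat verbatim'' makes explicit: for the step where $\partial_i$ hits $\partial^\zeta\partial_j(\eps^2\omega)$, producing $\eps^2\partial^\zeta D_x^2\omega$, bounding the last factor in the $|\cdot|_{s-|\zeta|,\eps}$ norm requires some care. Writing $\omega=\partial_t\nu$, the combination $\partial^\mu_{t,x}\partial^\zeta D_x^2\partial_t\nu$ with $|\mu|=s-|\zeta|$ carries $s+1$ derivatives of $D_x^2\nu$, which $|\eps^2\overline{D}^2_x\nu|_{s,\eps}$ alone does not control. One uses $|\zeta|\le|\alpha|-1\le s-1$ and, when $p\ge1$, a small rebalancing of the indices in the product estimate (shifting one unit of regularity onto a $\partial^{\gamma_k}D_xv$ factor, which is absorbed via $|\eps^2\overline{D}\omega|_{s,\eps}$ or $|\eps^2\overline{D}^2_x\nu|_{s,\eps}$). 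The conclusion is unchanged, but the step is not quite automatic.
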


\begin{proof}

The proof can be obtained by repeating verbatim the proof of Lemma \ref{icomh1}  and replacing $\nu$ with $\eps \omega$ wherever $\nu$ occurs in that proof.
\end{proof}

\begin{lem}[Interior forcing for \eqref{H2}]\label{iforh2}
 Let $\eps_1$ and $C_1$ be as in Proposition \ref{mainestimate}. 
 There exist positive constants   $\eps_2(C_1)\leq \eps_1$  and $B_1=B_1(A_1)$ such that for $\eps\in [0,\eps_2]$  and all $t \in [0,T_\eps]$ 
\begin{align}\label{b22}
|\eps^{2+|\alpha|}\partial^\alpha\mathbb{F}_2(t)|_{L^2(x)}\leq B_1 \mathcal{E}^s_\eps(t)+\eps|R^\eps(t)|_{s+1,\eps} \text{ for all }|\alpha|\leq s. 
\end{align}
%Here (and below) $i,j\in [1,\dots,d]$. 

\end{lem}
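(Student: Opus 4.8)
The plan is to follow the pattern already established for $\eqref{H1}$, adapted to the structure of $\mathbb{F}_2 = \eps^{-M}[\partial_t\CalF - H(\overline{D}_x^2 v)]$. Recall $\eps^{-M}\CalF = R^\eps$, so $\eps^{-M}\partial_t\CalF = \partial_t R^\eps$, and applying $\eps^{2+|\alpha|}\partial^\alpha$ to this piece produces $\eps^{2+|\alpha|}\partial^\alpha \partial_t R^\eps$; since $|\alpha|\le s$ and we pick up one extra $t$-derivative, this is controlled by $\eps|R^\eps(t)|_{s+1,\eps}$ after absorbing the remaining factor of $\eps$ (here $\eps^{1+|\alpha|}\partial^\alpha\partial_t R^\eps$ is a term of $\eps^{1+|\alpha|+1}$-type, i.e. bounded by $\eps\,|R^\eps(t)|_{s+1,\eps}$). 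That accounts for the $\eps|R^\eps(t)|_{s+1,\eps}$ term in \eqref{b22}.

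For the remaining part, I would expand $\eps^{-M}H(\overline{D}_x^2 v)$. From the definition,
\[
H(\overline{D}_x^2 v) = -\sum_{i,j}\partial_i\big[(A_{ij}(D_x(u_a+v)) - A_{ij}(D_xu_a))\,\partial_j\partial_t u_a\big].
\]
Writing $A_{ij}(D_x(u_a+v)) - A_{ij}(D_xu_a) = \int_0^1 (dA_{ij})(D_x u_a + \theta D_x v)(D_x v)\,d\theta$, we see each summand is $\partial_i$ applied to a product of the form $H_i(D_xu_a, D_xv)\,(D_xv)\,(\partial_j\partial_t u_a)$ with $H_i$ smooth. Multiplying by $\eps^{-M}$ and then by $\eps^{2+|\alpha|}\partial^\alpha$ and expanding by Leibniz, one gets a finite sum of terms structurally identical to \eqref{b12} except that one factor $\partial^{\zeta_2}D_x\nu$ is replaced by a factor $\partial^{\zeta_2}D_x\partial_t u_a = \partial^{\zeta_2}D_x^{(t)} u_a$ (an extra $t$-derivative on $u_a$, which by Assumption \ref{ass1} costs one more power of $\eps^{-1}$ but is still bounded by $\eps^{2-|\zeta_2|-2}A_1$ in $L^\infty$), while the remaining $D_xv$ factor is what carries the $\eps^{-M}$. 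The crucial bookkeeping: the single $D_xv$ factor gives $C_1\eps^{M - |\gamma|-1}$ via \eqref{b6b}, which cancels the $\eps^{-M}$; the $u_a$ factors give powers of $\eps^{2-(\cdot)-1}$ or $\eps^{2-(\cdot)-2}$ via \eqref{b6}; and when $\partial_i$ hits a factor producing a second derivative, the "extra" $\eps^{-1}$ is as in \eqref{b13}. Collecting powers exactly as in Lemma \ref{iforh1}, one obtains a bound $C_1 A_1^q \mathcal{E}^s_\eps(t)\,\eps^{\mu}$ with $\mu>0$ (using $M - \frac d2 - 1 > 1$ and $|\beta|+|\gamma|+|\zeta|\le s$), and $\eps^\mu \le 1$ for $\eps\le\eps_2(C_1)$ small, so these terms contribute $B_1\mathcal{E}^s_\eps(t)$.

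I should be careful about two points. First, the definition of $H$ already contains a $\partial_i$, so after the outer $\eps^{2+|\alpha|}\partial^\alpha$ and Leibniz expansion there is always exactly one spatial derivative "spent" on the product, and I must check that the worst case — when that $\partial_i$ falls on the $D_xv$ factor, yielding $D_x^2 v$ with norm $C_1\eps^{M-|\gamma|-2}$ by \eqref{b6b} — still leaves $\mu>0$; it does, since we started with the extra $\eps^2$ (versus $\eps^1$ in \eqref{H1}) precisely to pay for the extra $t$-derivative on $u_a$, and the arithmetic is the same as in \eqref{b13} with the roles of the "extra $\eps^{-1}$" factor tracked carefully. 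Second, I need $\mathcal{E}^s_\eps(t)$ to control $\eps|\overline D v|_{s,\eps}$ and hence $|D_xv|_{s,\eps}\lesssim C_1\eps^{M-1}$, together with $|D_xv|_{W^{1,\infty}}\le 1$ from Remark \ref{imprem}, so that the smooth functions $H_i(D_xu_a, D_xv)$ are uniformly bounded in $L^\infty$ along with their needed derivatives — this is where $\eps_2$ gets (further) reduced, exactly as in the earlier lemmas. The main obstacle is thus not conceptual but purely the power-counting verification that $\mu>0$ holds in the single worst case (second derivative on $D_xv$, maximal $t$-derivative transferred to $u_a$); once that is confirmed, the proof is a verbatim transcription of the argument of Lemma \ref{iforh1} with the factor $\partial^{\zeta_2}D_x\nu$ systematically replaced by $\partial^{\zeta_2}D_x\partial_t u_a$ and the $\eps^{-M}$ absorbed by the lone $D_xv$ arising from the difference $A_{ij}(D_x(u_a+v)) - A_{ij}(D_xu_a)$.
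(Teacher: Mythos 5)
Your proposal is correct and takes essentially the same route as the paper: you identify the $\partial_t R^\eps$ term, Taylor-expand the coefficient difference so the remaining part of $\mathbb{F}_2$ has the form $\partial_i[H_i(D_xu_a,D_xv)\,D_x\nu\,\partial_t\partial_j u_a]$ (the paper folds the $\eps^{-M}$ into the lone $D_xv$ to produce $D_x\nu$, which is algebraically identical to your "absorbed by the lone $D_xv$" bookkeeping), and then run the same Leibniz/case/power-count argument as in Lemma~\ref{iforh1}, with the extra power $\eps^2$ paying for the additional $t$-derivative on $u_a$. The only minor imprecision is in your intermediate claim of a bound $C_1 A_1^q\mathcal{E}^s_\eps(t)\eps^\mu$ with $\mu>0$: in the $p=0$ subcase the exponent collapses to $\mu=q$, which may be zero, and one lands directly on $A_1^{q+1}\mathcal{E}^s_\eps(t)$ with no $\eps$-gain — but this still yields \eqref{b22}, so the conclusion is unaffected.
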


\begin{proof}
\textbf{1. }The first term of $\mathbb{F}_2$ is $\eps^{-M}\partial_t\mathcal{F}=\partial_tR^\eps$, which obviously gives rise to the term $\eps|R^\eps(t)|_{s+1,\eps}$ in \eqref{b22}.

\textbf{2. }The remaining part of $\mathbb{F}_2$ can be written in the form 
\begin{align}\label{b23}
\sum^d_{i=1}\partial_i[H_i(D_xu_a,D_xv)D_x\nu \;\partial_t\partial_j u_a] 
\end{align}
where $H_i$ is a $C^\infty$ function of its arguments.   First expanding out  
\begin{align}\label{b24}
\eps^{2+|\alpha|}\partial^\alpha[H_i(D_xu_a,D_xv)D_x\nu\; \partial_t\partial_ju_a]
\end{align}
we obtain a sum of terms of the form
\begin{align}\label{b25}
\eps^{2+|\alpha|}H(D_xu_a,D_xv)(\partial^{\beta_1}D_xu_a)\dots(\partial^{\beta_q}D_xu_a)(\partial^{\gamma_1}D_x v)\dots(\partial^{\gamma_p}D_x v)(\partial^{\zeta}D_x\nu)\partial^{\kappa}\partial_t\partial_j u_a,
\end{align}
where 
\begin{align}\label{b26}
|\beta|+|\gamma|+|\zeta|+|\kappa|=|\alpha|\leq s, 
\end{align}
and any of the summands in \eqref{b26} can be zero.      Applying $\partial_i$ to \eqref{b25}, one again obtains four cases depending on whether $\partial_i$ hits $H$, one of the $\partial^{\beta_k}D_xu_a$, one of the $\partial^{\gamma_k}D_xv$,  $\partial^{\zeta}D_x\nu$, or $\partial^{\kappa}\partial_t\partial_j u_a$.

\textbf{3. }Consider for example the third case.   When $p\geq 1$, letting ``case 3" denote  the typical term in that case, we obtain  by arguing as in the proof of Lemma \ref{icomh1}:
\begin{align}\label{b27}
\begin{split}
&|\text{case 3}|\lesssim \eps^{2+|\alpha|}(A_1^q\eps^{q-|\beta|})(C_1^p\eps^{-p\frac{d}{2}}\eps^{-|\gamma|-p}\eps^{Mp})(\mathcal{E}^s_\eps\eps^{-|\zeta|-2})(A_1\eps^{-|\kappa|})=\\
& C_1^{p}A_1^{q+1}\mathcal{E}^s_\eps(t)\eps^{p(M-\frac{d}{2}-1)}\eps^{|\alpha|-|\beta|-|\gamma|-|\zeta|-|\kappa|}\eps^{q}\leq C_1^{p}A_1^{q+1}\mathcal{E}^s_\eps(t)\eps^{\mu}\text{ for }t\in [0,T_\eps],
\end{split}
\end{align}
where $\mu>1$. 

When $p=0$ we simply obtain
\begin{align}
|\text{case 3}|\lesssim A_1^{q+1}\mathcal{E}^s_\eps(t)\eps^{|\alpha|-|\beta|-|\zeta|-|\kappa|}\eps^{q}\leq A_1^{q+1}\mathcal{E}^s_\eps(t).
\end{align}
%In \eqref{b13} we have exhibited the ``extra" factor of $\eps^{-1}$ contributed by $\partial^\zeta D_x^2\nu$.  The other factors of $C_1$, $A_1$, and $\eps$
%arise just as in \eqref{b6a}, \eqref{b6b}.

\textbf{4. }The estimate in case 3 for $p\geq 1$ readily implies the same estimate for case 2, and it is clear that the remaining two cases give the same estimates as in case 3.

\end{proof}

\begin{lem}[Boundary commutator for \eqref{H2}]\label{bcomh2}
 Let $\eps_1$ and $C_1$ be as in Proposition \ref{mainestimate}. 
 There exist positive constants   $\eps_2(C_1)\leq \eps_1$  and $B_1=B_1(A_1)$ such that for $\eps\in [0,\eps_2]$  and all $t \in [0,T_\eps]$ 
\begin{align}\label{b28}
\eps^2|\Lambda^{\frac{1}{2}}_{x'}[\eps^{|\alpha|}\partial^\alpha,A_{ij}(D_x(u_a+v))]\partial_j\omega|_{L^2(x')}\leq B_1 \mathcal{E}^s_\eps(t) \text{ for all }|\alpha|\leq s. 
\end{align}
\end{lem}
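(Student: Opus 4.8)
The plan is to mirror the proof of Lemma \ref{bcomh1} almost line for line, making the substitution $\nu\mapsto\eps\omega$ throughout, exactly as Lemma \ref{icomh2} was deduced from Lemma \ref{icomh1}. The underlying point is that the energy $\mathcal{E}^s_\eps(t)$ controls $|\eps^2\overline{D}\omega(t)|_{s,\eps}$ in precisely the same way that it controls $|\eps\overline{D}\nu(t)|_{s,\eps}$, so the rescaled quantity $\eps\omega$ satisfies exactly the bounds that $\nu$ satisfied in \eqref{b6}; in particular $|\partial^\zeta D^m_x(\eps\omega)(t)|_{s-|\zeta|,\eps}\leq\mathcal{E}^s_\eps(t)\,\eps^{-|\zeta|-m}$.

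Concretely, I would first set $\mathcal{C}:=[\eps^{|\alpha|}\partial^\alpha,A_{ij}(D_x(u_a+v))]\partial_j\omega$ and observe that $\eps^2\mathcal{C}$ is a finite sum of terms of the form \eqref{b4} with $\nu$ replaced by $\eps\omega$, the index constraints \eqref{b5} being unchanged. Then, as in \eqref{b15}, the trace estimate of Proposition \ref{trace} (applied with $s=0$ to $g=\eps\mathcal{C}$) gives
\begin{equation*}
\eps^2\langle\Lambda^{1/2}_{x'}\mathcal{C}(t)\rangle_{0,\eps}=\eps\langle\Lambda^{1/2}_{x'}(\eps\mathcal{C})(t)\rangle_{0,\eps}\leq|\eps\mathcal{C}(t)|_{0,1,\eps}\leq|\eps\mathcal{C}(t)|_{L^2(x)}+|\eps^2 D_x\mathcal{C}(t)|_{L^2(x)}.
\end{equation*}
Since $\eps^2 D_x\mathcal{C}$ is a sum of terms of the form $D_x$\eqref{b4} (again with $\nu\mapsto\eps\omega$), its $L^2$ norm is exactly what Lemma \ref{icomh2} bounds, so it is $\leq B_1\mathcal{E}^s_\eps(t)$. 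It then remains to estimate $|\eps\mathcal{C}(t)|_{L^2(x)}$, i.e.\ terms of the form $\eps^{-1}$ times a term \eqref{b4} with $\nu\mapsto\eps\omega$; this is handled precisely as in Step \textbf{2} of the proof of Lemma \ref{bcomh1}, splitting into the cases $p\geq 1$ and $p=0$ and invoking the product estimate (Proposition \ref{prod}), the Sobolev estimate (Proposition \ref{sob}), and the bounds \eqref{b6}, \eqref{b6aa}. In the case $p\geq 1$ one obtains an overall factor $\eps^\mu$ with $\mu>0$, and can therefore absorb the $C_1$-dependence by shrinking $\eps_2$; in the case $p=0$ one has $q\geq 1$ and all powers of $\eps$ are nonnegative, giving a bound $A_1^q\mathcal{E}^s_\eps(t)$.

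The only real (and minor) obstacle is the bookkeeping of powers of $\eps$: one must verify that in passing from $\nu$ to $\eps\omega$ and from the prefactor $\eps$ in \eqref{b14} to the prefactor $\eps^2$ in \eqref{b28}, every summand still carries a nonnegative power of $\eps$, and a strictly positive one as soon as a factor of $v$, $\nu$ or $\omega$ beyond the first is present, so that $B_1$ may be taken to depend only on $A_1$ (and on $K_1$ through Proposition \ref{b19}) and not on $C_1$, as required by the argument of Proposition \ref{HimpliesC}. Since the exponents involved are identical to those already tracked in Lemmas \ref{icomh1}, \ref{bcomh1} and \ref{icomh2}, this goes through, and in fact the cleanest write-up is simply to note that the proof is obtained by repeating the proof of Lemma \ref{bcomh1} verbatim with $\nu$ replaced by $\eps\omega$, just as Lemma \ref{icomh2} followed from Lemma \ref{icomh1}.
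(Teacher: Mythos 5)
Your proposal is correct and is exactly the paper's approach: the paper's proof consists of a single sentence stating that one repeats the proof of Lemma \ref{bcomh1} verbatim with $\nu$ replaced by $\eps\omega$. Your added bookkeeping (the trace estimate step and the verification of nonnegative powers of $\eps$) is sound and merely spells out what that replacement entails.
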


\begin{proof}

The proof can be obtained by repeating verbatim the proof of Lemma \ref{bcomh1}  and replacing $\nu$ with $\eps \omega$ wherever $\nu$ occurs in that proof.

\end{proof}

\begin{lem}[Boundary forcing for \eqref{H2}]\label{bforh2}
 Let $\eps_1$ and $C_1$ be as in Proposition \ref{mainestimate}. 
 There exist positive constants   $\eps_2(C_1)\leq \eps_1$  and $B_1=B_1(A_1)$ such that for $\eps\in [0,\eps_2]$  and all $t \in [0,T_\eps]$ 
\begin{align}\label{b29}
|\eps^2\Lambda^{\frac{1}{2}}_{x'}\eps^{|\alpha|}\partial^\alpha\mathbb{G}_2(t)|_{L^2(x')}\leq B_1 \mathcal{E}^s_\eps(t)+\langle r^\eps(t)\rangle_{s+2,\eps} \text{ for all }|\alpha|\leq s. 
\end{align}
%Here (and below) $i,j\in [1,\dots,d]$. 
\end{lem}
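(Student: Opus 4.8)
The plan is to prove Lemma~\ref{bforh2} in close parallel with Lemma~\ref{bforh1}, using the trace estimate of Proposition~\ref{trace} to reduce the boundary estimate to the interior estimate already carried out in Lemma~\ref{iforh2}. First I would split $\mathbb{G}_2$ into the two pieces appearing in its definition. The first piece is $\eps^{-M}\partial_t\CalG=\partial_t r^\eps$, a function of $(t,x')$ only; since $\Lambda^{1/2}_{x'}$ costs at most one spatial derivative and $\partial_t$ costs one more, the quantity $|\eps^2\Lambda^{1/2}_{x'}\eps^{|\alpha|}\partial^\alpha\partial_t r^\eps(t)|_{L^2(x')}$ involves at most $|\alpha|+2\le s+2$ derivatives carrying the matching $\eps$-weight $\eps^{2+|\alpha|}$, so it is $\lesssim\langle r^\eps(t)\rangle_{s+2,\eps}$. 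This produces the term $\langle r^\eps(t)\rangle_{s+2,\eps}$ in \eqref{b29}.

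For the remaining piece I would use the integral Taylor identity $A_{ij}(D_x(u_a+v))-A_{ij}(D_xu_a)=\bigl(\int^1_0(dA_{ij})(D_x(u_a+\theta v))\,d\theta\bigr)(D_xv)$ together with $D_xv=\eps^MD_x\nu$ to write $-\eps^{-M}H_b(\overline{D}_xv)$ as a finite sum of boundary traces (at $x_d=0$) of interior functions of the form $H_i(D_xu_a,D_xv)\,D_x\nu\,\partial_t\partial_j u_a$, each $H_i$ a $C^\infty$ function of its arguments. These are exactly the \emph{undifferentiated} expressions whose $x$-derivatives make up \eqref{b23} in the interior forcing $\mathbb{F}_2$. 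Writing $\mathcal K_2:=\eps^{|\alpha|}\partial^\alpha[H_i(D_xu_a,D_xv)\,D_x\nu\,\partial_t\partial_j u_a]$, so that $\eps^2\mathcal K_2$ is a finite sum of terms of the form \eqref{b25}, the trace estimate of Proposition~\ref{trace} (with $s=0$) gives, for each such product,
\[
\eps^2\bigl|\Lambda^{1/2}_{x'}\mathcal K_2(t,\cdot,0)\bigr|_{L^2(x')}=\eps\bigl(\eps\langle\Lambda^{1/2}_{x'}\mathcal K_2(t)\rangle_{0,\eps}\bigr)\lesssim\eps\,|\mathcal K_2(t)|_{L^2(x)}+\eps^2|D_x\mathcal K_2(t)|_{L^2(x)}.
\]

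It remains to bound the two terms on the right. The second, $\eps^2|D_x\mathcal K_2(t)|_{L^2(x)}=|D_x(\eps^2\mathcal K_2)(t)|_{L^2(x)}$, is a finite sum of terms of the form $D_x$\eqref{b25}, which are precisely the terms bounded by $B_1\mathcal{E}^s_\eps(t)$ in the proof of Lemma~\ref{iforh2} (the $\eps|R^\eps|_{s+1,\eps}$ contribution there came only from $\partial_t\CalF$, not from these products). For the first term, $\eps|\mathcal K_2(t)|_{L^2(x)}=\eps^{-1}|(\eps^2\mathcal K_2)(t)|_{L^2(x)}$ is $\eps^{-1}$ times an undifferentiated \eqref{b25}-type term; repeating the bookkeeping of step~\textbf{3} of Lemma~\ref{iforh2} but with $\partial^\zeta D_x\nu$ in place of $\partial^\zeta D_x^2\nu$ (hence one fewer negative power of $\eps$ by \eqref{b6}) shows each such term is $\lesssim C_1^pA_1^{q+1}\mathcal{E}^s_\eps(t)\,\eps^{1+q+p(M-\frac{d}{2}-1)}$, so after multiplying by $\eps^{-1}$ one is left with a nonnegative power of $\eps$ when $p=0$ and a power $\ge p(M-\frac{d}{2}-1)>p$ when $p\ge1$; shrinking $\eps_2$ in a manner depending on $C_1$ then makes every such term $\le A_1^{q+1}\mathcal{E}^s_\eps(t)$. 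Summing the finitely many contributions and combining with the first piece gives \eqref{b29} with $B_1=B_1(A_1)$.

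I expect the only point requiring care to be this last step: one loses a factor $\eps^{-1}$ in passing from the boundary norm to the interior $L^2$ norm via the trace estimate, and one must check that it is absorbed by a surplus power of $\eps$ coming from $D_xv=\eps^MD_x\nu$ and from the two derivatives hidden in $\partial_t\partial_j u_a$; this is exactly where the standing hypothesis $M>\frac{d}{2}+2$ is used, just as in the interior estimates of Lemmas~\ref{iforh1} and \ref{iforh2}. Everything else is a transcription of those arguments together with the trace-estimate reduction already used in Lemma~\ref{bforh1}.
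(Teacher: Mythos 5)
Your proof is correct and follows the paper's argument essentially verbatim: split $\mathbb{G}_2$ into $\partial_t r^\eps$ plus terms of the form $H_i(D_xu_a,D_xv)D_x\nu\,\partial_t\partial_ju_a$, apply the trace estimate to reduce the boundary $\Lambda^{1/2}_{x'}$-norm to the two interior quantities $|\eps\mathcal{K}(t)|_{L^2(x)}$ and $|\eps^2D_x\mathcal{K}(t)|_{L^2(x)}$, quote Lemma~\ref{iforh2} for the second, and redo the \eqref{b25}-type bookkeeping (with one fewer negative power of $\eps$) for the first. The only difference is that you spell out the integral Taylor identity giving the form \eqref{b30}, which the paper takes for granted; the estimates and exponent accounting match those in the paper's proof.
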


\begin{proof}
\textbf{1. }The first term of $\mathbb{G}_2$ is $\eps^{-M}\partial_t\mathcal{G}=\partial_tr^\eps$, which clearly gives rise to the term $\langle r^\eps(t)\rangle_{s+2,\eps}$ in \eqref{b10}.

\textbf{2. }The remaining part of $\mathbb{G}_2$ is a sum of terms like 
\begin{align}\label{b30}
H_i(D_xu_a,D_xv)D_x\nu \;\partial_t\partial_j u_a
\end{align}
where $H_i$ is a $C^\infty$ function of its arguments.   Denote $\eps^\alpha\partial^\alpha\eqref{b30}$  by $\mathcal{K}$ and observe that $\eps^2\mathcal{K}$ is a sum of terms of the form \eqref{b25}.  By the trace estimate of Proposition \ref{trace} we have
\begin{align}\label{b31}
\eps^2\langle \Lambda^{\frac{1}{2}}_{x'}\mathcal{K}(t)\rangle_{0,\eps}\leq \eps|\mathcal{K}(t)|_{0,1,\eps}\leq |\eps\mathcal{K}(t)|_{L^2(x)}+|\eps^2 D_x\mathcal{K}(t)|_{L^2(x)}.
\end{align} 
Since $\eps^2 D_x\mathcal{K}(t)$ is a sum of terms of the form $D_x$\eqref{b25}, the second term on the right in \eqref{b31} has already been estimated in Lemma \ref{iforh2}.

\textbf{3. }To estimate  $|\eps\mathcal{K}(t)|_{L^2(x)}$ we must estimate  terms like $|\eps^{-1}$\eqref{b25}$|_{L^2(x)}$. For the cases  $p\geq 1$, $p=0$  we obtain by the usual procedure, respectively,
\begin{align}
\begin{split}
&|\eps^{-1}\eqref{b25}|_{L^2(x)}\lesssim C_1^{p}A_1^{q+1}\mathcal{E}^s_\eps(t)\eps^{\mu}, \text{ where }\mu>0\\
&|\eps^{-1}\eqref{b25}|_{L^2(x)}\lesssim A_1^{q+1}\mathcal{E}^s_\eps(t).
\end{split}
\end{align}
\end{proof}

We now update Proposition \ref{b19} to reflect the results of the previous four lemmas.

\begin{prop}\label{b32}
%Let $s>...$ and let $T$ (which is independent of $\eps$)   be the time of existence of the approximate solution $u^\eps_a$.  
%Let $M$, $s$, and $T$ be as in Theorem \ref{main}. 
Suppose $\eps_1$ and  $C_1$ are positive constants and that for $\eps\in (0,\eps_1]$ and $T_\eps\in [0,T]$, we are given a solution $(\nu^\eps,\omega^\eps)$ of 
the three coupled systems on $[0,T_\eps]$ which satisfies  $\mathcal{E}^s_\eps(t)\leq C_1$ for all $t\in [0,T_\eps]$.   Then there exist positive constants   $\eps_2(C_1)\leq \eps_1$   and $B_1(A_1,K_1)$ such that for $\eps\in [0,\eps_2]$  and all $t \in [0,T_\eps]$ 
\begin{align}\label{b33}
|\eps \overline{D} \nu(t)|^2_{E^s_{\eps,tan}}+|\eps^2 \overline{D} \omega(t)|^2_{E^s_{\eps,tan}}\leq B_1\int ^t_0 \left([\mathcal{E}^s_\eps(\sigma)]^2+\eps^2|R^\eps(\sigma)|^2_{s+1,\eps}+\langle r^\eps(\sigma)\rangle ^2_{s+2,\eps}\right)d\sigma.
\end{align}

\end{prop}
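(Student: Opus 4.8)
The plan is to run the same argument that proves Proposition \ref{b19}, now applied to the problem \eqref{H2} for $\omega$ rather than \eqref{H1} for $\nu$, and then to add the resulting bound to the one already obtained in Proposition \ref{b19}. First I would record, for $\eps$ below a threshold $\eps_2(C_1)\le\eps_1$, that the coefficients $A_{ij}(D_x(u_a+v))$ satisfy Assumption \ref{assh} on $[0,T_\eps]$ uniformly in $\eps$: the symmetry is Assumption \ref{a5a}(b); the coercivity follows from Assumption \ref{a5a}(c) once Remark \ref{a8aa} is invoked to guarantee $|D_x(u_a+v)|\le 4R$; and the Lipschitz norm $\max_{ij}|A_{ij}(D_x(u_a+v))|_{W^{1,\infty}}$ is bounded by a constant depending only on $A_1$ --- the one point requiring care --- thanks to the $L^\infty$ bounds on $D_xu_a$ in \eqref{b6}, \eqref{b6aa} together with the uniform Lipschitz bound \eqref{b6aaa} on $D_xv$. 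In particular the constant $K_1$ in the hyperbolic estimate \eqref{a3} depends only on $T,\delta_1,\delta_2$ and $A_1$, not on $C_1$, which (as stressed in Remark \ref{imprem}) is essential for the continuous induction argument of Section \ref{mainR}.

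Next, for each multi-index $\alpha$ with $|\alpha|\le s$ and $\partial^\alpha=\partial_{t,x'}^\alpha$, I would apply $\eps^{|\alpha|}\partial^\alpha$ to $\eps^2$ times \eqref{H2}. The result is a linear problem of the form \eqref{a2} for $u_\alpha:=\eps^{|\alpha|+2}\partial^\alpha\omega$, whose interior forcing $f_\alpha$ is $\eps^{|\alpha|+2}\partial^\alpha\mathbb{F}_2$ plus the commutator $\sum_{i,j}\partial_i[\eps^{|\alpha|}\partial^\alpha,A_{ij}(D_x(u_a+v))]\partial_j(\eps^2\omega)$, and whose boundary forcing $g_\alpha$ is $\eps^{|\alpha|+2}\partial^\alpha\mathbb{G}_2$ plus the boundary commutator $\eps^2[\eps^{|\alpha|}\partial^\alpha,A_{ij}(D_x(u_a+v))]\partial_j\omega$. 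Since $\omega$ vanishes in $t<0$, we have $u_\alpha(0,\cdot)=\partial_t u_\alpha(0,\cdot)=0$, so Proposition \ref{hypest} applies and gives
\[
\|\overline{D}u_\alpha(t)\|^2\le K_1\int_0^t\left(\|f_\alpha(s)\|^2+\langle\Lambda^{1/2}_{x'}g_\alpha(s)\rangle^2\right)ds,\qquad t\in[0,T_\eps].
\]
Because $\partial^\alpha$ commutes with $\overline{D}$, one has $\sup_{|\alpha|\le s}\|\overline{D}u_\alpha(t)\|^2=|\eps^2\overline{D}\omega(t)|^2_{E^s_{\eps,tan}}$, so the whole matter reduces to bounding the right-hand side uniformly in $\eps$.

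That is exactly what Lemmas \ref{icomh2}--\ref{bforh2} supply: Lemma \ref{icomh2} controls the interior commutator and Lemma \ref{iforh2} the term $\eps^{|\alpha|+2}\partial^\alpha\mathbb{F}_2$, together giving $\|f_\alpha(s)\|\le B_1\mathcal{E}^s_\eps(s)+\eps|R^\eps(s)|_{s+1,\eps}$; Lemma \ref{bcomh2} controls the boundary commutator and Lemma \ref{bforh2} the term involving $\partial^\alpha\mathbb{G}_2$, together giving $\langle\Lambda^{1/2}_{x'}g_\alpha(s)\rangle\le B_1\mathcal{E}^s_\eps(s)+\langle r^\eps(s)\rangle_{s+2,\eps}$, all with $B_1$ depending only on $A_1$. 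Squaring these, inserting them into the displayed inequality, taking the supremum over $|\alpha|\le s$, and choosing $\eps_2$ to be the smallest of the thresholds furnished by those four lemmas, I obtain
\[
|\eps^2\overline{D}\omega(t)|^2_{E^s_{\eps,tan}}\le B_1\int_0^t\left([\mathcal{E}^s_\eps(\sigma)]^2+\eps^2|R^\eps(\sigma)|^2_{s+1,\eps}+\langle r^\eps(\sigma)\rangle^2_{s+2,\eps}\right)d\sigma
\]
for $\eps\in[0,\eps_2]$ and $t\in[0,T_\eps]$, with $B_1=B_1(A_1,K_1)$. Adding this to the estimate \eqref{b20} of Proposition \ref{b19} --- shrinking $\eps_2$ to lie below both thresholds, renaming $B_1$ as the larger of the two constants, and using $|R^\eps|_{s,\eps}\le|R^\eps|_{s+1,\eps}$ and $\langle r^\eps\rangle_{s+1,\eps}\le\langle r^\eps\rangle_{s+2,\eps}$ to absorb the weaker norms in \eqref{b20} --- yields \eqref{b33}. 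The argument is essentially bookkeeping; the only genuinely delicate point, already resolved in the preparatory lemmas and Remark \ref{imprem}, is keeping $K_1$, and hence $B_1$, independent of $C_1$, which is secured by the uniform Lipschitz bound \eqref{b6aaa}.
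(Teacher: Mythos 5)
Your proposal is correct and follows exactly the route the paper takes: apply the hyperbolic estimate of Proposition~\ref{hypest} to $\eps^{|\alpha|}\partial^\alpha(\eps^2\eqref{H2})$, control the four resulting source terms with Lemmas~\ref{icomh2}--\ref{bforh2}, and add the result to \eqref{b20}. The paper's own proof is a one-line pointer to that scheme; you have merely filled in the bookkeeping (verification of Assumption~\ref{assh} via Remark~\ref{a8aa}, the identification $\sup_{|\alpha|\le s}\|\overline{D}u_\alpha\|=|\eps^2\overline{D}\omega|_{E^s_{\eps,tan}}$, the monotonicity of the $R^\eps,r^\eps$ norms), and correctly highlighted the one subtle point --- keeping $K_1$, hence $B_1$, independent of $C_1$.
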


\begin{proof}
The proposition is obtained by 
applying the hyperbolic estimate \eqref{a3} to $\eps^\alpha\partial^\alpha (\eps^2 \eqref{H2})$ for $|\alpha|\leq s$, and combining the result with that of Proposition \ref{b19}.   

\end{proof}

It remains to estimate $|\eps^2 \overline{D}^2_x \nu(t)|_{E^s_{\eps,tan}}$ by applying the elliptic estimate \eqref{a3} to $\eps^\alpha\partial^\alpha (\eps^2 \eqref{E})$ for $|\alpha|\leq s$.

\begin{lem}[Interior commutator for \eqref{E}]\label{icome}
Let $\eps_1$ and $C_1$ be as in Proposition \ref{mainestimate}. 
 There exist positive constants   $\eps_2(C_1)\leq \eps_1$  and $B_1=B_1(A_1)$ such that for $\eps\in [0,\eps_2]$  and all $t \in [0,T_\eps]$ 
\begin{align}\label{b34}
|\partial_i[\eps^\alpha\partial^\alpha,A_{ij}(D_x u_a)]\partial_j(\eps^2\nu)|_{L^2(x)}\leq \eps B_1 \mathcal{E}^s_\eps(t) \text{ for all }|\alpha|\leq s. 
\end{align}

\end{lem}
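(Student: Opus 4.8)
The plan is to repeat the proof of Lemma \ref{icomh1} almost verbatim, exploiting the simplification that here the coefficient $A_{ij}(D_xu_a)$ depends on $u_a$ only and not on $v$. First I would observe, as in step \textbf{1} of the proof of Lemma \ref{icomh1}, that each component of the commutator $[\eps^\alpha\partial^\alpha,A_{ij}(D_xu_a)]\partial_j\nu$ is a finite sum of terms of the form \eqref{b4}, but now with $p=0$ (no factors $\partial^{\gamma_\ell}D_xv$ can occur, since $v$ does not enter the coefficient) and with $q\geq 1$ (a commutator always leaves at least one derivative on the coefficient), the multi-indices obeying \eqref{b5} with $|\gamma|=0$, i.e. $|\beta|+|\zeta|=|\alpha|\leq s$ and $|\zeta|\leq|\alpha|-1$.

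Since $\partial_j(\eps^2\nu)=\eps\,\partial_j(\eps\nu)$, we may write
\[
\partial_i[\eps^\alpha\partial^\alpha,A_{ij}(D_xu_a)]\partial_j(\eps^2\nu)=\eps\,\partial_i\bigl([\eps^\alpha\partial^\alpha,A_{ij}(D_xu_a)]\partial_j(\eps\nu)\bigr),
\]
and the expression inside the outer $\partial_i$ is a sum of terms of the form $\partial_i\eqref{b4}$ with $p=0$, $q\geq 1$. Each of these was already bounded in the proof of Lemma \ref{icomh1}: the subcase in which $\partial_i$ falls on one of the $\partial^{\beta_k}D_xu_a$ is \eqref{b8}; the subcase in which $\partial_i$ falls on $\partial^\zeta\partial_j\nu$ is the $p=0$ specialization of \eqref{b9}, recorded there as ``an estimate just like \eqref{b8}''; and the subcase in which $\partial_i$ falls on $H$ reduces to the first (it produces a factor $D^2_xu_a$ with $H$ replaced by its derivative). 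In each the operative inputs are the bounds \eqref{b6} on the $u_a$- and $\nu$-factors together with the $L^\infty$-boundedness of $H(D_xu_a)$, the latter being valid once $\eps_2\leq\eps_1$ is taken small enough (depending only on $A_1$, using \eqref{ass1a}) that $D_xu_a$, hence the arguments of $H$, remain in a fixed compact set. The resulting bound in every case has the form $A_1^q\mathcal{E}^s_\eps(t)$ times a combinatorial constant, uniformly in $\eps\in(0,\eps_2]$ and $t\in[0,T_\eps]$; summing over the finitely many terms and absorbing constants into $B_1=B_1(A_1)$ gives
\[
\bigl|\partial_i[\eps^\alpha\partial^\alpha,A_{ij}(D_xu_a)]\partial_j(\eps\nu)\bigr|_{L^2(x)}\leq B_1\,\mathcal{E}^s_\eps(t),
\]
and the factor $\eps$ extracted above yields \eqref{b34}.

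I do not expect any substantive obstacle here: the lemma is a strict simplification of Lemma \ref{icomh1}, since $p=0$ removes all $D_xv$-factors, and replacing $\eps\nu$ by $\eps^2\nu$ is exactly what upgrades the earlier bound $B_1\mathcal{E}^s_\eps(t)$ to the present $\eps B_1\mathcal{E}^s_\eps(t)$. The only points to keep in mind are that the constraint $q\geq 1$ must genuinely be invoked — it is what makes each term's net power of $\eps$ nonnegative even before the extra $\eps$ is pulled out, cf. the exponent $\eps^{q-1}$ in \eqref{b8} — and that, since no $v$-dependent quantity enters this estimate, both $B_1$ and $\eps_2$ can in fact be taken independent of $C_1$, which is convenient (though not needed here) for the continuous induction argument behind Proposition \ref{HimpliesC}.
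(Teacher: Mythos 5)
Your proof is correct and takes essentially the same route as the paper, which likewise reduces Lemma~\ref{icome} to the $p=0$ case of the estimates in the proof of Lemma~\ref{icomh1} (specifically \eqref{b8} and the $p=0$ variant of \eqref{b9}), with the extra $\eps$ in \eqref{b34} coming from $\partial_j(\eps^2\nu)=\eps\,\partial_j(\eps\nu)$. Your side remarks that $q\geq 1$ is what makes the net power of $\eps$ nonnegative, and that $B_1,\eps_2$ could here be taken independent of $C_1$, are both accurate.
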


\begin{proof}
We must estimate a sum of the terms of the form $\eps\partial_i\eqref{b4}$, except that now $H=H(D_xu_a)$ and $p=0$. Thus, the work is already done in  the proof of Lemma \ref{icomh1}. 

\end{proof}

\begin{lem}[Interior forcing for \eqref{E}]\label{ifore}
 Let $\eps_1$ and $C_1$ be as in Proposition \ref{mainestimate}. 
 There exist positive constants   $\eps_2(C_1)\leq \eps_1$  and $B_1=B_1(A_1)$ such that for $\eps\in [0,\eps_2]$  and all $t \in [0,T_\eps]$ 
\begin{align}\label{b35}
\begin{split}
&|\eps^{2+|\alpha|}\partial^\alpha\mathbb{F}_3(t)|_{L^2(x)}\leq \\
&\;\;\;|\eps^2\overline{D}\omega(t)|_{E^s_{\eps,tan}}+\left(\lambda\int_0^t\mathcal{E}^s_{\eps,tan}(\sigma)d\sigma+\eps\lambda\mathcal{E}^s_{\eps,tan}(t)\right)+\eps B_1 \mathcal{E}^s_\eps(t)+\eps^2A_2 \text{ for all }|\alpha|\leq s. 
\end{split}
\end{align}

%Here (and below) $i,j\in [1,\dots,d]$. 

\end{lem}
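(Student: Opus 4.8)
The plan is to decompose $\mathbb{F}_3$ according to its definition in \eqref{E}(a) and estimate each piece separately, using the same commutator-and-product bookkeeping already developed in the proofs of Lemmas \ref{icomh1}--\ref{iforh1}. Recall
\[
\mathbb{F}_3=-\partial_t\omega+\lambda\int_0^t\omega(s,x)\,ds+\eps^{-M}\big[\CalF(t,x)+E(\overline{D}^2_xv)\big].
\]
First I would handle the two ``linear'' terms. For $-\partial_t\omega$, multiplying by $\eps^{2+|\alpha|}$ and applying $\partial^\alpha=\partial_{t,x'}^\alpha$ we get exactly $\eps^{2}\eps^{|\alpha|}\partial^\alpha\partial_t\omega$, whose $L^2(x)$ norm is bounded by $|\eps^2\overline{D}\omega(t)|_{E^s_{\eps,tan}}$ by definition of that norm (here $\partial^\alpha\partial_t$ is a tangential derivative of order $\leq s+1$... but note we only apply $\partial^\alpha$ with $|\alpha|\le s$ to $\partial_t\omega$, which involves $\overline{D}\omega$). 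For the Volterra term $\lambda\int_0^t\omega(s,x)\,ds$, commuting $\eps^{2+|\alpha|}\partial_{t,x'}^\alpha$ inside: when all derivatives are spatial ($\partial^\alpha=\partial_{x'}^\alpha$) we may pull them under the integral and bound by $\lambda\int_0^t\mathcal{E}^s_{\eps,tan}(\sigma)\,d\sigma$ (using $|\eps^2\nu|$-type control of $\omega$ via $|\eps^2\overline{D}\omega|$, or directly the $E^s_{\eps,tan}$ norm of $\eps^2\omega$); when one of the $\partial_{t,x'}^\alpha$ is a $\partial_t$, that $\partial_t$ kills the integral sign and leaves $\eps^{2+|\alpha|}\lambda\,\partial_{t,x'}^{\alpha'}\omega$ with $|\alpha'|\le s-1$, contributing $\eps\lambda\,\mathcal{E}^s_{\eps,tan}(t)$ after extracting one factor of $\eps$. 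These two contributions produce the bracketed term $\big(\lambda\int_0^t\mathcal{E}^s_{\eps,tan}(\sigma)\,d\sigma+\eps\lambda\,\mathcal{E}^s_{\eps,tan}(t)\big)$ in \eqref{b35}.

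Next I would treat $\eps^{-M}\CalF=R^\eps$: this is $\partial_t$-free and directly gives $\eps^{2+|\alpha|}\partial^\alpha R^\eps$, whose $L^2(x)$ norm is $\leq \eps^2|R^\eps(t)|_{s,\eps}\leq \eps^2 A_2$ by Assumption \ref{ass1}, yielding the $\eps^2 A_2$ term (one could also keep it as $\eps^2|R^\eps|_{s,\eps}$ and absorb into $A_2$). Finally, the genuinely nonlinear term $\eps^{-M}E(\overline{D}^2_xv)$: by the integral formula in the definition of $E$, each component of $\eps^{-M}E$ is a sum of terms of the form
\[
\eps^{-M}\sum_i\partial_i\Big[H_i(D_x(u_a+\theta v))\,(D_xv,D_xv)\Big]
\]
(after integrating in $\theta$, the structure is the same as \eqref{b11} but \emph{quadratic} in $D_xv$ with no $D_x\nu$ factor). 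Multiplying by $\eps^{2+|\alpha|}$, expanding $\partial^\alpha$, and applying $\partial_i$ produces terms of exactly the shape \eqref{b12} but with the factor $\partial^{\zeta_2}D_x\nu$ replaced by a second $\partial^{\zeta_2}D_xv$, together with the leftover $\eps^{-M}$. Since $v=\eps^M\nu$, one factor $D_xv=\eps^M D_x\nu$ cancels the $\eps^{-M}$ and is estimated by $\mathcal{E}^s_\eps(t)$ (as in \eqref{b6}), while the remaining factors of $D_xv$ each contribute $C_1\eps^{M-d/2-1}$ via \eqref{b6aa}; the powers of $\eps$ accumulate to $\eps^\mu$ with $\mu\geq 1$ since we started from $\eps^2$ and lost at most one $\eps$ when $\partial_i$ hits a $D_x^2\nu$ (exactly as in \eqref{b13}). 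Thus for $\eps\le\eps_2(C_1)$ this piece is $\le \eps B_1\mathcal{E}^s_\eps(t)$ with $B_1=B_1(A_1)$, giving the $\eps B_1\mathcal{E}^s_\eps(t)$ term in \eqref{b35}.

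The one point requiring care—and the likely main obstacle—is the Volterra term $\lambda\int_0^t\omega\,ds$: unlike every other term it is \emph{not} pointwise in $t$, so its contribution cannot be folded into the Gronwall integrand without a time integral, which is why the statement of \eqref{b35} carries the explicit $\lambda\int_0^t\mathcal{E}^s_{\eps,tan}(\sigma)\,d\sigma$ and the boundary term $\eps\lambda\,\mathcal{E}^s_{\eps,tan}(t)$ separately rather than an $\eps B_1\mathcal{E}^s_\eps(t)$-type bound. One must track that when a $\partial_t$ lands on the integral the surviving $\omega$-term has \emph{one fewer} tangential derivative, so that after pulling out one $\eps$ it is controlled by $\eps\lambda\,\mathcal{E}^s_{\eps,tan}(t)$ (and not merely $\lambda\,\mathcal{E}^s_{\eps,tan}(t)$, which would be too large to close the continuous-induction argument); this uses $|\alpha|\le s$ so that $\partial_{t,x'}^{\alpha'}\omega$ with $|\alpha'|\le s-1$ is still controlled by the $E^s_{\eps,tan}$-norm of $\eps^2\overline{D}\omega$ with room to spare for the extra $\eps$. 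The remaining algebra—counting powers of $\eps$ in the nonlinear term, checking $\mu\ge1$—is entirely routine given \eqref{b6}, \eqref{b6aa}, Proposition \ref{prod}, and the proofs of Lemmas \ref{icomh1} and \ref{iforh1}.
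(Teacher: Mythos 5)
Your proposal is correct and follows essentially the same route as the paper: the same four-way decomposition of $\mathbb{F}_3$, the same dichotomy for the Volterra term $\lambda\int_0^t\omega$ according to whether a $\partial_t$ falls on the time integral, and the same reuse of the Lemma \ref{iforh1} machinery (via \eqref{b11}--\eqref{b13}) for the $\eps^{-M}E(\overline{D}^2_xv)$ term with the extra factor of $\eps$ tracked correctly.
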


\begin{proof}
From equation \eqref{E}(a) we see that there are four terms to estimate. 

\textbf{1. }The  term  $\eps^{-M}\mathcal{F}=R^\eps$ in $\mathbb{F}_3$  clearly gives rise to the term $\eps^2 A_2$ in \eqref{b10}.

\textbf{2. }The term $\eps^{-M}E(\overline{D}^2_x v)$  can be written in the form 
\begin{align}\label{b36}
\sum^d_{i=1}\partial_i[H_i(D_xu_a,D_xv)(D_xv,D_x\nu)], 
\end{align}
just like \eqref{b11}.  The estimates of Lemma \ref{iforh1} thus show 
$$\eps^{2+|\alpha|}|\partial^\alpha\eps^{-M}E(\overline{D}^2_x v)(t)|_{L^2(x)}\leq \eps B_1\mathcal{E}^s_\eps(t).$$

\textbf{3. }We have $\eps^{2+|\alpha|}|\partial^\alpha\partial_t\omega|_{L^2(x)}\leq |\eps^2\overline{D}\omega(t)|_{E^2_{\eps,tan}}$, a term estimated in \eqref{b33}.

\textbf{4. }With $\partial^\alpha=\partial_{t,x'}^{\alpha_0,\alpha'}$,   when $\alpha_0\geq 1$ or $\alpha_0=0$  we have respectively
\begin{align}
\begin{split}
&\partial^\alpha \left(\lambda\int^t_0 \omega(\sigma,x)d\sigma\right)=\lambda \partial^\beta\omega, \text{ where }|\beta|=|\alpha|-1,\\
&\partial^\alpha \left(\lambda\int^t_0 \omega(\sigma,x)d\sigma\right)=\lambda\int^t_0 \partial^{\alpha'}_{x'}\omega(\sigma,x)d\sigma, \text{ where }|\alpha'|=|\alpha|,
\end{split}
\end{align}
and corresponding estimates
\begin{align}
\begin{split}
&\eps^{2+|\alpha|}\lambda |\partial^\beta\omega|_{L^2(x)}\leq \eps\lambda |\eps^2\omega |_{E^{s-1}_{\eps,tan}}\leq \eps\lambda\mathcal{E}^s_{\eps,tan}(t)\\
&\eps^{2+|\alpha|}|\lambda\int^t_0 \partial^{\alpha'}_{x'}\omega(\sigma,x)d\sigma|_{L^2(x)}\leq \lambda\int_0^t\mathcal{E}^s_{\eps,tan}(\sigma)d\sigma.
\end{split}
\end{align}

\end{proof}

\begin{lem}[Boundary commutator for \eqref{E}]\label{bcome}
Let $\eps_1$ and $C_1$ be as in Proposition \ref{mainestimate}. 
 There exist positive constants   $\eps_2(C_1)\leq \eps_1$  and $B_1=B_1(A_1)$ such that for $\eps\in [0,\eps_2]$  and all $t \in [0,T_\eps]$ 
\begin{align}\label{b37}
\eps^2|\Lambda^{\frac{1}{2}}_{x'}[\eps^{|\alpha|}\partial^\alpha,A_{ij}(D_xu_a)]\partial_j\nu|_{L^2(x')}\leq \eps B_1 \mathcal{E}^s_\eps(t) \text{ for all }|\alpha|\leq s. 
\end{align}

\end{lem}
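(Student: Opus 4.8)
The plan is to follow the proof of Lemma~\ref{bcomh1} (the boundary commutator for \eqref{H1}), the two differences being that the coefficient here is $A_{ij}(D_xu_a)$ rather than $A_{ij}(D_x(u_a+v))$ — so in the term-by-term expansion of the commutator one has $H=H(D_xu_a)$ and $p=0$ in the notation of \eqref{b4} (no factors $\partial^{\gamma_\ell}D_xv$, hence $q\ge1$) — and that there is one extra power of $\eps$ on the left of \eqref{b37}, which we absorb by regarding $\partial_j\nu$ as $\eps^{-1}\partial_j(\eps\nu)$. In short, \eqref{b37} is \eqref{b14} with $\nu$ replaced by $\eps\nu$ and the coefficient frozen at $D_xu_a$, so that the same mechanism that takes Lemma~\ref{icomh1} to Lemma~\ref{icome} supplies the extra factor of $\eps$ on the right.

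First I would set $\mathcal{C}:=[\eps^{|\alpha|}\partial^\alpha,A_{ij}(D_xu_a)]\partial_j\nu$ and note, exactly as in the derivation of \eqref{b4}, that $\eps\mathcal{C}$ is a finite sum of terms of the form \eqref{b4} with $p=0$ and $H$ a $C^\infty$ function of $D_xu_a$ alone. Passing to the boundary via the trace estimate of Proposition~\ref{trace} applied to $\mathcal{C}$,
\[
\eps^2|\Lambda^{\frac{1}{2}}_{x'}\mathcal{C}(t)|_{L^2(x')}\lesssim\eps|\mathcal{C}(t)|_{0,1,\eps}\le\eps|\mathcal{C}(t)|_{L^2(x)}+\eps^2|D_x\mathcal{C}(t)|_{L^2(x)},
\]
so it suffices to estimate the two terms on the right.

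The term $\eps^2|D_x\mathcal{C}(t)|_{L^2(x)}$ is, up to a finite sum, the collection of quantities $|\partial_i[\eps^{|\alpha|}\partial^\alpha,A_{ij}(D_xu_a)]\partial_j(\eps^2\nu)|_{L^2(x)}$ already bounded by $\eps B_1\mathcal{E}^s_\eps(t)$ in Lemma~\ref{icome}. For $\eps|\mathcal{C}(t)|_{L^2(x)}=|\eps\mathcal{C}(t)|_{L^2(x)}$ I would invoke the $p=0$ case of the estimate in step~\textbf{2} of the proof of Lemma~\ref{bcomh1}: there $|\eps^{-1}\eqref{b4}|_{L^2(x)}\le A_1^q\mathcal{E}^s_\eps(t)\eps^{q-1}$ with $q\ge1$ (note $|\alpha|-|\beta|-|\zeta|=0$ since $|\gamma|=0$), hence $|\eqref{b4}|_{L^2(x)}\le A_1^q\mathcal{E}^s_\eps(t)\eps^{q}\le\eps A_1^q\mathcal{E}^s_\eps(t)$; summing the finitely many such terms gives $\eps|\mathcal{C}(t)|_{L^2(x)}\le\eps B_1\mathcal{E}^s_\eps(t)$. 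Adding the two bounds and absorbing constants into $B_1$ proves \eqref{b37}, with $\eps_2(C_1)$ taken as the smallest of the thresholds arising in Lemma~\ref{icome} and in the proof of Lemma~\ref{bcomh1}.

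I expect no genuine difficulty here; the one point to watch is the bookkeeping of $\eps$-powers — the gain that turns the right-hand side into $\eps B_1\mathcal{E}^s_\eps(t)$ rather than $B_1\mathcal{E}^s_\eps(t)$ comes solely from the factor $\eps^q$, $q\ge1$, carried by the $p=0$ terms, exactly as in the passage from Lemma~\ref{icomh1} to Lemma~\ref{icome}.
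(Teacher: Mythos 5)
Your proof is correct and follows essentially the same route as the paper's. The paper's proof is the one-line remark that the estimate is immediate from the $p=0$ case of the argument in Lemma~\ref{bcomh1}; you have simply made that remark explicit by carrying out the trace-estimate decomposition, invoking Lemma~\ref{icome} for the $\eps^2|D_x\mathcal{C}|_{L^2}$ piece, and observing that $|\alpha|-|\beta|-|\zeta|=0$ together with $q\geq1$ (forced by $p=0$ in the commutator) yields the extra factor $\eps^q\leq\eps$ for the $\eps|\mathcal{C}|_{L^2}$ piece.
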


\begin{proof}
The estimate is immediate from the argument in the $p=0$ case  of the proof of Lemma \ref{bcomh1}.
\end{proof}

\begin{lem}[Boundary forcing for \eqref{E}]\label{bfore}
 Let $\eps_1$ and $C_1$ be as in Proposition \ref{mainestimate}. 
 There exist positive constants   $\eps_2(C_1)\leq \eps_1$  and $B_1=B_1(A_1)$ such that for $\eps\in [0,\eps_2]$  and all $t \in [0,T_\eps]$ 
\begin{align}\label{b38}
|\eps^2\Lambda^{\frac{1}{2}}_{x'}\eps^{|\alpha|}\partial^\alpha\mathbb{G}_3(t)|_{L^2(x')}\leq \eps B_1 \mathcal{E}^s_\eps(t)+\eps A_2 \text{ for all }|\alpha|\leq s. 
\end{align}
%Here (and below) $i,j\in [1,\dots,d]$. 
\end{lem}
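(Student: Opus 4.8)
The plan is to mirror the proof of Lemma \ref{bforh1} almost verbatim, carrying along one extra factor of $\eps$ throughout, since $\mathbb{G}_3$ has exactly the same algebraic structure as $\mathbb{G}_1$ while the target \eqref{b38} carries the weight $\eps^2$ rather than the single $\eps$ of \eqref{b16}. The first step is to split $\mathbb{G}_3=\eps^{-M}[\CalG+E_b(\overline{D}_xv)]$ into its two pieces and treat them separately.

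For the first piece, $\eps^{-M}\CalG=r^\eps$ is a pure boundary function of $(t,x')$, so there is no trace to take. Since $\Lambda^{1/2}_{x'}$ is dominated by $\langle D_{x'}\rangle$, one gets directly from the definitions of the norms that $\eps\,|\Lambda^{1/2}_{x'}\eps^{|\alpha|}\partial^\alpha r^\eps(t)|_{L^2(x')}\lesssim\langle r^\eps(t)\rangle_{s+1,\eps}$ for $|\alpha|\le s$. Multiplying by the leftover factor $\eps$ and invoking Assumption \ref{ass1} in the form $\langle r^\eps(t)\rangle_{s+1,\eps}\le\langle r^\eps(t)\rangle_{s+2,\eps}\le A_2$ produces exactly the $\eps A_2$ term in \eqref{b38}.

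For the second piece, the key observation is that $\eps^{-M}E_b(\overline{D}_xv)$ is again a finite sum of terms of the form \eqref{b17}. Indeed, using the integral representation of $E_b$ from its definition together with the identities $D_x(u_a+\theta v)=D_xu_a+\theta D_xv$ and $\eps^{-M}D_xv=D_x\nu$, one can write $\eps^{-M}E_b(\overline{D}_xv)=\sum_i n_i H_i(D_xu_a,D_xv)(D_xv,D_x\nu)$, where each $H_i(D_xu_a,D_xv)=\int_0^1(1-\theta)(d^2A_i)(D_xu_a+\theta D_xv)\,d\theta$ is a $C^\infty$ (indeed polynomial) function of $(D_xu_a,D_xv)$. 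Setting $\mathcal{K}=\eps^{|\alpha|}\partial^\alpha\eqref{b17}$ for such a summand, so that $\eps\mathcal{K}$ is a sum of \eqref{b12}-type terms, the trace estimate of Proposition \ref{trace} gives
\[
\eps^2\langle\Lambda^{1/2}_{x'}\mathcal{K}(t)\rangle_{0,\eps}\;\lesssim\;\eps\,|\mathcal{K}(t)|_{0,1,\eps}\;\le\;\eps\,|\mathcal{K}(t)|_{L^2(x)}+\eps^2|D_x\mathcal{K}(t)|_{L^2(x)}.
\]
Here $\eps\,|\mathcal{K}(t)|_{L^2(x)}$ is $\eps$ times the quantity already bounded by $B_1(A_1)\mathcal{E}^s_\eps(t)$ in step $\mathbf{3}$ of the proof of Lemma \ref{bforh1} (a sum of terms $|\eps^{-1}\eqref{b12}|_{L^2(x)}\lesssim C_1^{p+1}A_1^q\mathcal{E}^s_\eps(t)\eps^\mu$ with $\mu>0$, which after shrinking $\eps_2(C_1)$ so that the surplus powers $\eps^\mu$ absorb the $C_1$-powers becomes $\le B_1(A_1)\mathcal{E}^s_\eps(t)$). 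And $\eps^2|D_x\mathcal{K}(t)|_{L^2(x)}=\eps\,|D_x(\eps\mathcal{K})(t)|_{L^2(x)}$ is $\eps$ times a sum of $D_x\eqref{b12}$-type terms, which is precisely what is estimated (apart from the $R^\eps$ contribution) in Lemma \ref{iforh1}; hence it too is $\le\eps B_1(A_1)\mathcal{E}^s_\eps(t)$. Summing over the finitely many summands and adding the $r^\eps$ contribution yields \eqref{b38}.

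I do not anticipate any serious obstacle. The only points that require a little care are (i) checking that $\int_0^1(1-\theta)(d^2A_i)(D_xu_a+\theta D_xv)\,d\theta$ is a smooth function of $(D_xu_a,D_xv)$, so that $\eps^{-M}E_b$ genuinely reduces to the structure \eqref{b17} already analyzed, and (ii) the bookkeeping of the single extra power of $\eps$ relative to Lemma \ref{bforh1}, which is exactly what converts the bound $B_1\mathcal{E}^s_\eps(t)+\langle r^\eps(t)\rangle_{s+1,\eps}$ of \eqref{b16} into the sharper $\eps B_1\mathcal{E}^s_\eps(t)+\eps A_2$ of \eqref{b38}; this gain is ultimately what allows the elliptic estimate \eqref{a5} to recover derivatives in Proposition \ref{mainestimate}.
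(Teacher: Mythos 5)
Your proposal is correct and matches the paper's intent: the published proof is just the one sentence ``$\mathbb{G}_3=\eps^{-M}\mathcal{G}+\eps^{-M}E_b(\overline{D}_xv)$, where the second term is a sum of terms of the form \eqref{b17}; so this lemma follows from the proof of Lemma \ref{bforh1},'' and you have unpacked exactly that reduction (the rewriting of $\eps^{-M}E_b$ into the form \eqref{b17}, the extra factor of $\eps$ carried through the trace estimate of Proposition \ref{trace} and through steps 2--3 of Lemma \ref{bforh1}, and the final passage from $\eps\langle r^\eps\rangle_{s+1,\eps}$ to $\eps A_2$ via Assumption \ref{ass1}).
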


\begin{proof}
We have $\mathbb{G}_3=\eps^{-M}\mathcal{G}+\eps^{-M}E_b(\overline{D}_xv)$, where  the second term is a sum of terms of the form \eqref{b17}.   So this lemma follows from the proof of Lemma \ref{bforh1}. 

\end{proof}

Combining the results of the previous four lemmas, we obtain the following estimate for $|\eps^2\overline{D}_x^2\nu(t)|_{E^s_{\eps,tan}}$ by applying the elliptic estimate \eqref{a3} to 
$\eps^\alpha\partial^\alpha (\eps^2 \eqref{E})$ for $|\alpha|\leq s$.

\begin{prop}\label{b39}
%Let $s>...$ and let $T$ (which is independent of $\eps$)   be the time of existence of the approximate solution $u^\eps_a$.  
%Let $M$, $s$, and $T$ be as in Theorem \ref{main}. 
Suppose $\eps_1$ and  $C_1$ are positive constants and that for $\eps\in (0,\eps_1]$ and $T_\eps\in [0,T]$, we are given a solution $(\nu^\eps,\omega^\eps)$ of 
the three coupled systems on $[0,T_\eps]$ which satisfies  $\mathcal{E}^s_\eps(t)\leq C_1$ for all $t\in [0,T_\eps]$.   Then there exist positive constants   $B_1(A_1)$ and $\eps_2(C_1)\leq \eps_1$  such that for $\eps\in [0,\eps_2]$  and all $t \in [0,T_\eps]$ 
\begin{align}\label{b40}
|\eps^2 \overline{D}^2_x \nu(t)|_{E^s_{\eps,tan}}\leq K_2\left(\eps B_1\mathcal{E}^s_\eps(t)+|\eps^2\overline{D}\omega|_{E^s_{\eps,tan}}+\lambda\int^t_0\mathcal{E}^s_{\eps,\tan}(\sigma)d\sigma+\eps\lambda \mathcal{E}^s_{\eps,\tan}(t)+\eps A_2\right).
\end{align}
\end{prop}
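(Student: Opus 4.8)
\textit{Proof proposal.} The plan is to follow the template of Propositions \ref{b19} and \ref{b32}, but with the elliptic estimate \eqref{a5} in place of the hyperbolic estimate \eqref{a3}. Fix $|\alpha|\leq s$ and write $\partial^\alpha=\partial_{t,x'}^\alpha$. Applying the constant-coefficient operator $\eps^{|\alpha|}\partial^\alpha$ to $\eps^2$ times \eqref{E} and commuting it through the divergence-form elliptic operator, one sees that $w_\alpha:=\eps^{2+|\alpha|}\partial^\alpha\nu$ solves a problem of the form \eqref{a4} with coefficients $A_{ij}(D_xu_a)$, the same $\lambda$, interior data
\[
f_\alpha=\eps^{2+|\alpha|}\partial^\alpha\mathbb{F}_3+\sum_{i,j=1}^d\partial_i\big([\eps^{|\alpha|}\partial^\alpha,A_{ij}(D_xu_a)]\partial_j(\eps^2\nu)\big),
\]
and boundary data
\[
g_\alpha=\eps^{2+|\alpha|}\partial^\alpha\mathbb{G}_3+\sum_{i,j=1}^d n_i[\eps^{|\alpha|}\partial^\alpha,A_{ij}(D_xu_a)]\partial_j(\eps^2\nu).
\]

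Before invoking \eqref{a5} I would verify its hypotheses uniformly in $\eps$. By Assumption \ref{ass1} we have $|D_xu_a|_{L^\infty(\Omega)}\leq\eps A_1$, which is $<4R$ once $\eps_2$ is taken small, and $|\partial_{t,x}^2u_a|_{L^\infty(\Omega)}\leq A_1$; hence the coefficients $A_{ij}(D_xu_a)$ lie in $\mathcal{B}^2_T$ with $W^{1,\infty}$-norm bounded by a constant depending only on $A_1$, and Assumption \ref{assh}(c) follows from Assumption \ref{a5a}(c). Consequently $\lambda_0$ and $K_2$ in \eqref{a5} may be chosen depending only on $A_1$, independently of $\eps$ and of $C_1$, and we fix $\lambda\geq\lambda_0$ accordingly; this step is simpler than the corresponding one for \eqref{H1}, \eqref{H2} precisely because these coefficients involve only $u_a$ and not $v$ (for each fixed $\eps$ the regularity needed to apply \eqref{a5} to $w_\alpha$ is supplied by Remark \ref{howtoapply}). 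Then \eqref{a5} gives $\|\overline{D}^2_x w_\alpha(t,\cdot)\|\leq K_2\big(\|f_\alpha(t,\cdot)\|+\langle\Lambda^{1/2}_{x'}g_\alpha(t,\cdot)\rangle\big)$; since $\overline{D}^2_x$ commutes with $\partial^\alpha$ we have $\|\overline{D}^2_x w_\alpha(t,\cdot)\|=\eps^{|\alpha|}|\partial^\alpha(\eps^2\overline{D}^2_x\nu)(t,\cdot)|_{L^2(x)}$, so taking the supremum over $|\alpha|\leq s$ puts $|\eps^2\overline{D}^2_x\nu(t)|_{E^s_{\eps,tan}}$ on the left-hand side.

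It then remains to estimate the right-hand side term by term. The two summands of $f_\alpha$ are controlled by Lemma \ref{ifore} (which supplies $|\eps^2\overline{D}\omega(t)|_{E^s_{\eps,tan}}+\lambda\int_0^t\mathcal{E}^s_{\eps,tan}(\sigma)d\sigma+\eps\lambda\mathcal{E}^s_{\eps,tan}(t)+\eps B_1\mathcal{E}^s_\eps(t)+\eps^2A_2$) and, after summing over the finitely many $i,j$, by Lemma \ref{icome} (which supplies $\eps B_1\mathcal{E}^s_\eps(t)$); the two summands of $\langle\Lambda^{1/2}_{x'}g_\alpha\rangle$ are controlled by Lemma \ref{bfore} (using $\Lambda^{1/2}_{x'}\eps^{2+|\alpha|}\partial^\alpha\mathbb{G}_3=\eps^2\Lambda^{1/2}_{x'}\eps^{|\alpha|}\partial^\alpha\mathbb{G}_3$, giving $\eps B_1\mathcal{E}^s_\eps(t)+\eps A_2$) and by Lemma \ref{bcome} (using the analogous scalar identity for the commutator, giving $\eps B_1\mathcal{E}^s_\eps(t)$). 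Collecting these four bounds, absorbing the several $\eps B_1\mathcal{E}^s_\eps(t)$ contributions into a single such term (redefining $B_1=B_1(A_1)$) and using $\eps^2A_2\leq\eps A_2$, one arrives at \eqref{b40}. There is no genuine obstacle here: all the analytic content has already been packaged in Lemmas \ref{icome}--\ref{bfore} and in the elliptic estimate \eqref{a5}, and the only points requiring attention are the $\eps$-power bookkeeping in the commutation step and the uniform-in-$\eps$ choice of $\lambda,K_2$ in \eqref{a5}, both of which are routine.
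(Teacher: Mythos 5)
Your proposal is correct and follows essentially the same route as the paper: the paper's own "proof" is just the one-sentence remark before Proposition \ref{b39} instructing the reader to combine Lemmas \ref{icome}--\ref{bfore} with the elliptic estimate applied to $\eps^\alpha\partial^\alpha(\eps^2\eqref{E})$, which is precisely what you carry out in detail. Two small notes: the paper's reference to "the elliptic estimate \eqref{a3}" is a typo for \eqref{a5}, which you correctly use; and your careful verification that $K_2$ and $\lambda_0$ in \eqref{a5} can be fixed uniformly in $\eps$ and independently of $C_1$ (because the coefficients $A_{ij}(D_xu_a)$ involve only $u_a$, not $v$) makes explicit a point the paper leaves implicit but which matters for the later absorption argument in Proposition \ref{b41}.
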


Putting together Propositions \ref{b32} and \ref{b39} we obtain
\begin{prop}\label{b41}
Under the assumptions of Proposition \ref{b39} there exist positive constants \\$B_1(T,A_1,K_1,K_2,\lambda)$, $B_2(A_2,K_2)$, and $\eps_2(C_1,A_1,K_1,K_2,\lambda)\leq \eps_1$  such that for $\eps\in [0,\eps_2]$  and all $t \in [0,T_\eps]$ 
\begin{align}\label{b42}
[\mathcal{E}^s_{\eps,tan}(t)]^2\leq B_1\int ^t_0 \left([\mathcal{E}^s_\eps(\sigma)]^2+\eps^2|R^\eps(\sigma)|^2_{s+1,\eps}+\langle r^\eps(\sigma)\rangle ^2_{s+2,\eps}\right)d\sigma+\eps^2(B_1[\mathcal{E}^s_\eps(t)]^2+B_2).
\end{align}

\end{prop}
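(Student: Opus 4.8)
The plan is to combine directly the hyperbolic estimate \eqref{b33} of Proposition \ref{b32} with the elliptic estimate \eqref{b40} of Proposition \ref{b39}; the whole content is bookkeeping, the one point needing care being that the right-hand side of \eqref{b40} already involves the quantity $\mathcal{E}^s_{\eps,tan}$ being estimated. First I would record the elementary inclusion of the tangential derivatives $\partial^\alpha_{t,x'}$ among the full derivatives $\partial^\alpha_{t,x}$, which gives $|u(t)|_{E^s_{\eps,tan}}\leq|u(t)|_{s,\eps}$ for every $u$ and hence $\mathcal{E}^s_{\eps,tan}(\sigma)\leq\mathcal{E}^s_\eps(\sigma)$ for all $\sigma$. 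With this, the two self-referential terms on the right of \eqref{b40} are disposed of: the boundary term at time $t$ obeys $\eps\lambda\,\mathcal{E}^s_{\eps,tan}(t)\leq\eps\lambda\,\mathcal{E}^s_\eps(t)$, and for the memory term Cauchy--Schwarz on $[0,t]\subseteq[0,T]$ gives $\big(\lambda\int_0^t\mathcal{E}^s_{\eps,tan}(\sigma)\,d\sigma\big)^2\leq\lambda^2T\int_0^t[\mathcal{E}^s_{\eps,tan}(\sigma)]^2\,d\sigma\leq\lambda^2T\int_0^t[\mathcal{E}^s_\eps(\sigma)]^2\,d\sigma$. Note that because the term $\eps^2B_1[\mathcal{E}^s_\eps(t)]^2$ is permitted on the right of \eqref{b42}, no absorption into the left side is required at this stage.

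Next I would square \eqref{b40} using $(a_1+\cdots+a_5)^2\leq5\sum_{i=1}^5a_i^2$ together with the two bounds just obtained, yielding
\begin{align*}
|\eps^2\overline{D}^2_x\nu(t)|^2_{E^s_{\eps,tan}}&\leq 5K_2^2\Big(\eps^2B_1^2[\mathcal{E}^s_\eps(t)]^2+|\eps^2\overline{D}\omega(t)|^2_{E^s_{\eps,tan}}\\
&\qquad+\lambda^2T\!\int_0^t[\mathcal{E}^s_\eps(\sigma)]^2\,d\sigma+\eps^2\lambda^2[\mathcal{E}^s_\eps(t)]^2+\eps^2A_2^2\Big).
\end{align*}
Then, using the trivial bound $[\mathcal{E}^s_{\eps,tan}(t)]^2\leq3\big(|\eps^2\overline{D}^2_x\nu(t)|^2_{E^s_{\eps,tan}}+|\eps\overline{D}\nu(t)|^2_{E^s_{\eps,tan}}+|\eps^2\overline{D}\omega(t)|^2_{E^s_{\eps,tan}}\big)$, I would control the first summand by the display above and the remaining two — as well as the $|\eps^2\overline{D}\omega|^2_{E^s_{\eps,tan}}$ appearing inside that display — by Proposition \ref{b32}'s estimate \eqref{b33}. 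Collecting like terms: the contributions of the form $\int_0^t([\mathcal{E}^s_\eps]^2+\eps^2|R^\eps|^2_{s+1,\eps}+\langle r^\eps\rangle^2_{s+2,\eps})\,d\sigma$ assemble into the integral term of \eqref{b42}; the terms proportional to $\eps^2[\mathcal{E}^s_\eps(t)]^2$ (with coefficients built from $K_2^2$, $\lambda^2$, and the $B_1$'s of Propositions \ref{b32} and \ref{b39}) assemble into $\eps^2B_1[\mathcal{E}^s_\eps(t)]^2$; and the lone $A_2$-contribution gives $\eps^2B_2$ with $B_2$ a multiple of $K_2^2A_2^2$.

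The argument runs entirely under the hypotheses of Propositions \ref{b32} and \ref{b39}, so it holds for $t\in[0,T_\eps]$ and $\eps\in(0,\eps_2]$ with $\eps_2$ the smaller of the two thresholds supplied there — in particular $\eps_2$ still depends only on $C_1$, $A_1$, $K_1$, $K_2$, $\lambda$. I would finish by checking the asserted constant dependencies: $T$ enters only through the Cauchy--Schwarz step, so $B_1=B_1(T,A_1,K_1,K_2,\lambda)$, while the single pure-constant term descends only from the $\eps A_2$ term of \eqref{b40}, so $B_2=B_2(A_2,K_2)$ with no dependence on $C_1$. I do not expect a genuine analytic obstacle here; the reason the step is isolated is that later — in Proposition \ref{mainestimate}, and crucially in the continuous induction of Proposition \ref{HimpliesC} — one must absorb $\eps^2B_1[\mathcal{E}^s_\eps(t)]^2$ into the left-hand side, which is only legitimate because $B_1$ has been kept independent of $C_1$.
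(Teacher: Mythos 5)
Your proof is correct and follows essentially the same route as the paper's: square \eqref{b40}, use \eqref{b33} to dominate the $|\eps^2\overline{D}\omega|^2_{E^s_{\eps,tan}}$ term that reappears, add the resulting bound on $|\eps^2\overline{D}^2_x\nu|^2_{E^s_{\eps,tan}}$ to \eqref{b33}, handle the integral term by Cauchy--Schwarz on $[0,t]\subseteq[0,T]$, and use $\mathcal{E}^s_{\eps,tan}\leq\mathcal{E}^s_\eps$ throughout. The one small divergence is cosmetic but worth flagging: after squaring, the paper \emph{absorbs} the resulting $K_2^2\eps^2\lambda^2[\mathcal{E}^s_{\eps,tan}(t)]^2$ into the left-hand side (shrinking $\eps_2$), whereas you observe that the stated bound \eqref{b42} already tolerates an $\eps^2B_1[\mathcal{E}^s_\eps(t)]^2$ on the right, so you simply convert $\mathcal{E}^s_{\eps,tan}(t)\leq\mathcal{E}^s_\eps(t)$ and sweep the term there without any absorption. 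Both versions keep $B_1$ free of $C_1$, which is the point the paper is careful about; your remark that this is the reason the proposition is isolated is exactly the right reading of the role this step plays in Proposition \ref{HimpliesC}.
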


\begin{proof}
Take the square of \eqref{b40}, use \eqref{b33} to estimate the term $|\eps^2\overline{D}\omega|^2_{E^s_{\eps,tan}}$ that appears on the right, and add the resulting estimate of $|\eps^2 \overline{D}^2_x \nu(t)|^2_{E^s_{\eps,tan}}$ to the estimate \eqref{b33}.  The left side now equals $[\mathcal{E}^s_{\eps,tan}(t)]^2$ and can be used to absorb the term $K_2^2\eps^2\lambda^2 [\mathcal{E}^s_{\eps,tan}(t)]^2$ from the right, provided $\eps_2$ is small enough.  Finally, to obtain \eqref{b42}  we have also used
\begin{align}
K_2^2\lambda^2\left(\int^t_0\mathcal{E}^s_{\eps,\tan}(\sigma)d\sigma\right)^2\leq K_2^2\lambda^2T^2\int^t_0[\mathcal{E}^s_{\eps,\tan}(\sigma)]^2d\sigma \leq  K_2^2\lambda^2T^2\int^t_0[\mathcal{E}^s_{\eps}(\sigma)]^2d\sigma.
\end{align}

\end{proof}

\subsection{Normal derivative estimates}

In order to complete the proof of Proposition \ref{mainestimate} we must estimate 
\begin{align}\label{c1}
|\eps^{|\alpha|}\partial_{t,x}^\alpha (\eps\overline{D}\nu, \eps^2\overline{D}\omega,\eps^2\overline{D}^2_x\nu)|_{L^2(x)}\text{ for }|\alpha|\leq s
\end{align}
when $\partial_d$ derivatives are present in $\partial^\alpha_{t,x}$.  In this section $\partial^\beta$ will always be taken to mean $\partial^{(\beta_0,\beta',\beta_d)}_{t,x',x_d}$, where $\beta_d\leq |\beta|$.   We have a noncharacteristic boundary ($A_{dd}$ is nonsingular), so we can ``use the equation" to control normal derivatives in an inductive argument starting with the control we now have over tangential derivatives (the case $\alpha_d=0$ in \eqref{c1}).   Although this type of argument is  standard, we have three equations here and a rather complicated object $\mathcal{E}^s_\eps$ to estimate, so some care is needed both to formulate the induction assumption concisely and to avoid unnecessary work.  Thus, we shall provide some details. 

First we define for $s_0\leq s$:
\begin{align}\label{c2}
\begin{split}
&|u(t)|_{E^s_{\eps,s_0}}=\sup_{|\alpha|\leq s, \alpha_d\leq s_0}\eps^{|\alpha|}|\partial^\alpha u(t,\cdot)|_{L^2(x)}\\
&\mathcal{E}^s_{\eps,s_0}(t)=|\eps^2 \overline{D}^2_x \nu(t)|_{E^s_{\eps,s_0}}+|\eps \overline{D} \nu(t)|_{E^s_{\eps,s_0}}+|\eps^2 \overline{D} \omega(t) |_{E^s_{\eps,s_0}}.
\end{split}
\end{align}
By induction on $s_0$ we will prove:

\begin{prop}\label{c3}
Let $s_0\in\{0,,\dots,s\}$. 
Suppose $\eps_1$ and  $C_1$ are positive constants and that for $\eps\in (0,\eps_1]$ and $T_\eps\in [0,T]$, we are given a solution $(\nu^\eps,\omega^\eps)$ of 
the three coupled systems on $[0,T_\eps]$ which satisfies  $\mathcal{E}^s_\eps(t)\leq C_1$ for all $t\in [0,T_\eps]$.   Then there exist positive constants $B_1=B_1(T,A_1,K_1,K_2,\lambda)$,  $B_2=B_2(A_1,A_2,K_2)$, and $\eps_2=\eps_2(C_1,A_1,K_1,K_2,\lambda)$ such that for $\eps\in [0,\eps_2]$  and all $t \in [0,T_\eps]$ 
% that for $0\leq s_0\leq s$, there are constants  $\eps_2$, $B_1$, $B_2$ as described in Proposition \ref{mainestimate} such that for $\eps\in (0,\eps_2]$ and all $t\in [0,T_\eps]$:
\begin{align}\label{c4}
[\mathcal{E}^s_{\eps,s_0}(t)]^2\leq B_1\int ^t_0 \left([\mathcal{E}^s_\eps(\sigma)]^2+\eps^2|R^\eps(\sigma)|^2_{s+1,\eps}+\langle r^\eps(\sigma)\rangle ^2_{s+2,\eps}\right)d\sigma+\eps^2(B_1[\mathcal{E}^s_\eps(t)]^2+B_2).
\end{align}
\end{prop}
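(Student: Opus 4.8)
The plan is to prove Proposition~\ref{c3} by induction on $s_0$. The base case $s_0=0$ is exactly Proposition~\ref{b41}, since $\mathcal{E}^s_{\eps,0}(t)=\mathcal{E}^s_{\eps,\mathrm{tan}}(t)$: the constraint $\alpha_d\le 0$ forces $\partial^\alpha$ to involve only the tangential variables $t,x'$. For the inductive step, fix $s_0\in\{1,\dots,s\}$ and assume \eqref{c4} holds with $s_0$ replaced by $s_0-1$. All contributions to $\mathcal{E}^s_{\eps,s_0}(t)$ coming from multi-indices with $\alpha_d\le s_0-1$ are then already dominated by the right-hand side of \eqref{c4}, so it remains only to estimate $\eps^{|\alpha|}|\partial^\alpha(\cdot)|_{L^2(x)}$ with $|\alpha|\le s$ and $\alpha_d=s_0$, applied to each of the three tuples $\eps\overline{D}\nu$, $\eps^2\overline{D}\omega$, $\eps^2\overline{D}_x^2\nu$.

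The key is that the boundary is noncharacteristic. By the ellipticity in Assumption~\ref{a5a}(c), together with the uniform bound $|D_xu_a|,|D_xv|\le R/2$ furnished by Remark~\ref{a8aa}, the leading normal coefficients $A_{dd}(D_xu_a)$ and $A_{dd}(D_x(u_a+v))$ are invertible with inverses bounded uniformly for $\eps$ small. Hence from the elliptic equation \eqref{E}(a) one solves
\[
\partial_d^2\nu = A_{dd}(D_xu_a)^{-1}\Bigl(\lambda\nu-\mathbb{F}_3-\sum_{i,j}(\partial_iA_{ij}(D_xu_a))\partial_j\nu-\sum_{(i,j)\neq(d,d)}A_{ij}(D_xu_a)\partial_i\partial_j\nu\Bigr),
\]
in which every term on the right carries at most one normal derivative of $\nu$; similarly the hyperbolic equation \eqref{H2}(a) gives $\partial_d^2\omega$ in terms of $\partial_t^2\omega$, mixed tangential--normal second derivatives of $\omega$, lower-order terms, and $\mathbb{F}_2$. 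Substituting these identities once into each new term (writing, e.g., $\partial^\alpha\partial_d^2\nu=\partial^{\alpha_0,\alpha'}_{t,x'}\partial_d^{s_0}(\partial_d^2\nu)$ and distributing by Leibniz) yields a sum of three kinds of terms: (i) derivatives of the tuples carrying at most $s_0-1$ normal derivatives of $\nu$ or $\omega$ --- here the point is that $\overline{D}_x^2$ already absorbs two normal derivatives, so these are controlled by $\mathcal{E}^s_{\eps,s_0-1}(t)$ and hence by the right side of \eqref{c4} via the induction hypothesis, the total derivative order remaining $\le s+2$; (ii) commutator terms in which derivatives fall on the coefficients $A_{ij}(D_xu_a)$ or $A_{ij}(D_x(u_a+v))$, estimated exactly as in the commutator lemmas of this section using $\mathcal{E}^s_\eps(t)\le C_1$ and Propositions~\ref{sob} and \ref{prod}; and (iii) derivatives of the forcing terms.

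For the forcing, the true forcing contributions $\eps^{-M}\mathcal{F}=R^\eps$, $\eps^{-M}\mathcal{G}=r^\eps$, $\eps^{-M}\partial_t\mathcal{F}=\partial_tR^\eps$ produce the integral term $\int_0^t(\eps^2|R^\eps(\sigma)|^2_{s+1,\eps}+\langle r^\eps(\sigma)\rangle^2_{s+2,\eps})\,d\sigma$ in \eqref{c4}, and the $\partial_t\omega$ and $\lambda\int_0^t\omega$ pieces of $\mathbb{F}_3$ are treated as in Lemma~\ref{ifore}. The remaining parts of $\mathbb{F}_2$, $\mathbb{F}_3$ (and the boundary analogues in $\mathbb{G}_i$) are built from $E$ and $H$, which are quadratic and linear in $v=\eps^M\nu$ respectively; after applying the Sobolev estimate of Proposition~\ref{sob} to a low-order $v$-factor, every such term that still contains the top normal-order derivative of $\nu$ carries a genuine extra positive power of $\eps$ (using $M>\frac d2+2$), hence is either absorbed into the left-hand side for $\eps$ small --- at the allowed cost of letting $\eps_2$ depend on $C_1$ --- or bounded by a term of the form $\eps^2B_1[\mathcal{E}^s_\eps(t)]^2$ with $B_1$ independent of $C_1$. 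Collecting everything, squaring, combining with the already-established estimates \eqref{b33} and \eqref{b40} for the $\omega$-contributions, and shrinking $\eps_2$ so that the $O(\eps^2)$ coefficient of $[\mathcal{E}^s_{\eps,s_0}(t)]^2$ on the right can be absorbed on the left, gives \eqref{c4}; taking $s_0=s$ then recovers $\mathcal{E}^s_{\eps,s}=\mathcal{E}^s_\eps$ and yields Proposition~\ref{mainestimate}.

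I expect the main obstacle to be the $\eps$-power bookkeeping in (iii): one must verify that every occurrence of the top normal-order derivative of $\nu$ regenerated through the nonlinear forcings $\mathbb{F}_2,\mathbb{F}_3$ really does come with an extra power of $\eps$ relative to its weight inside $\mathcal{E}^s_{\eps,s_0}$ --- this is where the quadratic-in-$v$ structure of $E$, the factor $\partial_j\partial_tu_a$ in $H$, and the inequality $M>\frac d2+2$ all enter --- and, as stressed in Remark~\ref{imprem}, that the constant $B_1$ never acquires dependence on $C_1$ (only $\eps_2$ may). A secondary, more routine point is checking that substituting the second-order equations never raises the total derivative order above $s+2$, so that Proposition~\ref{b41} and the induction hypothesis genuinely cover all the terms produced.
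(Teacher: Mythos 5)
Your proposal shares the paper's overall strategy --- induction on $s_0$ with base case Proposition~\ref{b41}, then using the noncharacteristic boundary to convert the new $\partial_d$ derivative into controlled quantities --- but it takes a genuinely different, and heavier, route in the inductive step, and it misses the observation that makes the paper's version quite light. The paper's proof (steps 3--5) first shows that \emph{two of the three quantities} at normal order $s_0+1$ can be dominated by $\mathcal{E}^s_{\eps,s_0}(t)$ without appealing to any of the three equations at all: for $|\eps\overline{D}\nu|_{E^s_{\eps,s_0+1}}$, when the extra $D$ is $\partial_t$ or $\partial_{x'}$ one simply swaps it past one of the $\partial_d$'s, and when it is $\partial_d$ one regroups $\eps^{|\alpha|}\,\eps\,\partial_d^{s_0+1}\partial_d\nu = \eps^{|\alpha|-1}\,\eps^2\,\partial_d^{s_0}\partial_d^2\nu$, landing on the $\overline{D}_x^2\nu$ piece of the energy at order $s_0$; for $|\eps^2\overline{D}\omega|_{E^s_{\eps,s_0+1}}$ one writes $\omega=\partial_t\nu$ and swaps to again land on $\overline{D}_x^2\nu$ at order $s_0$. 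Only $|\eps^2\overline{D}_x^2\nu|_{E^s_{\eps,s_0+1}}$ requires solving for $\partial_d^2\nu$, and the paper does so from the hyperbolic equation \eqref{H1}(a) (formula \eqref{c8}), whose time term is precisely $\partial_t^2\nu=\partial_t\omega$, already controlled by step~3. You instead propose to solve \eqref{E}(a) for $\partial_d^2\nu$ and \eqref{H2}(a) for $\partial_d^2\omega$. This is not a gap --- the $\eps$-weights and derivative counts still close, and your bookkeeping concerns in (iii) are exactly the right ones to check --- but it is strictly more work: the elliptic equation drags in the $\lambda\int_0^t\omega\,ds$ term, which forces the extra splitting of Lemma~\ref{ifore}, and solving \eqref{H2}(a) for $\partial_d^2\omega$ is entirely superfluous once you see the $\omega=\partial_t\nu$ trick. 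What the paper's choice of \eqref{H1}(a) over \eqref{E}(a) costs --- its coefficients are $A_{ij}(D_x(u_a+v))$ rather than the frozen $A_{ij}(D_x u_a)$ --- is already paid for by the commutator estimates of Lemma~\ref{icomh1}, so there is no hidden disadvantage. If you carry out your route, do make sure the constant in front of $\mathcal{E}^s_{\eps,s_0}$ after substitution depends only on $A_1$ (as in the paper's \eqref{c6a} and \eqref{c10}), since, as you correctly emphasize, $B_1$ must not pick up any $C_1$-dependence.
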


\begin{proof}

\textbf{1. }The case $s_0=s$ is the same as the estimate asserted in  Proposition \ref{mainestimate}; the case $s_0=0$ is  treated in Proposition \ref{b41}.

\textbf{2. Induction assumption. }Let $s_0<s$ and assume that \eqref{c4} holds for this $s_0$.  It remains to show that \eqref{c4} holds for $s_0+1$.
It is perhaps surprising that the terms  $|\eps^2 \overline{D} \omega(t) |_{E^s_{\eps,s_0+1}}$ and $|\eps \overline{D} \nu(t) |_{E^s_{\eps,s_0+1}}$ 
can be estimated \emph{without} having to ``use the equation." Also, in estimating the remaining term $|\eps^2 \overline{D}_x^2 \nu(t) |_{E^s_{\eps,s_0+1}}$ 
we will only need to use one equation, namely \eqref{H1}(a).

\textbf{3. The term $|\eps^2 \overline{D} \omega(t) |_{E^s_{\eps,s_0+1}}.$ }With $\alpha=(\alpha_0,\alpha',s_0+1)$ satisfying $|\alpha|\leq s$ (here and in the remaining steps), we first estimate
\begin{align}\label{c4a}
|\eps^2 {D} \omega(t) |_{E^s_{\eps,s_0+1}}=\eps^{|\alpha|}|\partial_t^{\alpha_0}\partial_{x'}^{\alpha'}\partial_d^{s_0+1}(\eps^2 D\omega)|_{L^2(x)}
\end{align}
If $D$ is replaced by $\partial_t$ in \eqref{c4a}, we can swap this $\partial_t$ with one of the $\partial_d$ derivatives  to obtain $\eqref{c4a}\leq  \mathcal{E}^s_{\eps,s_0}(t)$.
When $D$ is replaced by $D_x$ we write  $\omega=\partial_t\nu$ to obtain
\begin{align}\label{c5}
\eps^{|\alpha|}|\partial_t^{\alpha_0}\partial_{x'}^{\alpha'}\partial_d^{s_0+1}(\eps^2 D_x\partial_t\nu)|_{L^2(x)}=\eps^{|\alpha|}|\partial_t^{\alpha_0}\partial_{x'}^{\alpha'}\partial_d^{s_0}\partial_t(\eps^2 D_x\partial_d\nu)|_{L^2(x)}\leq \mathcal{E}^s_{\eps,s_0}(t).
\end{align}
  When $D$ is absent in \eqref{c4a}, the desired estimate is immediate.

\textbf{4. The term $|\eps \overline{D} \nu(t) |_{E^s_{\eps,s_0+1}}.$}
We consider only
\begin{align}\label{c6}
\eps^{|\alpha|}|\partial_t^{\alpha_0}\partial_{x'}^{\alpha'}\partial_d^{s_0+1}(\eps D\nu)|_{L^2(x)}.
\end{align}
When $D$ is replaced by $\partial_t$ or $\partial_{x'}$, we can swap that derivative with one of the $\partial_d$ derivatives in $\partial_d^{s_0+1}$ to  obtain $\eqref{c6}\leq \mathcal{E}^s_{\eps,s_0}(t)$.   When $D$ is replaced by $\partial_d$, we can rewrite \eqref{c6} as 
$$ 
\eps^{|\alpha|-1}|\partial_t^{\alpha_0}\partial_{x'}^{\alpha'}\partial_d^{s_0}(\eps^2 \partial^2_d\nu)|_{L^2(x)}\leq \mathcal{E}^s_{\eps,s_0}(t).
$$

\textbf{5. The term $|\eps^2 \overline{D}^2_x \nu(t) |_{E^s_{\eps,s_0+1}}.$}    We will show
\begin{align}\label{c6a}
|\eps^2 \overline{D}^2_x \nu(t) |_{E^s_{\eps,s_0+1}}\leq C(A_1)\mathcal{E}^s_{\eps,s_0}+\eps C(A_1)\mathcal{E}^s_\eps+\eps C(A_1,A_2).
\end{align}

To estimate   $|\eps^2 {D}^2_x \nu(t) |_{E^s_{\eps,s_0+1}}$ we consider
\begin{align}\label{c7}
\eps^{|\alpha|}|\partial_t^{\alpha_0}\partial_{x'}^{\alpha'}\partial_d^{s_0+1}(\eps^2 D^2_x\nu)|_{L^2(x)}.
\end{align}
If exactly one $\partial_d$ appears in $D^2_x$, for example, if $D^2_x=\partial_i\partial_d$ with $i<d$, we can swap the $\partial_i$ with one of the $\partial_d$ derivatives in 
$\partial_d^{s_0+1}$ to obtain $\eqref{c7}\leq \mathcal{E}^s_{\eps,s_0}(t)$. We get the same estimate, of course, if no $\partial_d$ appears in $D_x^2$.

To treat the remaining case where $D_x^2=\partial^2_d$ in \eqref{c7},  we first use equation \eqref{H1}(a) to write
\begin{align}\label{c8}
\begin{split}
&\partial_d^2\nu=-A_{dd}^{-1}\left[-\partial_t^2\nu+\partial_d(A_{dd})\partial_d\nu +\sum_{i\;or\;j\neq d}\partial_i(A_{ij}\partial_j\nu)+R^\eps+\sum_i\partial_i[H_i(D_xu_a,D_xv)(D_xv,D_x\nu)]\right].
\end{split}
\end{align}
Here $i,j\in\{1,\dots,d\}$, the coefficients $A_{ij}= A_{ij}(D_xu_a+D_xv)$, and we have used \eqref{b11}.

Replacing ${D}^2_x\nu$ in \eqref{c7} by the right side of \eqref{c6}, we examine first the contribution to \eqref{c7} from 
%\begin{align}
$-A_{dd}^{-1}\left(\sum_{i\;or\;j\neq d}A_{ij}\partial^2_{ij}\nu\right)$, which is a sum of terms of the form $H_{ij}(D_xu_a,D_xv)\partial^2_{ij}\nu$ with $i$ or $j\neq d$.
%\end{align}
Thus, we must estimate terms 
%like $\eps^{|\alpha|+2}|\partial^\alpha(H(D_xu_a,D_xv)\partial^2_{ij}\nu)|_{L^2(x)}$, and therefore 
like  
\begin{align}\label{c9}
|\eps^{2+|\alpha|}H\cdot(\partial^{\beta_1}D_xu_a)(\partial^{\beta_2}D_xu_a)\dots(\partial^{\beta_q}D_xu_a)(\partial^{\gamma_1}D_x v)\dots(\partial^{\gamma_p}D_x v)\partial^\zeta \partial^2_{ij}\nu|_{L^2(x)},
\end{align}
where $|\beta|+|\gamma|+|\zeta|=|\alpha|$, at least one of $i,j$ (say $i$) is $\neq d$, and the total number of $\partial_d$ derivatives appearing in $(\partial^\beta,\partial^\gamma,\partial^\zeta)$ is $s_0+1$.

First consider the ``worst" case, that is, when $p=0$.  We obtain
\begin{align}\label{c10}
\eqref{c9}\leq \eps^{2+|\alpha|}\eps^{q-|\beta|}A_1^q |\partial^\zeta\partial^2_{ij}\nu|_{L^2(x)}\leq \eps^{2+|\alpha|}\eps^{q-|\beta|}A_1^q \mathcal{E}^s_{\eps,s_0}\eps^{-|\zeta|-2}\leq A_1^q\mathcal{E}^s_{\eps,s_0}(t).
\end{align}
Here the second inequality is immediate when $\zeta_d\leq s_0$;  if $\zeta_d=s_0+1$ we swap $\partial_i$ with one of the $\partial_d$ derivatives in $\partial^\zeta$.

When $p\geq 1$ we obtain using the product estimate of Proposition \ref{prod} $p$ times:
\begin{align}\label{c11}
\eqref{c9}\leq  \eps^{2+|\alpha|}\eps^{q-|\beta|}A_1^q \eps^{p(M-\frac{d}{2}-1)}\eps^{-|\gamma|} \mathcal{E}^s_\eps\eps^{-|\zeta|-2}\leq \eps A_1^q\mathcal{E}^s_\eps
\end{align}
for $\eps\in (0,\eps_2]$ if $\eps_2$ small enough (use $p(M-\frac{d}{2}-1)>1$). 

\begin{rem}
The estimate \eqref{c11}  illustrates that when factors like $\partial^{\gamma_j}D_xv$ are present, it is not necessary to use 
the induction assumption to obtain an estimate consistent with \eqref{c6a}.   Every term in the  contribution to \eqref{c7}
when $D^2_x\nu$  is replaced by 
$$-A_{dd}^{-1}\left(\sum_i\partial_i[H_i(D_xu_a,D_xv)(D_xv,D_x\nu)]\right)$$ 
includes at least one such factor and is again dominated by $\eps C(A_1)\mathcal{E}^s_\eps$.
\end{rem}

Next consider  the contribution to \eqref{c7}
when $D^2_x\nu$  is replaced by $-A_{dd}^{-1}\partial_t^2\nu=-A_{dd}^{-1}\partial_t\omega$.  Using step \textbf{3} we easily obtain 
\begin{align}\label{c12}
\eps^{2+|\alpha|}\partial^\alpha |(A_{dd}^{-1}\partial^2_t\nu)|_{L^2(x)}\leq C(A_1)\mathcal{E}^s_{\eps,s_0}+\eps C(A_1)\mathcal{E}^s_\eps.   
\end{align}
The second term here arises when $\partial^\alpha$ hits $A_{dd}$. 

By entirely similar or easier estimates we find
\begin{align}\label{c13}
\begin{split}
&\eps^{2+|\alpha|}|\partial^\alpha [A_{dd}^{-1}\partial_d(A_{dd})\partial_d\nu]|_{L^2(x)}\leq \eps C(A_1)\mathcal{E}^s_\eps\\
&\eps^{2+|\alpha|}|\partial^\alpha [A_{dd}^{-1}\partial_i(A_{ij})\partial_j\nu]|_{L^2(x)}\leq \eps C(A_1)\mathcal{E}^s_\eps, \text{ where }i\text{ or }j\neq d\\
&\eps^{2+|\alpha|}|\partial^\alpha (A_{dd}^{-1}R^\eps)|_{L^2(x)}\leq \eps C(A_1,A_2).
\end{split} 
\end{align}
This completes the proof of \eqref{c6a}.

\textbf{6. }The results of steps \textbf{3,4,5} imply 
\begin{align}
\mathcal{E}^s_{\eps,s_0+1}(t) \leq C(A_1)\mathcal{E}^s_{\eps,s_0}+\eps C(A_1)\mathcal{E}^s_\eps+\eps C(A_1,A_2),
\end{align}
which in turn implies that \eqref{c4} holds for $s_0+1$.   This completes the induction step.

\end{proof} 

We can now finish the proof of Proposition \ref{mainestimate}.

\begin{proof}[End of the proof of Proposition \ref{mainestimate}.]
The case $s_0=s$ in Proposition \ref{c3} gives
\begin{align}
[\mathcal{E}^s_{\eps}(t)]^2\leq B_1\int ^t_0 \left([\mathcal{E}^s_\eps(\sigma)]^2+\eps^2|R^\eps(\sigma)|^2_{s+1,\eps}+\langle r^\eps(\sigma)\rangle ^2_{s+2,\eps}\right)d\sigma+\eps^2(B_1[\mathcal{E}^s_\eps(t)]^2+B_2).
\end{align}
For $\eps_2$ small enough the term $\eps^2 B_1[\mathcal{E}^s_\eps(t)]^2$ can be absorbed into the left side, yielding the estimate \eqref{apriori} of Proposition \ref{mainestimate}.

\end{proof}

As explained in section \ref{mainR}, this completes the proof of Theorem \ref{main}.

\part{Construction of approximate solutions for the traction problem}\label{p3}

In this part we construct high order approximate solutions to the equations of the Saint Venant-Kirchhoff model of nonlinear elasticity with traction boundary conditions.   It will simplify the exposition and greatly lighten the notation to carry out the construction in two space dimensions, but the construction in higher dimensions goes through with only obvious (and almost exclusively notational) changes.  We change notation slightly from part \ref{p2} and denote the normal variable $x_d$ here by $y$ and the single tangential spatial variable by $x$ in place of the earlier $x'$.

%TODO: Explain restriction to 2D, extension to multiD, notation $y$ instead of $x_d$.

\section{Introduction }\label{introp3}

    We consider the equations of the Saint Venant-Kirchhoff model \eqref{j0} in two space dimensions:
	\begin{equation}\label{eq:InitIntSys}
		\begin{split}
		&\partial_t^2\phi-\nabla\cdot(\nabla\phi\sigma(\nabla\phi))=0 \text{ on }y>0\\
&	\nabla\phi\sigma(\nabla\phi){n}=\varepsilon^2G(t,x,\frac{\beta\cdot(t,x)}{\varepsilon}):=
		\eps^2\begin{bmatrix}
			f \\
			g
		\end{bmatrix}\text{ on }y=0\\
		&\phi(t,x,y)=(x,y) \text{ and }G=0 \text{ in }t\leq 0,
	\end{split}
	\end{equation}
	where $\phi=(\phi_1,\phi_2)$ is the deformation, the $2\times 2$ matrix $\sigma$ is the stress (defined below \eqref{j0}),  ${n}=\begin{pmatrix} 0 \\ -1 \end{pmatrix}$, 
	and the boundary forcing is given by $G(t,x,\theta)\in H^\infty([0,T_0]\times \mathbb{R}_x\times \mathbb{T}_\theta)$ for some $T_0>0$.\footnote{Whenever we use an expression like 
	$G(t,x,\theta)\in H^\infty([0,T_0]\times \mathbb{R}_x\times \mathbb{T}_\theta)$, where $G$ is a function that vanishes in $t<0$, it is to be understood that $G$ vanishes to infinite order at $t=0$.}We take $\beta$ of the form
	\begin{align}\label{beta}
	\beta=(-c,1)\in\mathbb{R}^2
	\end{align}
	for a $c$ whose choice is discussed below.   The case where $G$ has finite regularity can easily be treated, but at the cost of much additional bookkeeping.  In order to highlight the phenomenon of internal rectification we assume that the 
 the Fourier mean (or zero-th Fourier mode $G^0(t,x)$) of $G$ is zero.\footnote{The construction goes through just as well if  $G^0$ is not zero.  See Remark \ref{internal} for more on internal rectification.}
The matrix $\nabla\phi$ is given by 	
\begin{equation*}
		\nabla\phi=
		\begin{bmatrix}
			\partial_x\phi_1(t,x,y) & \partial_y\phi_1(t,x,y)\\
			\partial_x\phi_2(t,x,y) & \partial_y\phi_2(t,x,y),
		\end{bmatrix}
	\end{equation*}
	%and $\sigma(\nabla\phi)=\lambda\Tr(E)I+2\mu E$ where $E=\frac{1}{2}^t\nabla\phi\nabla\phi-I$ where $\lambda$ and $\mu$ are the Lam\'e constants. 
	%The Saint Venant-Kirchhoff model is this particular choice of stress, $\sigma$ and strain $E$. 
	and we observe that  $\nabla\varphi\sigma(\nabla\phi)$ is a $2\times 2$ matrix whose entries are cubic polynomials in $\nabla\varphi$.
	
	We now write the system \eqref{j2} for the displacement $U(t,x,y)=\phi(t,x,y)-(x,y)$ as 
	\begin{equation}\label{eq:UInitIntSys}
		\begin{split}
	&(a)	\partial_t^2U+\nabla\cdot(L(\nabla U)+Q(\nabla U)+C(\nabla U))=0\text{ on }y>0\\
	&(b) L_2(\nabla U)+Q_2(\nabla U)+C_2(\nabla U)=\eps^2\begin{bmatrix}
			f\\
			g
		\end{bmatrix}\text{ on }y=0\\
		&U=0 \text{ in }t\leq 0,
\end{split}
	\end{equation}
	 where $L=(L_1,L_2)$, $Q=(Q_1,Q_2)$, and $C=(C_1,C_2)$ are respectively linear, quadratic, and cubic functions of $\nabla U$.  
	 
	 \subsection{Choice of $c$}   Letting $(\tau,\xi,\omega)$ denote variables dual to $(t,x,y)$, we can write the principal symbol of the operator obtained by linearizing the left side of \eqref{eq:UInitIntSys} at $\nabla U=0$ as 
\begin{align}\label{oo1}
\begin{split}
&L(\tau,\xi,\omega)=\begin{pmatrix}\tau^2-(r-1)\xi^2-|\xi,\omega|^2 &-(r-1)\xi\omega\\-(r-1)\xi\eta&\tau^2-(r-1)\omega^2-|\xi,\omega|^2\end{pmatrix}
%&M_{\omega_j}(\sigma,\xi_1,\xi_2)=\begin{pmatrix}-2c\sigma-2r\xi_1-2\omega_j\xi_2&-(r-1)\omega_j\xi_1-(r-1)\xi_2\\-(r-1)\omega_j\xi_1-(r-1)\xi_2&-2c\sigma-2\xi_1-2r\omega_j\xi_2\end{pmatrix}
\end{split}
\end{align}
The constant $r>1$  is the ratio of the squares of pressure $c_d$ and shear $c_s$ velocities.\footnote{We have $c_s^2=\mu$ and  $c_d^2=(\lambda+2\mu)$, where  $\lambda$, $\mu$ are the Lam\'e constants.  The form \eqref{oo1} is obtained by taking units of time so that $c_s=1$.  Observe $r=c_d^2/c_s^2>1$ since $\lambda+\mu>0$.}   
The matrix $L(\beta,\omega)$ has characteristic roots $\omega_j$ and vectors $r_j$ satisfying 
\begin{align}\label{oo2}
\det L(\beta,\omega_j)=0\text{ and }L(\beta,\omega_j)r_j=0, j=1,\dots 4.
\end{align}
\begin{align}\label{ray1}
\omega_j^2=c^2-1 \text{ for }j=1,3; \; \quad \omega_j^2= \frac{c^2}{r}-1\text{ for }j=2,4.  
\end{align}
The boundary frequency $\beta$ is said to lie in the \emph{elliptic region} when  $c^2-1$ and $\frac{c^2}{r}-1$ are negative.     The $\omega_j$ are then purely imaginary, and we take $\omega_1$, $\omega_2$ to have positive imaginary part.       Thus, we have   $\omega_3=\overline{\omega_1}$,  $\omega_4=\overline{\omega_2}$ and we take 
\begin{align}\label{oo3}
r_1=\begin{pmatrix}-\omega_1\\1\end{pmatrix},\;r_2=\begin{pmatrix}1\\ \omega_2\end{pmatrix},\;r_3=\overline{r}_1, r_4=\overline{r}_2.
\end{align}

 If we define $q=q(c)>0$ by 
\begin{align}\label{ray2}
q^2=-\omega_1\omega_2,
\end{align}
then the condition for $\beta=(-c,1)$ to be a \emph{Rayleigh frequency} is that
\begin{align}\label{ray}
(2-c^2)^2=4q^2(c),    \text{ or equivalently } 2-c^2=2q.
\end{align}
This equation is equivalent to the statement that 
\begin{align}\label{oo4}
\det \mathcal{B}_{Lop}=0,
\end{align}
where $\mathcal{B}_{Lop}$   is the Lopatinski matrix  \eqref{f5} derived in section \ref{sec:GenObs}. 
For the existence of $0<c<1$ satisfying \eqref{ray} we refer for example to \cite{T2}, and we fix $\beta=(-c,1)$ for this choice of $c$.   We will see below that it is the vanishing of the determinant \eqref{oo4} that gives rise to Rayleigh waves.

In the remainder of this part we will construct approximate solutions  of \eqref{eq:UInitIntSys} of the form
\begin{equation}\label{d1}
		U^\varepsilon_a(t,x,y)=\sum_{n=2}^{N}\varepsilon^k U_k(t,x,y,\theta,Y)|_{\theta=\frac{x-ct}{\varepsilon},Y=\frac{y}{\varepsilon}},
	\end{equation}
	where the profiles $U_k$ belong to the space $S$ of Definition \ref{d1a}. 
The function  $U_{a}^\varepsilon(t,x,y)$ is constructed (Theorem \ref{theo:Error}) to satisfy
			\begin{align}\label{ooh}
			\begin{split}
			&\partial_t^2U_{a}^\varepsilon+\nabla\cdot(L(\nabla U_{a}^\varepsilon)+Q(\nabla U_{a}^\varepsilon)+C(\nabla U_{a}^\varepsilon))=\varepsilon^{N-1} E'_N(t,x,y,\frac{x-ct}{\varepsilon},\frac{y}{\varepsilon})\text{ on }y>0\\
	&L_2(\nabla U_{a}^\varepsilon)+Q_2(\nabla U_{a}^\varepsilon)+C_2(\nabla U_{a}^\varepsilon)-\eps^2\begin{bmatrix}
				f\\
				g
			\end{bmatrix}=\varepsilon^Ne_N(t,x,0,\frac{x-ct}{\varepsilon},0)\text{ on }y=0,
		\end{split}
		\end{align}
		where  $E'_N, e_N$ lie in the space $S^e$ of Definition \ref{def:MixedTerm}. 

In Theorem \ref{maincor} we combine the results of theorems \ref{main} and  \ref{theo:Error} to state our main result for nonlinear elasticity,  which makes precise the sense in which  the approximate solution
is close to exact solution.

In Chapter 2 of \cite{Mar2} A. Marcou constructed $U_2$ and part of $U_3$ for a simplified SVK-type model:  there were no cubic terms $C(\nabla U)$ and a number of the quadratic terms in $Q(\nabla U)$ were dropped.  It will be clear from the exposition below that her analysis of that model was helpful to us in constructing the approximate solution.

	\section{Spaces of profiles}\label{sec:Hyp}
	In this section we define spaces that contain all the kinds  of functions that will arise  in the construction of profiles.  The first two definitions concern functions defined on the 
	``interior", that is,  the set where $y>0$, $Y>0$. 
	
	\begin{defn}\label{d1a}
	Let the space $S$ be given by $S=\underline{S}\oplus S^*$ where $\underline{S}=H^\infty([0,T]\times\mathbb{R}_x\times[0,\infty)_y)$ is the usual Sobolev space and $S^*$ consists of functions $u^*(t,x,\theta,Y)\in H^\infty([0,T]\times\mathbb{R}_x\times\mathbb{T}_\theta\times[0,\infty)_Y)$ satisfying  the additional restriction that:
	\begin{equation}
	|\partial^\alpha_{t,x,\theta,Y} u^*(t,x,\theta,Y)|_{L^2(\mathbb{R}_x)}\leq C_{\alpha}e^{-\delta Y}
	\end{equation}
	where $\alpha$ is a multi-index, and $C_\alpha$ and $\delta$ are positive constants.    Note that $\delta$ is independent of $\alpha$ but not independent of $u^*$. 
	\end{defn}
	
This space was used, for example, by Marcou \cite{Marcou} in her study of first-order hyperbolic conservation laws, and also in Chapter 2 of \cite{Mar2} in her study of an SVK-type model. 

%and also in the second chapter of her thesis ..., where she constructed the leading two profiles for a simplified version of the Saint Venant-Kirchhoff model.  
	
	The intervals $y\in[0,\infty)$ in $\underline{S}$ and $Y\in[0,\infty)$ in $S^*$ contain different variables. A given element $u\in S$ can be written as $u(t,x,y,\theta,Y)=\underline{u}(t,x,y)+u^*(t,x,\theta,Y)$, where $\underline{u}\in\underline{S}$ and $u^*\in S^*$. Moreover, since each $u\in S$ is periodic with respect to $\theta$, we can further decompose $u$ as 
	\begin{align}
	\begin{split}
	&u(t,x,y,\theta,Y)=\underline{u}(t,x,y)+u^{0*}(t,x,Y)+\sum_{n\not=0}u^{n}(t,x,Y)e^{in\theta}=
	u^0(t,x,y,Y)+u^{osc}(t,x,\theta,Y),
       \end{split}
       \end{align}
	where 
	\begin{align}
	u^0(t,x,y,Y):=\underline{u}(t,x,y)+u^{0*}(t,x,Y)\text{ and }u^{osc}(t,x,\theta,Y):=\sum_{n\not=0}u^{n}(t,x,Y)e^{in\theta}.
	\end{align}

	%Later on in section \ref{sec:Order} we will decompose the sum further.    
	
	The spaces $\underline{S}$ and $S^*$ are each closed under multiplication, but   $S$ is  not closed under multiplication, since it does not contain products $\underline u v^*$, where $\underline u\in \underline S$, $v^*\in S^*$.   This forces us to introduce the extended space $S^e$.

	\begin{defn}[The space $S^e$]\label{def:MixedTerm}
		1) A function $u(t,x,y,\theta,Y)$ is called \emph{mixed}  if it is a finite linear combination of functions of the form $\underline{a}(t,x,y)b^*(t,x,\theta,Y)$ where $\underline{a}\in\underline{S}$ and $b^*\in S^*$.   Let $S^m$ be the space of all such linear combinations.  
		
		2)   The extended space $S^e:=S\oplus S^m$.

		\end{defn}
		
		A function $u(t,x,y,\theta,Y)\in S^e$ is periodic in $\theta$ so we can write
		\begin{align}\label{uinse}
		u=u^0(t,x,y,Y)+\sum_{n\neq 0}u^n(t,x,y,Y)e^{in\theta}= u^0(t,x,y,Y)+u^{osc}(t,x,y,\theta,Y)
		\end{align}
		where 
		\begin{align}\label{uinse2}
		\begin{split}
		&u^0(t,x,y,Y)=\underline u(t,x,y)+u^{0*}(t,x,Y)+u^{0,m}(t,x,y,Y)\text{ with }u^{0*}\in S^*, \;u^{0,m}\in S^m\\
		&u^{osc}(t,x,y,\theta,Y)=u^{osc,*}(t,x,\theta,Y)+u^{osc,m}(t,x,y,\theta,Y)\text{ with }u^{osc,*}\in S^*, \;u^{osc,m}\in S^m.
		\end{split}
		\end{align}

		On the ``boundary", that is, the set where $y=0$, $Y=0$ we have
		\begin{defn}
		Let $S^b=H^\infty([0,T]\times \mathbb{R}_x\times \mathbb{T}_\theta)$. 
	\end{defn}
		
	Functions in $S^b$ can be written $f(t,x,\theta)=\underline f(t,x)+ f^{osc}(t,x,\theta)$, where the Fourier mean of $f^{osc}$ is zero.  We note also that 
	\begin{align}
	u\in S^e\Rightarrow u|_{y=0,Y=0}\in S^b.
	\end{align}

		The following proposition, whose proof is immediate from \eqref{uinse} and the definitions, records several of the properties of $S^e$. 
		
		\begin{prop}\label{d2a}
		For elements $u\in S^e$ we refer here to the pieces defined in \eqref{uinse} and \eqref{uinse2}. 
		
		1) The space $S^e$ is closed under multiplication.
		
		2) For $u(t,x,y,\theta,Y)\in S^e$ we have $\lim_{Y\to\infty} u=\underline u(t,x,Y)$.
		
		3) Any piece of $u$ whose superscripts include one or more of $*$, $osc$, or $m$ is exponentially decaying in $Y$. 
		\end{prop}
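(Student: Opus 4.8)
\textbf{Proof proposal for Proposition \ref{d2a}.}

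The plan is to verify the three assertions by direct inspection of the Fourier-in-$\theta$ decompositions \eqref{uinse}, \eqref{uinse2}, using only the closure properties of $\underline S$ and $S^*$ under multiplication together with the exponential-decay estimate in Definition \ref{d1a}. Nothing deep is involved; the point is bookkeeping with the pieces $\underline u$, $u^{0*}$, $u^{0,m}$, $u^{osc,*}$, $u^{osc,m}$.

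For part 1), write $u=\underline u+u^*+u^m$ and $v=\underline v+v^*+v^m$ with the obvious membership. Expanding the product $uv$, I would check each of the nine terms separately: $\underline u\,\underline v\in\underline S$ since $\underline S=H^\infty$ is an algebra; $u^*v^*\in S^*$ because $H^\infty$ in the $(t,x,\theta,Y)$ variables is an algebra and the product of two functions bounded by $C e^{-\delta Y}$ and $C' e^{-\delta' Y}$ is bounded by $CC' e^{-(\delta+\delta')Y}$, so the decay persists (and likewise for all derivatives, by Leibniz); $\underline u\,v^*$ and $u^*\underline v$ are mixed by definition, hence in $S^m$; $\underline u\,v^m$ and $u^m\underline v$ are again in $S^m$, since multiplying a finite linear combination of terms $\underline a b^*$ by an element of $\underline S$ produces terms $(\underline u\,\underline a)b^*$ with $\underline u\,\underline a\in\underline S$; $u^*v^m$ and $u^m v^*$ lie in $S^m$ because a product $b^*(\underline a c^*)=\underline a(b^*c^*)$ with $b^*c^*\in S^*$ as above; and finally $u^m v^m$ is a sum of terms $(\underline a b^*)(\underline c d^*)=(\underline a\,\underline c)(b^* d^*)\in S^m$ by the same two facts. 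Summing, $uv\in S\oplus S^m=S^e$.

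For part 2), note that every summand other than $\underline u(t,x,y)$ in \eqref{uinse}--\eqref{uinse2} carries a superscript $*$, $osc$, or $m$; by Proposition \ref{d2a}.3 (proved next, or equivalently by the defining estimate of $S^*$ applied termwise, using that $S^m$ is spanned by $\underline a b^*$ with $b^*\in S^*$) each such piece is $O(e^{-\delta Y})\to 0$ as $Y\to\infty$, uniformly on $[0,T]\times\mathbb{R}_x$ after a Sobolev embedding in $x$. Hence $\lim_{Y\to\infty}u=\underline u$. (I would state this as convergence in $H^\infty$ in $(t,x)$, or in $L^\infty$, to match how it is used later.) For part 3), a piece with superscript $*$ satisfies the estimate in Definition \ref{d1a} by definition; a piece with superscript $osc$ is $u^{osc,*}+u^{osc,m}$, and a piece with superscript $m$ is a finite sum of $\underline a b^*$ with $b^*\in S^*$, so in either case differentiating and using Leibniz reduces the bound to the $S^*$ estimate on the factor $b^*$ (the $\underline a$ factor and its derivatives are bounded on the compact-in-$(t,y)$, Sobolev-in-$x$ sense), giving decay $\le C_\alpha e^{-\delta Y}$ for the appropriate $\delta>0$ of that factor. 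The only mild subtlety is that $\delta$ depends on the function, so for a finite sum one takes the minimum of the finitely many rates; this is harmless and should be remarked.

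The main (very minor) obstacle is purely notational: keeping straight which of the nine product-terms lands in $\underline S$, in $S^*$, or in $S^m$, and making sure the mixed-term manipulations $\underline a(b^*c^*)$ and $(\underline a\,\underline c)(b^*d^*)$ are justified by the algebra structure of $\underline S$ and $S^*$ respectively. There is no analytic difficulty beyond Leibniz and the elementary fact that a sum of exponentials $e^{-\delta_j Y}$ is dominated by $e^{-(\min_j\delta_j)Y}$.
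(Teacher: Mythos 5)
Your argument is correct and is exactly the routine verification the paper has in mind — the paper itself gives no proof, declaring the proposition "immediate from \eqref{uinse} and the definitions," and your nine-term case check for part 1) together with the Leibniz/Sobolev-embedding observations for parts 2) and 3) is the natural way to fill that in. Two tiny remarks: the paper's statement of part 2) has a typo ($\underline u(t,x,Y)$ should read $\underline u(t,x,y)$, as you wrote), and when estimating mixed products such as $\underline a\, b^*$ in $L^2(\mathbb{R}_x)$ one should, as you implicitly do, place the $\underline a$ factor in $L^\infty_x$ via Sobolev embedding rather than try to multiply two $L^2_x$ functions.
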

		
		In the nonlinearity of the Saint Venant-Kirchhoff model, we have  products of elements of $S$. In order to deal with the fact that such products lie in $S^e$ but not necessarily in $S$,  it will be useful to Taylor expand functions $u^m\in S^m$ in the $y$ variable as follows.   
		\begin{align}\label{d2}
		\begin{split}
		&\qquad\qquad u^m(t,x,y,\theta,Y)=\\
		&u^m(t,x,0,\theta,Y)+\eps \frac{y}{\eps} \partial_yu^m(t,x,0,\theta,Y)+\dots+\eps^k\frac{y^k}{\eps^k}\frac{1}{k!}\partial_y^ku^m(t,x,0,\theta,Y)+\eps^{k+1}\frac{y^{k+1}}{\eps^{k+1}}r_{k+1}(t,x,y,\theta,Y),
		\end{split}
		\end{align}
		where $r_{k+1}\in S^m$. Next define a modification of $u^m$, $u^{m,mod}_{k+1}\in S^m$, by 
		\begin{align}\label{d2aa}
		\begin{split}
		&\qquad\qquad u^{m,mod}_{k+1}(t,x,y,\theta,Y)=\\
		&u^m(t,x,0,\theta,Y)+\eps Y\partial_yu^m(t,x,0,\theta,Y)+\dots+\eps^kY^k\frac{1}{k!}\partial_y^ku^m(t,x,0,\theta,Y)+\eps^{k+1}R_{k+1}(t,x,y,\theta,Y),
		\end{split}
		\end{align}
		where $R_{k+1}=Y^{k+1}r_{k+1}(t,x,y,\theta,Y)\in S^m$. This turns out to be useful because of the following two properties:
		\begin{align}\label{d3}
		\begin{split}
		&u^{m,mod}_{k+1}-\eps^{k+1}R_{k+1}\in S^*\\
		&u^m(t,x,y,\theta,Y)|_{Y=\frac{y}{\eps}}=u^{m,mod}_{k+1}(t,x,y,\theta,Y))|_{Y=\frac{y}{\eps}}.
		\end{split}
		\end{align}
		Roughly speaking, the properties \eqref{d3} will allow us to replace elements of $S^m$ by elements of $S^*$ at the price of an error $\eps^{k+1}R_{k+1}\in \eps^{k+1}S^m$ which is 
		harmless for the purposes of constructing approximate solutions.
	\begin{rem}
		%1) In order to close $S$ under products we could redefine $S^*$, in a slight abuse of notation, to be $H^\infty(t,x,y,\theta,Y)$ with the same exponential decay in $Y$. This approach, however, introduces new difficulties in determining the profiles. For example, two portions, $U_{k,\alpha}$ and $U_{k,h}$ discussed in Section \ref{sec:Order}, are determined by their traces on $y=Y=0$. Therefore, extending $S^*$ to include dependence on $y$ makes defining $U_{k,\alpha}$ and $U_{k,h}$ on the interior somewhat ambiguous.\\
		The piece  $u^0(t,x,y,Y)$ is the ``Fourier mean" of $u\in S^e$.  Since it is common  in geometric optics to use  $\underline{u}$ to denote  the Fourier mean, in order to avoid confusion we will not refer to either $\underline{u}$ or $u^0$ as the ``mean" of $u$.
	\end{rem}

	\section{Cascade of profile equations}\label{sec:Cascade}
	
	 We look for an approximate solution of \eqref{eq:UInitIntSys} given by the following ansatz:
	\begin{equation}\label{eq:ansatz}
		U^\varepsilon_a(t,x,y)=\sum_{n=2}^{N}\varepsilon^n U_n(t,x,y,\theta,Y|_{\theta=\frac{x-ct}{\varepsilon},Y=\frac{y}{\varepsilon}},
	\end{equation}
	%where each $U_n$ is in the space $S$, and $N$ is some positive integer to be chosen later. 
	%As a small remark, notice that the lowest order term in this expansion is $O(\varepsilon^2)$. Suppose we have $U^\varepsilon(t,x,y,\frac{x-ct}{\varepsilon},\frac{y}{\varepsilon})$, and we apply $\partial_x$. From the chain rule, this results in $\partial_xU^\varepsilon=\partial_xU^\varepsilon+\frac{1}{\varepsilon}\partial_\theta U^\varepsilon$. Applying similar logic to the other derivatives, suggests we make the following substitutions for the derivatives in \eqref{eq:UInitIntSys}:
	%\begin{equation*}
	%	\partial_x\rightarrow\partial_x+\frac{1}{\varepsilon}\partial_\theta \quad \partial_y\rightarrow\partial_y+\frac{1}{\varepsilon}\partial_Y \quad \partial_t\rightarrow\partial_t-\frac{c}{\varepsilon}\partial_\theta
	%\end{equation*}
%\\
Plugging in $U_a^\varepsilon$ into \eqref{eq:UInitIntSys} and grouping terms according to powers of $\eps$ gives:
		\begin{equation}\label{d4}
		\begin{split}
			&\partial_t^2 U_{a}^\varepsilon+\nabla\cdot(L(\nabla U_{a}^\varepsilon)+Q(\nabla U_{a}^\varepsilon)+C(\nabla U_{a}^\varepsilon))=\\
			& \quad \left[\sum_{k=2}^{N} \varepsilon^{k-2}\left(L_{ff}(U_k)-\begin{pmatrix}
				H_{k-1}\\
				K_{k-1}
			\end{pmatrix}\right)+\varepsilon^{N-1}E_N^\eps\right]|_{\theta=\frac{x-ct}{\varepsilon},Y=\frac{y}{\varepsilon}}\text{ on }y>0
		\end{split}
		\end{equation}

\begin{equation}\label{d5}
		\begin{split}
		&L_2(\nabla U_{a}^\varepsilon)+Q_2(\nabla U_{a}^\varepsilon)+C_2(\nabla U_{a}^\varepsilon)-\eps^2\begin{pmatrix}f\\g\end{pmatrix}|_{\theta=\frac{x-ct}{\eps}}=\\
		&\quad  \left[\sum_{k=2}^{N} \varepsilon^{k-1}\left(l_{f}(U_k)-\begin{pmatrix}
				h_{k-1}\\
				k_{k-1}
			\end{pmatrix}\right)+\varepsilon^{N}e^\eps_N\right]|_{\theta=\frac{x-ct}{\varepsilon},Y=\frac{y}{\varepsilon}}\text{ on }y=0.
		\end{split}
		\end{equation}
The operators $L_{ff}$ and $l_f$ are defined below in \eqref{eq:linearoperators}.   The functions $H_{k-1}$, $K_{k-1}$, $h_{k-1}$, $k_{k-1}$ as well as $E^\eps_N$ and $e^\eps_N$  are determined by \eqref{d4} and \eqref{d5} as nonlinear functions of the profiles $U_2, \dots, U_{k-1}$ and belong to $S^e$.   Formulas for $H_{k-1},\dots, k_{k-1}$ that are as explicit as we need for the profile construction are given below.   
		
	Clearly, in order to obtain high order approximate solutions we would like to choose the $U_k\in S$ so that the following equations hold:\footnote{It will turn out that these equations can be solved with $U_k\in S$ for $k=2,\dots,5$.  For $k\geq 6$ we will need to modify the right side of the interior equation.   See Remark \ref{nec}.}
	\begin{equation}\label{eq:CascadeInterior}
		\begin{split}
		&L_{ff}(U_k)=\begin{pmatrix}
		H_{k-1}\\
		K_{k-1}
		\end{pmatrix}\text{ on }y>0, Y>0\\
		&l_f(U_k)=\begin{pmatrix}
		h_{k-1}\\
		k_{k-1}
		\end{pmatrix}\text{ on }y=0,Y=0.
	\end{split}
	\end{equation}

	To specify the objects appearing in \eqref{d4} and \eqref{d5} we first define linear operators involving derivatives with respect to fast variables $\theta$, $Y$ and slow variables $t$, $x$, $y$.     The constant $r>1$ in the formulas below is same as in \eqref{oo1}.
	%\footnote{We have $c_s^2=\mu$ and  $c_d^2=(\lambda+2\mu)$, where  $\lambda$, $\mu$ are the Lam\'e constants.  The form \eqref{eq:linearoperators} is obtained by taking units of time so that $c_s=1$.  Observe $r=c_d^2/c_s^2>1$ since $\lambda+\mu>0$.}   

	\begin{align}\label{eq:linearoperators}
		\begin{split}
		& L_{ff}:=\begin{pmatrix} %FAST-FAST INTERIOR
			c^2-r& 0 \\
			0 & c^2-1
		\end{pmatrix}\partial_{\theta\theta}-\begin{pmatrix}
			0 & r-1 \\
			r-1 & 0
		\end{pmatrix}\partial_{\theta Y}-\begin{pmatrix}
			1 & 0\\
			0 & r
		\end{pmatrix}\partial_{YY}\\
		& L_{fs}:=-2c\partial_{t\theta}-\begin{pmatrix} %FAST SLOW INTERIOR
			2r & 0\\
			0 & 2 \\
		\end{pmatrix}\partial_{x\theta}-\begin{pmatrix}
			0 & r-1 \\
			r-1 & 0
		\end{pmatrix}[\partial_{xY}+\partial_{y\theta}]-\begin{pmatrix}
			2 & 0\\
			0 & 2r
		\end{pmatrix}\partial_{yY}\\
		& L_{ss}:=\partial_{tt}-\begin{pmatrix} %SLOW SLOW INTERIOR
			r & 0\\
			0 & 1
		\end{pmatrix}\partial_{xx}-\begin{pmatrix}
			0 & r-1\\
			r-1 & 0
		\end{pmatrix}\partial_{xy}-\begin{pmatrix}
			1 & 0\\
			0 & r
		\end{pmatrix}\partial_{yy}
		\end{split}
		\end{align}
		\begin{align}
		\begin{split}
	       & l_f:=\begin{pmatrix} %FAST BOUNDARY
			0 & 1 \\
			r-2 & 0
		\end{pmatrix}\partial_\theta+\begin{pmatrix}
			1 & 0 \\
			0 & r
		\end{pmatrix}\partial_Y\\
		& l_s:=\begin{pmatrix} %SLOW BOUNDARY
			0 & 1\\
			r-2 & 0
		\end{pmatrix}\partial_x+\begin{pmatrix}
			1 & 0\\
			0 & r
		\end{pmatrix}\partial_y.
	\end{split}
	\end{align}
	Next we give formulas for the terms $(H_{k-1}$, $K_{k-1})$ and $(h_{k-1}$, $k_{k-1})$ in \eqref{eq:CascadeInterior}.   Profiles $U_j$ with $j<2$ are defined to be zero.  
	The various operators $A_{\dots}$, $B_{\dots}$, $Q_j(\dots)$, $C_j(\dots)$ that appear are defined further below.
		\begin{equation}\label{eq:CascadeIntRHS}
		\begin{split}
			&\begin{pmatrix}
				H_{k-1}\\
				K_{k-1}
			\end{pmatrix}=
		 -L_{fs}(U_{k-1})-L_{ss}(U_{k-2})+\\
		&\sum_{i+j=k-2}A_{sss}(U_i,U_j)+\sum_{i+j=k-1}A_{fss}(U_i,U_j)+\sum_{i+j=k}A_{ffs}(U_i,U_j)\\
		& +\sum_{i+j=k+1}A_{fff}(U_i,U_j)+\sum_{l+m+n=k-2}B_{ssss}(U_l,U_m,U_n)+\sum_{l+m+n=k-1}B_{fsss}(U_l,U_m,U_n)\\ &+\sum_{l+m+n=k}B_{ffss}(U_l,U_m,U_n)+ \sum_{l+m+n=k+1}B_{fffs}(U_l,U_m,U_n) +\sum_{l+m+n=k+2}B_{ffff}(U_l,U_m,U_n)
		\end{split}
	\end{equation}
	and $(h_{k-1},k_{k-1})$, as in \eqref{eq:CascadeInterior}, is given by the expression:
	\begin{equation}\label{eq:CascadeBdyRHS}
		\begin{split}
			\begin{pmatrix}
				h_{k-1}\\
				k_{k-1}
		\end{pmatrix}
		=& -l_s(U_{k-1})-\sum_{i+j=k+1}Q_2(\partial_{\f};\partial_{\f})(U_i,U_j)-\sum_{i+j=k}[Q_2(\partial_{\f};\partial_{\s})(U_i,U_j)+Q_2(\partial_{\s};\partial_{\f})(U_i,U_j)]\\
		& -\sum_{i+j=k-1}Q_2(\partial_{\s};\partial_{\s})(U_i,U_j)-\sum_{l+m+n=k+2}C_2(\partial_{\f};\partial_{\f};\partial_{\f})(U_l,U_m,U_n)-\\
		& \sum_{l+m+n=k+1}[C_2(\partial_{\f},\partial_{\f};\partial_{\s})(U_l,U_m,U_n)+C_2(\partial_{\f};\partial_{\s};\partial_{\f})(U_l,U_m,U_n)+\\ &+C_2(\partial_{\s};\partial_{\f};\partial_{\f})(U_l,U_m,U_n)]\\
		&-\sum_{l+m+n=k}[C_2(\partial_{\f},\partial_{\s};\partial_{\s})(U_l,U_m,U_n)+C_2(\partial_{\s};\partial_{\f};\partial_{\s})(U_l,U_m,U_n)+\\ &+C_2(\partial_{\s};\partial_{\s};\partial_{\f})(U_l,U_m,U_n)]\\
		&-\sum_{l+m+n=k-1}C_2(\partial_{\s};\partial_{\s};\partial_{\s})(U_l,U_m,U_n)+\begin{pmatrix}f_{k-1}\\g_{k-1}\end{pmatrix},
		\end{split}
	\end{equation}
		where $\begin{pmatrix}f_{k-1}\\g_{k-1}\end{pmatrix}=\begin{pmatrix}f\\g\end{pmatrix}$ as in \eqref{eq:UInitIntSys} if $k=3$ and is zero otherwise.
	%The $Q_j$'s are quadratic functions of two profiles and the $C_j$'s are cubic functions of three profiles that are derived in the following manner. 

	The notation here is an extension of that used by Marcou in Chapter 2 of \cite{Mar2}.
	Recall that in \eqref{eq:UInitIntSys} we had $\nabla\cdot Q(\nabla U)=\partial_x Q_1(\nabla U)+\partial_y Q_2(\nabla U)$ with $Q_1$ the first column of $Q$ and $Q_2$ the second.    The $Q_j$ are quadratic in $\nabla U$, so with some abuse we can write
	\begin{align}
	Q_j(\nabla U)=Q_j(\partial_{\s};\partial_{\s})(U,U),
	\end{align}
	where the first pair of derivatives act on the first profile in the argument, and the second  pair acts on the second argument. 
Thus,  the expression on the right denotes a column vector whose entries are linear combinations of terms of the form $\partial_x u\partial_y v$, $\partial_yu\partial_y u$, etc.,
where$ \;U=\begin{pmatrix}u\\v\end{pmatrix}$.\footnote{The exact coefficients in the linear combinations could be determined, but they are not needed for the profile construction.}

	% One useful property of $Q$ is that it is bilinear, and so we can write $Q_j(\nabla U)=Q_j(\nabla U,\nabla U)=Q_j(\partial_{\s};\partial_{\s})(U,U)$ where in the third statement we placed the derivatives into a separate set of arguments. The first pair of derivatives act on the first profile in the argument, and the second derivative pair acts on the second argument. This is to make it easy to swap the slow derivatives $\partial_{\s}$ for the fast derivatives $\partial_{\f}$. 

	 Similarly, each entry of $Q_2(\partial_{\s};\partial_{\f})(U_i,U_j)$ is a linear combination of terms like $\partial_y u_i \partial_\theta v_j$, $\partial_x v_i \partial_Y u_j$, $\partial_y v_i\partial_\theta v_j$ for $U_i=\begin{pmatrix}
		u_i\\
		v_i
		\end{pmatrix}$.
		The trilinear functions $C_1,C_2$ are defined in an analogous manner.   Thus,  each entry of $C_2(\partial_{\f};\partial_{\s};\partial_{\f})(U_l,U_m,U_n)$ is a linear combination of terms of the form $\partial_\theta u_l\partial_yv_m\partial_Yu_n$, $\partial_Yv_l\partial_xv_m\partial_\theta v_n$, etc..

		The $A$ and $B$ functions are related to the $Q$ and $C$ functions by the following relations:\footnote{Here and in \eqref{eq:BinTermsofC} we suppress some of the $(U_i,U_j)$ or $(U_l,U_m,U_n)$ arguments.}
	\begin{align}\label{eq:AinTermsofQ}
		\begin{split}
		&A_{fff}(U_i,U_j):=\partial_{\theta}[Q_1(\partial_{\f};\partial_{\f})(U_i,U_j)]+\partial_Y[Q_2(\partial_{\f};\partial_{\f})(U_i,U_j)]\\
		&A_{ffs}:=\partial_{\theta}Q_1(\partial_{\f};\partial_{\s})+\partial_YQ_2(\partial_{\f},\partial_{\s})+ \partial_{\theta}Q_1(\partial_{\s};\partial_{\f})+\partial_YQ_2(\partial_{\s},\partial_{\f})+\\ &\qquad \partial_{x}Q_1(\partial_{\f};\partial_{\f})+\partial_yQ_2(\partial_{\f},\partial_{\f})
		\end{split}
		\end{align}
		\begin{align}
		\begin{split}
		&A_{fss}:=\partial_{\theta}Q_1(\partial_{\s};\partial_{\s})+\partial_YQ_2(\partial_{\s},\partial_{\s})+ \partial_{x}Q_1(\partial_{\s};\partial_{\f})+\partial_{y}Q_2(\partial_{\s},\partial_{\f})+\\ 
		&\qquad\partial_{x}Q_1(\partial_{\f};\partial_{\s})+\partial_yQ_2(\partial_{\f},\partial_{\s})\\
		&A_{sss}(U_i,U_j):=\partial_x[Q_1(\partial_{\s};\partial_{\s})(U_i,U_j)]+\partial_y[Q_2(\partial_{\s};\partial_{\s})(U_i,U_j)]
	\end{split}
	\end{align}
	For example, each entry of the $\partial_{x}Q_1(\partial_{\s};\partial_{\f})(U_i,U_j)$ term of $A_{fss}(U_i,U_j)$ is a linear combination of terms of  the form $\partial_x(\partial_y u_i\partial_\theta u_j)$, 
	$\partial_x(\partial_yv_i\partial_Yu_j)$, etc..

	\begin{align}\label{eq:BinTermsofC}
		 \begin{split}
		 &B_{ffff}:=\partial_{\theta}C_1(\partial_{\f};\partial_{\f};\partial_{\f})+\partial_YC_2(\partial_{\f};\partial_{\f};\partial_{\f})\\
		&B_{fffs}:=\partial_{\theta}C_1(\partial_{\f};\partial_{\f};\partial_{\s})+\partial_YC_2(\partial_{\f};\partial_{\f};\partial_{\s})
			 +\partial_{\theta}C_1(\partial_{\f};\partial_{\s};\partial_{\f})+\partial_YC_2(\partial_{\f};\partial_{\s};\partial_{\f})\\
			&\quad +\partial_{\theta}C_1(\partial_{\s};\partial_{\f};\partial_{\f})+\partial_YC_2(\partial_{\s};\partial_{\f};\partial_{\f})
			 +\partial_xC_1(\partial_{\f};\partial_{\f};\partial_{\f})+\partial_yC_2(\partial_{\f};\partial_{\f};\partial_{\f})
		\end{split}
		\end{align}
		\begin{align}
		\begin{split}
			B_{ffss}&:=\partial_{\theta}C_1(\partial_{\f};\partial_{\s};\partial_{\s})+\partial_YC_2(\partial_{\f};\partial_{\s};\partial_{\s})+ \partial_{\theta}C_1(\partial_{\s};\partial_{\f};\partial_{\s})+\partial_YC_2(\partial_{\s};\partial_{\f};\partial_{\s})\\
			&+ \partial_{x}C_1(\partial_{\f};\partial_{\f};\partial_{\s})+\partial_yC_2(\partial_{\f};\partial_{\f};\partial_{\s})
			+\partial_{x}C_1(\partial_{\f};\partial_{\s};\partial_{\f})+\partial_yC_2(\partial_{\f};\partial_{\s};\partial_{\f})\\
			& + \partial_{x}C_1(\partial_{\s};\partial_{\f};\partial_{\f})+\partial_yC_2(\partial_{\s};\partial_{\f};\partial_{\f})
			\partial_{\theta}C_1(\partial_{\s};\partial_{\s};\partial_{\f})+\partial_YC_2(\partial_{\s};\partial_{\s};\partial_{\f})
		\end{split}
		\end{align}
		\begin{align}
		\begin{split}
			&B_{fsss}:=\partial_{\theta}C_1(\partial_{\s};\partial_{\s};\partial_{\s})+\partial_YC_2(\partial_{\s};\partial_{\s};\partial_{\s})
			+\partial_{x}C_1(\partial_{\f};\partial_{\s};\partial_{\s})+\partial_yC_2(\partial_{\f};\partial_{\s};\partial_{\s})\\
			&\quad + \partial_{x}C_1(\partial_{\s};\partial_{\f};\partial_{\s})+\partial_yC_2(;\partial_{\s};\partial_{\f};\partial_{\s})
			+\partial_{x}C_1(\partial_{\s};\partial_{\s};\partial_{\f})+\partial_yC_2(\partial_{\s};\partial_{\s};\partial_{\f})
		\end{split}
		\end{align}
		\begin{align}
		B_{ssss}&:=\partial_{x}C_1(\partial_{\s};\partial_{\s};\partial_{\s})+\partial_yC_2(\partial_{\s};\partial_{\s};\partial_{\s})
	\end{align}
	%where $Q_1,Q_2$ are quadratic functions derived from the columns of $Q(\nabla U)$ and $C_1,C_2$ derived from the columns of $C(\nabla U)$ appearing in \eqref{eq:UInitIntSys}. Since we modified the derivatives, the variables indicated in the first set of parentheses determine the derivatives appearing in the expansion of the $Q_j$'s and $C_j$'s. For example, $Q_1(\partial_{y,x};\partial_{Y,\theta})(U_i,U_j)$ has terms of the form $\partial_yu_i\partial_\theta v_j$. For $j=1,2$, $Q_j$ are functions of two profiles and three profiles for $C_j$. As an example, $Q_2(\partial_{y,x};\partial_{y,x})$ is a lengthy sum of terms of the form $\partial_yu_i\partial_xv_j$ for $U_i=(u_i,v_i)$. From this, we can observe that $Q_j$ and the $A$'s are bilinear functions of two profiles $U_i,U_j$.
For example, each entry of the  $\partial_{\theta}C_1(\partial_{\s};\partial_{\f};\partial_{\s})(U_l,U_m,U_n)$ term of $B_{ffss}(U_l,U_m,U_n)$ is a linear combination of terms of the form $\partial_\theta(\partial_x u_l\partial_Y u_m\partial_x v_n)$, etc..

	Notice that for a given natural number $k$, if  $A_{fff}(U_i,U_j)$ and $B_{fff}(U_l,U_m,U_n)$ are coefficients of $\varepsilon^k$, the following inequalities hold:
	\begin{equation*}
		A_{fff}(U_i,U_j)\implies \  i+j=k+3\implies \ 2\leq i,j\leq k+1
	\end{equation*}
	\begin{equation*}
		B_{ffff}(U_l,U_m,U_n)\implies \ l+m+n=k+4 \implies \ 2\leq l,m,n \leq k.
	\end{equation*}
	This implies that $\begin{pmatrix}H_{k-1}\\K_{k-1}\end{pmatrix}$, which is part of  the coefficient of $\varepsilon^{k-2}$ in \eqref{d4},   depends only on the profiles $U_2,...,U_{k-1}$, . Moreover, from these bounds we observe that cubic terms appear first in the expressions for $H_3,K_3$.
	
	\begin{rem}\label{nec}
	A necessary condition for obtaining a solution $U_k\in S$ to the equation $L_{ff}(U_k)=\begin{pmatrix}H_{k-1}\\K_{k-1}\end{pmatrix}$ is that the  right side belong to $S^*$ .    Inspection of \eqref{eq:CascadeIntRHS} already shows that $(H_2,K_2)\in S^*$.  It turns out that $\underline {U_2}(t,x,y)=0$, which will imply that  $(H_3,K_3)\in S^*$ and $(H_4,K_4)\in S$, but we will find that $(H_j,K_j)\in S^m$  not $S$ for $j\geq 5$.\footnote{More precisely, there is no reason for these function to lie in $S$; barring unlikely cancellations, they lie in $S^m$ not  $S$.}  Thus, in section \ref{sec:AnalysisU2} we will modify the $(H_j,K_j)$, $j\geq 5$,  to elements $(H_j',K_j')\in S$, using \eqref{d2}-\eqref{d3} .  By a careful choice of $\underline{U_{j-1}}$ we will eventually arrange $(H_j',K_j')\in S^*$.

Observe that there is no constraint of the form $h_{k-1},k_{k-1}\in S^*$. This is because the boundary condition in \eqref{eq:CascadeInterior} involves only  the traces of these functions on the boundary $y=Y=0$. 
%TODO:mention Marcou.

% In principle there is no reason for $H_{k- 1}\in S^*$, however, this is true by our choice of $\underline{U}_{k-2}$ and some modifications that are discussed in more details later on. The key difference is that in Marcou, the terms are only quadratic in the previous profiles, and here there are both quadratic and cubic terms and have significantly more terms than the quadratic terms appearing in her work. 
	\end{rem}

	\section{Solvability conditions for  $L_{ff}(U)=F$, $l_f(U)=G$}\label{sec:GenObs}
	Motivated by the form of the profile equations \eqref{eq:CascadeInterior}, we consider in this section the general question of finding real-valued solutions $U=\begin{pmatrix}u\\v\end{pmatrix}\in S$ of systems  of the form
\begin{equation}\label{eq:GenInt}
		\begin{split}
		&L_{ff}(U)=F \text{ on }y,Y>0\\
	&l_f(U)=G\text{ on }y,Y=0,
	\end{split}
	\end{equation}
	 where $F\in S^e$, $G\in S^b$.

	 A necessary condition for the existence of a real-valued solution $U\in S$ is   that $F$ and $G$ be real-valued with $F\in S^*$, $G\in S^b$,  and so we assume that.   We will see that the existence of a solution depends on certain \emph{additional} solvability conditions being satisfied.   The operators $L_{ff}$ and $l_f$ both annihilate elements of $\underline S$, so we will look for $U=U^{0*}+U^{osc}\in S^*$.    Writing $F=F^{0*}+F^{osc}$ and $G=\underline{G}+G^{osc}$, we look for $U$ of the form
	 \begin{align}\label{f1}
	 U=U^{0*}+U^{osc}=U^{0*}+U_p+U_h,
	 \end{align}
	 where we try to make these pieces satisfy
	 \begin{align}\label{f2}
	 \begin{split}
	&(a)\;L_{ff}U^{0*}=F^{0*},\;\;l_f(U^{0*})=\underline{G}\\
	 &(b)\;L_{ff}U_p=F^{osc}\\
	 &(c)\;L_{ff}(U_h)=0, l_f(U_h)=G^{osc}-l_f(U_p).
	 \end{split}
	 \end{align}
	 Here and below equations with $L_{ff}$ or $l_f$ on the left hold respectively on $y,Y>0$ or $y,Y=0$. 
	 
	 We will use Fourier series to analyze these equations.   For $F^{osc}$ and  $F^{0*}$ we write 
	 \begin{align}
	 \begin{split}
	 F^n(t,x,Y)=\begin{pmatrix}f_1^n\\f_2^n\end{pmatrix}, n\neq 0,\qquad\quad  F^{0*}(t,x,Y)=\begin{pmatrix}f^{0*}_1\\f^{0*}_2\end{pmatrix}.
	 \end{split}
\end{align}
		Consider first  \eqref{f2}(a).   Since $U^{0*}=U^{0*}(t,x,Y)$ is independent of $\theta$,  the interior equation simplifies to:
	 \begin{equation*}
	- \begin{pmatrix}
		 \partial_Y^2 u^{0*}\\
		 r\partial_Y^2v^{0*}
	 \end{pmatrix}=\begin{pmatrix}
		 f_1^{0*}\\
		 f_2^{0*}
	 \end{pmatrix},
	 \end{equation*}
	%since $F\in S^*$, which implies $\underline{F}=\underline{F}^0=0$, and so a particular solution of \eqref{eq:GenIntFour} is given by:
	which has the unique solution in $S^*$ given by 
	\begin{equation}\label{eq:PartSolZero}
	U^{0*}=\begin{pmatrix}
		u^{0*}\\
		v^{0*}
	\end{pmatrix}=\begin{pmatrix}
		-\int_{Y}^{\infty}\int_{s}^{\infty}f_1^{0*}(t,x,z)dzds\\
		-\frac{1}{r}\int_{Y}^{\infty}\int_{s}^{\infty}f_2^{0*}(t,x,z)dzds
	\end{pmatrix}.
	\end{equation}
	   Observing that $l_f(U^{0*})=\begin{pmatrix}
		\int_{0}^{\infty}f_1^{0*}(t,x,z)dz\\
		\int_{0}^{\infty}f_2^{0*}(t,x,z)dz\end{pmatrix}$,
		we see that  the boundary condition in \eqref{f2}(a) can hold only if we impose the solvability condition 
		\begin{align}\label{f3}
		\begin{pmatrix}
		\int_{0}^{\infty}f_1^{0*}(t,x,z)dz\\
		\int_{0}^{\infty}f_2^{0*}(t,x,z)dz\end{pmatrix}=\underline{G}(t,x).
		\end{align}

	Next consider the interior equations in \eqref{f2}(b)(c), which we will solve  by diagonalization after rewriting them as  first-order systems.   The equation for the $n$th Fourier mode in \eqref{f2}(b) is  
		\begin{align}\label{eq:GenIntFour}
			\begin{split}
			&-\partial_{YY}u^n-in(r-1)\partial_Yv^n-n^2(c^2-r)u^n=f_1^n\\
			&-r\partial_{YY}v^n-in(r-1)\partial_Yu^n-n^2(c^2-1)v^n=f_2^n,
		\end{split}
		\end{align}
where  $(u^n,v^n)=U_p^n(t,x,Y)$, $n\neq 0$.	 Introducing $\tilde{U}=(U,\partial_Y U)$ and $\tilde{F}=(0,F)$, we rewrite this as the $4\times 4$  first order system
	\begin{equation}\label{eq:FirstOrder}
		(\partial_Y-G(\beta,n))\tilde U^n=\left(\partial_Y-\begin{pmatrix}
			0 & I \\
			D(\beta,n) & B(n)
		\end{pmatrix}\right)
		\begin{pmatrix}
			U^n \\
			\partial_Y U^n
		\end{pmatrix}
		=\begin{pmatrix}
			0 \\
			-\begin{pmatrix}1&0\\0&1/r\end{pmatrix}F^n
		\end{pmatrix}:=\tilde F^n,
	\end{equation}
	 where the  matrices $B(n)$ and $D(\beta,n)$ are given by:
	 \begin{equation*}
		 B(n)=in\begin{pmatrix}
			 0 & 1-r \\
			 \frac{1}{r}-1 & 0
		 \end{pmatrix}\quad D(\beta,n)=n^2 \begin{pmatrix}
		 r-c^2 & 0\\
		 0 & \frac{1-c^2}{r}
		 \end{pmatrix}
	 \end{equation*}
	and $\beta=(-c,1)$ as before. The matrix $G(\beta,n)$ has eigenvalues $in\omega_j$, $j=1,\dots,4$, where 
	$$\omega_1^2=c^2-1, \omega_2^2=\frac{c^2}{r}-1, \omega_3=\overline{\omega_1} \text{ and }\omega_4=\overline{\omega_2}$$
	 ($\omega_1$, $\omega_2$ are purely imaginary with positive imaginary part), 
	 and corresponding right eigenvectors:
	\begin{equation*}
		R_1(n)=\begin{pmatrix}
			-\omega_1 \\
			1\\
			-in\omega_1^2\\
			in\omega_1
		\end{pmatrix}\quad R_2(n)=\begin{pmatrix}
		1 \\
		\omega_2\\
		in\omega_2\\
		in\omega_2^2\\
		\end{pmatrix}\quad R_3(n)=\overline{R_1(-n)} \quad R_4(n)=\overline{R_2(-n)}.
	\end{equation*}
	We also choose corresponding  left (row) eigenvectors $L_i(n)$, $i=1,\dots,4$ satisfying $L_iR_j=\delta_{ij}$.
	
	In the case when $F^{osc}=0$ (that is, $L_{ff}(U_h)=0$) we use the $L_i(n)$, $R_j(n)$ to diagonalize the left side of \eqref{eq:FirstOrder} and find easily that real, decaying solutions of $L_{ff}(U_h)=0$ must have the form\footnote{The functions $\tilde U_h=(U_h,\partial_YU_h)$ have the same form with $R_j(n)$ in place of $r_j$.}  
	\begin{equation*}
		U_h(t,x,\theta,Y)=\sum_{n\not=0}U_h^n(t,x,Y)e^{in\theta}
	\end{equation*}
	where the $U_h^n$ are given by the formulas:
	\begin{equation}\label{eq:GenHomSol}
		U_h^n=\begin{cases}
			\sigma_1(t,x,n)e^{in\omega_1Y}r_1+\sigma_2(t,x,n)e^{in\omega_2Y}r_2 &\text{for } n>0\\
			\sigma_3(t,x,n)e^{in\omega_3Y}r_3+\sigma_4(t,x,n)e^{in\omega_4Y}r_4 &\text{for } n<0.
		\end{cases}
	\end{equation}
Here $r_j$ is a column vector consisting of  the first two components of $R_j(n)$, 
	and  the $\sigma_j$ are undetermined scalar functions  satisfying $\sigma_3(t,x,n)=\bar{\sigma}_1(t,x,-n)$ and $\sigma_4(t,x,n)=\bar{\sigma}_2(t,x,-n)$ for $n<0$.\footnote{The $\omega_j$ and $r_j$ here are the same as in \eqref{ray1}, \eqref{oo3}.}

	%To return from the first order system to the original problem, we make the following definition:
	%\begin{equation*}
	%	r_1=\begin{pmatrix}
	%	-\omega_1\\
	%	1
	%	\end{pmatrix}\quad r_2=\begin{pmatrix}
	%		1\\
	%		\omega_2
	%	\end{pmatrix}\ \text{and }r_3=\bar{r}_1\quad r_4=\bar{r}_2
	%\end{equation*}
	%This amounts to taking the first two components of the $R_j$.
	
	Similarly, diagonalization yields particular real, decaying solutions of $L_{ff}(U_p)=F^{osc}$ with Fourier modes of the form 
	\begin{align}\label{f4a}
	 U^n_p(t,x,Y)=\sum^4_{j=1}\tau^n_j(t,x,Y)r_j,
	\end{align}
	where
	\begin{equation}\label{eq:PartSolNonzero}
		\tau^n_j(t,x,Y):=\begin{cases}
			\int_0^Y e^{in\omega_j(Y-s)}F_j^n(t,x,s)ds\quad \text{for } j=1,2\text{ and } n>0\\
			\int_{\infty}^{Y}e^{in\omega_j(Y-s)}F_j^n(t,x,s)ds\quad \text{for }j=3,4\text{ and }n>0\\
			\int_{\infty}^{Y}e^{in\omega_j(Y-s)}F_j^n(t,x,s)ds\quad\text{for } j=1,2\text{ and }n<0\\
			\int_0^Y e^{in\omega_j(Y-s)}F_j^n(t,x,s)ds\quad \text{for } j=3,4\text{ and } n<0
		\end{cases}
	\end{equation}
	  and  $F_j^n=L_j(n)\tilde{F}^n$. 	
%	Notice that the above equation only gives information about $u^{0*},v^{0*}$. This is because $\underline{u},\underline{v}$ are eliminated by $\partial_Y$, and so the above equation is insufficient to determine $\underline{u},\underline{v}$. Moreover, we cannot solve for $u^0$ and $v^0$ in $S$ if $\underline{f}_1,\underline{f}_2\not=0$, since $l_f(U)\in S^*$. This is also apparent from formula \eqref{eq:PartSolZero}, because if $\underline{F}\not=0$, then we have that $u^{0*},v^{0*}$ grow quadratically in $Y$.
	
	Finally, consider the boundary equation in \eqref{f2}(c), where the right side is now determined.  We have
	\begin{align}\label{f4}
	l_f(U_h^n)=\begin{pmatrix}\partial_Y u^n+inv^n\\r\partial_Y v^n+(r-2)inv^n\end{pmatrix}=\begin{pmatrix}
		0 & in & 1 & 0\\
		(r-2)in & 0 & 0 & r
		\end{pmatrix}\tilde U_h^n:=C(\beta,n)\tilde U_h^n, \;n\neq 0.
\end{align}
Using \eqref{eq:GenHomSol} for $n>0$ we write
\begin{align}\label{f5}
\begin{split}
&C(\beta,n)\tilde U_h^n=[C(\beta,n)R_1, C(\beta,n)R_2]\begin{pmatrix}
		\sigma_1(t,x,n)\\
		\sigma_2(t,x,n)
		\end{pmatrix}=in\begin{pmatrix}
			2-c^2 & 2\omega_2 \\
			2\omega_1 & c^2-2
		\end{pmatrix}\begin{pmatrix}
		\sigma_1\\
		\sigma_2
		\end{pmatrix}:=in\mathcal{B}_{Lop}\begin{pmatrix}
		\sigma_1\\
		\sigma_2
		\end{pmatrix}
\end{split}
\end{align}	
	Recall that the boundary frequency $\beta=(-c,1)$ was chosen so that $\mathcal{B}_{Lop}$ is singular.   Clearly,\footnote{Here we use  $2-c^2=2q$.} 
	\begin{align}\label{f6}
	\ker \mathcal{B}_{Lop}=\mathrm{span }\begin{pmatrix} \omega_2 \\ -q \end{pmatrix},\quad \mathrm{coker }\;\mathcal{B}_{Lop}=\mathrm{span }(q \;\;\omega_2), \text{ where }q^2=-\omega_1\omega_2 \text{ and }q>0.
	\end{align}
	We obtain a solvability condition for $ l_f(U_h)=G^{osc}-l_f(U_p)$ by considering  
	\begin{align}\label{f6b}
	l_f(U_h^n)=in\mathcal{B}_{Lop}\begin{pmatrix}\sigma_1(t,x,n)\\\sigma_2(t,x,n)\end{pmatrix}=G^n-C(\beta,n)\tilde U^n_p, \text{ for, say,  }n>0.
	\end{align}
	With \eqref{f6} we see that 
\begin{align}\label{f7}
	\begin{split}
	&(q \;\;\omega_2)\left( G^n-C(\beta,n)\tilde U^n_p\right)=0, \;\;n\neq 0,  \text{ or equivalently }\\
	&(q \;\;\omega_2) \left( G^{osc}-l_f(U_{p}\right)=0
	\end{split}
	\end{align}
   is a necessary and sufficient condition for the existence of a solution in $S^*$ of \eqref{f2}(c).   
	
	Assuming that \eqref{f7} holds we now complete the construction of $U_h$.   For $n>0$ we want to choose $\sigma_1$, $\sigma_2$ so that 
	\eqref{f6b} holds.  Although  $\mathcal{B}_{Lop}$ has a one dimensional kernel $\mathcal{K}:=\mathrm{span}\begin{pmatrix}\omega_2\\-q\end{pmatrix}$,  	
 we can fix a solution of \eqref{f6b} by taking 
		\begin{equation}\label{f7b}
		\begin{pmatrix}
		\sigma_{1}(t,x,n)\\
		\sigma_{2}(t,x,n)
		\end{pmatrix}=\frac{1}{in}\mathcal{B}_{Lop}^{-1} \left(G^n-C(\beta,n)\tilde{U}_{k,p}^n\right) \text{ for }n>0,
		\end{equation}
	where $\mathcal{B}_{Lop}^{-1}$ is the inverse of $\mathcal{B}_{Lop}:\mathcal{K}^\perp\to \mathrm{Im} \mathcal{B}_{Lop}$.

\begin{rem}[Solvability conditions]\label{f6a}
Summarizing, we have found the following solvability conditions for obtaining a solution $U\in S$  to \eqref{eq:GenInt} when $F=\underline F+F^*+F^m\in S^e$, $G\in S^b$:
\begin{align}\label{f8}
\begin{split}
& a)\; \underline F = 0, \;\; F^m=0  \quad (\text{ equivalently, }F\in S^*)\\
&b) \begin{pmatrix}
		\int_{0}^{\infty}f_1^{0*}(t,x,z)dz\\
		\int_{0}^{\infty}f_2^{0*}(t,x,z)dz\end{pmatrix}=\underline{G}(t,x)\\
&c)\; (q \;\;\omega_2)\left( G^{osc}-l_f(U_p)\right)=0.
%\; (q \;\;\omega_2)\left( G^n-C(\beta,n)\tilde U^n_p\right)=0, \;\;n\neq 0.
\end{split}
	\end{align}
	Given any such solution $U$, we can obtain  another solution in $S$ by  adding 	any $\underline V(t,x,y)\in \underline S$.    

\end{rem}
	
	We close this section by observing that since  $\mathcal{B}_{Lop}$ is singular,   there are nontrivial decaying solutions $U\in S$  to 
	\begin{align}\label{f9}
	L_{ff}(U)=0, \;l_f(U)=0.
	\end{align}
	   The kernel of $\mathcal{B}_{Lop}$ is spanned by $\begin{pmatrix} \omega_2 \\ -q \end{pmatrix}$, so using \eqref{eq:GenHomSol} we see that for $n>0$
	\begin{align}
	l_f(U^n)=in\mathcal{B}_{Lop}\begin{pmatrix}
			\sigma_1(t,x,n)\\
			\sigma_2(t,x,n)
		\end{pmatrix}=0\Leftrightarrow \begin{pmatrix}
			\sigma_1(t,x.n)\\
			\sigma_2(t,x,n)
		\end{pmatrix}=\alpha(t,x,n)\begin{pmatrix}
			\omega_2\\
			-q
		\end{pmatrix}
	\end{align}
	for some scalar function $\alpha$ to be determined.    Thus, we obtain nontrivial, real decaying solutions $U_\alpha(t,x,\theta,Y)\in S^*$ of \eqref{f9} defined by
	\begin{align}\label{f10}
	\begin{split}
	&U_\alpha^n(t,x,Y)=\alpha(t,x,n)\left(\omega_2 e^{in\omega_1Y}r_1-qe^{in\omega_2Y}r_2\right):=\alpha(t,x,n)\hat r(n,Y), \text{ for }n>0\\
	&U_\alpha^n(t,x,Y)=\alpha(t,x,n)\hat r(n,Y), \text{ for }n<0,
	\end{split}
	\end{align}
	 where 
	 \begin{align}\label{f11}
	 \alpha(t,x,n)=\overline{\alpha}(t,x,-n)\text{ and }\hat r(n,Y)=\overline{\hat r}(t,x,-n).
	\end{align}
	We will see below that solutions like $U_\alpha$  are used in the analysis of the profile equations to insure that the third solvability condition \eqref{f8}(c) holds.

	\section{Order of construction}\label{sec:Order}
	
	Consider again the cascade of profile equations
	\begin{align}\label{g1}
	L_{ff}(U_k)=\begin{pmatrix}H_{k-1}\\K_{k-1}\end{pmatrix},\;\; l_f(U_k)=\begin{pmatrix}h_{k-1}\\k_{k-1}\end{pmatrix},  \;\; k=2,3,\dots,...
	\end{align}
	where the $H_j,...,k_j$  are given by \eqref{eq:CascadeIntRHS}, \eqref{eq:CascadeBdyRHS}, $U_j=0$ for $j\leq 1$, and we seek $U_k\in S$.  
   In general when the expressions  \eqref{eq:CascadeIntRHS}, \eqref{eq:CascadeBdyRHS} are evaluated using profiles $U_j\in S$, one obtains elements of $S^e$ not $S^*$ (as is required by Remark \ref{f6a}).    Thus we need to work with modifications of the $H_j,K_j$ which we denote by $H_j',K_j'$.  These have to be defined inductively; for example, we will see that the choice of $\underline{U_{k-1}}$ is made so that $\underline {H'_k}=0$, $\underline {K'_k}=0$.  The definition of the $H_j',K_j'$ is given in \eqref{e4}; in this section, whose purpose is mainly to lay out the order of construction of the pieces of $U_k$,  we only need to know that $H_j',K_j'$ depend only on $U_k$ for $k\leq j$ and belong to $S$.

	We split the $U_k$'s into five pieces, 
	\begin{align}\label{g2}
	\begin{split}
	&U_k(t,x,y,\theta,Y)=\underline{U}_k(t,x,y)+U_k^{0*}(t,x,Y)+U_{k,h}(t,x,\theta,Y)+U_{k,p}(t,x,\theta,Y)+U_{k,\alpha}(t,x,\theta,Y),\\
	&\text{so now } U^0(t,x,y,Y)=\underline {U_k}+U_k^{0*},\;\;U^{osc}_k=U_{k,h}+U_{k,p}+U_{k,\alpha}.
\end{split}
\end{align}
	The terms $\underline {U_k}$, $U_{k,\alpha}$ are without analogues in \eqref{f1}: we will see that they are chosen to arrange solvability conditions for $U_{k+1}$.

	 The pieces of $U_k$ are constructed to satisfy
	\begin{align}\label{g3}
		\begin{split}
		&(a)\; L_{ff}(U_k^{0*})=\begin{pmatrix} H'^0_{k-1}\\ K'^0_{k-1}\end{pmatrix}\\
		&(b)\; L_{ff}(U_{k,p})=\begin{pmatrix} H'^{osc}_{k-1}\\K'^{osc}_{k-1}\end{pmatrix}\\
		&(c)\; L_{ff}(U_{k,h})=0 \quad\quad l_f(U_{k,h})=\begin{pmatrix} h_{k-1}^{osc}\\k_{k-1}^{osc} \end{pmatrix}-l_f(U_{k,p})\\
		&(d)\; L_{ff}(U_{k,\alpha})=0 \quad\quad l_f(U_{k,\alpha})=0\quad\quad (q\;\;\omega_2) \left(\begin{pmatrix} h_{k}^{osc}\\k_{k}^{osc} \end{pmatrix}-l_f(U_{k+1,p})\right)=0\\
		&(e)\;  \begin{pmatrix}\underline{H'_{k+1}}\\\underline{K'_{k+1}}\end{pmatrix}=0 \quad\quad \int_{0}^{\infty}\begin{pmatrix} H'^0_{k}\\ K'^0_{k}\end{pmatrix}dY=\begin{pmatrix} h_{k}^0\\ k_{k}^0\end{pmatrix}
	\end{split}
	\end{align}
	where the first equation in each line is on $y,Y>0$ and the second or third, when present, is on $y=Y=0$.   The equations in \eqref{g3} have obvious counterparts in \eqref{f2}, \eqref{f8}, \eqref{f9}.  Those  in line (e) are used to determine $\underline{U_k}$.     We sometimes refer to \eqref{g3} as \eqref{g3}$_k$. 
	
	The construction is done inductively.  When solving for $U_k$ we assume that real profiles $U_2$,\dots, $U_{k-1}$ in $S$ have been found satisfying \eqref{g3}$_j$, for $j\leq k-1$. 
	We also suppose that the $U_j$, $j\geq k$, although undetermined, lie in $S$.

	The first elements to determine are $U_{k,p}$ and $U_k^{0,*}$, which  depend only on profiles $U_j$, $j\leq k-1$.  From \eqref{eq:PartSolZero} we obtain
	%\footnote{Here and in \eqref{eq:UkP} we suppress the prime on $H_{k-1},K_{k-1}$.}
	\begin{equation}\label{eq:Uk0*}
		U_k^{0*}=-\int_{Y}^{\infty}\int_{s}^{\infty}\begin{pmatrix}
		H_{k-1}^{'0*}(t,x,z)\\
		\frac{1}{r}K_{k-1}^{'0*}(t,x,z)U_p
		\end{pmatrix}dzds.
	\end{equation}

	From \eqref{f4a} we see that  $U_{k,p}^n(t,x,Y)$  is given for $n>0$ by\footnote{Here we use $L_1(n)=(-in(r-c^2),-in\omega_1, \omega_1,-r)/(-2i\omega_1c^2n)$, \;$L_2(n)=(in\omega_2r,ik(c^2-1),1,r\omega_2)/(2i\omega_2c^2n)$.}
	\begin{equation}\label{eq:UkP}
		\begin{split}U_{k,p}^n&=
		-\int_0^Y \frac{e^{in\omega_1(Y-s)}}{-2i\omega_1c^2n}[\omega_1H_{k-1}^{'n}-K_{k-1}^{'n}]r_1+\frac{e^{in\omega_2(Y-s)}}{2i\omega_2c^2n}[H_{k-1}^{'n}+\omega_2K_{k-1}^{'n}]r_2ds\\
		&-\int_{\infty}^{Y}\frac{e^{in\omega_3(Y-s)}}{2i\omega_1c^2n}[-\omega_1H_{k-1}^{'n}-K_{k-1}^{'n}]r_3 + \frac{e^{in\omega_4(Y-s)}}{-2i\omega_2c^2n}[H_{k-1}^{'n}-\omega_2K_{k-1}^{'n}]r_4ds,
		\end{split}
	\end{equation}
	and for $n<0$ we have $U_{k,p}^n=\overline{U_{k,p}^{-n}}$. 
	%It can be verified from the properties of $H_{k-1}$, the $L_j$'s and $r_j$'s that $U_{k,p}^{-n}=\overline{U_{k,p}^n}$, which ensures that $U_{k,p}$ is a real valued function. Moreover, 3) of Proposition \ref{prop:SBasicProps} guarantees that $U_{k,p}$ is in $S^*$ if $H_{k-1},K_{k-1}\in S^*$.
	By definition of $S^*$  (Definition \ref{d1a})  Sobolev norms $H^s(t,x)$, $s\in \mathbb{N}$ of $H_{k-1}^{'n}(t,x,Y)$ and  $K_{k-1}^{'n}(t,x,Y)$   are rapidly decaying with respect to $n$ and exponentially decaying with respect to $Y$; so \eqref{eq:UkP} implies  $U_{k,p}\in S^*$. 
	
	Knowing $U_{k,p}$ we can now construct $U_{k,h}$.   By the induction assumption, line (c) of \eqref{g3}$_{k-1}$ shows that the solvability condition (recall \eqref{f7}) for 
	 $l_f(U_{k,h})=\begin{pmatrix} h_{k-1}^{osc}\\k_{k-1}^{osc} \end{pmatrix}-l_f(U_{k,p})$ does hold.
	Using \eqref{eq:GenHomSol} and \eqref{f7b} we obtain
	\begin{equation}\label{g4}
		U_{k,h}^n=\sigma_{1,k}(t,x,n)e^{in\omega_1Y}r_1+\sigma_{2,k}(t,x,n)e^{in\omega_2Y}r_2\text{ for }n>0,
	\end{equation}
	where 
	\begin{equation}\label{ukhc}
		\begin{pmatrix}
		\sigma_{1,k}(t,x,n)\\
		\sigma_{2,k}(t,x,n)
		\end{pmatrix}=\frac{1}{in}\mathcal{B}_{Lop}^{-1} \left(\begin{pmatrix}
			h_{k-1}^n\\
			k_{k-1}^n
		\end{pmatrix}-C(\beta,n)\tilde{U}_{k,p}^n\right).
		\end{equation}
	
	Moving next to $U_{k,\alpha}$, we see that the first two equations of line (d) of \eqref{g3}$_k$, together with \eqref{f10}, \eqref{f11},  imply that 
	\begin{align}\label{g5}
	U_{k,\alpha}^n(t,x,Y)=\alpha_k(t,x,n)\hat r(n,Y), \;n\neq 0
	\end{align}
	for some function $\alpha_k$ to be determined.    This function is determined in section \ref{sec:Amplitudes} so that  the third equation in line (d) of  \eqref{g3}$_k$ holds.  
	It turns out to depend only on the pieces of $U_k$ that are already known and previous profiles. 
		
	The last piece to construct is $\underline{U_k}$. This function is determined in section \ref{sec:AnalysisU2} so that the equations in line (e) of  \eqref{g3}$_k$ hold.  It turns out to depend only on $U_k^{0,*}$,  $U_{k,p}$, $U_{k,h}$, $U_{k,\alpha}$ and previous profiles.

	%There is not very much flexibility to force $U_k^{0*}$ to satisfy boundary conditions since it is a particular solution.

	% It turns out that generically $H_{k-1}, K_{k-1}\not\in S$, this issue will be analyzed in more detail in section \ref{sec:AnalysisUk}. 	
	
	\begin{remark}[Order of construction]
		%There is a little bit of flexibility in the order because $U_k^{0*}$ is only dependent on the previous profiles, so it could be determined before $U_{k,p}$, $U_{k,h}$, or $U_{k,\alpha}$. 
To summarize, the order of construction is 
\begin{align}
U_{k,p} \text{ or }U_k^{0,*}, U_{k,h}, U_{k,\alpha}, \underline{U_k}.
\end{align}	
The first two pieces depend only on previous profiles,  $U_{k,h}$ depends also on $U_{k,p}$, while $U_{k,\alpha}$ depends also  on $U_k^{0,*}$,  $U_{k,p}$ and $U_{k,h}$. Finally, $\underline{U_k}$  depends also on all four other pieces of $U_k$.
	\end{remark}

\section{Amplitude equations}\label{sec:Amplitudes}

We now discuss the construction of $U_{k,\alpha}$, which is chosen so that the third equation in line (d) of \eqref{g3}$_k$, the solvability condition for $U_{k,h+1}$,  holds.  Recall from \eqref{f10} that the Fourier modes of $U_{k,\alpha}$ have the form 
\begin{align}\label{h1}
	U_{k,\alpha}^n(t,x,Y)=\alpha_k(t,x,n)\hat r(n,Y), \;n\neq 0
	\end{align}
for an ``amplitude" $\alpha_k$ to be determined.   The solvability condition can be written
\begin{equation}\label{h2}
\begin{pmatrix}
q & \omega_2
\end{pmatrix}
\left(
\begin{pmatrix}
h^n_k\\
k^n_k
\end{pmatrix}
-C(\beta,n)\tilde{U}^n_{k+1,p}\right)=0, \;n\neq 0.
\end{equation}
%It follows from the general argument of  \cite{Coddington} (Chapter 11, Theorem 4.1)  that  this is an equivalent form of equation $\eqref{eq:Amplitude1}$, \footnote{This means that $\alpha_k$ is a solution of one form of the equation if and only if it is a solution of the other form.}  and thus of \eqref{eq:AmplitudeEq2} or \eqref{eq:AmplitudeEqk} .
%This equation, while much more intuitive than \eqref{eq:AmplitudeEq2}, makes it more difficult to determine the amplitude equation. 
%At first glance, \eqref{eq:DualityRelation2} does not seem to involve $\alpha_k$, but both $\begin{pmatrix}
%h^n_k\\
%k^n_k
%\end{pmatrix}$ and $\tilde{U}^n_{k+1,p}$
%depend on $U_k$ and thus on $U_{k,\alpha}$. 
%$h_k,k_k$ depend on  $l_s(U_{k})$ and $\tilde U_{k,p}$ contains an integral of $U_{k-1,P}$ 
%and therefore both terms contain $U_{k-1,\alpha}$. 
%\footnote{The algebra to get from \eqref{eq:DualityRelation2} to \eqref{eq:AmplitudeEq2} or \eqref{eq:AmplitudeEqk} is more difficult than going from \eqref{eq:Amplitude1}.}
%Thus, the amplitude equation is a necessary and sufficient condition to solve for $U_{k+1,h}$. Referring to \cite{Williams} and \cite{Hunter06}, we get the following proposition showing that the amplitude equation is well-posed.

To see first  the dependence of the terms in  \eqref{h2} on $U_k$, we can use the formula \eqref{eq:UkP}$_{k+1}$ for $U^{n}_{k+1,p}$ together with the formulas for $H^{n}_k$, $K^{n}_k$, $h^n_k$, $k^n_k$ given by \eqref{eq:CascadeIntRHS}, \eqref{eq:CascadeBdyRHS}.    The functions $H^{'}_k$, $K^{'}_k$, defined in \eqref{e4} and appearing in  \eqref{eq:UkP}$_{k+1}$,  are modifications of $H_k$, $K_k$ belonging to $S^*$.  Like $H_k$, $K_k$ these functions depend just on the $U_j$ for $j\leq k$.   For the purposes of this section the only other information we need is that the terms of $H^{'}_k$, $K^{'}_k$ in which $U_k$ appears are exactly the same as the terms of $H_k$, $K_k$ in which $U_k$ appears.  This observation allows us to use \eqref{eq:CascadeIntRHS}.   Thus, we can write
\begin{equation}\label{h3}
	\begin{split}
		&(a) \begin{pmatrix}H^{'n}_k\\K^{'n}_k\end{pmatrix}=-[L_{fs}(U_k)]^n+[A_{fff}(U_2,U_k)]^n+[A_{fff}(U_k,U_2)]^n+
		 N^n_1(U_2,...,U_{k-1})\\
		 &(b) \begin{pmatrix}h_k^n\\k_k^n\end{pmatrix}=- [l_s(U_k)]^n-[Q_2(\partial_{\f};\partial_{\f})(U_2,U_k)]^n-
		 [Q_2(\partial_{\f};\partial_{\f})(U_k,U_2)]^n+\\
		 &\qquad \qquad N^n_2(U_2,...,U_{k-1})+\begin{pmatrix}
		f_k^n\\
		g_k^n
		\end{pmatrix}
	\end{split}
	\end{equation}
	where the $N_j$'s are (known) nonlinear functions depending only on the profiles $U_2,...,U_{k-1}$, and the corresponding expression for $\begin{pmatrix}H^{n}_k\\K^{n}_k\end{pmatrix}$ differs from \eqref{h3}(a) only in the function $N_1$.
	
	Writing  $U_{k}=\underline{U_k}+U_k^{0*}+U_{k,\alpha_k}+U_{k,h}+U_{k,p}$ and using the bilinearity of the operators with arguments  $(U_2,U_k)$ or  $(U_k,U_2)$,  we claim we can modify the above to:
	\begin{equation}\label{h4}
	\begin{split}
		&\begin{pmatrix}H^{'n}_k\\K^{'n}_k\end{pmatrix}=-[L_{fs}(U_{k,\alpha})]^n+[A_{fff}(U_2,U_{k,\alpha})]^n+[A_{fff}(U_{k,\alpha},U_2)]^n+
		 N^n_3(U_2,...,U_{k-1},U_k^{0*},U_{k,h},U_{k,p} )\\
		 &\begin{pmatrix}h_k^n\\k_k^n\end{pmatrix}=- [l_s(U_{k,\alpha})]^n-[Q_2(\partial_{\f};\partial_{\f})(U_2,U_{k,\alpha})]^n-
		 [Q_2(\partial_{\f};\partial_{\f})(U_{k,\alpha}U_2)]^n+\\
		 &\qquad\qquad N^n_4(U_2,...,U_{k-1},,U_k^{0*},U_{k,h},U_{k,p} )+\begin{pmatrix}
		f_k^n\\
		g_k^n
		\end{pmatrix}.
	\end{split}
	\end{equation}
	This follows just from the fact that the right side of \eqref{h3} does not depend on $\underline{U_k}$.\footnote{The term  $[l_s(U_k)]^n$ is independent of $\underline{U_k}$ since $l_s$ is linear and $n\neq 0$.}
	At this point in the construction the arguments of $N_3$ and $N_4$ are known.  Thus, when \eqref{h4} is plugged into \eqref{h2} we obtain an equation in which the \emph{only} unknown is $\alpha_k$.  We refer to this as the \emph{amplitude equation} for $\alpha_k$.  
	
	Although it is  a lot of work to unravel the explicit form of the equation for $\alpha_k$, it is already clear from a glance at  \eqref{h2}, \eqref{eq:UkP}, and \eqref{h4} that the equation for $\alpha_2$ has a quadratic nonlinearity with a forcing term depending only on $(f_2,g_2)$, and that the equation for $\alpha_k$ is a linearized form of the equation for $\alpha_2$.
	%\footnote{Observe that since $U_j=0$ for $j\leq 1$, the formulas of section \ref{sec:Order} show immediately that $U_2=\underline{U_2}+U_{2,\alpha}$.  We will see in the next section that $\underline{U_2}=0$.} 
	%Amplitude equations for nonlinear elasticity, together with more or less similar amplitude equations arising in other areas,   
	
	Nonlocal amplitude equations involving bilinear Fourier multipliers arising in nonlinear elasticity and other areas have been studied by  a number of authors including 
	\cite{Lardner,Hunter06,B,Marcou, Benzoni-Gavage,Secchi, Williams}.   The next proposition describes the form of the equation that arises in isotropic hyperelastic nonlinear elasticity.

\begin{proposition}\label{prop:Amplitudes}
		
		a) The amplitude equation for $\alpha_2$ has the form
		\begin{equation}\label{eq:AmplitudeEq2}
		\partial_t\alpha_2+c\partial_x\alpha_2+\mathcal{H}(\mathcal{B}(\alpha_2,\alpha_2))=G_2(f_2,g_2)
	\end{equation}
	where $(-c,1)=\beta$,  $\mathcal{H}$ denotes the Hilbert transform with respect to $\theta$ ($\widehat{\mathcal{H}f}(k):=-i\sgn(k)\hat f(k)$), and $\mathcal{B}$ is the bilinear Fourier multiplier given by:
	\begin{equation}\label{eq:BilFourMult}
		\widehat{\mathcal{B}(\alpha_2,\alpha_2)}(n):=-\frac{1}{4\pi c_0}\sum_{n'\not=0}b(-n,n-n',n')\alpha_2(t,x,n-n')\alpha_2(t,x,n').
	\end{equation}
	 The kernel $b(n_1,n_2,n_3)$, determined in \cite{Hunter06,Williams},  is symmetric in its arguments and homogeneous of  degree two.
	The constant $c_0$ is determined in \cite{Williams}.
	 %\footnote{The kernel $b$	 is symmetric in its arguments and homogeneous of  degree two. See proposition \ref{propwellposed}.}
	
	b) For $k\geq3$ the amplitude equation has the form 
	\begin{equation}\label{eq:AmplitudeEqk}
		\partial_t\alpha_k+c\partial_x\alpha_k+2\mathcal{H}(\mathcal{B}(\alpha_2,\alpha_k))=G_k(U_2,\dots,U_{k-1}). 
	\end{equation}
	
\begin{rem}\label{h5}
The analogue of \eqref{eq:AmplitudeEq2} for space dimensions $d\geq 3$ is given in \cite{Williams}.  The only change is that the $c\partial_x$ operator is replaced in higher dimensions by $c\frac{\underline \eta}{|\underline \eta|}\cdot\nabla_{x'}$, where the Rayleigh frequency is now $(-c|\underline{\eta}|,\underline \eta)$, $\underline{\eta}\in\mathbb{R}^{d-1}\setminus \{0\}$, and $c$ is as before.
It is shown in chapter 2 of \cite{Williams} that the vector field $\partial_t+c\frac{\underline \eta}{|\underline \eta|}\cdot\nabla_{x'}$, which governs the  speed and direction of Rayleigh waves along the boundary,  is a characteristic vector field  of the Lopatinski determinant.  

\end{rem}

\end{proposition}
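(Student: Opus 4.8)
# Proof Proposal for Proposition \ref{prop:Amplitudes}

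The plan is to derive both equations by plugging the decomposition \eqref{h4} into the solvability condition \eqref{h2} and carrying out the algebra explicitly, keeping track of which terms are linear and which are bilinear in the amplitude $\alpha_k$. First I would write out $\tilde U^n_{k+1,p}$ using the formula \eqref{eq:UkP}$_{k+1}$, which expresses it as a $Y$-integral against $e^{in\omega_j(Y-s)}$ of the components of $(H'^n_k, K'^n_k)$. Substituting \eqref{h4}(a) into that integral, the terms involving $L_{fs}(U_{k,\alpha})$ and $A_{fff}(U_2,U_{k,\alpha})$, $A_{fff}(U_{k,\alpha},U_2)$ contribute linearly and bilinearly in $\alpha_k$ respectively; since $U_{k,\alpha}^n = \alpha_k(t,x,n)\hat r(n,Y)$ has the explicit exponential profile $\hat r(n,Y) = \omega_2 e^{in\omega_1 Y} r_1 - q e^{in\omega_2 Y} r_2$, all the $Y$-integrals can be computed in closed form, producing (for the linear part) terms proportional to $\partial_t \alpha_k$, $\partial_x \alpha_k$, and $\alpha_k$ itself with coefficients that are rational functions of $c$, $\omega_1$, $\omega_2$, $q$. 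Evaluating at $Y=0$ to get the trace and applying $C(\beta,n)$ then pairing with the cokernel vector $(q\;\;\omega_2)$ (which annihilates the range of $\cB_{Lop}$, by \eqref{f6}) produces a scalar equation in $\alpha_k$. The bilinear contributions from $A_{fff}$, $Q_2$ and the boundary term $Q_2(\partial_\theta;\partial_\theta)$ are where the convolution structure enters: each is a product of two profiles, so after Fourier expansion in $\theta$ one gets a sum over $n'$ of $\alpha(\cdot,n-n')\alpha(\cdot,n')$ (for $k=2$) or $\alpha_2(\cdot,n-n')\alpha_k(\cdot,n') + \alpha_k(\cdot,n-n')\alpha_2(\cdot,n')$ (for $k\geq 3$), weighted by a kernel $b$ coming from the trilinear structure of $\nabla\phi\,\sigma(\nabla\phi)$; this is the bilinear Fourier multiplier \eqref{eq:BilFourMult}.

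The key structural observations to isolate are: (i) the linear-in-$\alpha_k$ part, after pairing with $(q\;\;\omega_2)$, collapses to a multiple of $\partial_t\alpha_k + c\,\partial_x\alpha_k$ — this is essentially the statement (cited in Remark \ref{h5}) that $\partial_t + c\partial_x$ is a characteristic field of the Lopatinski determinant, and it follows from differentiating the relation $\det\cB_{Lop}(\beta)=0$ along with the Kreiss-type identity relating $\partial_\tau$ and $\partial_\xi$ of the Lopatinski determinant; (ii) the $\alpha_k$ (zeroth-order) terms cancel against each other — this is a genuine cancellation that I would verify by an explicit computation, using $2-c^2 = 2q$ and $q^2 = -\omega_1\omega_2$; (iii) the Hilbert transform $\cH$ appears because the sign factors $e^{in\omega_j Y}$ evaluated and integrated produce factors of $\sgn(n)$ (from choosing $\omega_j$ versus $\overline{\omega_j}$ according to $n>0$ or $n<0$), and the net effect of the bilinear term after the cokernel pairing is multiplication by $-i\,\sgn(n)$ of the convolution, i.e. $\cH(\cB(\cdot,\cdot))$. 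The forcing $G_2(f_2,g_2)$ comes from the $(f_k^n, g_k^n)$ term in \eqref{h4}(b), which is nonzero only when $k=3$ at the level of the original forcing but enters the $\alpha_2$ equation through the shifted indexing (here one must be careful about the index conventions in \eqref{eq:CascadeBdyRHS}); for $\alpha_k$ with $k\geq 3$ the forcing $G_k$ absorbs everything built from $U_2,\dots,U_{k-1}$ and the already-constructed pieces $U_k^{0*}, U_{k,h}, U_{k,p}$, since $N_3, N_4$ in \eqref{h4} depend only on those.

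For part (b), once part (a) is established, the argument is essentially formal: since $(H'^n_k, K'^n_k)$ and $(h^n_k, k^n_k)$ depend on $U_k$ only through $U_{k,\alpha}$ (by \eqref{h4}, using that the right side of \eqref{h3} is independent of $\underline{U_k}$ and that the already-constructed pieces are lumped into $N_3, N_4$), and since this dependence enters through the \emph{bilinear} operators $A_{fff}(U_2,\cdot)$, $A_{fff}(\cdot,U_2)$, $Q_2(\partial_\theta;\partial_\theta)(U_2,\cdot)$, $Q_2(\partial_\theta;\partial_\theta)(\cdot,U_2)$ and the \emph{linear} operators $L_{fs}$, $l_s$, the map $\alpha_k \mapsto (\text{solvability expression})$ is exactly the Fréchet derivative at $\alpha_2$ of the corresponding nonlinear map for $k=2$, evaluated in the direction $\alpha_k$ — hence the same transport operator $\partial_t + c\partial_x$ plus twice the symmetrized bilinear term $\cH(\cB(\alpha_2,\alpha_k))$, with the factor $2$ coming from the two ways $U_2$ can pair with $U_{k,\alpha}$ in the symmetric bilinear forms.

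The main obstacle I expect is the explicit verification of the cancellations in observations (i) and (ii) above — i.e., showing that after the cokernel pairing the zeroth-order terms vanish and the first-order terms assemble precisely into $\partial_t + c\partial_x$ with no spurious contributions. This requires carefully organizing the contributions of $L_{fs}$ (which mixes $\partial_{t\theta}$, $\partial_{x\theta}$, $\partial_{xY}$, $\partial_{y\theta}$, $\partial_{yY}$ derivatives), tracking how $\partial_Y$ acting on $\hat r(n,Y)$ brings down factors of $in\omega_j$, performing the $Y$-integrations in \eqref{eq:UkP} against these exponentials (which can produce apparent denominators $in(\omega_j - \omega_\ell)$ that must cancel), and then using the specific null/conull vectors of $\cB_{Lop}$. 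The cleanest route is probably to observe that this computation was essentially done in \cite{Hunter06} and \cite{Williams} for the leading profile — so I would structure the proof to reduce to those references for the precise form of $b$ and $c_0$, and only carry out in detail the linearization step \eqref{h4} and the extraction of the transport term, citing the prior works for the kernel identification and the constant $c_0$. The homogeneity degree two and symmetry of $b$ follow from the cubic (hence the differentiated-quadratic) structure of the nonlinearity and the symmetry of summing over the two profile slots.
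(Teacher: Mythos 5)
Your proposal follows essentially the same route the paper takes: the paper itself does not supply a detailed proof of Proposition \ref{prop:Amplitudes}, but instead derives the structure informally from \eqref{h2}--\eqref{h4}, observes that the only unknown in the resulting solvability condition is $\alpha_k$, and defers to \cite{Hunter06,Williams} for the explicit kernel $b$ and constant $c_0$, exactly as you propose. Your expanded account of the $Y$-integrations, the cokernel pairing with $(q\;\;\omega_2)$, the origin of the Hilbert transform from $\sgn(n)$, and the Fr\'echet-derivative interpretation of part (b) are all consistent with (and fill in gaps in) what the paper leaves implicit.
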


The well-posedness of the amplitude equation  \eqref{eq:AmplitudeEq2} has been studied in \cite{Hunter06,Williams}.\footnote{J. Hunter in \cite{Hunter06} studies a different, but equivalent, form of the equation which shares the essential feature that his kernel ``$b$"   is unbounded with positive homogeneity.   \cite{Secchi} studies a related equation with unbounded,  positively homogeneous kernel on a plasma-surface interface.  The kernels studied in \cite{B,Marcou} are bounded.}  Here we need a version of the result in which the time of existence of $\alpha_2$, even for $H^\infty$ solutions,   depends only on a \emph{fixed} low order of   regularity.   We state the result for the following problem 
in $d\geq 2$ space dimensions (so $x'\in\bbR^{d-1}$):
 \begin{equation}\label{ae2}
		\begin{split}
		&\partial_t\alpha+v\cdot\nabla_{x'}\alpha+\mathcal{H}(\mathcal{B}(\alpha,\alpha))=G(t,x',\theta)\\
		&\alpha(t,x',\theta)=0\text{ in }t<0, 
		\end{split}
	\end{equation}
where $\mathcal{H}$ and $\mathcal{B}$ are as in Proposition \ref{prop:Amplitudes}, and $v\in \bbR^{d-1}$ is a fixed vector.
	
\begin{proposition}\label{prop:Amplitudes2}
%	TODO:CHECK   There exists an integer $\bar{m}$ dependent only on the spatial dimension $d$ such that for every $m\in\mathbb{N}$ with $m\geq\bar{m}$ and every $R>0$ there exists a $T=T(m,R)$ such that if $||\alpha_0||_{H^m}<R$, then there exists a unique $\alpha\in\mathcal{C}([0,T];H^m(\mathbb{R}^{d-1}\times\mathbb{T};\mathbb{Z})$ to equations \eqref{eq:AmplitudeEq2} and \eqref{eq:AmplitudeEqk} satisfying $\alpha|_{t=0}=\alpha_0$.
Let $m\geq m_1>\frac{d}{2}+2$ and suppose $G(t,x',\theta)\in \mathcal{C}([0,T_0];H^m(\mathbb{R}^{d-1}\times\mathbb{T}))$ for some $T_0>0$.   For every $R>0$ there exists a $T=T(m_1,R)\leq T_0$ such that if  $|G|_{\mathcal{C}([0,T_0];H^{m_1}(\mathbb{R}^{d-1}\times\mathbb{T}))}<R$, then there exists a unique solution $\alpha\in\mathcal{C}([0,T];H^m(\mathbb{R}^{d-1}\times\mathbb{T}))\cap\mathcal{C}^1([0,T];H^{m-1}(\mathbb{R}^{d-1}\times\mathbb{T}))$ to the problem \eqref{ae2}.

\end{proposition}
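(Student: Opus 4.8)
The plan is to obtain Proposition~\ref{prop:Amplitudes2} from a \emph{tame a priori estimate} for the problem \eqref{ae2}, combined with a standard regularization-and-bootstrap scheme; the tame estimate itself is the substance of Part~\ref{tamewellp}, and I would invoke it for the key inequality below. First I would record the elementary mapping properties on which everything rests: the Hilbert transform $\mathcal{H}$ is a Fourier multiplier in $\theta$ of order zero, hence bounded on every $H^{m}(\mathbb{R}^{d-1}\times\mathbb{T})$ and commuting with $\partial_{t}$ and with all $x'$- and $\theta$-derivatives; and the bilinear multiplier $\mathcal{B}$ acts only in $\theta$, treating $(t,x')$ as parameters, so that $x'$-derivatives pass freely through it and can be tracked by Moser-type product estimates. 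The heart of the matter is the tame energy estimate: applying $\Lambda^{m}:=(1-\Delta_{x',\theta})^{m/2}$ to \eqref{ae2}, pairing with $\Lambda^{m}\alpha$ in $L^{2}$, and using that the constant-coefficient transport field $v\cdot\nabla_{x'}$ is skew-adjoint (so it contributes nothing), one is reduced to controlling $(\Lambda^{m}\mathcal{H}\mathcal{B}(\alpha,\alpha),\Lambda^{m}\alpha)$ and $(\Lambda^{m}G,\Lambda^{m}\alpha)$. The second is bounded by $\|G\|_{H^{m}}\|\alpha\|_{H^{m}}$. For the first, a paraproduct decomposition of $\mathcal{B}(\alpha,\alpha)$ (equivalently, commuting $\Lambda^{m}$ through) leaves exactly one top-order piece, in which all the weight falls on a single copy of $\alpha$; by the symmetry and degree-two homogeneity of the kernel $b$ of \cite{Hunter06,Williams}, symmetrizing the resulting trilinear form and integrating by parts in $\theta$ turns this piece into a Burgers-type quantity controlled by $\|\alpha\|_{H^{m_{1}}}\|\alpha\|_{H^{m}}^{2}$ (using $H^{m_{1}}\hookrightarrow W^{2,\infty}$ since $m_{1}>\frac{d}{2}+2$), while every remaining term carries a factor of order $\le m_{1}$ placed in $L^{\infty}$. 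One thus gets
\[
\frac{d}{dt}\,\|\alpha(t)\|_{H^{m}}^{2}\ \le\ C_{m}\big(\|\alpha(t)\|_{H^{m_{1}}}\big)\,\Big(\|\alpha(t)\|_{H^{m}}^{2}+\|G(t)\|_{H^{m}}^{2}\Big),
\]
with $C_{m}$ nondecreasing and locally bounded, and --- crucially --- $C_{m_{1}}$ depending on $m_{1}$ alone.

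Next I would construct solutions by parabolic regularization. For $\nu>0$ solve $\partial_{t}\alpha^{\nu}+v\cdot\nabla_{x'}\alpha^{\nu}+\mathcal{H}(\mathcal{B}(\alpha^{\nu},\alpha^{\nu}))-\nu\Delta_{x',\theta}\alpha^{\nu}=G$, $\alpha^{\nu}|_{t=0}=0$, which is locally solvable in $\mathcal{C}([0,T_{\nu}];H^{m})$ by a contraction argument since the viscosity makes $\mathcal{H}\mathcal{B}$ a lower-order perturbation. The energy estimate above survives the addition of $-\nu\Delta_{x',\theta}$ --- the extra term $-2\nu\|\nabla_{x',\theta}\Lambda^{m}\alpha^{\nu}\|^{2}$ has the favorable sign --- so it yields bounds uniform in $\nu$. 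Applying it first at level $m_{1}$, a continuous-induction/Gr\"onwall argument produces a time $T=T(m_{1},R)\le T_{0}$ depending only on $m_{1}$ and $R=|G|_{\mathcal{C}([0,T_{0}];H^{m_{1}})}$ on which $\|\alpha^{\nu}(t)\|_{H^{m_{1}}}$ stays bounded; then, because $C_{m}(\|\alpha^{\nu}\|_{H^{m_{1}}})$ is thereby controlled on $[0,T]$, the level-$m$ estimate propagates the $H^{m}$ bound on the \emph{same} interval $[0,T]$. Passing to the limit $\nu\to0$ (weak-$*$ compactness in $L^{\infty}([0,T];H^{m})$, strong convergence in lower norms by Aubin--Lions) gives a solution of \eqref{ae2} in $L^{\infty}([0,T];H^{m})$, which the equation upgrades to $\mathcal{C}([0,T];H^{m})\cap\mathcal{C}^{1}([0,T];H^{m-1})$. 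Uniqueness comes from the same mechanism one level down: the difference $\delta$ of two solutions satisfies a linear equation whose nonlinear part is, by bilinearity, $\mathcal{H}(\mathcal{B}(\delta,\alpha_{1})+\mathcal{B}(\alpha_{2},\delta))$, so the top-order derivative now always lands on a factor $\alpha_{i}\in H^{m_{1}}$, and an $H^{m_{1}-1}$ energy estimate with Gr\"onwall forces $\delta\equiv0$. Finally, since $G$ vanishes for $t<0$ and to infinite order at $t=0$, the zero function solves \eqref{ae2} for $t\le0$, so uniqueness gives $\alpha\equiv0$ there, which is the sense in which the condition $\alpha|_{t<0}=0$ holds.

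The main obstacle is the top-order cancellation in the nonlinear energy estimate: one must show that $(\Lambda^{m}\mathcal{H}\mathcal{B}(\alpha,\alpha),\Lambda^{m}\alpha)$ loses, after symmetrization through the symmetry and homogeneity of $b$, no net derivatives beyond what $\|\alpha\|_{H^{m_{1}}}$ controls, and that this is achieved while simultaneously carrying the slow variables $x'$ --- over which $\mathcal{B}$ is only a parameter-dependent nonlocal operator, so the $x'$-derivatives must be commuted through $\mathcal{B}$ and estimated by Moser inequalities. This is precisely the new argument of Part~\ref{tamewellp}, on which the displayed estimate rests; once it is granted, the regularization, bootstrap, limit, uniqueness, and causality steps above are routine.
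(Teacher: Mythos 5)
Your proposal is correct and takes essentially the same approach as the paper: it rests on the tame estimate of Part~\ref{tamewellp} (Proposition~\ref{propwellposed}), and the parabolic-regularization, $\nu$-uniform bootstrap at levels $m_1$ then $m$, limit, and one-level-down uniqueness steps you outline are precisely the standard machinery the paper outsources to \cite{Hunter06} and to \cite{TaylorIII}, Chapter~16, rather than writing out. The only cosmetic differences are that the paper's tame estimate \eqref{ae3} carries an $\|\alpha\|_{H^{m_1}}$ factor (rather than a constant $C_m$ depending on $\|\alpha\|_{H^{m_1}}$) and is stated for the unforced Cauchy problem, whereas you correctly incorporate the $G$-term, which is a trivial modification.
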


\begin{rem}
1) This proposition follows directly from the main well-posedness result of \cite{Hunter06}.  The essential step for obtaining a time of existence depending on a fixed low order of regularity $H^{m_1}$ is to obtain a tame estimate of the form
\begin{align}\label{ae3}
\left|\dfrac{{\rm d}}{{\rm d}t} \| \alpha(t) \|_{H^m}^2\right|  \le C \, \| \alpha(t) \|_{H^m}^2 \|\alpha(t)\|_{H^{m_1}}
\end{align}
 for solutions to the Cauchy problem with zero interior forcing and nonzero initial data.  
 Although Hunter only \emph{uses} a weaker nontame estimate, namely, 
 \begin{align}\label{ae4}
\left|\dfrac{{\rm d}}{{\rm d}t} \| u(t) \|_{H^m}^2\right|  \le C \, \| u(t) \|_{H^m}^3, 
\end{align}
 to obtain a time of existence that shrinks with increasing $H^m$ regularity, he actually \emph{proves} the better estimate \eqref{ae3}.   The arguments using the tame estimate to get a time of existence depending just on $H^{m_1}$ regularity are standard and given, for example, in \cite{TaylorIII}, Chapter 16. 

2) Given $\alpha_2$ and $G$  as in Proposition \ref{prop:Amplitudes2}, one readily obtains (again using estimates contained in the proofs of \cite{Hunter06}) a solution with the same regularity on the same time interval to the \emph{linear} problem
\begin{equation}\label{ae5}
		\begin{split}
		&\partial_t\alpha+v\cdot\nabla_{x'}\alpha+2\mathcal{H}(\mathcal{B}(\alpha_2,\alpha))=G(t,x',\theta)\\
		&\alpha(t,x',\theta)=0\text{ in }t<0, 
		\end{split}
	\end{equation}
corresponding to  \eqref{eq:AmplitudeEqk}.

3) If $G\in H^\infty$ in \eqref{eq:AmplitudeEq2} then one can show $\alpha\in H^\infty$ by using the equation  to deduce increased regularity with respect to $t$.  If one has both $G$ and $\alpha_2$ in $H^\infty$, the same remark applies to the  solution  of \eqref{ae5}. 

4) In part \ref{tamewellp} we give a new proof of the tame estimate \eqref{ae3} that applies directly to the form of the amplitude equation given in \eqref{eq:AmplitudeEq2} 
%Our proof also applies directly to the particular form of the amplitude equation given in \eqref{eq:AmplitudeEqk} 
and also incorporates the slow tangential space variables.

\end{rem}

\section{Final steps in the construction of the  $U_k$}\label{sec:AnalysisU2}
	
	We now use the results of the previous two sections to complete the construction of the profiles.   In the construction of $U_2$ and $U_3$, we  ask to reader to accept for a moment that  the definition of $(H'_k,K'_k)$ \eqref{e4} implies that 
	\begin{align}\label{i1}
	(H_j,K_j)\in S^* \text{ for }j\leq k\Rightarrow (H_k,K_k)=(H'_k,K'_k)\in S^*.
	\end{align}
	This reflects the fact that in such cases there is no need to modify $(H_k,K_k)$.

	Recall that all profiles are required to vanish in $t\leq 0$. 
	
	\subsection{The profile $U_2$}
	The pieces of the leading profile $U_2$ are determined by the equations \eqref{g3}$_{2}$.    The only nonvanishing piece turns out to be $U_{2,\alpha}$.

	\begin{proposition}\label{prop:U2=U2alpha}
		The leading order profile $U_2$ is given by $U_2=U_{2,\alpha}(t,x,\theta,Y)\in S^*$.
	\end{proposition}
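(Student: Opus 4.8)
The plan is to run through the five equations \eqref{g3}$_2$ governing the pieces of $U_2$ and to observe that the right-hand sides $(H_1,K_1)$ and $(h_1,k_1)$ vanish identically. Indeed, by \eqref{eq:CascadeIntRHS} and \eqref{eq:CascadeBdyRHS} the functions $(H_{k-1},K_{k-1})$, $(h_{k-1},k_{k-1})$ depend only on the profiles $U_2,\dots,U_{k-1}$ (together with $U_j\equiv 0$ for $j\le 1$), and the boundary forcing $(f_{k-1},g_{k-1})$ is nonzero only for $k=3$; for $k=2$ every term therefore involves a vanishing profile, so $(H_1,K_1)=(h_1,k_1)=0$, and hence $(H_1',K_1')=(0,0)$ as well (no modification of the zero function is needed, cf.\ \eqref{i1}). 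Consequently \eqref{g3}$_2$(a) together with \eqref{eq:Uk0*} gives $U_2^{0*}=0$, and \eqref{g3}$_2$(b) together with \eqref{eq:UkP} gives $U_{2,p}=0$. Then the boundary datum in \eqref{g3}$_2$(c) is $(h_1^{osc},k_1^{osc})-l_f(U_{2,p})=0$, so \eqref{g4}--\eqref{ukhc} force the coefficients $\sigma_{1,2},\sigma_{2,2}$ to vanish, i.e.\ $U_{2,h}=0$. At this stage $U_2=\underline{U_2}+U_{2,\alpha}$, with $U_{2,\alpha}\in S^*$ the solution of the amplitude equation \eqref{eq:AmplitudeEq2} provided by Proposition \ref{prop:Amplitudes2} (it lies in $S^*$ since $G\in H^\infty$).

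It remains to show $\underline{U_2}=0$. By construction $\underline{U_2}$ is determined by line (e) of \eqref{g3}$_2$, and the crucial point is that, knowing already $U_2=\underline{U_2}+U_{2,\alpha}$ with $U_{2,\alpha}$ oscillatory in $\theta$ and exponentially decaying in $Y$, the limit $Y\to\infty$ (Proposition \ref{d2a}) of $(H_3',K_3')$ reduces to $-L_{ss}(\underline{U_2})$. First, the modifications producing $(H_3',K_3')$ out of $(H_3,K_3)$ act only on $S^m$-components (via \eqref{d2}--\eqref{d3}), which are exponentially $Y$-decaying and hence have vanishing $Y\to\infty$ limit; so it suffices to analyze $(H_3,K_3)$ itself. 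In the formula \eqref{eq:CascadeIntRHS} for $(H_3,K_3)$ the only terms are $-L_{fs}(U_3)$, $-L_{ss}(U_2)$, $A_{ffs}(U_2,U_2)$, $A_{fff}(U_2,U_3)+A_{fff}(U_3,U_2)$, and $B_{ffff}(U_2,U_2,U_2)$ (all other sums are empty because $i,j,l,m,n\ge 2$). Every operator $L_{fs}$, $A_{\dots}$, $B_{\dots}$ here carries a fast derivative $\partial_\theta$ or $\partial_Y$; such a derivative annihilates $\underline{U_2}$ (which is independent of $\theta$ and $Y$), so after expanding via bilinearity/trilinearity every surviving contribution contains a factor of a derivative of $U_{2,\alpha}$ and is therefore oscillatory in $\theta$ and exponentially decaying in $Y$, hence has vanishing $Y\to\infty$ limit; likewise $\lim_{Y\to\infty}L_{fs}(U_3)=0$ because $L_{fs}$ always differentiates in $\theta$ or $Y$ (recall \eqref{eq:linearoperators}). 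The only term left is $\lim_{Y\to\infty}L_{ss}(U_2)=L_{ss}(\underline{U_2})$. Thus the equations of line (e) of \eqref{g3}$_2$ reduce $\underline{U_2}$ to a solution of the linear (hyperbolic) problem $L_{ss}(\underline{U_2})=0$ with data vanishing for $t\le 0$, so $\underline{U_2}=0$ by uniqueness, and $U_2=U_{2,\alpha}\in S^*$.

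I expect the step requiring the most care to be precisely this reduction $\lim_{Y\to\infty}(H_3',K_3')=-L_{ss}(\underline{U_2})$: one must check term by term in \eqref{eq:CascadeIntRHS} for $(H_3,K_3)$, substituting $U_2=\underline{U_2}+U_{2,\alpha}$ and distributing the multilinear operators, that each term either kills a slot occupied by $\underline{U_2}$ through a fast derivative or retains a (derivative of a) $U_{2,\alpha}$ factor, and is therefore exponentially decaying in $Y$; and similarly that no purely slow nonlinearity (an $A_{sss}$ or $B_{ssss}$ term) occurs at this order. Everything else is bookkeeping already covered by the general solvability analysis of Section \ref{sec:GenObs} and the explicit construction of Section \ref{sec:Order}.
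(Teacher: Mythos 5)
Your derivation that $U_2^{0*}=U_{2,p}=U_{2,h}=0$ matches the paper and is correct. Your treatment of the interior equation for $\underline{U_2}$ is also essentially correct: using $\lim_{Y\to\infty}(H_3',K_3')=\underline{(H_3',K_3')}$ (Proposition \ref{d2a}) is just another way of extracting the $\underline{(\cdot)}$-component, and your term-by-term check that only $-L_{ss}(\underline{U_2})$ survives is the same bookkeeping the paper does when it observes that "terms involving fast derivatives vanish."

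However, there is a genuine gap at the end. You conclude ``$\underline{U_2}$ solves the linear hyperbolic problem $L_{ss}(\underline{U_2})=0$ with data vanishing for $t\le 0$, so $\underline{U_2}=0$ by uniqueness.'' But $\underline{U_2}(t,x,y)$ lives on the half-space $y>0$, so $L_{ss}(\underline{U_2})=0$ together with $\underline{U_2}\equiv 0$ for $t\le 0$ does \emph{not} determine $\underline{U_2}$ uniquely: a hyperbolic problem on a half-space needs a boundary condition at $y=0$, and without one there are plenty of nontrivial solutions. Line (e) of \eqref{g3}$_2$ carries two equations precisely for this reason, and you have only used the first one (the interior condition $\underline{(H_3',K_3')}=0$). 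The second, $\int_0^\infty (H_2'^{0},K_2'^{0})\,{\rm d}Y = (h_2^0,k_2^0)$, is where the boundary condition comes from: after inserting the definitions of $(H_2,K_2)$ and $(h_2,k_2)$, using $U_2^{0*}=0$ to kill $[L_{fs}(U_2)]^0$, and cancelling the $Q_2(\partial_{\f};\partial_{\f})(U_2,U_2)$ contributions on both sides (together with $\underline G=0$), it reduces to $l_s(\underline{U_2})=0$ on $y=0$. Only with the traction-type boundary condition $l_s(\underline{U_2})=0$ on $y=0$, $L_{ss}(\underline{U_2})=0$ in $y>0$, and vanishing for $t\le 0$ is the conclusion $\underline{U_2}=0$ justified. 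Your proof needs that second half of line (e) spelled out before the uniqueness claim can be made.
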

	\begin{proof}
	\textbf{1. }We follow the procedure outlined in section \ref{sec:Order}.    Since the profiles $U_j$, $j\leq 1$ are zero, we have $(H_j,K_j)=0$ for $j\leq 1$, so \eqref{i1}
	and the formulas \eqref{eq:Uk0*}, \eqref{eq:UkP},  and \eqref{g4}, \eqref{ukhc} imply immediately that $U_2^{0*}$, $U_{2,p}$, and $U_{2,h}$ are zero.  The term $U_{2,\alpha}$ is given by \eqref{g5}, where $\alpha_2$ is provided by Proposition \ref{prop:Amplitudes2}.

	%The next component of $U_2$ is $U_2^{0*}$, which can be calculated from equation \eqref{eq:LeadingInterior} with $n=0$:
	%\begin{equation}
	%\partial_{YY}
	%	\begin{pmatrix}
	%	u_2^{0*}\\
	%	v_2^{0*}
	%	\end{pmatrix}=0
	%\end{equation}\\
	%Since we want $U_2^{0*}$ to decay at infinity, this means we must choose $U_2^{0*}=0$, as the above equation only has linear functions as solutions.
	
	\textbf{2. } The only remaining component to find is $\underline{U_2}$. This piece is determined by line (e) of \eqref{g3}$_2$.  We now use the fact, which will be  clear from the definition \eqref{e4}, that 
	\begin{align}\label{i2}
	\begin{pmatrix}
	\underline{H'_k}\\
	\underline{K'_k}
	\end{pmatrix}=\begin{pmatrix}
	\underline{H_k}\\
	\underline{K_k}
	\end{pmatrix} \text{ for all }k,
	\end{align}
	so we can use \eqref{eq:CascadeIntRHS} to write  the interior equation as 
\begin{equation*}\label{eq:U2BarEqV1}
	\begin{split}
	&\begin{pmatrix}
	\underline{H_3}\\
	\underline{K_3}
	\end{pmatrix}=
	\underline{A_{ffs}(U_2,U_2)+A_{fff}(U_2,U_3)+A_{fff}(U_3,U_2)+B_{ffff}(U_2,U_2,U_2)-L_{ss}(U_2)-L_{fs}(U_3)}=0.
	\end{split}
	\end{equation*}
	Using Proposition \ref{d2a} it is easy to see that the terms involving fast derivatives vanish, so this equation reduces to 
	\begin{equation}\label{u2bar}
		L_{ss}(\underline{U}_2)=0 \text{ on }y>0.
	\end{equation}
	
	The boundary conditions for $\underline{u}_2,\underline{v}_2$  come from 
	\begin{equation}\label{eq:U2barBdy}
	\int_{0}^{\infty}\begin{pmatrix}
	H_2^{'0}\\
	K_2^{'0}
	\end{pmatrix}dY=\begin{pmatrix}
	h_2^0\\
	k_2^0
	\end{pmatrix}\text{ on }y=Y=0.
	\end{equation}
	Inspection of \eqref{eq:CascadeIntRHS} shows that $(H_2,K_2)\in S^*$, so $(H_2,K_2)=(H'_2,K'_2)$.\footnote{We use \eqref{i1} here.}  Thus \eqref{eq:U2barBdy} is 
		\begin{equation*}
	\int_0^\infty -[L_{fs}(U_2)]^0+\partial_Y[Q_2(\partial_{\f};\partial_{\f})(U_2,U_2)]^0dY=-[l_s(U_2)]^0-[Q_2(\partial_{\f};\partial_{\f})(U_2,U_2)]^0
	\end{equation*}
	Since $U_2^{0*}=0$ we have $[L_{fs}({U}_2)]^0=0$. Computing the integral  gives
	\begin{equation*}
	-[Q_2(\partial_{\f};\partial_{\f})(U_2,U_2)]^0=-[l_s(U_2)]^0-[Q_2(\partial_{\f};\partial_{\f})(U_2,U_2)]^0
	\end{equation*}
	which reduces to $l_s(\underline{U}_2)=0$ on $y=0$.\footnote{We use here the fact that $\underline G(t,x')=0$.}  With \eqref{u2bar} this gives $\underline{U_2}=0$.

	\end{proof}
	
	\subsection{The profile $U_3$}\label{sec:AnalysisU3}
	The profile $U_3$ is determined by the equations \eqref{g3}$_3$.  Knowing that $\underline{U_2}=0$, we see from the formulas \eqref{eq:CascadeIntRHS} that 
	\begin{align}
	\begin{pmatrix}H_3\\K_3\end{pmatrix}\in S^*\text{ and  }\begin{pmatrix}H_4\\K_4\end{pmatrix}\in S.
\end{align}

	\begin{proposition}
		There exists a profile  $U_3\in S$ satisfying the equations \eqref{g3}$_3$.
	\end{proposition}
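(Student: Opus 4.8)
The plan is to follow the order of construction laid out in section~\ref{sec:Order}: produce in turn $U_{3,p}$ and $U_3^{0*}$, then $U_{3,h}$, then $U_{3,\alpha}$, and finally $\underline{U_3}$, checking at each stage that the relevant solvability conditions \eqref{f8} of section~\ref{sec:GenObs} hold. All data below are $H^\infty$ functions vanishing to infinite order at $t=0$, so membership of each piece in the appropriate space will be automatic once the piece is produced; the substance lies in verifying the solvability conditions and in solving the final boundary value problem for $\underline{U_3}$.

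Since $\underline{U_2}=0$ by Proposition~\ref{prop:U2=U2alpha}, inspection of \eqref{eq:CascadeIntRHS} together with \eqref{i1} shows $(H_2,K_2)=(H'_2,K'_2)\in S^*$. The particular solutions $U_3^{0*}$ and $U_{3,p}$ are then \emph{defined} by the explicit formulas \eqref{eq:Uk0*} and \eqref{eq:UkP} with $k=3$, and the rapid Fourier decay and exponential decay in $Y$ of $(H'_2,K'_2)$ propagate through those formulas to give $U_3^{0*},U_{3,p}\in S^*$. No extra condition intervenes here, because the zero-mode boundary condition for $U_3$ is exactly $\int_0^\infty(H'^0_2,K'^0_2)\,dY=(h_2^0,k_2^0)$, which is line (e) of \eqref{g3}$_2$ and was already arranged in the construction of $U_2$.

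Next, line (c) of \eqref{g3}$_2$ --- which holds because $\alpha_2$ was chosen to solve the amplitude equation \eqref{eq:AmplitudeEq2} --- is precisely the solvability condition $(q\;\;\omega_2)\big((h_2^{osc},k_2^{osc})-l_f(U_{3,p})\big)=0$ for $U_{3,h}$, so $U_{3,h}\in S^*$ is given by \eqref{g4}--\eqref{ukhc}. Then $U_{3,\alpha}$ is fixed by imposing the third equation in line (d) of \eqref{g3}$_3$, i.e.\ the solvability condition for $U_{4,h}$; by the reduction carried out in section~\ref{sec:Amplitudes} this amounts to the amplitude equation \eqref{eq:AmplitudeEqk} with $k=3$, namely the \emph{linear} problem $\partial_t\alpha_3+c\partial_x\alpha_3+2\mathcal{H}(\mathcal{B}(\alpha_2,\alpha_3))=G_3(U_2)$ whose right-hand side depends only on the already-known profile $U_2\in S^*$ (note that $(f_3,g_3)=0$). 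Since $\alpha_2$ and $G_3$ lie in $H^\infty$, Proposition~\ref{prop:Amplitudes2} together with the Remark following it produces a unique $H^\infty$ solution $\alpha_3$ on the same time interval $[0,T]$ on which $\alpha_2$ (hence $U_2$) exists, and \eqref{f10} then yields $U_{3,\alpha}\in S^*$.

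Finally, $\underline{U_3}$ is determined by line (e) of \eqref{g3}$_3$. Using \eqref{i2} and Proposition~\ref{d2a} to discard every term of $\underline{H_4},\underline{K_4}$ that carries a fast derivative or an exponentially $Y$-decaying factor, the interior equation $(\underline{H'_4},\underline{K'_4})=0$ collapses --- exactly as it did for $\underline{U_2}$ --- to $L_{ss}(\underline{U_3})=0$ on $y>0$; and the boundary equation $\int_0^\infty(H'^0_3,K'^0_3)\,dY=(h_3^0,k_3^0)$, once one isolates the only term in which $\underline{U_3}$ occurs (the summand $-l_s(\underline{U_3})$ of $(h_3^0,k_3^0)$), collapses to $l_s(\underline{U_3})|_{y=0}=F(t,x)$ for an explicit $F$ built from the already-constructed pieces $U_2,U_3^{0*},U_{3,h},U_{3,p}$, lying in $H^\infty$ and vanishing to infinite order at $t=0$. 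Thus $\underline{U_3}$ is the solution of the constant-coefficient linear traction problem $L_{ss}(\underline{U_3})=0$ in $y>0$, $l_s(\underline{U_3})=F$ on $y=0$, $\underline{U_3}=0$ in $t\leq 0$, which is the linearization of \eqref{eq:UInitIntSys} at $\nabla U=0$. This last step is the one requiring the most care: the uniform Lopatinski condition fails at the Rayleigh frequency $\beta$, so producing $\underline{U_3}$ relies on the (weakly well-posed) solvability theory for the linear traction problem of section~\ref{apri} and \cite{Sh,SN} --- the hyperbolic estimate of Proposition~\ref{hypest} exhibits exactly the half-derivative loss on the boundary in question --- and on checking that this loss is harmless because $F\in H^\infty$, so that indeed $\underline{U_3}\in\underline{S}$. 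Assembling the five pieces then gives $U_3=\underline{U_3}+U_3^{0*}+U_{3,h}+U_{3,p}+U_{3,\alpha}\in\underline{S}\oplus S^*=S$ satisfying \eqref{g3}$_3$.
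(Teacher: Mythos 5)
Your construction is correct and follows the same five-piece strategy as the paper's own proof: use \eqref{i1} to identify $(H_2,K_2)=(H'_2,K'_2)\in S^*$, build $U_3^{0*}$, $U_{3,p}$, $U_{3,h}$ from the explicit formulas (with the solvability condition for $U_{3,h}$ supplied by the choice of $\alpha_2$), determine $\alpha_3$ from the linearized amplitude equation via Proposition \ref{prop:Amplitudes2}, and close by solving the $L_{ss}/l_s$ boundary value problem for $\underline{U_3}$. The one place you add something beyond the paper's written argument is in making explicit that the last step is a constant-coefficient weakly well-posed linear traction problem (Lopatinski degenerate) whose solvability for $H^\infty$ data rests on the estimates of \cite{Sh,SN}; the paper asserts the existence of $\underline{U_3}$ without comment, and your observation is a correct and useful gloss rather than a different route.
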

	\begin{proof}

	\textbf{1. }By \eqref{i1} we have $(H_2,K_2)=(H'_2,K'_2)$, so we can use \eqref{eq:CascadeIntRHS} in  the formulas \eqref{eq:Uk0*}, \eqref{eq:UkP},  and \eqref{g4}, \eqref{ukhc} to construct the pieces 
	$U_3^{0*}$, $U_{3,p}$, and $U_{3,h}$.   The term $U_{3,\alpha}$ is given by \eqref{g5}, where $\alpha_3$ is provided by Proposition \ref{prop:Amplitudes2}.

	\textbf{2. } Finally, we construct $\underline{U}_3$, which is determined by line (e) of \eqref{g3}$_3$.    Using \eqref{i2} and the definitions of $H_4,K_4$ provided in \eqref{eq:CascadeIntRHS}, we obtain for the interior equation 
	\begin{equation}\label{eq:Uk3BarIntV2}
		\begin{split}\begin{pmatrix}
			\underline{H}_4\\
			\underline{K}_4
		\end{pmatrix}=&
			\underline{-L_{fs}(U_4)-L_{ss}(U_3)+A_{fss}(U_2,U_2)+A_{ffs}(U_2,U_3)+A_{ffs}(U_3,U_2)+A_{fff}(U_3,U_3)}\\
			&\underline{+B_{fffs}(U_2,U_2,U_2)+B_{ffff}(U_3,U_2,U_2)+B_{ffff}(U_2,U_3,U_2)+B_{ffff}(U_2,U_2,U_3)}=0.
	\end{split} 
	\end{equation}
	This simplifies to
	\begin{equation}\label{eq:Uk3BarIntFin}
		L_{ss}(\underline{U}_3)=0
	\end{equation}
	by an argument similar to that which gave \eqref{u2bar}.

	The boundary conditions for $\underline{U}_3$ come from the formula
	\begin{equation}\label{eq:Uk3BarBdyV1}
		\int_{0}^{\infty}\begin{pmatrix}
			H_3^{'0}\\
			K_3^{'0}
		\end{pmatrix}dY=\begin{pmatrix}
			h_3^0\\
			k_3^0
		\end{pmatrix}\text{ on }y=Y=0.
	\end{equation}
	Applying \eqref{i1} again, we can can use the definitions  of $H_3$ ,$K_3$, $h_3$ and $k_3$ in \eqref{eq:Uk3BarBdyV1} to obtain
	
	\begin{equation}\label{eq:Uk3BarBdyV2}
		\begin{split}
			\int_0^\infty&[-L_{fs}(U_3)+A_{fff}(U_2,U_3)+A_{fff}(U_3,U_2)+N_1(U_2)]^0dY=\\
			&[-l_s(U_3)-Q_2(\partial_{\f};\partial_{\f})(U_2,U_3)-Q_2(\partial_{\f};\partial_{\f})(U_3,U_2)+N_2(U_2)]^0
		\end{split}
	\end{equation}

	%\begin{equation}\label{eq:Uk3BarBdyV2}
	%	\begin{split}
	%		\int_0^\infty&[-L_{fs}(U_3)-L_{ss}(U_2)+A_{ffs}(U_2,U_2)+A_{fff}(U_2,U_3)+A_{fff}(U_3,U_2)+B_{ffff}(U_2,U_2,U_2)]^0dY=\\
	%		&[-l_s(U_3)-Q_2(\partial_{\f};\partial_{\f})(U_2,U_3)-Q_2(\partial_{\f};\partial_{\f})(U_3,U_2)-Q_2(\partial_{\f};\partial_{\s})(U_2,U_2)\\
	%		&-Q_2(\partial_{\s};\partial_{\f})(U_2,U_2)-C_2(\partial_{\f};\partial_{\f};\partial_{\f})(U_2,U_2,U_2)]^0
	%	\end{split}
	%\end{equation}
	To simplify this expression recall that the $A$'s are related to the $Q$'s as described in the  formulas \eqref{eq:AinTermsofQ}. The integral of  the term $[A_{fff}(U_3,U_2)]^0$ can be expanded 
	\begin{equation}\label{eq:Uk3CubicInt}
		\begin{split}
		&\int_0^\infty[A_{fff}(U_3,U_2)]^0dY=\int_0^\infty[\partial_\theta Q_1(\partial_{\f};\partial_{\f})(U_3,U_2)+\\
		&\qquad \partial_Y Q_2(\partial_{\f};\partial_{\f})(U_3,U_2)]^0dY=-[Q_2(\partial_{\f};\partial_{\f})(U_3,U_2)]^0|_{Y=0},
\end{split}
	\end{equation}
	since $[\partial_\theta Q_1(\partial_{\f};\partial_{\f})(U_3,U_2)]^0$ vanishes.	Notice that the right hand side  is a term appearing in $\begin{pmatrix}
		h_3^0\\
		k_3^0
	\end{pmatrix}$.  
	%Similarly, the $A_{fff}(U_3,U_2)$ term in $H_3,K_3$ cancels with the $Q_2(\partial_{\f};\partial_{\f})(U_3,U_2)$ in $h_3,k_3$.   After taking account of other terms that cancel or vanish,
	Doing the same for the other $A_{fff}$ term, 
	 we reduce  \eqref{eq:Uk3BarBdyV2} to the following:
	\begin{equation}\label{eq:Uk3BarBdyV3}
		\int_0^\infty[-L_{fs}(U_3)+N_1(U_2,U_2)]^0dY=[-l_s(U_3)+N_2(U_2)]^0
	\end{equation}
Using  $U_2^0=0$, which implies $[L_{ss}(U_2)]^0=0$, and  $L_{fs}(U_3)=L_{fs}(U_3^*)$, we get the final form of the boundary conditions:
	\begin{equation}\label{eq:U3BarBdyFin}
		l_s(\underline{U}_3)=-l_s(U_3^{0*})+\int_{0}^{\infty}[L_{fs}(U_3^*)-\partial_x Q_1(\partial_{\theta,Y};\partial_{\theta,Y})(U_2,U_2)]^0dY.
	\end{equation}
In view of  \eqref{eq:Uk3BarIntFin} and \eqref{eq:U3BarBdyFin}  we obtain  a unique solution  $\underline{U}_3\in \underline S$.     This  completes the construction of $U_3$.
	\end{proof}
	
	\begin{remark}\label{internal}
		One of the goals of Chapter 2 of \cite{Mar2} is to show that even though $\underline G(t,x)=0$ and $U_2\in S^*$,  it can happen that  $\underline{U}_3\not=0$. This conclusion is reached by showing  that the right side of \eqref{eq:U3BarBdyFin}, or rather its analogue in her simplified model,  is not $0$ and hence neither is $\underline{U}_3$.    This is an example of ``internal rectification''.
	The computation of \cite{Mar2} shows there is every reason to expect that the right side of \eqref{eq:U3BarBdyFin} is nonzero in the Saint Venant-Kirchhoff model as well, except for rare accidents.  When that happens,  the error analysis of part \ref{p2} shows that internal rectification is truly present in the exact solution.
		
		% see \cite[Marcou] for more details.
	\end{remark}
	
	\subsection{The profiles $U_k$, $k\geq 4$}\label{sec:AnalysisUk}    It remains to construct $U_k\in S$ satisfying \eqref{g3}$_k$, assuming that profiles $U_2,\dots,U_{k-1}$ in $S$ satisfying \eqref{g3}$_j$, $j\leq k-1$ have already been constructed.    By the construction of $U_2$ and $U_3$ we see that $(H_j,K_j)\in S^*$ for $j\leq 4$, so there was no need to modify $(H_j,K_j)$ for these $j$.\footnote{In \cite{Mar2} Marcou gave a construction of $U_2$ and part of $U_3$ which involved  only  $(H_j,K_j)$  for $j\leq 4$. Thus, she did not need to carry out a modification process like the one we describe below.} But for $j\geq 5$ we must expect $(H_j,K_j)$ to contain terms in $S^m$.  For instance, the term $\partial_x(\partial_y u_3\partial_\theta v_2)$ from $A_{fss}(U_3,U_2)\in S^m$, since $\underline{u}_3\not=0$ (normally), which implies that $H_5,K_5\not\in S^*$. \footnote{Recall that $H_j,K_j\in S^*$ for $j\leq 4$ since $\underline U_2$ turned out to be zero.    For these $j$ we have $H_j=H_j'$, $K_j=K_j'$.}   
In order to construct the higher profiles we must now define the $(H'_j,K'_j)$, $j\geq 5$.   We define the $(H'_j,K'_j)$ as elements of $S$, even though they will turn out by the choice of $\underline{U_{j-1}}$ to lie in $S^*$. 
	
	%In view of \eqref{d4}  we would like the general term $U_k$ to satisfy:
	%\begin{equation}\label{e1}
	%	\begin{split}
	%	&(a) L_{ff}(U_k)=\begin{pmatrix}
	%	H_{k-1}\\
	%	K_{k-1}
	%	\end{pmatrix} \text{ on }y,Y>0\\
	%	&(b) l_f(U_k)=\begin{pmatrix}
	%		h_{k-1}\\
	%		k_{k-1}
	%	\end{pmatrix}\text{ on }y,Y=0,
	%	\end{split}
	%\end{equation}
	%where  the functions $H_{k-1},K_{k-1}$ are defined in terms of $U_2,...,U_{k-1}$ by formula \eqref{eq:CascadeIntRHS}, and $h_{k-1},k_{k-1}$ are defined by \eqref{eq:CascadeBdyRHS}.  In Remark \ref{nec} we noted that a necessary condition for solving \eqref{e1}(a) with $U_k\in S$ is that the right side lie in $S^*$. 
%However, even though $\underline H_{k-1}, \underline K_{k-1}$ are both constructed to be zero,
%	the  functions  $H_{k-1},K_{k-1}$ are generically \textit{not} in $S^*$,  but rather in $S^*\oplus S^m$. So for instance, the term $\partial_x(\partial_y u_3\partial_\theta v_2)$ from $A_{fss}(U_3,U_2)$ is not in $S^*$ since $\underline{u}_3\not=0$ (normally), which implies that $H_5,K_5\not\in S^*$. \footnote{Recall that $H_j,K_j\in S^*$ for $j\leq 4$ since $\underline U_2$ turned out to be zero.    For these $j$ we have $H_j=H_j'$, $K_j=K_j'$.}   
	
For functions $f=\underline f+ f^*+f^m \in S^e$, we can define a modification $f^{mod}=\underline f+ f^*+f^{m,mod}$, where $f^{m,mod}$ is as in \eqref{d2a}.\footnote{We now suppress the subscript on $f^{m,mod}$; it will be clear from the context.}Applying this to the $H_{k-1}, K_{k-1}$  for $6\leq k\leq N$ we obtain a preliminary modification
	\begin{align}\label{e2}
	\begin{split}
	&\begin{pmatrix}H_{k-1}\\K_{k-1}\end{pmatrix}^{mod}=\underline{\begin{pmatrix}H_{k-1}\\K_{k-1}\end{pmatrix}}+\begin{pmatrix}H_{k-1}\\K_{k-1}\end{pmatrix}^*+M_{k-1,0}+\eps Y M_{k-1,1}+\dots+\\
	&\qquad \eps^{N-(k-2)-2}Y^{N-(k-2)-2}M_{k-1,N-(k-2)-2}+\eps^{N-(k-2)-1}R_{k-1,N-(k-2)-1},
	\end{split}
	\end{align}
	where the $M_{k-1,j}\in S^*$ are defined by
	\begin{align}\label{e3}
	M_{p,j}:=\partial^j_y\begin{pmatrix}H_{p}\\K_{p}\end{pmatrix}(t,x,0,\theta,Y)/j! \text{ for }p\geq 5,\; M_{p,j}:=0 \text{ for }p\leq 4.  
	\end{align}
	Noting that $L_{ff}U_k$ is part of the coefficient of $\eps^{k-2}$ in \eqref{d4},
	%and with a view toward replacing the right side of \eqref{e1}(a) by  $\begin{pmatrix}H'_{k-1}\\K'_{k-1}\end{pmatrix}\in S^*$, 
	we define\footnote{Here the term $YM_{k-2,1}$, for example, comes from the $\eps YM_{k-2,1}$ term in $\begin{pmatrix}H_{k-2}\\K_{k-2}\end{pmatrix}^{mod}$.}   
	%Although the construction of $\underline{U_{k-2}}$ makes $\underline{\begin{pmatrix}H_{k-1}\\K_{k-1}\end{pmatrix}}=0$, we do not want to \emph{assume} that term is zero in the definition \eqref{e4}.}
	\begin{align}\label{e4}
	\begin{pmatrix}H'_{k-1}\\K'_{k-1}\end{pmatrix}:=\underline{\begin{pmatrix}H_{k-1}\\K_{k-1}\end{pmatrix}}+\begin{pmatrix}H_{k-1}\\K_{k-1}\end{pmatrix}^*+M_{k-1,0}+YM_{k-2,1}+Y^2M_{k-3,2}+...+Y^{k-6}M_{5,k-6}.
	\end{align}

		\begin{proposition}\label{prop:UkExistUniq}
			For each $2\leq k\leq N$, there exists a profile $U_k\in S$ satisfying the equations \eqref{g3}$_k$.  
		\end{proposition}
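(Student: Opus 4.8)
The plan is to prove Proposition \ref{prop:UkExistUniq} by induction on $k$, following exactly the order of construction laid out in section \ref{sec:Order}: namely, for each $k$ one constructs in turn $U_{k,p}$ and $U_k^{0*}$, then $U_{k,h}$, then $U_{k,\alpha}$, and finally $\underline{U_k}$. The cases $k=2$ and $k=3$ have already been handled in Propositions \ref{prop:U2=U2alpha} and \ref{sec:AnalysisU3} above, where one also checks that $(H_j,K_j)\in S^*$ for $j\le 4$; these serve as the base of the induction. So the essential content is the inductive step: assuming real profiles $U_2,\dots,U_{k-1}\in S$ satisfying \eqref{g3}$_j$ for $j\le k-1$ have been constructed, produce $U_k\in S$ satisfying \eqref{g3}$_k$.

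First I would verify that the right-hand sides $(H'_{k-1},K'_{k-1})$ of the interior equation, as defined in \eqref{e4}, indeed lie in $S^*$. By the induction hypothesis $\underline{H'_{k-1}}=\underline{K'_{k-1}}=0$ (this is the line (e) equation at level $k-2$, using \eqref{i2}), and the remaining terms of \eqref{e4} — namely $(H_{k-1},K_{k-1})^*$, $M_{k-1,0}$, and the terms $Y^j M_{k-1-j,j}$ — are all in $S^*$ by construction of the $M_{p,j}$ in \eqref{e3} together with Definition \ref{d1a}; the point of the modification procedure \eqref{e2}–\eqref{e4} was precisely to strip out the $S^m$ pieces. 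Once this is known, the solvability condition \eqref{f8}(a) holds, and formulas \eqref{eq:Uk0*} and \eqref{eq:UkP} (the specialization of \eqref{eq:PartSolZero} and \eqref{eq:PartSolNonzero} with $F$ replaced by $(H'_{k-1},K'_{k-1})$) define $U_k^{0*}\in S^*$ and $U_{k,p}\in S^*$; the exponential decay in $Y$ and rapid decay in the Fourier index follow from the corresponding properties of $H'_{k-1},K'_{k-1}\in S^*$ as noted after \eqref{eq:UkP}. Next, $U_{k,h}$: the solvability condition \eqref{f8}(c) for $l_f(U_{k,h})=(h_{k-1}^{osc},k_{k-1}^{osc})-l_f(U_{k,p})$ is exactly the third equation in line (d) of \eqref{g3}$_{k-1}$, which holds by the induction hypothesis; then \eqref{g4}–\eqref{ukhc} (i.e. \eqref{eq:GenHomSol} and \eqref{f7b}) give $U_{k,h}\in S^*$. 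Then $U_{k,\alpha}$: by \eqref{f10}–\eqref{f11} its Fourier modes must be $\alpha_k(t,x,n)\hat r(n,Y)$, and the amplitude equation \eqref{eq:AmplitudeEqk} for $\alpha_k$ — a linearization of \eqref{eq:AmplitudeEq2} about $\alpha_2$ with forcing $G_k(U_2,\dots,U_{k-1})$ depending only on already-determined data (as explained around \eqref{h4}) — is solved by Proposition \ref{prop:Amplitudes2} and remark (2) following it, yielding $\alpha_k$, hence $U_{k,\alpha}\in S^*$, so that the third equation in line (d) of \eqref{g3}$_k$ holds. Finally, $\underline{U_k}$ is the unique element of $\underline S$ determined by line (e) of \eqref{g3}$_k$: the interior equation reduces to $L_{ss}(\underline{U_k})=0$ on $y>0$ after checking, via Proposition \ref{d2a}, that all terms of $\underline{H_{k+1}},\underline{K_{k+1}}$ involving fast derivatives vanish, and that $\underline{H'_{k+1}}=\underline{H_{k+1}}$ by \eqref{i2}; the boundary condition becomes $l_s(\underline{U_k})=(\text{known data})$ coming from $\int_0^\infty (H'^0_k,K'^0_k)\,dY=(h_k^0,k_k^0)$, manipulated exactly as in \eqref{eq:Uk3CubicInt}–\eqref{eq:U3BarBdyFin}, where the $A_{fff}$ integrals are converted into boundary terms that match the $Q_2$ terms in $(h_k^0,k_k^0)$. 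Well-posedness of this linear constant-coefficient boundary value problem for $\underline{U_k}$ in $\underline S$ is standard.

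The main obstacle I expect is showing that the line-(e) determination of $\underline{U_{k-1}}$ actually forces $\underline{H'_k}=\underline{K'_k}=0$, which is what makes the induction close — that is, verifying that the modification \eqref{e4} is set up so that the choice of $\underline{U_{k-1}}$ absorbs precisely the mean part of $(H'_k,K'_k)$ and no more. This requires carefully tracking, through \eqref{eq:CascadeIntRHS}, which terms of $H_k,K_k$ depend on $\underline{U_{k-1}}$: by the bound $i+j=k+1$ for $A_{fff}$-arguments and the fact that $L_{fs}$ and $L_{ss}$ lower the index by $1$ and $2$, the profile $\underline{U_{k-1}}$ enters $(H_k,K_k)$ only through $L_{ss}(U_{k-1})$, $L_{fs}(U_k)$ (but $L_{fs}$ annihilates $\underline S$, so this contributes nothing), and quadratic/cubic terms involving $\underline{U_{k-1}}$ paired with lower profiles. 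One must check that, after taking Fourier mean and applying Proposition \ref{d2a} to discard the fast-derivative pieces, the dependence on $\underline{U_{k-1}}$ is exactly $L_{ss}(\underline{U_{k-1}})$ in the interior and $l_s(\underline{U_{k-1}})$ on the boundary, so that line (e) of \eqref{g3}$_{k-1}$ is a solvable (elliptic-type, noncharacteristic) boundary value problem whose solution makes $\underline{H'_k}=\underline{K'_k}=0$. The remaining bookkeeping — verifying membership in $S$, $S^*$, or $S^m$ of the various nonlinear combinations, and that all profiles vanish in $t\le0$ (which follows since the forcing and $\alpha_2$ vanish to infinite order at $t=0$) — is routine given Propositions \ref{d2a} and \ref{prop:Amplitudes2}, and I would treat it briefly.
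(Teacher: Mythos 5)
Your proposal is correct and follows essentially the same inductive scheme as the paper's proof: the order of construction $U_{k,p}, U_k^{0*} \to U_{k,h} \to U_{k,\alpha} \to \underline{U_k}$, with the solvability conditions supplied by lines (c)--(e) of \eqref{g3}$_{k-1}$ and \eqref{g3}$_k$, the amplitude equation \eqref{eq:AmplitudeEqk} providing $\alpha_k$, and the constant-coefficient boundary value problem \eqref{eq:UkBarIntFin}, \eqref{eq:UkBarBdyFin} determining $\underline{U_k}$. The point you flag as the "main obstacle" --- checking that after taking the Fourier mean the dependence on $\underline{U_{k-1}}$ reduces precisely to $L_{ss}(\underline{U_{k-1}})$ and $l_s(\underline{U_{k-1}})$ --- is indeed the nontrivial bookkeeping step, which the paper dispatches by referring back to the $U_2$, $U_3$ cases ("an argument similar to the one in section \ref{sec:AnalysisU3}") and to Proposition \ref{d2a} to kill the fast-derivative contributions.
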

	\begin{proof}
		\textbf{1. }The statement has been proved for $k=2,3$.  The $U_j$, $j\leq k-1$ are assumed to be known and to satisfy \eqref{g3}$_j$, $j\leq k-1$, so the $H'_j$, $K'_j$, $h_j$, $k_{j}$, $j\leq k-1$ are known, and we can use 
	 the formulas \eqref{eq:Uk0*}, \eqref{eq:UkP},  and \eqref{g4}, \eqref{ukhc} to construct the pieces 
	$U_k^{0*}$, $U_{k,p}$, and $U_{k,h}$.   The term $U_{k,\alpha}$ is given by \eqref{g5}, where $\alpha_k$ is provided by Proposition \ref{prop:Amplitudes2}.

		\textbf{2. }Finally, we construct $\underline{U_k}$, which is determined by line (e) of \eqref{g3}$_k$. Using \eqref{i2} we can use the definitions of $H_{k+1}$, $K_{k+1}$ to write the interior equation as

	%	The interior equation for $\underline{U}_k$ is derived from  $\underline{H}'_{k+1}=\underline{K}'_{k+1}=0$. Recall that $H'_{k+1}$ and $K'_{k+1}$ are derived from $H_{k+1}, K_{k+1}$ by replacing mixed terms with elements of $S^*$ and adding in corresponding terms from lower order mixed terms in the procedure discussed at the beginning of this section. Moreover, since both mixed terms and elements of $S^*$ limit to 0 as $Y\rightarrow\infty$, we have that:TODOmodify
	%	\begin{equation}
	%		\underline{H}_{k+1}=\underline{H}'_{k+1} \quad \underline{K}_{k+1}=\underline{K}'_{k+1}
	%	\end{equation}
		%We can partially expand the definition of $H_{k+1}$ and $K_{k+1}$ to get the following equation for $(\underline H_{k+1},\underline K_{k+1})$:
		\begin{equation}\label{eq:UkBarIntExpanded}
			\begin{split}
				&\underline{\begin{pmatrix}H'_{k+1}\\K'_{k+1}\end{pmatrix}}= \underline{-L_{ss}(U_k)-L_{fs}(U_{k+1})+A_{ffs}(U_k,U_2)+A_{ffs}(U_k,U_2)}+\\
				& \underline{B_{ffff}(U_k,U_2,U_2)+B_{ffff}(U_2,U_k,U_2)+B_{ffff}(U_2,U_2,U_k)+N_{k+1}(U_2,...,U_{k-1})}=0
			\end{split}
		\end{equation}
		where $N_{k+1}$ is a known nonlinear function.  As in earlier arguments this easily reduces to 
		\begin{equation}\label{eq:UkBarIntFin}
			L_{ss}(\underline{U}_k)=\underline{N_{k+1}(U_2,...,U_{k-1})}.
		\end{equation}
		%Observe that at this point, every term in the right hand side is known at this point. Moreover, 
		Starting at $H'_7,K'_7$ (or $k=6$), the function $\underline{N}_7$ is expected to be nonzero since $A_{sss}(U_3,U_3)$ contains terms  like $\partial_x[\partial_y \underline{u}_3\partial_x \underline{v}_3]\not=0$, which are normally nonzero.
		Observe that the function $N_{k+1}$ is different from the corresponding function in the expression for $(H_{k+1},K_{k+1})$, but 
	$\underline{N_{k+1}}$ is not different.

		Next, we look at the boundary conditions given by:
		\begin{equation}\label{eq:UkBarBdyV1}
			\int_{0}^{\infty}\begin{pmatrix}
				H'_k \\
				K'_k
			\end{pmatrix}^0 dY=\begin{pmatrix}
				h_k^0\\
				k_k^0
			\end{pmatrix}\text{ on }y=Y=0.
		\end{equation}
		Notice that here   the distinction between $H_k,K_k$ and $H'_k,K'_k$ has an effect,   because the terms $M_{k,j}$ in \eqref{e4} do not integrate to zero.      Observing (again) that  the terms of $H^{'}_k$, $K^{'}_k$ in which $U_k$ appears are exactly the same as the terms of $H_k$, $K_k$ in which $U_k$ appears, we can use \eqref{eq:CascadeIntRHS} to  write

		\begin{equation}
		\begin{split}
		&\int_{0}^{\infty}[-L_{fs}(U_k)+A_{fff}(U_k,U_2)+A_{fff}(U_2,U_k)+N_1(U_2,...,U_{k-1})]^0dY=\\
		&-[l_s(U_k)-Q_2(\partial_{\f};\partial_{\f})(U_k,U_2)-Q_2(\partial_{\f};\partial_{\f})(U_2,U_k)+N_2(U_2,...,U_{k-1})]^0
		\end{split}
		\end{equation}
		where the $N_j$'s are known nonlinear functions of the lower order profiles. An argument similar to the one  in section \ref{sec:AnalysisU3} allows us to simplify this to
		\begin{equation}\label{eq:UkBarBdyFin}
		l_s(\underline{U}_k)=-l_s(U_k^{0*})+\int_{0}^{\infty}L_{fs}(U_k^{0*})dY+[N(U_2,...,U_{k-1})]^0 \text{ on }y=Y=0.
		\end{equation}
		The equations  \eqref{eq:UkBarIntFin} and \eqref{eq:UkBarBdyFin} together with the initial condition $\underline{U}_k=0$ in $t\leq 0$ uniquely determine $\underline{U_k}$. This completes the construction of $U_k$ and the inductive step.
		\end{proof}

	\begin{theorem}\label{theo:Error}
	(a)   Assume $d=2$. 	Let $U_k$, $k=2,...,N$ be given by proposition \ref{prop:UkExistUniq}. Then the approximate solution $U_{a}^\varepsilon(t,x,y)=\sum_{k=2}^{N}\varepsilon^kU_k(t,x,y,\frac{x-ct}{\varepsilon},\frac{y}{\varepsilon})$ satisfies
			\begin{align}\label{eq:ModifiedInteriorError}
			\begin{split}
			&\partial_t^2U_{a}^\varepsilon+\nabla\cdot(L(\nabla U_{a}^\varepsilon)+Q(\nabla U_{a}^\varepsilon)+C(\nabla U_{a}^\varepsilon))=\varepsilon^{N-1} E'_N(t,x,y,\frac{x-ct}{\varepsilon},\frac{y}{\varepsilon})\text{ on }y>0\\
	&L_2(\nabla U_{a}^\varepsilon)+Q_2(\nabla U_{a}^\varepsilon)+C_2(\nabla U_{a}^\varepsilon)-\eps^2\begin{bmatrix}
				f\\
				g
			\end{bmatrix}=\varepsilon^Ne_N(t,x,0,\frac{x-ct}{\varepsilon},0)\text{ on }y=0,
		\end{split}
		\end{align}
		where  $E'_N, e_N\in S^e$.
	
	(b) Assume $d\geq 3$.  The same result holds, where now $U_k=U_k(t,x',x_d,\theta,Y)$ and 
	$$
	U^\eps_a= (\eps^2U_2+\dots+\eps^p U_p)|_{\theta=\frac{\beta\cdot (t,x')}{\eps},Y=\frac{x_d}{\eps}}.
	$$
Here $\beta=(-c|\underline{\eta}|,\underline{\eta})$ is a Rayleigh frequency as described in Remark \ref{h5}.

		\end{theorem}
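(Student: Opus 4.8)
The plan is to verify that the profiles $U_k$ constructed in Proposition \ref{prop:UkExistUniq} plug back into the SVK system and produce residuals of the advertised order. First I would substitute the ansatz $U^\eps_a=\sum_{k=2}^N\eps^k U_k(t,x,y,\tfrac{x-ct}\eps,\tfrac y\eps)$ into the left sides of \eqref{eq:UInitIntSys} and expand in powers of $\eps$, using the chain rule $\partial_t\to\partial_t-\tfrac c\eps\partial_\theta$, $\partial_x\to\partial_x+\tfrac1\eps\partial_\theta$, $\partial_y\to\partial_y+\tfrac1\eps\partial_Y$. This gives exactly the decomposition \eqref{d4}, \eqref{d5}: the coefficient of $\eps^{k-2}$ in the interior is $L_{ff}(U_k)-\binom{H_{k-1}}{K_{k-1}}$, and of $\eps^{k-1}$ on the boundary is $l_f(U_k)-\binom{h_{k-1}}{k_{k-1}}$, with the remainders collected into $\eps^{N-1}E^\eps_N$ and $\eps^N e^\eps_N$. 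Here the only subtlety, already flagged in Remark \ref{nec}, is that the cascade equations are solved with the \emph{modified} right-hand sides $(H'_{k-1},K'_{k-1})$ rather than $(H_{k-1},K_{k-1})$; so I would write $H_{k-1}=H'_{k-1}+(H_{k-1}-H'_{k-1})$ and observe from \eqref{e2}--\eqref{e4} that the discrepancy $H_{k-1}-H'_{k-1}$ is a finite sum of terms of the form $\eps^j(\tfrac y\eps)^j(\cdots) - \eps^j Y^j(\cdots)$ evaluated at $Y=y/\eps$, which vanish identically by the second line of \eqref{d3}, together with a genuine remainder $\eps^{N-(k-2)-1}R_{k-1,\cdot}$ of high order in $\eps$ that gets absorbed into $E^\eps_N$.

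Next I would check that $E^\eps_N$ and $e^\eps_N$, and hence the final $E'_N$, $e_N$, lie in $S^e$. By construction each $U_k\in S\subset S^e$, the operators $L_{ff},L_{fs},L_{ss},l_f,l_s$ preserve $S^e$, and the $A$- and $B$-operators are built from $\partial_{x,y,\theta,Y}$ applied to products of components of the $U_i$; since $S^e$ is closed under multiplication (Proposition \ref{d2a}(1)) and under these derivatives, every term appearing in $E^\eps_N$ is a finite sum of products of elements of $S^e$ with polynomial-in-$Y$ coefficients, hence again in $S^e$ after applying the Taylor-modification trick \eqref{d2}--\eqref{d3} to convert the stray $S^m$ pieces; restricting to $y=Y=0$ and using $u\in S^e\Rightarrow u|_{y=0,Y=0}\in S^b$ handles $e_N$. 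The factor $\eps^{N-1}$ (resp.\ $\eps^N$) is exactly the lowest surviving power once the cascade equations \eqref{g3}$_k$ have been satisfied for $k=2,\dots,N$; in the interior the $\eps^{-2}$-shift in \eqref{d4} is what turns $\eps^N$-coefficients into $\eps^{N-1}$, while on the boundary there is no such shift, giving $\eps^N$.

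For part (b), $d\ge 3$, I would point out that every structure used above is insensitive to the number of tangential variables: the fast variable is still the single phase $\theta=\beta\cdot(t,x')/\eps$ with $\beta=(-c|\underline\eta|,\underline\eta)$, the interior operator $L_{ff}$ in the fast normal variable $Y$ is unchanged (its coefficients depend only on $r$ and $|\beta'|$, normalized), and the Lopatinski matrix $\mathcal B_{Lop}$ and its cokernel are the same $2\times2$ objects after the rotation described in Remark \ref{h5}. The only change is that $L_{fs}$, $L_{ss}$, $l_s$ and the $A,B$ operators now carry $\nabla_{x'}$ in place of $\partial_x$, and the amplitude equation picks up the transport field $c\tfrac{\underline\eta}{|\underline\eta|}\cdot\nabla_{x'}$ instead of $c\partial_x$ — which is precisely the case covered by Proposition \ref{prop:Amplitudes2}. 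Thus the inductive construction of the $U_k(t,x',x_d,\theta,Y)\in S$ goes through verbatim with $x$ replaced by $x'$ throughout, and the same residual bookkeeping yields $E'_N,e_N\in S^e$.

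The main obstacle I anticipate is the careful tracking of the $S^m$-versus-$S^*$ distinction through the substitution: one must be certain that \emph{all} of the $S^m$ content of the $\eps^{k-2}$-coefficient is cancelled by the definition \eqref{e4} of $(H'_{k-1},K'_{k-1})$ (modulo the explicitly high-order remainder), rather than leaking into a term of order $\eps^{j}$ with $j<N-1$. This is bookkeeping-heavy because the modification is recursive — the $Y^mM_{k-1-m,m}$ term of $(H'_{k-1},K'_{k-1})$ originates from the $\eps^mY^m M_{k-1-m,m}$ term inside $(H_{k-1-m},K_{k-1-m})^{mod}$ — so one has to unwind the Taylor tower \eqref{e2} across several levels of the cascade simultaneously and confirm that the telescoping identity \eqref{d3} makes everything below order $\eps^{N-1}$ cancel exactly when evaluated at $Y=y/\eps$. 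Everything else is routine: the solvability conditions \eqref{f8} have been arranged to hold at each stage by the choices of $U_{k,\alpha}$ and $\underline{U_k}$, and the $S^e$-membership of the leftover terms follows from the algebraic closure properties already recorded.
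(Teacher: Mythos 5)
Your plan follows the paper's own proof almost exactly: substitute the ansatz, use \eqref{d4}--\eqref{d5} with the cascade equations \eqref{g3}$_k$ solved via the modified $(H',K')$, convert $H_{k-1}$ to $H^{mod}_{k-1}$ using the second identity of \eqref{d3}, and then show that the leftover $\eps^{k-2}Y^lM$ terms telescope against each other across different $k$, leaving only an $O(\eps^{N-1})$ residual in $S^e$.

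One caution on your middle paragraph: the claim that at each fixed $k$ the discrepancy $H_{k-1}-H'_{k-1}$ is ``a finite sum of terms of the form $\eps^j(\tfrac{y}{\eps})^j(\cdots)-\eps^jY^j(\cdots)$ which vanish identically'' is not quite right. That description applies only to $H_{k-1}-H^{mod}_{k-1}$; the remaining piece $H^{mod}_{k-1}-H'_{k-1}=\sum_j\eps^jY^jM_{k-1,j}-\sum_l Y^lM_{k-1-l,l}$ (mod the $\eps^{N-k+1}R$ remainder) does \emph{not} vanish termwise. Its cancellation only occurs after weighting by $\eps^{k-2}$ and summing over $k=6,\dots,N$: the reindexing $p=k-1-l$ shows that $\sum_{k}\eps^{k-2}\sum_{l}Y^lM_{k-1-l,l}$ and $\sum_{k}\eps^{k-2}\sum_j\eps^jY^jM_{k-1,j}$ are the same double sum. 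You correctly identify this as the telescoping obstacle in your final paragraph; just don't advertise the level-by-level cancellation you describe earlier, because that version is false. With that corrected, your argument coincides with the paper's.
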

	
	\begin{proof}
		\textbf{1. }First consider the interior equation.  Using \eqref{d4} we obtain\footnote{Here and below we suppress the evaluations $\theta=\frac{x-ct}{\varepsilon},Y=\frac{y}{\varepsilon}$.}
		\begin{equation}\label{eq:ErrorIntEq3}
		\begin{split}
			&\partial_t^2 U_{a}^\varepsilon+\nabla\cdot(L(\nabla U_{a}^\varepsilon)+Q(\nabla U_{a}^\varepsilon)+C(\nabla U_{a}^\varepsilon))=\\
			& \sum_{k=2}^{N} \varepsilon^{k-2}(L_{ff}(U_k)-\begin{pmatrix}
				H_{k-1}\\
				K_{k-1}
			\end{pmatrix})+\varepsilon^{N-1}E_N= \sum_{k=2}^{N}\eps^{k-2}(\begin{pmatrix}
				H'_{k-1}\\
				K'_{k-1}
			\end{pmatrix}) -\begin{pmatrix}
				H_{k-1}\\
				K_{k-1}
			\end{pmatrix})+\varepsilon^{N-1}E_N=\\
			& \sum_{k=6}^{N}\eps^{k-2}(\begin{pmatrix}
				H'_{k-1}\\
				K'_{k-1}
			\end{pmatrix}) -\begin{pmatrix}
				H^{mod}_{k-1}\\
				K^{mod}_{k-1}
			\end{pmatrix})+\varepsilon^{N-1}E_N,
		\end{split}
		\end{equation}
		where for the last equality we used \eqref{d3} and the fact that the $H_j, K_j$ are modified only for $j\geq 5$.    Substituting into the right side from \eqref{e2}, \eqref{e4} yields
		\begin{align}
		\begin{split}
		&\sum_{k=6}^N\sum_{l=0}^{k-6}\eps^{k-2}Y^lM_{k-1-l,l}-\sum_{k=6}^N\sum_{j=0}^{N-k}\eps^{k-2+j}Y^jM_{k-1,j}-\sum^N_{k=6}\eps^{N-1}R_{k-1,N-(k-2)-1}+\eps^{N-1}E_N=\\
		&\qquad \eps^{N-1}\left(E_N-\sum^N_{k=6}R_{k-1,N-k+1}\right):=\eps^{N-1}E_N'.
		\end{split}
		\end{align}

		\textbf{2. }On the boundary we have directly from \eqref{d5}
	       \begin{equation}
		\begin{split}
		&L_2(\nabla U_{a}^\varepsilon)+Q_2(\nabla U_{a}^\varepsilon)+C_2(\nabla U_{a}^\varepsilon)-\eps^2\begin{pmatrix}f\\g\end{pmatrix}=\\
		&\quad  \sum_{k=2}^{N} \varepsilon^{k-1}\left(l_{f}(U_k)-\begin{pmatrix}
				h_{k-1}\\
				k_{k-1}
			\end{pmatrix}\right)+\varepsilon^{N}e_N=\eps^Ne_N.
		\end{split}
		\end{equation}

		\textbf{3. }As already noted, the proof for $d\geq 3$ is just a repetition of that for $d=2$ with mainly notational changes.  For example,  in solving for $U_2$ one now uses the form of the amplitude equation given in Remark \ref{h5}.

		\end{proof}

Combining the results of Theorems \ref{main} and \ref{theo:Error} we obtain our main result for the SVK system:
 
 \begin{theorem}\label{maincor}
 Consider the traction problem in nonlinear elasticity \eqref{j0}, where $G(t,x',\theta)\in H^\infty([0,T_0]\times \mathbb{R}^{d-1}\times \mathbb{T})$, $d\geq 2$.   With 
 $\Omega:= (-\infty,T]\times \overline{R}^d_+$ 
 let $M>\frac{d}{2}+2$ 
and let 
\begin{align}\label{m1}
u^\eps_a(t,x)=\left(\eps^2U_2+\eps^3 U_3+\cdots+\eps^{M+1}U_{M+1}\right)|_{\theta=\frac{\beta\cdot (t,x')}{\eps},Y=\frac{x_d}{\eps}}\in H^\infty(\Omega)
\end{align}
be the approximate solution constructed in Theorem \ref{theo:Error} for $\eps\in (0,1]$ and some positive $T\leq T_0$.

(a) Suppose $s\geq [\frac{d}{2}]+6$.  There exist constants $\eps_0>0$ and $C>0$ such that for $\eps\in (0,\eps_0]$ the problem  \eqref{j0}  has a unique solution 
$u^\eps=u^\eps_a+v\in E^{s+2}(-\infty,T]$ such that $v^\eps$ satisfies the estimate
\begin{align}
|\eps^2 \overline{D}^2_x v^\eps(t)|_{s,\eps}+|\eps \overline{D} v^\eps(t)|_{s,\eps}+|\eps^2 \overline{D} \partial_t v^\eps(t) |_{s,\eps}\leq \eps^M C \text{ for }t\in [0,T]\text{ and }\eps\in (0,\eps_0].
\end{align}
In particular this implies
$|v^\eps|_{W^{1,\infty}(\Omega)}\leq C \eps^{M-\frac{d}{2}-1}$.

(b) With $s$ as in part (a), let $p$ be an integer $\geq 2$, choose $M$ such that $p<M-\frac{d}{2}-1$,  let $u^\eps_a$ be as in \eqref{m1},  and let $u^\eps=u^\eps_a+v$ be the exact solution as in part (a). Then we have
\begin{align}\label{bound2}
\left|u^\eps - (\eps^2U_2+\dots+\eps^p U_p)_{\theta=\frac{\beta\cdot (t,x')}{\eps},Y=\frac{x_d}{\eps}}\right|_{L^\infty(\Omega)} = o(\eps^p).
\end{align}
\end{theorem}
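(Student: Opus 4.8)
\textbf{Proof plan for Theorem \ref{maincor}.}

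The plan is to deduce part (a) directly by combining Theorem \ref{theo:Error} and Theorem \ref{main}, then derive part (b) as a simple triangle-inequality consequence. First I would check that the approximate solution $u^\eps_a$ of \eqref{m1} satisfies Assumption \ref{ass1}: this is where Theorem \ref{theo:Error} is used. By Remark \ref{NE} the SVK system falls under Assumption \ref{a5a}, so the structural hypotheses of Theorem \ref{main} are in place. The interior and boundary residuals $E'_N, e_N$ produced by Theorem \ref{theo:Error} lie in $S^e$, and since each profile $U_k$ lies in $S$ (or $S^e$ after the modifications of section \ref{sec:AnalysisUk}), the composition with $\theta=\frac{\beta\cdot(t,x')}{\eps}$, $Y=\frac{x_d}{\eps}$ produces functions whose $|\cdot|_{s+1,\eps}$ and $\langle\cdot\rangle_{s+2,\eps}$ norms are bounded uniformly in $\eps$ because of the exponential $Y$-decay in $S^*$ and $S^m$ (this is the key point: rescaling $Y=x_d/\eps$ and using $e^{-\delta Y}$ decay costs only a bounded factor, by the change-of-variables Lemma \ref{aa0} applied to the normal variable). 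Similarly the bound $|\eps^\alpha\partial_{t,x}^\alpha u^\eps_a|_{L^\infty}\leq \eps^2 A_1$ for $|\alpha|\le s+2$ follows because each term $\eps^k U_k$ contributes $\eps^k$ and each $\eps$-rescaled derivative $\eps\partial_{t,x}$ acting on $U_k(t,x,\frac{x-ct}{\eps},\frac{x_d}{\eps})$ lands either on a slow variable (gaining nothing but staying bounded) or on a fast variable (where the $\eps$ cancels the $1/\eps$ from the chain rule), so $\eps^{|\alpha|}\partial^\alpha_{t,x}(\eps^k U_k)$ is $O(\eps^k)=O(\eps^2)$ uniformly. I must take the number of constructed profiles $N$ in Theorem \ref{theo:Error} large enough — specifically $N-1\geq M$ — so that the residual orders $\eps^{N-1}E'_N$ and $\eps^N e_N$ match the form $-\eps^M R^\eps$, $-\eps^M r^\eps$ required in \eqref{apsys} with $R^\eps$, $r^\eps$ of bounded norm; this fixes how many profiles beyond $U_{M+1}$ are needed.

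With Assumption \ref{ass1} verified, Theorem \ref{main} applies verbatim and yields part (a): existence of $\eps_0>0$, $C>0$, a unique exact solution $u^\eps=u^\eps_a+v^\eps=u^\eps_a+\eps^M\nu^\eps\in E^{s+2}(-\infty,T]$ on a time interval $[0,T]$ independent of $\eps$, together with the stated energy bound on $v^\eps$ and the $W^{1,\infty}$ estimate $|v^\eps|_{W^{1,\infty}(\Omega)}\leq C\eps^{M-\frac d2-1}$, which comes from the Sobolev estimate of Proposition \ref{sob} applied to the energy bound exactly as in the last line of Theorem \ref{main}(b).

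For part (b), I would write
\begin{align}
u^\eps-\bigl(\eps^2 U_2+\dots+\eps^p U_p\bigr)\big|_{\theta,Y}
= v^\eps + \bigl(u^\eps_a - (\eps^2U_2+\dots+\eps^pU_p)\big|_{\theta,Y}\bigr)
= v^\eps + \sum_{k=p+1}^{M+1}\eps^k U_k\big|_{\theta,Y}.
\end{align}
The first term is controlled by part (a): $|v^\eps|_{L^\infty(\Omega)}\leq |v^\eps|_{W^{1,\infty}(\Omega)}\leq C\eps^{M-\frac d2-1}$, and since $p<M-\frac d2-1$ this is $o(\eps^p)$. For the second term, each $U_k\in S$ (hence bounded on $\Omega$ uniformly in $\eps$ after the substitution $\theta=\frac{\beta\cdot(t,x')}{\eps}$, $Y=\frac{x_d}{\eps}$, because elements of $\underline S$ are bounded and elements of $S^*$ decay in $Y$), so $\bigl|\sum_{k=p+1}^{M+1}\eps^k U_k|_{\theta,Y}\bigr|_{L^\infty(\Omega)}\leq C'\eps^{p+1}=o(\eps^p)$. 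Adding the two estimates gives \eqref{bound2}.

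\textbf{Main obstacle.} The only genuinely non-mechanical step is the verification that $u^\eps_a$ satisfies Assumption \ref{ass1} with norms \emph{uniform in $\eps$} — in particular the residual bound $|R^\eps(t)|_{s+1,\eps}+\langle r^\eps(t)\rangle_{s+2,\eps}\leq A_2$. This requires knowing that when an $S^e$-function $W(t,x,y,\theta,Y)$ is evaluated at $\theta=\frac{x-ct}{\eps}$, $Y=\frac{y}{\eps}$, the resulting function of $(t,x,y)$ has $|\cdot|_{s,k,\eps}$ norms bounded independently of $\eps$. This is where the exponential decay in $Y$ built into the definitions of $S^*$ and $S^m$ is essential: after the rescaling $\tilde y=y/\eps$ the $L^2(y)$ norm picks up a factor $\eps^{1/2}$ from $dy=\eps\,d\tilde y$, but $\int_0^\infty e^{-2\delta \tilde y}d\tilde y<\infty$ keeps everything finite, and the $\eps$-weighted derivatives $\eps^{|\alpha|}\partial^\alpha_{t,x}$ convert cleanly to $\partial^\alpha$ in the fast/slow variables via the chain rule (fast derivatives absorb the $\eps$, slow derivatives are harmless). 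Once this uniform-boundedness bookkeeping is written out carefully — essentially a several-variable version of Lemma \ref{aa0} — the rest is a direct invocation of the two main theorems. I would also need to observe that the profiles and residuals vanish to infinite order at $t=0$ (which follows from $G$ vanishing to infinite order there, as stipulated in section \ref{introp3}), so that $u^\eps_a$ indeed vanishes in $t<0$ as Assumption \ref{ass1} demands.
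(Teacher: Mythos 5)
Your proposal follows the same route as the paper's proof: verify Assumption \ref{ass1} for $u^\eps_a$ using the residual structure from Theorem \ref{theo:Error} (with $N=M+1$ so that the interior error matches $\eps^M R^\eps$), invoke Remark \ref{NE} and Theorem \ref{main} for part (a), and close part (b) with the triangle inequality. You spell out the uniform-in-$\eps$ bookkeeping for the $|\cdot|_{s,k,\eps}$ norms of $S^e$-functions after the fast-variable substitution — the exponential $Y$-decay argument — which the paper simply labels ``straightforward to check,'' but the underlying argument is identical.
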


\begin{proof}
Using the definitions of the spaces $S$ and $S^e$ (section \ref{sec:Hyp}) and taking $R^\eps$, $r^\eps$ to be given by
\begin{align}
R^\eps(t,x)=-E'_{M+1}(t,x,\theta,Y)|_{\theta=\frac{\beta\cdot (t,x')}{\eps},Y=\frac{x_d}{\eps}},\;\;r^\eps(t,x')=-e_{M+1}(t,x',0,\theta,0)|_{\theta=\frac{\beta\cdot (t,x')}{\eps}},
\end{align}
where $E'_{M+1}$, $e_{M+1}$ are as in \eqref{eq:ModifiedInteriorError},
it is straightforward to check that Assumption \ref{ass1} is satisfied by $u^\eps_a$, $R^\eps$, $r^\eps$  for an appropriate choice of $A_1$, $A_2$.    Thus,  Remark \ref{NE} allows us to apply Theorem \ref{main} to prove part (a).  
We also have
\begin{align}
|u^\eps - (\eps^2U_2+\dots+\eps^p U_p)|_{L^\infty(\Omega)}\leq |\eps^{p+1}U_{p+1}+\dots+\eps^{M+1} U_{M+1})|_{L^\infty(\Omega)}+|v|_{L^\infty(\Omega)}
\end{align}
which gives \eqref{bound2}.

\end{proof}

%[\mathcal{E}^s_\eps(t)]^2\leq \int^t_0 e^{B_1(t-\sigma)}\left(\eps^2|R^\eps(\sigma)|^2_{s+1,\eps}+\langle r^\eps(\sigma)\rangle^2_

\part{Tame estimate for the amplitude equation}\label{tamewellp}

 In this section we give a new proof of  a  tame  a priori estimate for the amplitude equation \eqref{ae2}.   
This is the main step in obtaining a time of existence for very regular solutions that depends only on a \emph{fixed} low order of regularity.    The same proof provides a tame estimate for the pulse analogue of \eqref{ae2} considered by  \cite{Williams}.
% and we expect that the same methods may yield tame estimates for the problems considered in \cite{Hunter2006,Secchi}. 

\begin{prop}
\label{propwellposed}
Let $v \in \R^{d-1}$ be a fixed velocity vector, and assume that the kernel $b$ in \eqref{eq:BilFourMult} is 
symmetric with respect to its arguments and that there exists a constant $C>0$ such that
\begin{equation}
\label{hypotheseb}
\forall \, (n_1,n_2,n_3) \in \bbZ^3 \, ,\quad |b(n_1,n_2,n_3)|\leq C\min(|n_1n_2|, |n_2n_3|,|n_1n_3|) .
\end{equation}
Let $m\geq m_1>\frac{d}{2}+2$.    Then sufficiently smooth solutions of the Cauchy problem
%For every $R>0$ there exists a $T=T(m_1,R)$ such that if the initial 
%data $u_0 \in H^m(\R^{d-1}_y \times \bbT_\theta ;\R)$ satisfies $\| u_0 \|_{H^{m_1}} \le R$, then there exists a 
%unique $u \in {\mathcal C}([0,T];H^m(\R^{d-1}_y \times \bbT_\theta;\R))$ solution to the Cauchy problem
\begin{equation}
\label{cauchyamplitude}
\partial_t u +\sum_{j=1}^{d-1} v_j \, \partial_j u +{\mathcal H} \, \big( {\mathcal B}(u,u) \big) =0 \, ,\quad 
u|_{t=0}=u_0 \, .
\end{equation}
satisfy the estimate
\begin{align}\label{tame}
\left|\dfrac{{\rm d}}{{\rm d}t} \| u(t) \|_{H^m}^2\right|  \le C \, \| u(t) \|_{H^m}^2 \|u(t)\|_{H^{m_1}}.
\end{align}
\end{prop}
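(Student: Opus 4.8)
The plan is to differentiate $\|u(t)\|_{H^m}^2$ in $t$, use the equation \eqref{cauchyamplitude} to replace $\partial_t u$, and show that the transport term contributes nothing while the nonlinear term $\mathcal{H}(\mathcal{B}(u,u))$ contributes at most $C\|u\|_{H^m}^2\|u\|_{H^{m_1}}$. Concretely, writing $\Lambda^m := (1-\Delta_{x',\theta})^{m/2}$, we have
\begin{equation*}
\frac{1}{2}\frac{{\rm d}}{{\rm d}t}\|u(t)\|_{H^m}^2 = (\Lambda^m \partial_t u, \Lambda^m u)_{L^2} = -\sum_{j=1}^{d-1}v_j(\Lambda^m\partial_j u,\Lambda^m u)_{L^2} - (\Lambda^m \mathcal{H}(\mathcal{B}(u,u)),\Lambda^m u)_{L^2}.
\end{equation*}
The transport term vanishes since $\partial_j$ is skew-adjoint and commutes with $\Lambda^m$: $(\Lambda^m\partial_j u,\Lambda^m u) = \frac{1}{2}\partial_j\|\Lambda^m u\|^2$ integrates to zero. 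So everything reduces to estimating $|(\Lambda^m\mathcal{H}(\mathcal{B}(u,u)),\Lambda^m u)_{L^2}|$. Since $\mathcal{H}$ is a bounded skew-adjoint Fourier multiplier (of modulus $1$) commuting with $\Lambda^m$, it suffices to bound $|(\Lambda^m\mathcal{B}(u,u),\Lambda^m\mathcal{H}u)|$, and after Cauchy–Schwarz the whole problem becomes the tame bilinear estimate
\begin{equation*}
\|\mathcal{B}(u,u)\|_{H^m} \lesssim \|u\|_{H^m}\|u\|_{H^{m_1}},
\end{equation*}
since then $|(\Lambda^m\mathcal{B}(u,u),\Lambda^m\mathcal{H}u)| \le \|\mathcal{B}(u,u)\|_{H^m}\|u\|_{H^m}$ and the claimed estimate \eqref{tame} follows with $C$ absorbing the multiplier norms.

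The core of the argument is therefore the tame estimate for $\mathcal{B}$. Working on the Fourier side, $\widehat{\mathcal{B}(u,u)}(n)$ is (up to the constant $-1/(4\pi c_0)$) $\sum_{n'\ne 0}b(-n,n-n',n')\,\hat u(n-n')\,\hat u(n')$, where $\hat u$ also depends on the dual variable $\xi'$ to $x'$; the hypothesis \eqref{hypotheseb} gives $|b(-n,n-n',n')| \le C\min(|n(n-n')|,|(n-n')n'|,|nn'|)$. The strategy is a Moser-type / Sobolev bilinear argument: split into the region where $|n-n'| \le |n'|$ (so $n'$ is the "high" frequency) and the region where $|n'| < |n-n'|$, and in each use the bound on $b$ that pairs a weight $\langle n\rangle^m$ against the high frequency while leaving only $m_1 > d/2 + 2$ — in fact $\ge 2$ extra — derivatives of room to spare on the low frequency. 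Because $b$ is homogeneous of degree two, it costs exactly two derivatives, which are comfortably covered by $m_1 - (d/2) > 2$; the low-frequency factor is controlled in $L^\infty$ (in both $\theta$ and $x'$) by $\|u\|_{H^{m_1}}$ via Sobolev embedding $H^{m_1}(\mathbb{R}^{d-1}\times\mathbb{T}) \hookrightarrow W^{2,\infty}$. One does this carefully, e.g. by estimating $\langle n\rangle^m|\widehat{\mathcal{B}(u,u)}(n)|$ using $\langle n\rangle \lesssim \langle n-n'\rangle + \langle n'\rangle$, distributing the $m$ weights and the two $b$-derivatives onto the two factors, then applying Cauchy–Schwarz in $n'$ together with $\ell^2_n\ell^2_{n'}$ summability coming from the $\langle n'\rangle^{-(m_1 - d/2 + \delta)}$ tail, and finally integrating in $\xi'$ with the analogous Sobolev product estimate in the continuous variable.

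The main obstacle I expect is the bookkeeping in this bilinear estimate: one must handle the mixed discrete ($\theta$) / continuous ($x'$) Fourier setting simultaneously, and the kernel $b$ is \emph{unbounded} (it grows quadratically), so the degree-two homogeneity must be tracked precisely to confirm that exactly two derivatives — and no more — are consumed, which is what makes $m_1 > d/2 + 2$ (rather than $d/2$) the right threshold. A clean way to organize this is to prove a single lemma: if $K(n,n',n'')$ is supported on $n = n' + n''$, symmetric, and satisfies $|K| \le C\min(|n'n''|,\dots)$, then the bilinear operator with symbol $K$ maps $H^m\times H^{m_1} \to H^m$ for $m \ge m_1 > d/2 + 2$. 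Everything else — the skew-adjointness of the transport term, the fact that $\mathcal{H}$ is a unimodular Fourier multiplier, and the reduction via Cauchy–Schwarz — is routine and can be stated briefly. Part \ref{tamewellp} of the paper is advertised as carrying out exactly this with the slow variables included, so the proof should follow this outline with the bilinear lemma as its technical heart.
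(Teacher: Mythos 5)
Your reduction of the problem to the Sobolev bilinear bound $\|\mathcal{B}(u,u)\|_{H^m} \lesssim \|u\|_{H^m}\,\|u\|_{H^{m_1}}$ is a genuine gap: that estimate is \emph{false} under hypothesis \eqref{hypotheseb} alone, so your proposed ``single lemma'' ($K$ symmetric with $|K|\le C\min(|n'n''|,\ldots)$ giving $H^m\times H^{m_1}\to H^m$) is also false. Because $b$ is homogeneous of degree two, one of its two degrees of homogeneity unavoidably lands on the \emph{high} frequency, not the low one. Concretely, put mass at the adjacent modes $n'=N$ and $n-n'=1$; then
$$|b(-n,n-n',n')| \sim \min\big(|n(n-n')|,|(n-n')n'|,|nn'|\big)=\min(N+1,\,N,\,N(N+1))=N,$$
so $\langle n\rangle^m\,|b|\sim N^{m+1}$ multiplies $\hat u(n')$, and the left side of your proposed estimate scales like $\|u\|_{H^{m+1}}\|u\|_{H^{m_1}}$, not $\|u\|_{H^m}\|u\|_{H^{m_1}}$. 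The threshold $m_1>\frac{d}{2}+2$ you focus on controls summability of the low-frequency tail; it does nothing to recover the missing derivative on the high frequency, and no redistribution of the three options in \eqref{hypotheseb} avoids this loss.

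What actually closes the estimate — and what your Cauchy--Schwarz step erases — is a cancellation that is visible only in the \emph{paired trilinear} integral $\int \partial_\theta^m u\,\mathcal{H}\big(\mathcal{B}(\partial_\theta^m u,u)\big)$, not in $\|\mathcal{B}(u,u)\|_{H^m}$. After passing to the Fourier side, the realness of $u$ and the symmetry of $b$ allow one to symmetrize in $(k,k-l)$ and produce the factor $\sgn(-k)+\sgn(k-l)$, which vanishes off the set $\{|k|\le|l|,\ |k-l|\le|l|\}$; on that set \eqref{hypotheseb} gives $|b|\lesssim |l|^2$, putting \emph{both} degrees of homogeneity on the low-frequency variable. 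This is exactly the content of Lemma \ref{lembornesB}, $\big|\int u\,\mathcal{H}(\mathcal{B}(u,v))\big|\le C\|v\|_{H^{m_1}}\|u\|_{L^2}^2$, which applied with $u\to\partial_\theta^m u$, $v\to u$ recovers precisely the derivative your bilinear route loses. The remainder of your outline is fine — the transport term does vanish, the $\chi_A/\chi_B$ frequency dichotomy is the right device for the cross terms $\mathcal{B}(\partial_\theta^{m'}u,\partial_\theta^{m-m'}u)$ with $1\le m'\le m-1$ (handled in the paper via the Schur-type Lemma \ref{f0a} and needing no cancellation), and the mixed discrete/continuous Fourier setting is just bookkeeping. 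The gap is concentrated at the leading high--high term, where the skew-adjointness/symmetry cancellation, not Cauchy--Schwarz, is what makes the estimate tame.
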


%When the initial condition for \eqref{cauchyamplitude} vanishes but the source term $g$  is nonzero, one 
%solves \eqref{cauchyamplitude} by using the Duhamel formula, thereby proving Proposition \ref{prop:Amplitudes2} . We omit the details of that case here and focus on the solvability of 
%the pure Cauchy problem for nonzero initial data and zero forcing term.
 In isotropic elastodynamics, the 
kernel $b$ that appears in \eqref{eq:BilFourMult}  satisfies the bound \eqref{hypotheseb} as can be seen immediately  by  inspection of the basic kernels written down in formulas (2.56)-(2.58) of \cite{Williams} (or formula (3.20) of \cite{Hunter06}).

The proof of proposition \ref{propwellposed} uses the following lemma from \cite{RR}.    Here and below,  integration with respect to $k$ or $l$ is summation over $\mathbb{Z}$. 
 \begin{lem}\label{f0a}
Suppose $G:(\mathbb{R}^{d-1}\times \mathbb{Z})\times (\mathbb{R}^{d-1}\times \mathbb{Z})\to\mathbb{C}$ is a locally integrable measurable function that can be decomposed into a finite sum 
\begin{align}
G(\xi,k,\eta,l)=\sum_{j=1}^K G_j(\xi,k,\eta,l)
\end{align}
such that for each $j$ we have either
\begin{align}
\sup_{\xi,k}\int |G_j(\xi,k,\eta,l)|^2 d(\eta ,l)< C\text{ or }\sup_{\eta,l}\int |G_j(\xi,k,\eta,l)|^2 d(\xi ,k)< C.
\end{align}
 Then
\begin{align}
(f,g)\to \int G(\xi,k,\eta,l) f(\xi-\eta,k-l)g(\eta,l)d(\eta ,l)
\end{align}
defines a continuous bilinear map of $L^2\times L^2\to L^2$, and 
\begin{align}
 |\int G(\xi,k,\eta,l) f(\xi-\eta,k-l)g(\eta,l)d(\eta, l)|_{L^2}\leq C |f|_{L^2} |g|_{L^2}.
\end{align}
\end{lem}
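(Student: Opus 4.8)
The statement is a bilinear Schur test, and the plan is to deduce it from two applications of the Cauchy--Schwarz inequality, Tonelli's theorem, and the translation invariance of the measure $d(\xi,k)$ on $\bbR^{d-1}\times\bbZ$ (Lebesgue measure on $\bbR^{d-1}$ times counting measure on $\bbZ$). Write
\[
B_G(f,g)(\xi,k):=\int G(\xi,k,\eta,l)\,f(\xi-\eta,k-l)\,g(\eta,l)\,d(\eta,l).
\]
Since $G=\sum_{j=1}^{K}G_j$ is a \emph{finite} sum, $B_G(f,g)=\sum_{j}B_{G_j}(f,g)$, so by the triangle inequality in $L^2$ it suffices to bound each $\|B_{G_j}(f,g)\|_{L^2}$ by $C^{1/2}\|f\|_{L^2}\|g\|_{L^2}$; the constant in the conclusion is then $\le KC^{1/2}$. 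By hypothesis each $G_j$ falls into one of the two cases below.

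First I would treat a term $G_j$ with $\sup_{\xi,k}\int|G_j(\xi,k,\eta,l)|^2\,d(\eta,l)\le C$. For fixed $(\xi,k)$, Cauchy--Schwarz in $(\eta,l)$, with $G_j$ contracted against the product $fg$, gives
\[
|B_{G_j}(f,g)(\xi,k)|^2\le\Big(\int|G_j(\xi,k,\eta,l)|^2 d(\eta,l)\Big)\Big(\int|f(\xi-\eta,k-l)|^2|g(\eta,l)|^2 d(\eta,l)\Big)\le C\int|f(\xi-\eta,k-l)|^2|g(\eta,l)|^2 d(\eta,l).
\]
Integrating in $(\xi,k)$, applying Tonelli, and using for each fixed $(\eta,l)$ the substitution $(\xi,k)\mapsto(\xi-\eta,k-l)$ (translation invariance), so that $\int|f(\xi-\eta,k-l)|^2 d(\xi,k)=\|f\|_{L^2}^2$, yields $\|B_{G_j}(f,g)\|_{L^2}^2\le C\|f\|_{L^2}^2\|g\|_{L^2}^2$.

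Next I would treat a term $G_j$ with $\sup_{\eta,l}\int|G_j(\xi,k,\eta,l)|^2\,d(\xi,k)\le C$. Here the point is to contract $G_j$ against $g$ rather than $f$, exploiting that $f$ is evaluated at the \emph{difference} $(\xi-\eta,k-l)$. For fixed $(\xi,k)$, Cauchy--Schwarz in $(\eta,l)$ gives
\[
|B_{G_j}(f,g)(\xi,k)|^2\le\Big(\int|G_j(\xi,k,\eta,l)|^2|g(\eta,l)|^2 d(\eta,l)\Big)\Big(\int|f(\xi-\eta,k-l)|^2 d(\eta,l)\Big)=\|f\|_{L^2}^2\int|G_j(\xi,k,\eta,l)|^2|g(\eta,l)|^2 d(\eta,l),
\]
the last equality again by translation invariance, now in $(\eta,l)$. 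Integrating in $(\xi,k)$, applying Tonelli and the hypothesis on $G_j$,
\[
\|B_{G_j}(f,g)\|_{L^2}^2\le\|f\|_{L^2}^2\int|g(\eta,l)|^2\Big(\int|G_j(\xi,k,\eta,l)|^2 d(\xi,k)\Big)d(\eta,l)\le C\|f\|_{L^2}^2\|g\|_{L^2}^2.
\]
The same Cauchy--Schwarz estimates applied to $|G_j|,|f|,|g|$ show that the $(\eta,l)$-integral defining $B_{G_j}(f,g)(\xi,k)$ is absolutely convergent for a.e.\ $(\xi,k)$ (its modulus is dominated by a function in $L^2_{(\xi,k)}$, hence finite a.e.), and measurability is immediate since $G_j,f,g$ are measurable; hence $B_G(f,g)\in L^2$ is well defined for all $f,g\in L^2$ and $(f,g)\mapsto B_G(f,g)$ is continuous bilinear with the asserted bound.

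There is no serious obstacle here: the whole argument is a routine Schur test. The only point requiring care is the grouping in the second case — one must contract the kernel with $g$, so that $\int|f(\xi-\eta,k-l)|^2 d(\eta,l)=\|f\|_{L^2}^2$ by translation invariance and the leftover factor is exactly $\int|G_j|^2 d(\xi,k)$, which the hypothesis controls. Contracting instead with $f$ leaves a ``diagonal'' integral of $|G_j|^2$ that is not bounded by either hypothesis.
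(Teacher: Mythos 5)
Your proof is correct. Note that the paper does not prove this lemma at all --- it is quoted from the reference [RR] --- so there is no internal proof to compare against; your argument is the standard bilinear Schur test that such a reference would supply. Both cases are handled properly, and you correctly isolate the one point of substance: in the second case the kernel must be contracted against $g$ so that the translation-invariant integral $\int |f(\xi-\eta,k-l)|^2\, d(\eta,l)=\|f\|_{L^2}^2$ absorbs $f$ and the leftover factor $\int |G_j|^2\, d(\xi,k)$ is exactly what the hypothesis controls.
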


%\begin{proof}
\begin{proof}[Proof of Proposition \ref{propwellposed}]
\textbf{1. }
%The main step is to prove a tame a priori estimate for 
 %sufficiently smooth solutions to the Cauchy problem of the form 
%\begin{align}\label{tame}
%\left|\dfrac{{\rm d}}{{\rm d}t} \| u(t) \|_{H^m}^2\right|  \le C \, \| u(t) \|_{H^m}^2 \|u(t)\|_{H^{m_1}}.
%\end{align}
 %By standard arguments, see for instance \cite{TaylorIII,B}, a priori 
%estimates can be turned into a well-posedness result as stated in Proposition \ref{propwellposed} by 
%using convenient Fourier truncation approximations (recall here that the underlying space domain is 
%$\R^{d-1}_y \times \bbT_\theta$ so Fourier analysis is readily available).
We  consider a solution $u$ to the
%\in {\mathcal C}([0,T];H^m(\R^{d-1}_y \times \R_\theta;\R))$ to the 
Cauchy problem \eqref{cauchyamplitude} 
%and try to derive an estimate for the evolution of the $H^m$ 
%norm of $u$. We can assume that  $u$ 
that is sufficiently smooth for all manipulations below to be rigorous. 
Using the Fourier expression of the $H^m$ norm, we see that is enough to estimate just the $L^2$ 
norms of the functions $u$, $\partial_\theta^m u$, $\partial_y^\alpha u$ with $|\alpha|=m$ ($\alpha \in 
\bbN^{d-1}$). All other partial derivatives of $u$ can be dealt with by interpolating between such `extreme' 
cases. 
%Once again, we refer to \cite{TaylorIII} for the details of such arguments. 
Let us first prove the 
following bounds on the operator ${\mathcal B}$.\footnote{Lemma \ref{lembornesB} was proved in \cite{Williams} for the case of pulses.   The remainder of this proof differs from the argument in \cite{Williams}; the argument in \cite{Williams} gave only a nontame estimate for $\frac{d}{dt}\|u\|^2_{H^m}$.}

\begin{lem}
\label{lembornesB}
Under the assumptions of Proposition \ref{propwellposed}, the bilinear operator ${\mathcal B}$ is symmetric. 
It satisfies the Leibniz rule
$$
\partial_\theta {\mathcal B}(u,v) ={\mathcal B}(\partial_\theta u,v) +{\mathcal B}(u,\partial_\theta v) \, ,
$$
and more generally the Leibniz rule at any order of differentiation in $\theta$, as well as the 
bounds\footnote{The bounds obviously extend by continuity to functions in appropriate Sobolev spaces 
and are not restricted to functions in the Schwartz class.}
\begin{align*}
%\forall \, u,v,w \in {\mathcal S}(\R^{d-1} \times \R;\R) \, ,\quad 
%\left| \int_{\R^{d-1} \times \R} u \, {\mathcal H} \, \big( {\mathcal B}(v,w) \big) \, {\rm d}y \, {\rm d}\theta \right| 
%&\le C \, \| u \|_{L^2} \, \| v \|_{H^1} \, \| w \|_{H^{m_0}} \, ,\\
\forall \, u,v \in {\mathcal S}(\R^{d-1} \times \bbT;\R) \, ,\quad 
\left| \int_{\R^{d-1} \times \bbT} u \, {\mathcal H} \, \big( {\mathcal B}(u,v) \big)\, {\rm d}y \, {\rm d}\theta \right| 
&\le C \, \| v \|_{H^{m_1}} \, \| u \|_{L^2}^2 \, ,
\end{align*}
for a suitable constant $C$ and any integer $m_1$ satisfying $m_1 >\frac{d}{2} +2$. (The Sobolev norms refer 
to the space domain $\R^{d-1}_y \times \bbT_\theta$.)
\end{lem}

%\begin{proof}
The fact that ${\mathcal B}$ is symmetric comes from the symmetry of the kernel $b$ with respect to its 
three arguments. We now consider  functions $u,v$ in the Schwartz space ${\mathcal S}(\R^{d-1}_y 
\times \bbT_\theta;\R)$.
We will take advantage of some cancelation arising from the skew-symmetric 
operator ${\mathcal H}$. We compute
%\footnote{Here and below integrals $\int_\bbZ (...)dk$ should be interpreted as sums over $k$.}
\begin{multline*}
\int_{\R^{d-1} \times \bbT} u \, {\mathcal H} \, \big( {\mathcal B}(u,v) \big) \, {\rm d}y \, {\rm d}\theta 
=i\, \int_{\R^{d-1} \times \bbZ \times \bbZ} \overline{\widehat{u}(y,k)} \, \widehat{u}(y,k-l) \, \widehat{v}(y,l) \, 
\text{\rm sgn} (-k) \, b(-k,k-l,l) \, {\rm d}y \, {\rm d}k \, {\rm d}l \\
=i\, \int_{\R^{d-1} \times \bbZ \times \bbZ} \widehat{u}(y,-k) \, \widehat{u}(y,k-l) \, \widehat{v}(y,l) \, 
\text{\rm sgn} (-k) \, b(-k,k-l,l) \, {\rm d}y \, {\rm d}k \, {\rm d}l \\
=\dfrac{i}{2}\, \int_{\R^{d-1} \times \bbZ \times \bbZ} \widehat{u}(y,-k) \, \widehat{u}(y,k-l) \, \widehat{v}(y,l) \, 
\Big( \text{\rm sgn} (-k) +\text{\rm sgn} (k-l) \Big) \, b(-k,k-l,l) \, {\rm d}y \, {\rm d}k \, {\rm d}l\, ,
\end{multline*}
where we have used the fact that $u$ is real valued and the symmetry of $b$. Let us observe 
that if $-k$ and $k-l$ have opposite signs, then the quantity $\text{\rm sgn} (-k) +\text{\rm sgn} (k-l)$ 
vanishes. If $-k$ and $k-l$ have the same sign, then the sum of signs is either $2$ or $-2$, and there 
holds
$$
|k| \le |l| \, ,\quad \text{\rm and } \quad |k-l| \le |l| \, .
$$
With \eqref{hypotheseb} this yields
\begin{multline*}
\Big| \big( \text{\rm sgn} (-k) +\text{\rm sgn} (k-l) \big) \, b(-k,k-l,l) \Big| 
\le C  | \text{\rm sgn} (-k) +\text{\rm sgn} (k-l)| |k-l||l| \le C  |l|^2 \, .
\end{multline*}
Using the Cauchy-Schwarz and $L^2-L^1$ convolution inequalities, we derive the bound
\begin{align*}
\left| \int_{\R^{d-1} \times \bbT} u \, {\mathcal H} \, \big( {\mathcal B}(u,v) \big) \, {\rm d}y \, {\rm d}\theta \right| 
&\le C \, \int_{\R^{d-1} \times \bbZ \times \bbZ} | \widehat{u}(y,-k)| \, |\widehat{u}(y,k-l)| \, |l|^2 \, |\widehat{v}(y,l)| 
\, {\rm d}y \, {\rm d}k \, {\rm d}l\\
&\le C \, \int_{\R^{d-1}} \| \widehat{u}(y,\cdot) \|_{L^2} \, \| \widehat{u}(y,\cdot) \|_{L^2} \, 
\| k^2\, \widehat{v}(y,\cdot) \|_{L^1} \, {\rm d}y \\
&\le C \, \| u \|_{L^2}^2 \, \sup_{y \in \R^{d-1}} \| v(y,\cdot) \|_{H^q(\bbT)} \, \text{ for any }q>\frac{5}{2}.
\end{align*}
Applying the Sobolev imbedding Theorem completes the proof of Lemma \ref{lembornesB}.
%\end{proof}

\textbf{2. }Let us consider an integer $m \ge m_1$ with $m_1$ as in Lemma \ref{lembornesB}. We consider a 
sufficiently smooth solution $u$ to \eqref{cauchyamplitude} and compute (the transport terms with 
respect to the variables $y$ can be removed by a change of $(t,y)$ variables):
$$
\dfrac{{\rm d}}{{\rm d}t} \| u(t) \|_{L^2}^2 =-2 \, \int_{\R^{d-1} \times \R} u \, {\mathcal H} \, {\mathcal B}(u,u) 
\, {\rm d}y \, {\rm d}\theta \, .
$$
Applying the bound in Lemma \ref{lembornesB}, we get
\begin{equation}
\label{apriori1}
\left| \dfrac{{\rm d}}{{\rm d}t} \| u(t) \|^2_{L^2} \right| \le C \, \| u(t) \|^2_{L^2} \,  \| u(t) \|_{H^{m_1}} \leq  C\| u(t) \|^2_{H^m} \,  \| u(t) \|_{H^{m_1}}
\end{equation}

\textbf{3. }Let us now differentiate  \eqref{cauchyamplitude} $m$ times  with respect to $\theta$, and get
$$
\partial_t \partial_\theta^m u +\sum_{j=1}^{d-1} v_j \, \partial_j \partial_\theta^m u +2\, {\mathcal H} \, \big( 
{\mathcal B}(\partial_\theta^m u,u) \big) =-\sum_{m'=1}^{m-1} \binom {m} {m'} \, 
{\mathcal H} \, \big( {\mathcal B}(\partial_\theta^{m'} u,\partial_\theta^{m-m'} u) \big) \, .
$$
Taking the $L^2$ scalar product with $\partial_\theta^m u$, we get
\begin{align}\label{diff}
\dfrac{{\rm d}}{{\rm d}t} \| \partial_\theta^m u(t) \|_{L^2}^2 =&-4 \, \int_{\R^{d-1} \times \bbT} \partial_\theta^m u \, 
{\mathcal H} \, \big( {\mathcal B}(\partial_\theta^m u,u) \big) \, {\rm d}y \, {\rm d}\theta \\
&-2 \, \sum_{m'=1}^{m-1} \binom {m} {m'} \, \int_{\R^{d-1} \times \bbT} \partial_\theta^m u \, 
{\mathcal H} \, \big( {\mathcal B}(\partial_\theta^{m'} u,\partial_\theta^{m-m'} u) \big) \, {\rm d}y \, {\rm d}\theta \, .
\end{align}
For the first integral we apply the estimate of Lemma \ref{lembornesB} and get
$$
\left| \int_{\R^{d-1} \times \R} \partial_\theta^m u \, {\mathcal H} \, \big( {\mathcal B}(\partial_\theta^m u,u) 
\big) \, {\rm d}y \, {\rm d}\theta \right| \le C \, \| u(t) \|_{H^{m_1}} \, \| u(t) \|_{H^m}^2. 
$$

\textbf{4.  }Now we consider the remaining terms in \eqref{diff}. We let $\xi$ or $\eta$  denote Fourier variables dual to $y$.  
%The notation $\int ...dk$ denotes a sum over $k$ for wavetrains, an integral over $k$ for pulses. 
Assuming without loss of 
generality $m-1 \ge m' \ge m-m' \ge 1$,  and letting $\chi_A(k,l)$, $\chi_B(k,l)$ be the characteristic functions of $\{|k|\leq |l|\}$, $\{|l| < |k|\}$ respectively, we consider one of the remaining terms\begin{align}\label{b}
\begin{split}
&\left| \int_{\R^{d-1} \times \R} \partial_\theta^m u \, {\mathcal H} \, \big( {\mathcal B}(\partial_\theta^{m'} u, 
\partial_\theta^{m-m'} u) \big) \, {\rm d}y \, {\rm d}\theta \right|=\\
&\left | \int k^m\hat u(\xi,k)(k-l)^{m'}\hat u(\xi-\eta,k-l) l^{m-m'}\hat u(\eta,l)b(-k,k-l,l)[\chi_A(k,l)+\chi_B(k,l)]d(\xi,k)d(\eta,l)\right| \leq \\
&\qquad |\int F_A(\xi,k,\eta,l)d(\xi,k)d(\eta,l)|+|\int F_B(\xi,k,\eta,l)d(\xi,k)d(\eta,l)|:= \mathcal{A}+\mathcal{B},
\end{split}
\end{align}
%:=\left | \int k^m\hat u(\xi,k) H_1(\xi,k)d(\xi,k)\right|.
where $F_A$, $F_B$ have the obvious definitions.\footnote{We replace $\sgn{(-k)}$ by one in these estimates.}

\textbf{5. }We can estimate $\mathcal{A}$ by Cauchy-Schwarz after estimating the $L^2(\xi,k)$ norm of 
\begin{align}\label{c}
H_A(\xi,k):=\int(k-l)^{m'}\hat u(\xi-\eta,k-l) l^{m-m'}\hat u(\eta,l)b(-k,k-l,l)\chi_A(k,l)d(\eta,l).
\end{align}
For this we apply     
  lemma \ref{f0a} to the kernel 
\begin{align}
G_{A}(\xi,k,\eta,l):=\frac{|k-l|^{m'}|l|^{m-m'}|l||k-l|\chi_A}{\langle \eta,l\rangle^m\langle\xi-\eta,k-l\rangle^{m_1}}\lesssim \frac{1}{\langle\xi-\eta,k-l\rangle^{m_1-2}}.
\end{align}
Here we have used \eqref{hypotheseb}, the fact that $m'\geq 1$, and the fact that on supp $\chi_A$ we have $|k-l|\leq 2|l|$. Observe that this estimate does not work if $m'=0$. 
This gives
\begin{align}\label{d}
\mathcal{A}\lesssim \| u(t) \|_{H^m}^2 \|u(t)\|_{H^{m_1}}.
\end{align}

\textbf{6. }To estimate $\mathcal{B}$ we use $|k|^m\lesssim |l|^m+|k-l|^m$ to write
\begin{align} 
\begin{split}
&\mathcal{B}\leq \left | \int l^m\hat u(\xi,k)(k-l)^{m'}\hat u(\xi-\eta,k-l) l^{m-m'}\hat u(\eta,l)b(-k,k-l,l)\chi_B(k,l)d(\xi,k)d(\eta,l)\right|+\\
&\quad  \left | \int (k-l)^m\hat u(\xi,k)(k-l)^{m'}\hat u(\xi-\eta,k-l) l^{m-m'}\hat u(\eta,l)b(-k,k-l,l)\chi_B(k,l)d(\xi,k)d(\eta,l)\right|:=
 \mathcal{B}_1+\mathcal{B}_2.
\end{split}
\end{align}

\textbf{7. Estimate of $\mathcal{B}_1$. }Pairing $l^m$ with $\hat u(\eta,l)$, we can use Cauchy-Schwarz to estimate $\mathcal{B}_1$ after  estimating the $L^2(\eta,l)$ norm of 
\begin{align}\label{e}
H_{B1}(\eta,l):=\int(k-l)^{m'}\hat u(\xi-\eta,k-l) l^{m-m'}\hat u(\xi,k)b(-k,k-l,l)\chi_B(k,l)d(\xi,k).
\end{align}
To do this we apply     
  lemma \ref{f0a} to the kernel 
\begin{align}
G_{B1}(\xi,k,\eta,l):=\frac{|k-l|^{m'}|l|^{m-m'}|l||k-l|\chi_B}{\langle \xi,k\rangle^m\langle\xi-\eta,k-l\rangle^{m_1}}\lesssim \frac{1}{\langle\xi-\eta,k-l\rangle^{m_1-2}}.
\end{align}
Here we have used \eqref{hypotheseb}, the fact that $m'\geq 1$, and the fact that on supp $\chi_B$ we have $|k-l|\leq 2|k|$, $|l|\leq |k|$.   This gives
\begin{align}\label{f}
\mathcal{B}_1\lesssim \| u(t) \|_{H^m}^2 \|u(t)\|_{H^{m_1}}.
\end{align}

\textbf{8. Estimate of $\mathcal{B}_2$. }In the integral that defines $\mathcal{B}_2$ make the change of variables
\begin{align}\label{g}
(\xi,k,\eta, l)\to (\xi,k,\alpha,p) \text{ where }\alpha=\xi-\eta, p=k-l
\end{align}
to obtain
\begin{align}
\left | \int p^m\hat u(\xi,k)p^{m'}\hat u(\alpha,p) (k-p)^{m-m'}\hat u(\xi-\alpha,k-p)b(-k,p,k-p)\chi_B(k,k-p)d(\xi,k)d(\alpha,p)\right|
\end{align}
Pairing $p^m$ with $\hat u(\alpha,p)$, we can use Cauchy-Schwarz to estimate $\mathcal{B}_2$ after estimating the $L^2(\alpha,p)$ norm of 
\begin{align}\label{h}
H_{B2}(\alpha,p):=\int p^{m'}\hat u(\xi-\alpha,k-p) (k-p)^{m-m'}\hat u(\xi,k)b(-k,p,k-p)\chi_B(k,k-p)d(\xi,k).
\end{align}
For this we apply     
 lemma \ref{f0a} to the kernel 
\begin{align}
G_{B2}(\xi,k,\alpha,p):=\frac{|k|^{m'}|k-p|^{m-m'}|k||k-p|\chi_B}{\langle \xi,k\rangle^m\langle\xi-\alpha,k-p\rangle^{m_1}}\lesssim \frac{1}{\langle\xi-\alpha,k-p\rangle^{m_1-2}}.
\end{align}
Here we have used \eqref{hypotheseb}, the fact that $m-m'\geq 1$, and the fact that on supp $\chi_B$ we have $|k-p|\leq |k|$, $|p|\leq 2|k|$. \footnote{This estimate does not work if $m'=m$.}  This gives
\begin{align}\label{i}
\mathcal{B}_2\lesssim \| u(t) \|_{H^m}^2 \|u(t)\|_{H^{m_1}}.
\end{align}

\textbf{9. }  These estimates go through unchanged if factors like $(k-l)^{m'}$, $l^{m-m'}$ are replaced by $\langle k-l\rangle^{m'}$, $\langle l\rangle^{m-m'}$.

The $y$-partial derivatives of $u$ are estimated in an entirely similar way. The factors $k^m$, $(k-l)^{m'}$, and $l^{m-m'}$ in \eqref{b} are now replaced by 
$\langle \xi\rangle^{m}$, $\langle \xi-\eta\rangle^{m'}$, $\langle \eta\rangle^{m-m'}$.     The dichotomy $\{|k|\leq |l|\}$, $\{|l| < |k|\}$ is replaced by $\{|\xi|\leq |\eta|\}$, $\{|\eta| < |\xi|\}$ with respective characteristic functions $\chi_A(\xi,\eta)$, $\chi_B(\xi,\eta)$.

%3.)For the estimates in steps 2-6 to work we need $m_1>\frac{d}{2}+2$.  For the estimate \eqref{ab}  you needed $m_0+1>\frac{d}{2}+\frac{5}{2}$.    Thus, we require 
%$m_1>\frac{d}{2}+\frac{5}{2}$.

Putting these estimates together gives the estimate \eqref{tame} for $m\geq m_1>\frac{d}{2}+2$.

\end{proof}

\bibliographystyle{alpha}
\bibliography{RayleighWT}

\end{document}